\documentclass[a4paper, 12pt, oneside, notitlepage]{amsart}

\usepackage{amsmath,amscd}
\usepackage{amssymb}
\usepackage{amsthm}
\usepackage{comment}
\usepackage[usenames,dvipsnames]{xcolor}

\usepackage{mathrsfs}
\usepackage[colorlinks=true,linkcolor=Red,citecolor=PineGreen]{hyperref}

\usepackage{thmtools}
\usepackage[margin=3cm]{geometry}
\usepackage{enumitem}



\newcommand{\WF}{{\rm WF}}
\theoremstyle{plain}
\newtheorem{thm}{Theorem}[section]
\newtheorem{prop}[thm]{Proposition}
\newtheorem{lem}[prop]{Lemma}
\newtheorem{cor}[prop]{Corollary}

\newtheorem{rmk}[prop]{Remark}

\newtheorem*{thm*}{Theorem}

\numberwithin{equation}{section}
\newcommand{\mc}{\mathcal}
\newcommand{\1}{\mathbf{1}}
\newcommand {\R} {\mathbb{R}} \newcommand {\Z} {\mathbb{Z}}
\newcommand {\T} {\mathbb{T}} \newcommand {\N} {\mathbb{N}}
\newcommand {\C} {\mathbb{C}}

\newcommand{\B}{\mathbb{B}}

\newcommand {\D} {{\mathcal D}}
\newcommand {\dd} {\mathrm{d}}
\newcommand{\eps}{\varepsilon}

\newcommand {\supp} {\text{supp}}

\newcommand{\bV}{{\mathbf V}}
\newcommand{\bH}{{\mathbf H}}
 
\newcommand{\mS}{\mathbb{S}}                   
 


\definecolor{armygreen}{rgb}{0.29, 0.33, 0.13}
\definecolor{ao(english)}{rgb}{0.0, 0.5, 0.0}

\DeclareMathOperator {\sign} {sgn}


\pagestyle{headings}

\title[Geodesics, L\'evy Flights, Random Searches]{Geodesic L\'evy Flights and Expected Stopping Time for Random Searches}

\author{Yann Chaubet}
\address{Cambridge University, Faculty of Mathematics, Wilberforce Road, Cambridge CB3 0WB, United Kingdom}
\email{yann.chaubet@dpmms.cam.ac.uk	}

\author{Yannick Guedes Bonthonneau}
\address{Institut Galil\'ee, Universit\'e Paris 13, avenue Jean-Baptiste Clément
93430 - Villetaneuse}
\email{bonthonneau@math.univ-paris13.fr}

\author{Thibault Lefeuvre}
\address{Université de Paris and Sorbonne Université, CNRS, IMJ-PRG, F-75006 Paris, France.}
\email{tlefeuvre@imj-prg.fr}

\author{Leo Tzou}
\address{University of Amsterdam, Korteweg-de Vries Institute,  Science Park, 1098XH Amsterdam, Netherlands}
\email{leo.tzou@gmail.com}

\date{\today}

\begin{document}

\begin{abstract} We give an analytic description for the infinitesimal generator constructed in \cite{AppEst} for L\'evy flights on a broad class of closed Riemannian manifolds including all negatively-curved manifolds, the flat torus and the sphere. Various properties of the associated semigroup and the asymptotics of the expected stopping time for L\'evy flight based random searches for small targets, also known as the ``narrow capture problem", are then obtained using our newfound understanding of the infinitesimal generator. {Our study also relates to the \emph{L\'evy flight foraging hypothesis} in the field of biology as we compute the expected time for finding a small target by using the L\'evy flight random search. A similar calculation for Brownian motion on surfaces was done in \cite{nursultanov2022narrow}}.
\end{abstract}

\maketitle



\section{Introduction}



The \emph{L\'evy flight foraging hypothesis} is a well-known hypothesis in the field of biology asserting that animals foraging behaviours should be modelled by L\'evy flights insofar as they may optimize search efficiencies. While this hypothesis has been around for more than twenty years, it is still controversial and subject to many research articles investigating whether Brownian motion or L\'evy flights are optimal search strategies \cite{palyulin2014levy, shlesinger1986growth, viswanathan2011physics, bressloff2013stochastic, benichou2011intermittent, dipierro2022efficiency}. The purpose of this article is to shed a new theoretical light on this question by means of a precise mathematical study.

More precisely, we will investigate the \emph{narrow capture problem} which consists in finding a small target in space for a motion whose law is that of a L\'evy flight. The interesting quantity to understand then is the \emph{expected capture time}, namely, the expected time that a process starting at a given point $p$ will eventually find the target. This small target typically models a prey hunted by a predator whose foraging behaviour is modelled by the L\'evy process. The L\'evy flight foraging hypothesis can then be phrased as follows: \emph{is the expected capture time significantly lower if one uses a search based on L\'evy flights rather than on Brownian motion?}

For bounded domains in the Euclidian space, there are various search strategies based on Brownian motion and in this case an important set of literatures already investigated the expected time of finding small targets \cite{singer2008narrow, singer2006narrow, gomez2015asymptotic, cheviakov2010asymptotic, chen2011asymptotic, ammari2012layer}. However, while the background geometry of many animal foraging behaviours and constraint optimization searches are naturally curved, we have only recently started addressing the question of expected stopping time for Brownian motions on Riemannian manifolds \cite{nursultanov2022narrow, nursultanov2021mean, nursultanov2021narrow}. Thus far, nothing has been done for stopping time for L\'evy flight based searches even in flat geometry. We address this question here for a class of isotropic pure jump L\'evy processes introduced by Applebaum--Estrade \cite{AppEst}. In particular, we investigate the asymptotics of the expected stopping time for a L\'evy flight based random search to find a target the size of a small geodesic ball whose radius converges to zero. 


\subsection{Main result}

\label{subsec:results}

We assume throughout that $(M, g)$ is a smooth closed (that is, compact without boundary) connected $n$-dimensional Riemannian manifold with $n \geqslant 2$. We let $(X_t)_{t \geqslant 0}$ be a cadlag semi-martingale on $M$ which is an isotropic L\'evy process in the sense of \cite{AppEst}, induced by the isostropic L\'evy measure
\begin{equation}\label{eq:levymeasure}
\nu_p (A) = C(n,\alpha)\int_{A} \frac{\dd T_p(v)}{|v|_g^{n+2\alpha}}, \quad A \subset T_pM,\quad  \alpha\in (0,1)
\end{equation}
on each tanget space. Here $T_p$ is the volume form on $T_pM$ induced by the metric $g|_{T_pM}$ and $C(n,\alpha)$ is the constant\footnote{This constant is chosen to be consistent with the definition of the fractional Laplacian on $\R^n$, which is the infinitesmial generator of $2\alpha$-stable isotropic L\'evy processes in Euclidean space.}
\begin{eqnarray}
\label{Cna}
C(n,\alpha) := \frac{4^\alpha \Gamma(n/2+\alpha)}{\pi^{n/2}|\Gamma(-\alpha)|}.
\end{eqnarray}
Fix $p_0 \in M$ and let $B_\varepsilon(p_0)$ be the open geodesic ball of radius $\varepsilon>0$ centered at $p_0$. We define the \emph{expected stopping time} as:
\begin{equation}
\label{eq:deftaueps}
\tau_{\varepsilon} = \inf\left\{ t\geqslant 0 \mid X_t \in \overline{B_\varepsilon(p_0)}\right\} \quad \text{and} \quad u_{\varepsilon}(q) = {\mathbb E}\left(\tau_{\varepsilon}\mid X_0 = q\right),
\end{equation}
for each $q \in M$. 

Throughout the paper, we will denote by $\mathbb{S}^n$ the Riemannian $n$-dimensional sphere equipped with the round metric and by $\T^n := \R^n/\Z^n$ the $n$-dimensional torus with the flat metric. We say that a manifold is \emph{Anosov} if its geodesic flow is Anosov on its unit tangent bundle, see \S\ref{subsec:resgen} and \S\ref{anosov overview} for further details. In particular, this includes all negatively-curved manifolds.

We will prove the following result.

\begin{thm}
\label{thm:main}
Assume that $M = \mS^n, {\T^n}$ or is Anosov. Then the following holds.
\begin{enumerate}[label=\emph{(\roman*)}]
\item \label{point:imain} There is $c(n, \alpha) > 0$ such that the average of $u_{\varepsilon}$ has the expansion
\[
\frac{1}{|M|}\int_M u_{\varepsilon} {\rm dvol}_g\sim \frac{|M|c(n, \alpha)}{\varepsilon^{n-2\alpha}}, \quad \varepsilon \to 0. \]
\item For each $p \neq p_0 \in M$ (and $p \neq -p_0$ if $M = \mS^n$),
\[
u_{\varepsilon}(p) - \frac{1}{|M|}\int_M u_{\varepsilon}{\rm dvol}_g \to |M| \mathrm{G}_\mathscr{A}(p,p_0), \quad \varepsilon \to 0,
\]
where $\mathrm{G}_\mathscr{A}$ is the Green's function of the generator of $(X_t)_{t\geqslant 0}$ (see Theorem \ref{anosov generator} and Corollary \ref{sphere cor}).\\
\item If $M = \mS^n$, $n > 1 + 4\alpha$ and $1 > (n-4)\alpha$  then for some $\tilde c(n,\alpha)\neq 0$,
$$
\left|u_{\varepsilon}(-p_0) - \frac{1}{M}\int_M u_{\varepsilon}{\rm dvol}_g\right|\sim \frac{|M|\tilde c(n, \alpha)}{\varepsilon^{n -1 -4\alpha }}, \quad \varepsilon \to 0.
$$
\end{enumerate}
\end{thm}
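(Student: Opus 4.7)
The plan is to convert the probabilistic quantity $u_\varepsilon$ into the solution of a non-local boundary value problem involving $\mathscr{A}$, and then extract its asymptotics by inverting $\mathscr{A}$ through its Green's function. Applying Dynkin's formula to the process stopped at $\tau_\varepsilon$ yields
\[
\mathscr{A} u_\varepsilon = -1 \ \text{on } M \setminus \overline{B_\varepsilon(p_0)}, \qquad u_\varepsilon = 0 \ \text{on } \overline{B_\varepsilon(p_0)}.
\]
Because $\mathscr{A}$ is a non-local integral operator, evaluating it inside the ball (where $u_\varepsilon \equiv 0$) produces a non-negative function $\phi_\varepsilon$ supported on $\overline{B_\varepsilon(p_0)}$ that encodes the contribution of jumps from outside landing on the ball, so globally $\mathscr{A} u_\varepsilon = -\mathbf{1}_{M\setminus\overline{B_\varepsilon(p_0)}} + \phi_\varepsilon$. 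Pairing against $1$ and using the self-adjointness of $\mathscr{A}$ (inherited from the isotropy of the L\'evy measure, which in particular gives $\mathscr{A} 1 = 0$) yields the mass balance $\int_M \phi_\varepsilon\,{\rm dvol}_g = |M| - |B_\varepsilon(p_0)|$, so $\phi_\varepsilon\,{\rm dvol}_g \rightharpoonup |M|\,\delta_{p_0}$ as $\varepsilon \to 0$.

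Next, decompose $u_\varepsilon = \bar u_\varepsilon + w_\varepsilon$ with $\bar u_\varepsilon := |M|^{-1}\int_M u_\varepsilon\,{\rm dvol}_g$ and $w_\varepsilon$ mean-zero. Since $\mathscr{A} u_\varepsilon$ has mean zero, applying the Green's operator $\mathrm{G}_\mathscr{A}$ (the pseudo-inverse of $\mathscr{A}$ on mean-zero functions, constructed earlier in the paper) and using $\int_M \mathrm{G}_\mathscr{A}(q,q')\,{\rm dvol}_g(q') = 0$ gives the representation
\[
w_\varepsilon(q) = \int_{\overline{B_\varepsilon(p_0)}} \mathrm{G}_\mathscr{A}(q,q')\bigl[1 + \phi_\varepsilon(q')\bigr]\,{\rm dvol}_g(q').
\]
Part \emph{(ii)} then follows for any $q \neq p_0$ (and $q \neq -p_0$ when $M = \mathbb{S}^n$) by continuity of $\mathrm{G}_\mathscr{A}(q,\cdot)$ at $p_0$ combined with the weak convergence $\phi_\varepsilon\,{\rm dvol}_g \rightharpoonup |M|\delta_{p_0}$: the limit is $|M|\,\mathrm{G}_\mathscr{A}(q,p_0)$.

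For part \emph{(i)}, the crucial input is the boundary condition $u_\varepsilon(p_0) = 0$, which gives $\bar u_\varepsilon = -w_\varepsilon(p_0)$. The local behaviour of $\mathrm{G}_\mathscr{A}$ near the diagonal can be read off from the parametrix construction for $\mathscr{A}$: to leading order it matches the Euclidean fractional-Laplacian Green's kernel, so $\mathrm{G}_\mathscr{A}(p_0,q') \sim -c(n,\alpha)\,d_g(p_0,q')^{2\alpha-n}$ as $q' \to p_0$, where $c(n,\alpha)>0$ is an explicit constant tied to $C(n,\alpha)$ in \eqref{Cna}. Plugging this singular asymptotic into the integral representation of $w_\varepsilon(p_0)$ and exploiting that $\phi_\varepsilon$ concentrates at $p_0$ with total mass $|M|+o(1)$ produces the expansion $\bar u_\varepsilon \sim c(n,\alpha)\,|M|\,\varepsilon^{-(n-2\alpha)}$.

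Part \emph{(iii)} concerns $w_\varepsilon(-p_0)$ on $\mathbb{S}^n$. Here the central geometric fact is that the exponential map $\exp_{-p_0}$ focuses every radial geodesic of length $\pi$ onto $p_0$, which forces a secondary singularity of $\mathrm{G}_\mathscr{A}(-p_0,\cdot)$ at the cut locus $p_0$. A spectral computation via spherical harmonics (the $SO(n+1)$-invariance of $\mathscr{A}$ giving a Funk--Hecke diagonalisation with explicit eigenvalues) should yield $\mathrm{G}_\mathscr{A}(-p_0,q') \sim \tilde c(n,\alpha)\,d_g(q',p_0)^{-(n-1-4\alpha)}$ near $q'=p_0$, exactly in the regime $n > 1+4\alpha$ and $1 > (n-4)\alpha$ where this contribution dominates both the smooth background and higher-order corrections. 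Integrating against $\phi_\varepsilon$ then delivers the claimed rate. The main obstacle, and indeed the heart of \emph{(iii)}, is this antipodal analysis: isolating the exponent $n-1-4\alpha$ and verifying $\tilde c(n,\alpha)\neq 0$ requires careful control of the Gegenbauer/spherical-harmonic asymptotics of $\mathrm{G}_\mathscr{A}$ at the cut locus, and the two stated dimensional conditions should emerge naturally from tracking which terms in this expansion are dominant.
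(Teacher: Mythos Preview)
Your overall framework matches the paper's: introduce the ``layer density'' $\phi_\varepsilon$ (the paper calls it $F_\varepsilon$) via $\mathscr{A} u_\varepsilon = \phi_\varepsilon - \mathbf{1}_{\Omega_\varepsilon}$, apply the Green operator $\mathscr{A}^+$, and read off the asymptotics from $w_\varepsilon = \mathscr{A}^+(\phi_\varepsilon + \mathbf{1}_{B_\varepsilon})$. Your argument for \emph{(ii)} is essentially the paper's and is fine: for $q\notin\{p_0,-p_0\}$ the kernel $\mathrm{G}_\mathscr{A}(q,\cdot)$ is smooth near $p_0$, so weak convergence $\phi_\varepsilon\,{\rm dvol}_g\rightharpoonup |M|\delta_{p_0}$ suffices.

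The genuine gap is in \emph{(i)}. Knowing only that $\phi_\varepsilon\geqslant 0$ has total mass $|M|+o(1)$ and shrinking support is \emph{not} enough to compute $\int_{B_\varepsilon}\mathrm{G}_\mathscr{A}(p_0,q')\,\phi_\varepsilon(q')\,{\rm dvol}_g(q')$, because the kernel $\mathrm{G}_\mathscr{A}(p_0,q')\sim d_g(p_0,q')^{2\alpha-n}$ is singular exactly where $\phi_\varepsilon$ lives. Different profiles of $\phi_\varepsilon$ inside $B_\varepsilon$ (e.g.\ concentration near the centre versus near $\partial B_\varepsilon$) yield different constants, and even the exponent is not secured by weak convergence alone. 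In functional-analytic terms, $d_g(p_0,\cdot)^{2\alpha-n}$ lies in $L^{m'}$ only for $m'<n/(n-2\alpha)$, while $\phi_\varepsilon\in L^m$ only for $m<1/\alpha$; since $n/(2\alpha)\geqslant 1/\alpha$ for $n\geqslant 2$, no H\"older pairing is available. The paper's missing ingredient is precisely this: after rescaling to the unit ball, the restriction of $\mathscr{A}^+$ becomes the Euclidean Riesz potential $L_\alpha$ plus a small remainder, and one \emph{solves} the integral equation $(L_\alpha+R'_\varepsilon)(\tilde F_\varepsilon+1)=-C_\varepsilon\varepsilon^{-2\alpha}/C(n,-\alpha)$ on $\mathbb{B}^n$ using the explicit inverse of $L_\alpha$ (Kahane's formula $L_\alpha^{-1}\mathbf{1}=-c_\alpha(1-|x|^2)^{-\alpha}$). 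This simultaneously pins down the profile $\phi_\varepsilon(\psi_\varepsilon(x))\sim c\,\varepsilon^{-n}(1-|x|^2)^{-\alpha}$ and the constant $C_\varepsilon$.

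The same gap recurs in \emph{(iii)}: even granting the antipodal singularity $\mathrm{G}_\mathscr{A}(-p_0,q')\sim\tilde c\,d_g(q',p_0)^{-(n-1-4\alpha)}$, pairing it against $\phi_\varepsilon$ requires the profile, and the condition $1>(n-4)\alpha$ is exactly what makes $|x|^{-(n-1-4\alpha)}$ lie in the dual $L^{m'}$ of $\dot L^m(\overline{\mathbb{B}^n})$ for some $m<1/\alpha$. On the method for the antipodal singularity itself: you propose Funk--Hecke/spherical harmonics, whereas the paper obtains it by constructing a parametrix $\mathscr{K}+\mathscr{B}\mathscr{J}$ for $\mathscr{A}$ with $\mathscr{B}=-\mathscr{K}\mathscr{A}_{-1}\mathscr{K}\in\Psi^{-1-4\alpha}$ and reading off $\sigma(\mathscr{B})=\tilde c\,|\eta|^{-1-4\alpha}$; the exponent $n-1-4\alpha$ then drops out of the Schwartz kernel. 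Either route could work, but you still need the profile of $\phi_\varepsilon$. Finally, the paper does not derive $\mathscr{A}u_\varepsilon=-1$ from Dynkin's formula directly; because of regularity issues at $\partial B_\varepsilon$, it first \emph{constructs} a solution $\tilde u_\varepsilon\in\dot W^{2\alpha,m}(\overline{\Omega_\varepsilon})$ via the integral equation and then proves uniqueness using the Getoor semigroup $T_t$ on $\Omega_\varepsilon$ to conclude $u_\varepsilon=\tilde u_\varepsilon$.
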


We will state below more precise results (Theorems \ref{narrow capture} and \ref{narrow capture sphere}) giving an explicit expression of the constants and the size of the remainders. While such results exist for Brownian motions in Euclidean domains \cite{singer2008narrow, singer2006narrow, gomez2015asymptotic, cheviakov2010asymptotic, chen2011asymptotic, ammari2012layer} and on general manifolds \cite{nursultanov2022narrow, nursultanov2021mean, nursultanov2021narrow}, this is the first such detailed analytical calculation for L\'evy flights for such a broad class of geometries. 

We emphasize that Theorem \ref{thm:main} shows that the asymptotics of the deviation of the expected stopping time from its average heavily depends on the underlying geometry. In particular on the sphere, antipodal points are conjugate\footnote{On the sphere, conjugate points correspond to pair of points that may be connected by a non-trivial continuous one-parameter family of geodesic paths.}, and this leads to a singular behavior of the expected stopping time at those points. Nevertheless, we expect that point \emph{\ref{point:imain}} of Theorem \ref{thm:main} should remain valid for general Riemannian manifolds, regardless of the geometry.

The work of \cite{getoor1961first} showed that on $\R^n$ the expected time for a L\'evy process to exit the unit ball satisfies an integral equation involving the fractional Laplacian and we will derive an analogous statement in our setting, see Proposition \ref{proposition uepsilon} below. Theorem \ref{thm:main} follows from a detailed study of the analytic properties of the generator $\mathscr{A}$ of the L\'evy process, see Theorems \ref{anosov generator} and \ref{sphere} below. 

We finally observe that in the physical dimensions $n=2$, the expected stopping time for the Brownian motion was shown to be of size $\mc{O}(|\log \varepsilon|)$ in \cite{nursultanov2022narrow} whereas it is here of size $\mc{O}(\varepsilon^{-(2-2\alpha)})$ by Theorem \ref{thm:main}. 

\subsection{Results on the generator}\label{subsec:resgen}

While it is well understood that the infinitesimal generator for $2\alpha$-stable jump processes on Euclidean spaces are precisely the fractional powers of the Laplacian, the same may not hold for L\'evy processes on closed compact Riemannian manifolds. In fact it was shown in \cite{AppEst} that if $(X_t)_{t\geqslant 0}$ is a cadlag semi-martingale valued in a Riemannian manifold $(M,g)$, then it is an isotropic L\'evy process iff it is a Feller process with infinitesimal generator $a\Delta_g +\mathscr{A}$ for some constant $a\geqslant 0$ and for $u\in C^\infty(M)$,
\begin{equation} 
\label{def of A}
{\mathscr{A} }u(p):= \mathrm{p.v.}\int_{v\in T_pM\setminus 0} \left( u(\exp_p(v)) - u(p)\right)\nu_p(\dd v).
\end{equation}
Here $\mathrm{p.v.}$ means that we take the principal value of the integral,
$\{\nu_p\}_{p\in M}$ is a field of measures on $T_pM$ induced from an isotropic L\'evy measure $\nu$ on $\R^n$ by $\nu_p(A) = \nu(r^{-1}(A))$ whenever $\pi(r) = p$ and $r\in {\mathcal O}(M)$ is an element of the orthonormal frame bundle over $M$. Alternatively, one can re-write the principal value of the integral \eqref{def of A} as
\[
\frac{1}{2} \int_{v\in T_pM\setminus 0} \left( u(\exp_p(v)) + u(\exp_p(-v)) - 2u(p)\right)\nu_p(\dd v).
\]
Note that thanks to the isotropic assumption on $\nu$, this definition is independent of the choice of $r\in {\mathcal O}(M)$. 

When the leading term in the generator is $a\Delta_g$ (i.e. $a>0$), some mapping properties were analyzed in \cite{applebaum20212}. However, not much is known about the case when $a=0$ (i.e. the process is "pure jump"). This is due to the fact that \eqref{def of A} is now the dominant driver of the process and integrating the exponential map is difficult to control beyond the injectivity radius on a general Riemannian manifold. We address this challenge for a broad class of Riemannian manifolds.

Throughout the article, we make the choice
\begin{eqnarray}
\label{choice of levy measure}
\nu (A) = C(n,\alpha) \int_{A} \frac{1}{|v|^{n+2\alpha}}\dd v,\quad \alpha\in (0,1),
\end{eqnarray}
for the L\'evy measure, which is motivated by the fact that such processes on $\R^n$ are generated by the fractional Laplacian on the Euclidean space. Note that after pulling back by an element of the fiber of the orthonormal frame bundle $\mathcal O(M)$ over $p\in M$, this measure becomes the L\'evy measure on $T_pM$ described earlier in \eqref{eq:levymeasure}.

We will prove:
\begin{thm}
Let $(X_t)_{t\geqslant 0}$ be a cadlag semi-martingale valued on a Riemannian manifold $(M,g)$ which is either {$\mathbb{S}^n,\T^n$ or Anosov}. If it is an isotropic L\'evy process with pure jump induced by the L\'evy measure \eqref{eq:levymeasure}, then its infinitesimal generator $\mathscr{A}$ is a non-positive Fredholm operator
\[
\mathscr{A} : W^{s,m}(M) \to W^{s-2\alpha, m}(M),
\]
for all $s \in \R, m \in (1,\infty)$, that has discrete spectrum with one dimensional null-space and co-kernel.
\end{thm}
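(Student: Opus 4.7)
The plan is to reduce the statement to standard elliptic theory once the analytic structure of $\mathscr{A}$ is in hand. Granting the forthcoming Theorems \ref{anosov generator} and \ref{sphere}, which realise $\mathscr{A}$ as an elliptic pseudodifferential-type operator of order $2\alpha$ on $M$ with principal symbol proportional to $-|\xi|_g^{2\alpha}$ (matching that of $-(-\Delta_g)^{\alpha}$), the boundedness $\mathscr{A}: W^{s,m}(M) \to W^{s-2\alpha,m}(M)$ follows from $L^p$-continuity of pseudodifferential operators on closed manifolds. Ellipticity yields a parametrix $Q$ of order $-2\alpha$ with $Q\mathscr{A}-\Id$ and $\mathscr{A}Q-\Id$ smoothing, hence compact on every $W^{s,m}$, which gives the Fredholm property; inverting $\mathscr{A}-\lambda$ for $\lambda$ in the resolvent set then produces a compact resolvent and discrete spectrum.

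For non-positivity, I would use the symmetric representation
\[
\mathscr{A}u(p)=\frac{1}{2}\int_{T_pM\setminus\{0\}}\bigl(u(\exp_p v)+u(\exp_p(-v))-2u(p)\bigr)\nu_p(\dd v),
\]
together with the fact, going back to \cite{AppEst}, that the isotropic process $(X_t)$ is a symmetric Markov process with respect to the normalised Riemannian volume ${\rm dvol}_g$. A Dirichlet-form computation in the spirit of symmetric jump processes shows that $\mathscr{A}$ is formally $L^2(M,{\rm dvol}_g)$-symmetric and that, for $u\in C^\infty(M)$,
\[
-\langle \mathscr{A}u,u\rangle_{L^2}=\tfrac{1}{2}\int_M\!\!\int_{T_pM}\!\bigl(u(\exp_p v)-u(p)\bigr)^2\nu_p(\dd v)\,{\rm dvol}_g(p)\ \geqslant\ 0,
\]
so $\mathscr{A}\leqslant 0$. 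Constants trivially lie in $\ker\mathscr{A}$. Conversely, any $u\in W^{s,m}(M)$ with $\mathscr{A}u=0$ is smooth by elliptic regularity, hence in $L^2$; the identity above then forces $u(\exp_p v)=u(p)$ for $\nu_p$-almost every $v$ and every $p$. Since $\nu_p$ has full support in $T_pM\setminus\{0\}$ and $\exp_p:T_pM\to M$ is surjective on the closed manifold $M$ by Hopf--Rinow, continuity of $u$ gives $u\equiv\mathrm{const}$, so $\dim\ker\mathscr{A}=1$. As $\mathscr{A}$ is formally self-adjoint, the cokernel of $\mathscr{A}:W^{s,m}\to W^{s-2\alpha,m}$ is identified through the $L^2$-pairing with the kernel of $\mathscr{A}$ acting on $W^{2\alpha-s,m'}$, which by the same argument is the constants; hence $\dim\mathrm{coker}\,\mathscr{A}=1$.

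The main obstacle is the pseudodifferential description of $\mathscr{A}$ that underpins the whole approach. The integrand in \eqref{def of A} is controlled near $v=0$ by the Euclidean fractional Laplacian, but beyond the injectivity radius the behaviour depends on the long-time dynamics of $\exp_p$, whose wavefront set is sensitive to the geometry: on $\T^n$ one exploits the Fourier basis, on $\mathbb{S}^n$ the spherical-harmonic decomposition together with a careful handling of the antipodal caustic, and in the Anosov case one must use the smoothing effect produced by the mixing of the geodesic flow. Proving in each of these three geometric regimes that the far-field part of $\mathscr{A}$ is of order strictly lower than $2\alpha$ (or at worst compact relative to the local fractional-Laplacian contribution) is the genuinely nontrivial step, and is exactly what Theorems \ref{anosov generator} and \ref{sphere} deliver; the present statement is then a soft consequence.
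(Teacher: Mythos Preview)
Your approach is correct and matches the paper's: reduce to the structure theorems (Theorems \ref{torus}, \ref{sphere}, \ref{anosov generator}) for boundedness and the parametrix/Fredholm step, and use the Dirichlet form (Proposition \ref{uniqueness}) for non-positivity and the kernel computation, with self-adjointness handling the cokernel. Two points where the paper supplies more than you do: first, the identity $-\langle \mathscr{A}u,u\rangle=\tfrac12\int_M\int_{T_pM}(u(\exp_p v)-u(p))^2\nu_p\,{\rm dvol}_g$ is \emph{not} immediate on a curved manifold, since $(p,v)\mapsto(\exp_p v,\ldots)$ is not obviously measure-preserving on $TM$; the paper proves it by passing to polar coordinates on $SM$ and invoking invariance of the Liouville measure under the geodesic flow (Proposition \ref{uniqueness'}); second, on $\mathbb{S}^n$ the operator $\mathscr{A}=\mathscr{A}_{2\alpha}+\mathscr{A}_0+\mathscr{A}_{-1}\mathscr{J}$ is genuinely not pseudodifferential, so the parametrix $Q$ you invoke has to be of the form $\mathscr{K}+\mathscr{B}\mathscr{J}$ as in Lemma \ref{parametrix of A sphere}, and elliptic regularity for the kernel must be bootstrapped through this mixed structure rather than a standard $\Psi$DO parametrix.
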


We now give more details on our results on the generator on this L\'evy process.

\subsubsection{Dirichlet form of the generator}

The explicit presence of the exponential map in \eqref{def of A} suggests that the behaviour of $\mathscr{A}$ depends more on the geometry and the dynamics of geodesics than the fractional Laplacian. Therefore, we will take a dynamical systems approach. As an example of the advantages of taking this point of view, we can quickly see that $\mathscr{A}$ is always formally given by a Dirichlet form, as follows.

\begin{prop}
\label{uniqueness}
Let $(M,g)$ be a closed connected Riemannian manifold. There exists an operator ${\mathscr{D}}: {\rm Lip}(M)\to L^2(\R\times SM)$, with kernel given by the constant functions $\mathrm{Ker} ({\mathscr{D}}) = \C \cdot \mathbf{1}$, such that for all $u,v\in C^\infty(M)$, 
\[
-4\int_M u \, \mathscr{A} v\,{\rm dvol}_g = \int_\R \int_{SM} {\mathscr{D}} u \, {\mathscr{D}} v\, \dd \mathrm{L} \dd t.
\]
Here $SM$ denotes the unit sphere bundle and $\mathrm{L}$ is the Liouville measure on $SM$ invariant under the geodesic flow. Consequently, $\mathscr{A}$ is a non-positive operator that admits an extension as an operator ${\rm Lip}(M) \to \D'(M)$ whose kernel consists of constant functions.
\end{prop}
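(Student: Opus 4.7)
The plan is to exhibit $\mathscr{D}$ explicitly as
\[
\mathscr{D} u(t,x) := \sqrt{C(n,\alpha)}\,\frac{u(\pi \varphi_t(x)) - u(\pi(x))}{|t|^{(1+2\alpha)/2}}, \qquad (t,x) \in \R \times SM,
\]
where $\varphi_t$ denotes the geodesic flow on $SM$ and $\pi : SM \to M$ is the footpoint projection, and then derive the Dirichlet form identity by direct computation. The first step is to pass to spherical coordinates $v = r\xi$ with $r>0$, $\xi \in S_pM$ on each tangent space: the Lévy measure becomes $C(n,\alpha)|r|^{-1-2\alpha}\dd r\,\dd \xi$, and using the symmetric form of the principal value together with the $\xi \leftrightarrow -\xi$ invariance of the sphere measure one extends the radial integration to $\R$ and identifies $\exp_p(r\xi) = \pi\varphi_r(p,\xi)$. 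Disintegrating the Liouville measure as $\dd \mathrm{L} = \dd \xi\,{\rm dvol}_g$ and applying Fubini then gives
\[
\int_M u\,\mathscr{A} v\,{\rm dvol}_g = \frac{C(n,\alpha)}{2} \int_\R \int_{SM} u(\pi x)\bigl(v(\pi\varphi_r x) - v(\pi x)\bigr) \frac{\dd r\,\dd \mathrm{L}(x)}{|r|^{1+2\alpha}}.
\]

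The second step is to symmetrize this asymmetric pairing into a Dirichlet form, exploiting two invariances: $\mathrm{L}$ is preserved by $\varphi_r$, and $\dd r/|r|^{1+2\alpha}$ is invariant under $r \mapsto -r$. Expanding $(u\circ\pi\varphi_r - u\circ\pi)(v\circ\pi\varphi_r - v\circ\pi)$, the two diagonal terms $u(\pi\varphi_r x)v(\pi\varphi_r x)$ and $u(\pi x)v(\pi x)$ have identical integrals by flow-invariance, and the two cross terms are exchanged by the $r \mapsto -r$ substitution combined with flow-invariance; the net outcome is
\[
\int_\R \int_{SM} (u\circ\pi\varphi_r - u\circ\pi)(v\circ\pi\varphi_r - v\circ\pi) \frac{\dd r \,\dd \mathrm{L}}{|r|^{1+2\alpha}} = -\tfrac{4}{C(n,\alpha)}\int_M u\,\mathscr{A} v\,{\rm dvol}_g,
\]
which is exactly the claimed identity after absorbing $\sqrt{C(n,\alpha)}$ into $\mathscr{D}$. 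The delicate issue here—individually divergent sub-integrals—is handled by introducing a cutoff $|r| > \delta$ so that all pieces become absolutely convergent, and then checking that the $\delta \to 0$ limit is innocuous using the Taylor cancellation $v(\exp_p(r\xi)) - v(p) = O(r)$ for smooth $v$ together with the odd symmetry of $r/|r|^{1+2\alpha}$.

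It remains to verify the properties of $\mathscr{D}$ and deduce the consequences for $\mathscr{A}$. Boundedness $\mathscr{D} : {\rm Lip}(M) \to L^2(\R \times SM)$: for $u$ Lipschitz with constant $L$, the unit-speed bound $|u\circ\pi\varphi_t(x) - u\circ\pi(x)| \leq L|t|$ gives $|\mathscr{D} u|^2 \lesssim |t|^{1-2\alpha}$ near $0$, integrable because $\alpha < 1$; at infinity the crude bound $\leq 2\|u\|_\infty$ yields $|\mathscr{D} u|^2 \lesssim |t|^{-1-2\alpha}$, integrable because $\alpha > 0$. Kernel: if $\mathscr{D} u \equiv 0$ then $u(\exp_p(t\xi)) = u(p)$ for all $(p,\xi,t)$, and Hopf--Rinow on the closed connected manifold $M$ forces $u$ to be constant. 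Non-positivity of $\mathscr{A}$ is immediate from the identity with $u = v$. For the extension, one defines $\langle \mathscr{A} u, v \rangle := -\tfrac{1}{4}\int \mathscr{D} u\,\mathscr{D} v$ for $u \in {\rm Lip}(M)$ and $v \in C^\infty(M)$; to show the kernel of this extension is still $\C \cdot \mathbf{1}$, I would approximate $u$ by $u_\varepsilon \in C^\infty(M)$ (via heat-semigroup smoothing) with uniformly bounded Lipschitz constant and pointwise convergence, so that dominated convergence gives $\mathscr{D} u_\varepsilon \to \mathscr{D} u$ in $L^2$; testing $\mathscr{A} u = 0$ against $u_\varepsilon$ and passing to the limit yields $\|\mathscr{D} u\|^2_{L^2} = 0$. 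The main obstacle I anticipate is purely technical: making the principal-value symmetrization in the second step fully rigorous, as everything else reduces to bookkeeping.
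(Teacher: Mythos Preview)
Your proposal is correct and follows essentially the same approach as the paper: the operator $\mathscr{D}$ you write down is exactly the paper's $\mathscr{D}_b$ with $b\equiv 1$ (up to the normalizing constant), the polar-coordinate reduction is the paper's Lemma~\ref{A in polar}, the symmetrization via flow invariance of the Liouville measure and the $r\mapsto -r$ substitution is identical to the paper's computation of $N_\varepsilon$, and your cutoff $|r|>\delta$ is the paper's $I_\varepsilon$. Your treatment of the Lipschitz extension and its kernel (via smooth approximation and dominated convergence) is slightly more detailed than what the paper spells out, but the argument is the intended one.
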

We make the following remarks regarding the previous result:

\begin{enumerate}[label={(\roman*)}]
\item When the transition probability can be obtained by solving the heat equation with infinitesimal generator $\mathscr{A}$, Proposition \ref{uniqueness} implies that there is only one differentiable equilibrium state;
\item When $(M,g)$ is the round sphere, the flat torus, or Anosov, we can drop the differentiability a-priori assumption in Proposition \ref{uniqueness}.
\end{enumerate}


It is natural to ask how similar/different is $\mathscr{A}$ to the fractional Laplacian (defined spectrally) when $(M,g)$ is not Euclidean. We address this question for various manifolds below.

\subsubsection{Generator on the torus}
If $M = \T^n = \R^n / \Z^n$ is the flat torus, the operator $\mathscr{A}$ happens (not surprisingly) to be the fractional Laplacian:

\begin{thm}
\label{torus}
If $(M,g)$ is the torus $\T^n$, the infinitesimal generator given by \eqref{def of A} is
$$
-\mathscr{A} = (-\Delta)^\alpha,
$$ where $\Delta$ is the non-positive Laplace operator on $\T^n$. In particular, $\mathscr{A}$ is an elliptic, classical, pseudo-differential operator of order $2 \alpha$.
\end{thm}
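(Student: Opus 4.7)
The plan is to exploit two crucial simplifications that occur when $(M,g) = \T^n$: first, the exponential map is globally the identification $\exp_p(v) = p + v \pmod{\Z^n}$ for any $p \in \T^n$ and $v \in T_p\T^n \cong \R^n$; second, via a fixed orthonormal frame, every tangent space $T_p\T^n$ is canonically isometric to $\R^n$, so the isotropic L\'evy measure $\nu_p$ agrees with the fixed measure $C(n,\alpha)|v|^{-n-2\alpha}\,dv$ on $\R^n$. Together these reduce the abstract definition \eqref{def of A} to a formula on $\R^n$ acting on the periodic lift $\tilde u$ of $u$:
\[
\mathscr{A} u(p) \;=\; C(n,\alpha)\,\mathrm{p.v.}\!\int_{\R^n} \bigl(\tilde u(p+v) - \tilde u(p)\bigr)\,\frac{dv}{|v|^{n+2\alpha}}.
\]

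Next, I would recognize the right-hand side as precisely the Euclidean fractional Laplacian applied to $\tilde u$, but now evaluated on a periodic function. To check that this coincides with the spectrally-defined $-(-\Delta)^\alpha$ on $\T^n$, I would test against the Fourier basis $u_k(p) = e^{2\pi i k \cdot p}$, $k \in \Z^n$. Substituting gives
\[
\mathscr{A} u_k(p) \;=\; u_k(p)\cdot C(n,\alpha)\,\mathrm{p.v.}\!\int_{\R^n}\bigl(e^{2\pi i k \cdot v}-1\bigr)\,\frac{dv}{|v|^{n+2\alpha}},
\]
and the remaining integral is the classical computation, whose value is $-|2\pi k|^{2\alpha}$ exactly because the constant $C(n,\alpha)$ in \eqref{Cna} is normalized precisely so that the principal-value integral produces this eigenvalue. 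Hence $\mathscr{A} u_k = -|2\pi k|^{2\alpha} u_k = -(-\Delta)^\alpha u_k$, and agreement on the Fourier basis gives $-\mathscr{A} = (-\Delta)^\alpha$ on $C^\infty(\T^n)$ by density; the pseudo-differential and ellipticity statements then follow from the standard description of $(-\Delta)^\alpha$.

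The main (mild) obstacle is justifying the passage from the intrinsic formula \eqref{def of A} to the Euclidean integral on the periodic lift: one must check that the principal value is well-defined and that the $v \mapsto -v$ symmetrization allowed in the intrinsic formula translates correctly, so that no boundary contributions appear when the integration domain is enlarged from a fundamental domain to all of $\R^n$. Concretely, I would use the symmetric form
\[
\mathscr{A} u(p) \;=\; \tfrac{1}{2} C(n,\alpha)\!\int_{\R^n}\!\bigl(\tilde u(p+v)+\tilde u(p-v)-2\tilde u(p)\bigr)\frac{dv}{|v|^{n+2\alpha}},
\]
which converges absolutely for $u \in C^2$ (the integrand is $O(|v|^{2-n-2\alpha})$ near $0$ and $O(|v|^{-n-2\alpha})$ at infinity), eliminating the need for a delicate principal-value argument. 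The remaining Fourier-side computation is a textbook evaluation via dimensional analysis and the Gamma identity $\int_0^\infty (1-\cos r) r^{-1-2\alpha}\,dr = -\tfrac{1}{2}\Gamma(-2\alpha)\cos(\pi\alpha)$, combined with the duplication formula to recover $C(n,\alpha)^{-1}$.
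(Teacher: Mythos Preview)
Your proposal is correct and follows essentially the same route as the paper: reduce \eqref{def of A} to the symmetric Euclidean integral on the periodic lift, then identify it with $-(-\Delta)^\alpha$ via the Fourier basis $e_k(x)=e^{2\pi i k\cdot x}$. The only cosmetic difference is that the paper invokes \cite[Lemma~5.1]{stinga2010extension} for the identity $C(n,\alpha)\,\mathrm{p.v.}\!\int_{\R^n}(e^{i\xi\cdot v}-1)|v|^{-n-2\alpha}\,dv=-|\xi|^{2\alpha}$ and passes through $\mathcal{S}'(\R^n)$ to reach periodic functions, whereas you evaluate the integral directly on each $e_k$; both arguments are equivalent.
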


This result is byproduct of \cite[Example 1]{applebaum2014probability} but can also be obtained by a simple explicit computation, which we provide in \S\ref{subsec:generatortorus}. Obviously, for general Riemannian manifolds, such an explicit computation will not be available.

\subsubsection{Generator on the sphere}

It turns out that in the case of the round unit sphere, $\mathscr{A}$ {\bf\it does not} in fact resemble the fractional powers of the Laplacian. It is rather an object belonging to a more general class of operators called Fourier Integral Operators introduced by H\"ormander \cite{Hor}. For a background on microlocal analysis and pseudodifferential operators, we refer the reader to \S\ref{ssection:microlocal}. The consequences for the mapping properties of $\mathscr{A}$ on functional spaces are described in \S\ref{sssection:mapping} below. On the unit sphere with round metric, we denote by $\mathscr{J} : \D'(M)\to \D'(M)$ the pullback by the antipodal map. We will prove that the following result holds.

\begin{thm}
\label{sphere}
If $(M,g)$ is the sphere $\mathbb{S}^n$, the infinitesimal generator given by \eqref{def of A} can be written
{\begin{eqnarray}
\nonumber
\mathscr{A}  = \mathscr{A}_{2\alpha} + \mathscr{A}_0 + \mathscr{A}_{-1}\mathscr{J}
\end{eqnarray}}where for each $\ell = 2\alpha, 0, -1$, $\mathscr{A}_{\ell}\in \Psi^{\ell}_{\mathrm{cl}}(M)$ is a classical formally selfadjoint pseudodifferential operator of  order $\ell$. The operators $\mathscr{A}_{2\alpha}$ and $\mathscr{A}_{-1}$ have principal symbols $\sigma_{\mathscr{A}_{2\alpha}}(x,\eta) = - |\eta|_g^{2\alpha}$ and $\sigma_{\mathscr{A}_{-1}}(x,\eta) = c(n) |\eta|_g^{-1}$, for some constant $c(n) > 0$. All operators commute with the operator $\mathscr{J}$.
\end{thm}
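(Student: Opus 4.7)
The plan is to split $\mathscr{A} = \mathscr{A}_{\mathrm{loc}} + \mathscr{A}_{\mathrm{nloc}}$ via a smooth radial cutoff $\chi(|v|)$ on $T_pM$ supported inside the injectivity radius, and then to exploit the $2\pi$-periodicity of the geodesic flow on $\mathbb{S}^n$ to reduce the nonlocal part to a sum of two pseudodifferential operators, one of which gets composed with $\mathscr{J}$. After writing the integral in polar coordinates $v = s\omega$ with $s > 0$, $\omega \in S_pM$, and introducing the spherical-mean operator
\[
M_s u(p) := |S^{n-1}|^{-1} \int_{S_pM} u(\exp_p(s\omega))\,d\omega,
\]
the formula
\[
\mathscr{A} u(p) = C(n,\alpha)\,|S^{n-1}|\;\mathrm{p.v.}\!\int_0^\infty \bigl(M_s u(p) - u(p)\bigr)\,s^{-1-2\alpha}\,ds
\]
serves as the starting point.

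For $\mathscr{A}_{\mathrm{loc}}$, the cutoff confines the integration to within the injectivity radius, so $\exp_p$ is a diffeomorphism on its support. Passing to normal coordinates at $p$ (where $\exp_p$ is the identity and $g_{ij}(v) = \delta_{ij} + O(|v|^2)$) identifies $\mathscr{A}_{\mathrm{loc}}$ with a smoothly truncated Euclidean fractional-Laplacian-type integral; a standard symbolic expansion then shows it to be a classical pseudodifferential operator of order $2\alpha$ with principal symbol $-|\eta|_g^{2\alpha}$ by the normalization \eqref{Cna}. This piece is $\mathscr{A}_{2\alpha}$.

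For $\mathscr{A}_{\mathrm{nloc}}$, I use the two identities $M_{s+2\pi} = M_s$ and $M_{s+\pi} = \mathscr{J} M_s$, reflecting respectively the $2\pi$-periodicity of the geodesic flow and the fact that $\exp_p((s+\pi)\omega) = \exp_{-p}(s\,\tau_{p,\omega})$ for the parallel-transported direction $\tau_{p,\omega} \in S_{-p}M$. Folding the integral onto $[0,\pi]$ produces
\[
\mathscr{A}_{\mathrm{nloc}} u = c_0\!\int_0^\pi M_s u\cdot W_0(s)\,ds + c_0\,\mathscr{J}\!\int_0^\pi M_s u\cdot W_1(s)\,ds - c_2\,u,
\]
with $W_0(s) = \sum_{k\geq 0}(1-\chi(s+2k\pi))(s+2k\pi)^{-1-2\alpha}$ and $W_1(s) = \sum_{k\geq 0}(s+(2k+1)\pi)^{-1-2\alpha}$ both in $C^\infty([0,\pi])$, and $c_0, c_2$ explicit positive constants. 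Each operator $B_W u(p) := \int_0^\pi M_s u(p) W(s)\,ds$ has integral kernel $W(d(p,q))/(|S^{n-1}|(\sin d(p,q))^{n-1})$, singular at the diagonal $\{d=0\}$ and the anti-diagonal $\{d=\pi\}$. Splitting $W = W^{(1)} + W^{(2)}$ by a smooth cutoff at $s = \pi/2$ and applying $s\mapsto \pi-r$ together with $M_{\pi-r} = \mathscr{J} M_r$ to the second piece gives $B_W = B_W^{(1)} + \mathscr{J}\, B_{\widetilde W^{(2)}}^{(1)}$, where $B_\bullet^{(1)}$ denotes integration over $[0,\pi/2+\delta]$ and $\widetilde W^{(2)}(r):= W^{(2)}(\pi-r)$; each $B_\bullet^{(1)}$ has kernel singular only at the diagonal with leading behaviour $\bullet(0)(|S^{n-1}|\,d(p,q)^{n-1})^{-1}$, the characteristic singularity of a classical $\Psi^{-1}_{\mathrm{cl}}$-operator with principal symbol proportional to $|\eta|_g^{-1}$.

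Assembling and using that each $B_\bullet^{(1)}$ commutes with $\mathscr{J}$ (as it is built from isometry-invariant spherical means) to push the $\mathscr{J}$-factors to the right, I arrive at
\[
\mathscr{A}_{\mathrm{nloc}} = P + Q\,\mathscr{J} - c_2\,\mathrm{Id}, \qquad P, Q \in \Psi^{-1}_{\mathrm{cl}}(\mathbb{S}^n),
\]
so that the choice $\mathscr{A}_{2\alpha} := \mathscr{A}_{\mathrm{loc}}$, $\mathscr{A}_0 := P - c_2\,\mathrm{Id} \in \Psi^0_{\mathrm{cl}}$, $\mathscr{A}_{-1} := Q \in \Psi^{-1}_{\mathrm{cl}}$ yields the decomposition. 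The positivity $c(n) > 0$ of the principal symbol of $\mathscr{A}_{-1}$ follows from the aggregated kernel coefficient $c_0(W_0(\pi) + W_1(0))/|S^{n-1}| = 2c_0 \pi^{-1-2\alpha}\sum_{k\geq 0}(2k+1)^{-1-2\alpha}/|S^{n-1}| > 0$. Formal self-adjointness is inherited from the $v\leftrightarrow -v$ symmetry of $\nu_p$ (for $\mathscr{A}_{\mathrm{loc}}$) and the self-adjointness of $M_s$ (for $P, Q$), and commutation with $\mathscr{J}$ is automatic from isometry-invariance. I expect the main technical obstacle to be the rigorous symbolic identification of $\mathscr{A}_{\mathrm{loc}}$ as a classical $\Psi^{2\alpha}_{\mathrm{cl}}$-operator with the stated principal symbol — the kernel has the right homogeneity but one must control the lower-order corrections arising from the metric expansion in normal coordinates; once this is settled, the rest is symmetry-driven bookkeeping made possible by the unique conjugate-point structure of $\mathbb{S}^n$.
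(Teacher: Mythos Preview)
Your approach is correct and essentially the same as the paper's: both exploit the $2\pi$-periodicity of the geodesic flow and the antipodal identity $\exp_p((s+\pi)\omega)=-\exp_p(s\omega)$ to split the kernel singularities into a diagonal piece (the $\Psi^{2\alpha}_{\mathrm{cl}}$ part) and an anti-diagonal piece (the $\Psi^{-1}_{\mathrm{cl}}\mathscr{J}$ part), with the remaining contributions landing in $\Psi^{0}_{\mathrm{cl}}$. The only organizational difference is that you package the computation through the spherical-mean operators $M_s$ and the folding identities $M_{s+2\pi}=M_s$, $M_{s+\pi}=\mathscr{J} M_s$, whereas the paper localizes directly on $M\times M$ via cutoffs $\chi_1,\chi_2,\chi_3$ near the diagonal, near the anti-diagonal, and away from both, and then sums over the periodic returns; the resulting kernel analysis and principal-symbol identification are the same.
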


We shall see that since the integral kernel of $\mathscr{A}$ has singularities at both $p=q$ and $p=-q$ (antipodal point), it cannot be the fractional Laplacian.

\subsubsection{Generator on Anosov manifolds}

It is natural to deduce that the complications arising on the sphere are due to geodesics focusing at a single point (i.e. conjugate points). If we make assumptions about the manifold $(M,g)$ as to rule out such behaviour, we should expect $\mathscr{A}$ to have a simpler expression. This is indeed the case {if we assume that $(M,g)$ is \emph{Anosov}. The class of Anosov Riemannian manifolds is a very large class\footnote{In fact, $(M,g)$ is Anosov if and only if $(M,g)$ lies in the $C^2$ interior of the set of metrics without conjugate points \cite{ruggiero1991creation}.}  including in particular all negatively-curved manifolds, see \S\ref{anosov overview} or \cite{anosov1969geodesic, Knieper-02} for further details. We will prove the following result.}

\begin{thm}
\label{anosov generator}
If $(M,g)$ is a closed connected Anosov Riemannian manifold, the infinitesimal generator given by \eqref{def of A} can be written
$$
\mathscr{A} = \mathscr{A}_{2\alpha} + \mathscr{A}_0
$$
where for each $\ell = 2 \alpha, 0$, $\mathscr{A}_\ell \in \Psi^\ell_{\mathrm{cl}}(M)$ is a classical formally selfadjoint pseudodifferential operator of  order $\ell$.

More precisely, for each $\chi \in C^\infty_c(\R, [0, 1])$ such that $\chi(t) = 1$ for $t$ near $0$ and $\mathrm{supp}(\chi) \subset [0, r_{\mathrm{inj}}^2/2]$, where $r_{\mathrm{inj}}$ is the injectivity radius of $(M,g)$, the operator \eqref{def of A} writes
\begin{equation}
\label{anosov kernel}
\begin{aligned}
{\mathscr{A}} u (p) &= C(n,\alpha)~ \mathrm{p.v.}  \int_M \chi(\mathrm{dist}_g(p,q)^2) \frac{u(q)- u(p)}{\mathrm{dist}_g(p,q)^{n+2\alpha}}J(p,q){\rm dvol}_g(q) \\ &+ w(p)u(p)+ \int_M K(p,q) u(q) {\rm dvol}_g(q)
\end{aligned}
\end{equation}
for some smooth functions $w\in C^\infty(M)$ and $K\in C^\infty(M\times M)$. Here we set $J(p,q) = \det \dd_q\exp_{p}^{-1}$. 
\end{thm}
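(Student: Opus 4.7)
The strategy is to split $\mathscr{A}$ into a \emph{local} piece, treated via pseudodifferential calculus in normal coordinates, and a \emph{non-local} piece, controlled by lifting to the universal cover. The Anosov hypothesis enters only for the non-local piece, through the fundamental property that a closed Anosov manifold has no conjugate points, so that $\exp$ is a global diffeomorphism on the universal cover. Fix $\chi$ as in the statement and decompose $\mathscr{A}=\mathscr{A}_{\mathrm{loc}}+\mathscr{A}_{\mathrm{nl}}$ by inserting $\chi(|v|^2)$, respectively $1-\chi(|v|^2)$, into the integrand of \eqref{def of A}.

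For $\mathscr{A}_{\mathrm{loc}}$, integration is confined to $\{|v|<r_{\mathrm{inj}}/\sqrt{2}\}$, where $\exp_p$ is a diffeomorphism. The change of variables $v=\exp_p^{-1}(q)$, with $|v|=\dist_g(p,q)$ and $\dd v=J(p,q)\,{\rm dvol}_g(q)$, produces precisely the principal-value integral on the first line of \eqref{anosov kernel}. In Riemannian normal coordinates at $p$ the kernel reads $\chi(|y|^2)J(p,\exp_p(y))|y|^{-n-2\alpha}$; its leading singularity is the even homogeneous kernel $|y|^{-n-2\alpha}$ and, with the normalization of $C(n,\alpha)$ chosen in the paper, this is precisely the distributional kernel of the Euclidean fractional Laplacian. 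A standard Fourier computation (or invocation of singular-integral theory for even homogeneous kernels) thus identifies $\mathscr{A}_{\mathrm{loc}}$ as a classical pseudodifferential operator of order $2\alpha$ with principal symbol $-|\eta|_g^{2\alpha}$. Geodesic reversibility gives $J(p,q)=J(q,p)$, so the kernel is symmetric in $(p,q)$ and $\mathscr{A}_{\mathrm{loc}}$ is formally selfadjoint; set $\mathscr{A}_{2\alpha}:=\mathscr{A}_{\mathrm{loc}}$.

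For $\mathscr{A}_{\mathrm{nl}}$, polar coordinates $v=r\theta$ on $T_pM$ separate the $u(p)$ contribution as $w(p)u(p)$, where $w(p)=-C(n,\alpha)|S^{n-1}|\int_0^\infty(1-\chi(r^2))r^{-1-2\alpha}\,\dd r$ is independent of $p$ and hence smooth. It remains to write the rest as $\int_M K(p,q)u(q)\,{\rm dvol}_g(q)$ with $K\in C^\infty(M\times M)$; this is where the Anosov assumption enters. Since there are no conjugate points, the exponential $\exp_{\tilde p}:T_{\tilde p}\tilde M\to\tilde M$ on the universal cover is a global diffeomorphism. Lifting $u$ to its $\Gamma$-periodic extension ($\Gamma=\pi_1(M)$), folding the integral into a fundamental domain, and using the universal-cover Jacobian $\dd v=J(\tilde p,\cdot)\,{\rm dvol}_{\tilde g}$ yields
\[
K(p,q)=C(n,\alpha)\sum_{\gamma\in\Gamma}\bigl(1-\chi(\dist_{\tilde g}(\tilde p,\gamma\tilde q)^2)\bigr)\frac{J(\tilde p,\gamma\tilde q)}{\dist_{\tilde g}(\tilde p,\gamma\tilde q)^{n+2\alpha}}
\]
for any fixed lifts $\tilde p,\tilde q$ of $p,q$. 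Re-indexing $\gamma\mapsto\gamma^{-1}$ and using geodesic reversibility give $K(p,q)=K(q,p)$; together with reality of $w$ this makes $\mathscr{A}_0$, defined by $\mathscr{A}_0u(p)=w(p)u(p)+\int_M K(p,q)u(q)\,{\rm dvol}_g(q)$, formally selfadjoint, and, as the sum of a multiplication operator and a smoothing operator, a classical pseudodifferential operator of order $0$.

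The main obstacle is establishing $K\in C^\infty(M\times M)$, that is, uniform absolute convergence of the lattice sum and of each of its $(p,q)$-derivatives. The cutoff $1-\chi$ kills the singular terms (those $\gamma$ with $\gamma\tilde q$ close to $\tilde p$), so every remaining summand is smooth; the question is convergence. The crucial balance is the classical Anosov estimate on $\tilde M$: its topological/volume entropy $h>0$ controls simultaneously the lattice count $\#\{\gamma:\dist_{\tilde g}(\tilde p,\gamma\tilde q)\leq R\}\lesssim e^{hR}$ and the typical decay of the exponential Jacobian $J(\tilde p,\gamma\tilde q)\lesssim e^{-h\,\dist_{\tilde g}(\tilde p,\gamma\tilde q)}$ (the latter reflecting the exponential divergence of Jacobi fields in hyperbolic directions). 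The sum is therefore dominated by a convergent series of the form $\sum_r r^{-n-2\alpha}$ since $n\geq 2$ and $\alpha>0$. Differentiating in $(p,q)$ introduces only polynomial factors in $\dist_{\tilde g}$ and derivatives of $\exp_{\tilde p}$, which remain in the same exponential envelope by Anosov hyperbolicity and bounded geometry of $\tilde M$; the same dominating series controls them, giving $K\in C^\infty(M\times M)$ and completing the proof.
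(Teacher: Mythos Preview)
Your approach to the local piece $\mathscr{A}_{\mathrm{loc}}$ matches the paper's and is fine. For the non-local piece your route is genuinely different: the paper rewrites $\mathscr{A}_{\mathrm{nl}}$ in terms of the geodesic flow on $SM$ as $\pi_*R_a\pi^* - (\pi_*R_a\pi^*\mathbf{1})\cdot\mathrm{id}$ with $a(t)=(1-\chi(t^2))t^{-1-2\alpha}$, and then proves a separate microlocal result (Theorem~\ref{thm:averaging-smoothing}) showing that $\pi_*R_a\pi^*$ has smooth Schwartz kernel. That argument is a wavefront-set computation on $SM$ using the Anosov stable/unstable splitting, the absence of conjugate points in the form $d\phi_t(\mathbf V^\perp)\cap\mathbf V^\perp=E_0^*$, and the absolute continuity of the weak-stable foliation; no pointwise Jacobian estimates on the universal cover enter. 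Formal self-adjointness of each piece is obtained from the Dirichlet-form identity (Proposition~\ref{uniqueness'}), not from kernel symmetry.

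Your universal-cover argument has a real gap at the smoothness step. The balance you invoke, lattice count $\lesssim e^{hR}$ against $J(\tilde p,\cdot)\lesssim e^{-hR}$, is not a pointwise statement: volume entropy is an integrated quantity, and for a general Anosov metric (which need not be negatively curved) there is no reason for $\det d_v\exp_{\tilde p}$ to be uniformly $\gtrsim e^{h|v|}$ along every direction. More seriously, the claim that differentiating the summand in $(p,q)$ introduces only polynomial factors is incorrect as stated. A $\tilde q$-derivative of $J(\tilde p,\tilde q)=|\det d(\exp_{\tilde p}^{-1})|$ brings in the linear map $d(\exp_{\tilde p}^{-1})$ itself, and Anosov hyperbolicity says precisely that this map has singular values growing like $e^{\lambda\,\dist}$ in unstable directions; these exponential losses are \emph{not} absorbed by ``the same exponential envelope'' unless you prove a pointwise cancellation that you have not established. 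In strict negative curvature one can push such estimates through via Rauch comparison, but for general Anosov manifolds the required $C^k$ bounds on the lattice sum are neither standard nor sketched here. The paper's phase-space argument sidesteps all of this by never writing $K$ as a lattice sum and instead proving directly, via non-stationary phase on $SM$, that the operator is smoothing.
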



An immediate observation is that when $(M,g)$ is Anosov, the result of Theorem \ref{anosov generator} implies that the operator $\mathscr{A}$ is an elliptic pseudodifferential operator with principal symbol $\sigma_{\mathscr{A}}(x,\xi) = -|\eta|_g^{2\alpha}$ if $\alpha \geqslant 1/2$ and $\sigma_{\mathscr{A}}(x,\xi) = -|\eta|_g^{2\alpha} + \sigma_{\mathscr{A}_0}(x,\xi)$ if $\alpha < 1/2$. Also remark that, when $(M,g)$ is Anosov, the trace formula of Duistermaat-Guillemin \cite{Duistermaat-Guillemin} implies that the spectrum of $\mathscr{A}$ determines uniquely the lengths of periodic geodesics.

\subsubsection{Mapping properties of $\mathscr{A}$}

\label{sssection:mapping}

We will see that Theorems \ref{torus}, \ref{sphere} and \ref{anosov generator} imply the following
\begin{cor}
\label{sphere cor}
If $(M,g)$ is $\mS^n, \mathbb T^n$ or Anosov, the following holds.
\begin{enumerate}[label=\emph{(\roman*)}]
\item\label{item:1cor} $-\mathscr{A}$ extends to a formally selfadjoint Fredholm operator
\[
-\mathscr{A} : W^{s,m}(M) \to W^{s-2\alpha,m}(M),
\]
for all $s \in \R, m \in (1,\infty)$, with non-negative discrete spectrum and smooth eigenfunctions for all $s\in \R$. The null-space consists of only constant functions. \\
\item\label{item:2cor} There exists $\mathscr{A}^+ :W^{s,m}(M) \to W^{s+2\alpha,m}(M)$ with $\mathrm{Ker}(\mathscr{A}^+) = \C \cdot \mathbf{1}$ and $\mathrm{Ran}(\mathscr{A}^+) \perp \C \cdot \mathbf{1}$ such that 
$$\mathscr{A}^+ \mathscr{A} = \mathscr{A} \mathscr{A}^+ = I - P$$
where $P$ is the $L^2$ orthonormal projection to the space of constant functions. The Schwartz kernel of $\mathscr{A}^+$, $ \mathrm{G}_{\mathscr{A}}(p,q)$, which we will call the Green's function, satisfies, for each $p \in M$ and $u \in C^\infty(M)$,
\[
\int_{M} \mathrm{G}_{\mathscr{A}}(p,\cdot) \mathscr{A} u \, {\rm dvol}_g = u(p) - |M|^{-1} \int_M u {\rm dvol}_g
\]
\end{enumerate}
\end{cor}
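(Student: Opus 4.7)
The plan is to reduce everything to standard Fredholm theory for elliptic classical pseudodifferential operators on a closed manifold, using Theorems \ref{torus}, \ref{sphere}, \ref{anosov generator} to identify the leading part of $-\mathscr{A}$ and Proposition \ref{uniqueness} to pin down its sign and kernel. First, in each geometry $-\mathscr{A}$ takes the form $P + R$, where $P \in \Psi^{2\alpha}_{\mathrm{cl}}(M)$ is elliptic with positive principal symbol $|\eta|_g^{2\alpha}$ and $R$ is of strictly lower order (or compact). On the torus this is immediate with $R = 0$; on an Anosov manifold we take $P = -\mathscr{A}_{2\alpha}$ and $R = -\mathscr{A}_0$, of order $0 < 2\alpha$. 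On the sphere, Theorem \ref{sphere} adds a term $-\mathscr{A}_{-1}\mathscr{J}$; since the antipodal map is an isometry, $\mathscr{J}$ is bounded on every $W^{s,m}(M)$, hence $\mathscr{A}_{-1}\mathscr{J}$ sends $W^{s,m} \to W^{s+1,m}$, which embeds compactly into $W^{s-2\alpha,m}$ by Rellich, so it is again a compact perturbation of the elliptic principal part.

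Classical elliptic theory then yields, for every $s \in \R$ and $m \in (1,\infty)$, that $-\mathscr{A} : W^{s,m}(M) \to W^{s-2\alpha,m}(M)$ is bounded Fredholm of index $0$ with elliptic regularity; in particular $L^2$-eigenfunctions are smooth. On $L^2$, the Dirichlet form of Proposition \ref{uniqueness} ensures that $-\mathscr{A}$ is symmetric, non-negative, and has kernel exactly $\C \cdot \mathbf{1}$, and because the resolvent is compact (ellipticity on a compact manifold) the spectrum is discrete and non-negative. Formal self-adjointness on each $W^{s,m}$ follows from the symmetry of the distributional kernel of $\mathscr{A}$, interpreted through the duality $(W^{s,m})^\ast \simeq W^{-s,m'}$.

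To build $\mathscr{A}^+$, I will invert $-\mathscr{A}$ on the $L^2$-orthogonal complement $(\C \cdot \mathbf{1})^\perp$, which is possible since $-\mathscr{A}$ is Fredholm of index zero with kernel (and hence cokernel) equal to $\C \cdot \mathbf{1}$. Defining $\mathscr{A}^+$ to be minus this inverse on $(\C \cdot \mathbf{1})^\perp$ and zero on $\C \cdot \mathbf{1}$, the identities $\mathscr{A}^+\mathscr{A} = \mathscr{A}\mathscr{A}^+ = I - P$ and the stated kernel/range conditions are tautological. The mapping property $\mathscr{A}^+ : W^{s,m} \to W^{s+2\alpha,m}$ is obtained from the standard a-priori elliptic estimate applied to $\mathscr{A} u = f - Pf$, which upgrades the regularity of $u$ by $2\alpha$ derivatives. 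The Green's-function identity is then the rewriting of $(\mathscr{A}^+ \mathscr{A} u)(p) = u(p) - |M|^{-1}\int_M u \, \mathrm{dvol}_g$ in terms of the Schwartz kernel of $\mathscr{A}^+$.

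The only delicate point is the treatment of the antipodal contribution $\mathscr{A}_{-1}\mathscr{J}$ on the sphere: being a Fourier integral operator rather than a pseudodifferential operator, the symbolic calculus does not apply to $\mathscr{A}$ as a whole. Fortunately, its strictly negative order combined with the boundedness of $\mathscr{J}$ on all Sobolev scales makes it a genuine compact perturbation of the elliptic leading part, which is enough to carry the Fredholm and spectral conclusions through. All the remaining ingredients are standard elliptic theory packaged with the Dirichlet form information from Proposition \ref{uniqueness}.
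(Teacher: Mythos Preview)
Your proposal is correct and follows essentially the same approach as the paper: decompose $-\mathscr{A}$ into an elliptic leading part plus a lower-order/compact remainder via Theorems \ref{torus}, \ref{sphere}, \ref{anosov generator}, use Fredholm theory and elliptic bootstrapping for regularity, and then invoke Proposition \ref{uniqueness} to identify the kernel with the constants; the construction of $\mathscr{A}^+$ is then routine functional analysis in both accounts.

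The one packaging difference worth noting is on the sphere. You treat $\mathscr{A}_{-1}\mathscr{J}$ as a compact perturbation of the elliptic part $-(\mathscr{A}_{2\alpha}+\mathscr{A}_0)$ and bootstrap regularity of eigenfunctions directly from the elliptic estimate for the latter. The paper instead invokes its explicit partial parametrix $\mathscr{K}+\mathscr{B}\mathscr{J}$ (Lemma \ref{parametrix of A sphere}), which gives $(\mathscr{K}+\mathscr{B}\mathscr{J})\mathscr{A} = I + K$ with $K$ smoothing by $2+4\alpha$ derivatives, and reads off smoothness of null-space elements in one step. Your route is a little more elementary and avoids building that parametrix just for this corollary; the paper's route has the advantage that the parametrix is needed anyway for the finer structure of $\mathscr{A}^+$ (Lemma \ref{microlocal structure of A+ sphere}, Corollary \ref{A+ = L + R sphere}), so no extra work is incurred. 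Either way the logic is the same: regularity first, then Proposition \ref{uniqueness} on smooth functions to conclude $\mathrm{Ker}(\mathscr{A})=\C\cdot\mathbf{1}$, and formal self-adjointness to handle the cokernel.
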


\subsubsection{Domain of the generator} 
We will see later that in the sphere, torus and Anosov case, the "heat kernel" $e^{t\mathscr{A}}$ has bounded integral kernel (Lemma \ref{Linfty estimate for density}) for all $t>0$ and is therefore trace class. Since the spectrum is discrete and the operator is semidefinite, the solutions of the heat equation converge exponentially in $L^2(M)$ to the constant function because $\mathrm{Ker}(\mathscr{A}) = \C \cdot \mathbf{1}$. At last, we have the Poincar\'e inequality on on the sphere, torus, and Anosov case:
\begin{eqnarray}
\label{poincare}
-\int_M u \, \mathscr{A} u\, {\rm dvol}_g \geqslant c \|u\|^2_{L^2}.
\end{eqnarray}
for all $u\perp\C \cdot \mathbf{1}$.

Our detailed knowledge of $\mathscr{A}$ will yield some insight into the probabilitic aspects of the process $(X_t)_{t\geqslant 0}$.
For each $m\in (1,\infty)$, we define the domain $D_{L^m}(\mathscr{A}) \subset L^m(M)$ to be the set
$$\left \{ u\in L^m(M)~\Big|~ \lim\limits_{t\to 0^+} \left(\mathbb E(u(X_t) \mid X_0 = \cdot) - u\right)/t\ {\text{exists in }} L^m(M)\right\}.$$
We have a precise description of $D_{L^m}(\mathscr{A})$:
\begin{prop}
When $(M,g)$ is $\mS^n, \T^n$ or Anosov, then $D_{L^m}(\mathscr{A})  = W^{2\alpha, m}(M)$ when $m\in (1,\infty)$.
\end{prop}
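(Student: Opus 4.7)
The plan is to identify the probabilistic generator $\mathscr{A}_m$ of the transition semigroup $P_t u(p) := \mathbb{E}(u(X_t) \mid X_0 = p)$ acting on $L^m(M)$ with the pseudodifferential extension of $\mathscr{A}$ provided by Theorems \ref{torus}, \ref{sphere}, and \ref{anosov generator}, so that Corollary \ref{sphere cor} directly yields the claimed domain.

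First I would establish that $P_t$ extends to a $C_0$-semigroup of contractions on $L^m(M)$ for each $m \in (1, \infty)$. As a Markov semigroup, $P_t$ is a contraction on $L^\infty(M)$. Proposition \ref{uniqueness} shows that $\mathscr{A}$ is symmetric on $L^2(M, \mathrm{dvol}_g)$, so $P_t = e^{t\mathscr{A}}$ is self-adjoint there; by duality this yields an $L^1$-contraction, and Riesz--Thorin interpolation then gives an $L^m$-contraction for every $m \in [1, \infty]$. Strong continuity on $L^m$ for $m < \infty$ follows from the Feller property ($P_t u \to u$ uniformly for $u \in C(M)$), the density of $C(M)$ in $L^m(M)$, and the uniform contraction bound. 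This identifies $\mathscr{A}_m$ as the $L^m$-generator with domain $D_{L^m}(\mathscr{A})$. Moreover, for $u \in C^\infty(M)$, the uniform convergence $(P_t u - u)/t \to \mathscr{A} u$ in $C(M)$ (which is the content of \cite{AppEst} and of \eqref{def of A}) implies the same convergence in $L^m$, so $C^\infty(M) \subset D_{L^m}(\mathscr{A})$ with $\mathscr{A}_m u = \mathscr{A} u$ there.

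The identification of domains is then a resolvent argument. By Corollary \ref{sphere cor}\ref{item:1cor}, $-\mathscr{A} : W^{2\alpha, m}(M) \to L^m(M)$ is Fredholm with nonnegative discrete spectrum, so for any $\lambda > 0$ outside the spectrum, $\lambda - \mathscr{A}$ is a topological isomorphism with bounded inverse $R_\lambda^{\mathscr{A}} : L^m(M) \to W^{2\alpha, m}(M)$. Since $\mathscr{A}_m$ generates a contraction semigroup, the same $\lambda$ lies in its resolvent set, yielding a bounded inverse $R_\lambda^{\mathscr{A}_m} : L^m(M) \to D_{L^m}(\mathscr{A})$. These two resolvents agree on the subspace $(\lambda - \mathscr{A})(C^\infty(M))$, which is dense in $L^m(M)$ because $C^\infty(M)$ is dense in $W^{2\alpha,m}(M)$ and $\lambda - \mathscr{A}$ is a homeomorphism onto $L^m(M)$. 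Hence $R_\lambda^{\mathscr{A}}$ and $R_\lambda^{\mathscr{A}_m}$ coincide as bounded operators $L^m(M) \to L^m(M)$; comparing ranges gives $D_{L^m}(\mathscr{A}) = W^{2\alpha, m}(M)$.

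The main obstacle lies in the symmetry-plus-interpolation step: one must carefully leverage Proposition \ref{uniqueness} to obtain the $L^2$ self-adjointness of $P_t$ before invoking Riesz--Thorin, and in particular verify that the analytic and probabilistic $\mathscr{A}$ truly coincide on $C^\infty$. Once the $L^m$ semigroup is in place, the rest is a standard resolvent comparison, made possible precisely by the pseudodifferential description of $\mathscr{A}$ established earlier in the paper.
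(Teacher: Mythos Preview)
Your argument is correct and complete; the resolvent comparison you outline is a standard and clean way to identify the semigroup domain once the pseudodifferential mapping properties of Corollary \ref{sphere cor} are in hand. The paper, however, takes a shorter route: it simply observes that $C^\infty(M)\subset D(\mathscr{A})$ (from \cite{AppEst}), that the semigroup generator is closed, and then invokes the classical fact (cited as \cite[Theorem 12.15]{Wong-book}) that an elliptic pseudodifferential operator on a closed manifold has a \emph{unique} closed extension in $L^m$, namely $(\mathscr{A},W^{2\alpha,m})$; this forces $D(\mathscr{A})=W^{2\alpha,m}$ immediately. Your approach trades that external black box for an explicit resolvent identity, using the invertibility of $\lambda-\mathscr{A}:W^{2\alpha,m}\to L^m$ for $\lambda>0$ already supplied by the paper. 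The upshot is that your proof is more self-contained and makes the role of Corollary \ref{sphere cor} transparent, while the paper's is more economical. Both handle the sphere case equally well, since the $\mathscr{J}\mathscr{A}_{-1}$ piece is bounded on $L^m$ and does not disturb either the uniqueness-of-closure statement or the resolvent bijectivity.
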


This proposition actually comes as an intermediate step (Lemma \ref{domain is fractional sobolev}) in the calculation of the expected time it takes to find a small target using a random search which we now describe in detail. 

\subsection{Applications to random searches}\label{subsec:randomsearches}

As in \S\ref{subsec:results}, let $(X_t)_{t\geq0}$ be a cadlag semi-martingale that is an isotropic L\'evy process with infinitesimal generator $\mathscr{A}$ defined by \eqref{def of A}. {Let $B_\varepsilon(p_0)$ be the open geodesic ball of radius $\varepsilon>0$ centred at $p_0$.} We define $\tau_\varepsilon$ and $u_\varepsilon$ by \eqref{eq:deftaueps}. Let $c(n,\alpha)$ be the constant given by
\begin{equation}
\label{equation:cna}
c(n,\alpha) := \begin{cases}
			\dfrac{2^{-2\alpha}(1-\alpha) \Gamma(1-\alpha)^2 }{\pi^2}, & \text{if $n=2$,}\vspace{0.3cm}\\
            \dfrac{2^{1-2\alpha}\Gamma(n/2-\alpha)\Gamma(n/2-\alpha+1)}{\pi^{n/2}(n-2)\Gamma(n/2-1)}, & \text{if $n \geqslant 3$.}
		 \end{cases}
\end{equation}
Then we have the following result, which is a more precise version of Theorem \ref{thm:main}, involving remainder terms.

\begin{thm}
\label{narrow capture}
If $(M,g)$ is a closed connected Anosov Riemannian manifold, then:
\begin{enumerate}[label=\emph{(\roman*)}]
\item As $\varepsilon \to 0$, the average of $u_\varepsilon$ over $M$ has expansion {
\[
\dfrac{1}{|M|} \int_M u_\varepsilon {\rm dvol}_g =\varepsilon^{2\alpha - n} |M| c(n,\alpha) (1+\mc{O}(E(\alpha,\varepsilon))),
\]
}
where the error term $E(\alpha, \varepsilon)$ is given by
\begin{eqnarray}
\label{eq:errorterm}
E(\alpha, \varepsilon) = 
\begin{cases}
\varepsilon^{2\alpha}, \quad &\text{if} \ \alpha < 1/2, \\
  \varepsilon|\log\varepsilon|, \quad &\text{if} \ \alpha = 1/2,\\
\max( \varepsilon, \varepsilon^{n-2\alpha}), \quad &\text{if} \ \alpha > 1/2.
\end{cases}
\end{eqnarray}

\item For all $\varepsilon >0$, $u_\varepsilon \in C^\infty(M\setminus \partial B_\varepsilon (p_0)) \cap L^\infty(M)$. Moreover, for all $p\neq p_0$, we have as $\varepsilon \to 0$
\begin{equation}\label{eq:asymueps(p)}
u_\varepsilon(p)  - \dfrac{1}{|M|} \int_M u_\varepsilon {\rm dvol}_g =  |M| \mathrm{G}_{\mathscr{A}}(p,p_0) +\mc{O}(E(\alpha,\varepsilon))
\end{equation}
where $ \mathrm{G}_{\mathscr{A}}(p,q)$ is the Green's function of $\mathscr{A}$ given by (iii) of Theorem \ref{anosov generator}.
\end{enumerate}
\end{thm}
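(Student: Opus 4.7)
The plan is to exploit the pseudodifferential structure of $\mathscr{A}$ from Theorem \ref{anosov generator}, together with its pseudo-inverse $\mathscr{A}^+$ from Corollary \ref{sphere cor}, in order to reduce the narrow-capture asymptotic to a computation with the Green's function $\mathrm{G}_{\mathscr{A}}$.

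First I would start from the boundary value characterization of $u_\varepsilon$ supplied by Proposition \ref{proposition uepsilon}:
\[
\mathscr{A} u_\varepsilon = -1 \text{ on } M\setminus\overline{B_\varepsilon(p_0)}, \qquad u_\varepsilon \equiv 0 \text{ on } \overline{B_\varepsilon(p_0)}.
\]
Elliptic regularity for the classical pseudodifferential operator $\mathscr{A}$ then gives $u_\varepsilon \in C^\infty(M\setminus\partial B_\varepsilon(p_0))$, and a maximum principle argument yields the $L^\infty$-bound. Next, I would introduce the density
\[
f_\varepsilon := \mathscr{A} u_\varepsilon + \mathbf{1}_M,
\]
which is supported in $\overline{B_\varepsilon(p_0)}$. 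The integral representation of $\mathscr{A}$ combined with $u_\varepsilon \equiv 0$ on the ball and $u_\varepsilon \geqslant 0$ elsewhere forces $f_\varepsilon \geqslant 0$, and formal selfadjointness of $\mathscr{A}$ with $\mathscr{A}\mathbf{1} = 0$ yields $\int_M f_\varepsilon\,\mathrm{dvol}_g = |M|$. Thus $f_\varepsilon$ is a non-negative Radon measure concentrated on $\overline{B_\varepsilon(p_0)}$ that converges weakly to $|M|\delta_{p_0}$ as $\varepsilon \to 0$. Applying $\mathscr{A}^+$ to the relation $\mathscr{A} u_\varepsilon = f_\varepsilon - \mathbf{1}_M$ and using $\mathscr{A}^+\mathbf{1} = 0$ together with $\mathscr{A}^+\mathscr{A} = I - P$ produces the key identity
\[
u_\varepsilon(p) - \bar u_\varepsilon = \int_M \mathrm{G}_{\mathscr{A}}(p,q)\, f_\varepsilon(q)\,\mathrm{dvol}_g(q), \qquad \bar u_\varepsilon := |M|^{-1}\int_M u_\varepsilon\,\mathrm{dvol}_g.
\]

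For part (ii), since $p\neq p_0$, the kernel $\mathrm{G}_{\mathscr{A}}(p,\cdot)$ is smooth in a neighborhood of $p_0$, so the weak convergence $f_\varepsilon \rightharpoonup |M|\delta_{p_0}$ directly gives $(\mathscr{A}^+ f_\varepsilon)(p) \to |M|\mathrm{G}_{\mathscr{A}}(p,p_0)$, with a quantitative remainder obtained by Taylor-expanding $\mathrm{G}_{\mathscr{A}}(p,\cdot)$ around $p_0$. For part (i), the fact that $u_\varepsilon(p_0) = 0$ yields $\bar u_\varepsilon = -(\mathscr{A}^+ f_\varepsilon)(p_0)$. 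Since $\mathscr{A}^+$ is a classical pseudodifferential operator of order $-2\alpha$ with principal symbol $-|\eta|_g^{-2\alpha}$, a parametrix/Fourier inversion argument gives the near-diagonal expansion
\[
\mathrm{G}_{\mathscr{A}}(p,q) = -c(n,\alpha)\,\mathrm{dist}_g(p,q)^{2\alpha - n} + R(p,q)
\]
with $c(n,\alpha)$ matching \eqref{equation:cna} and $R$ of better regularity. Rescaling $q = \exp_{p_0}(\varepsilon w)$ reduces the equation at scale $\varepsilon$ to the Euclidean $-(-\Delta)^\alpha$-problem on $\R^n$ governing the inner profile of $f_\varepsilon$, and the leading asymptotic $|M|c(n,\alpha)\varepsilon^{2\alpha-n}$ falls out by homogeneity.

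The hard part will be extracting the sharp error $E(\alpha,\varepsilon)$ with its trichotomy at $\alpha = 1/2$. This trichotomy reflects a competition between the $2\alpha$-H\"older boundary behavior of $u_\varepsilon$ near $\partial B_\varepsilon(p_0)$, which controls the rate of concentration of $f_\varepsilon$ in the weak limit, and the order of the subprincipal operator $\mathscr{A}_0$ from Theorem \ref{anosov generator}, which controls the next order in the Green's function expansion. Handling the three regimes uniformly will require a careful two-scale matched asymptotic expansion: an inner Euclidean profile on the rescaled ball $B_1$ matched through an intermediate annulus to the outer expansion $|M|\mathrm{G}_{\mathscr{A}}(\cdot, p_0)$, together with commutator estimates for $\mathscr{A}_0$ acting on rescaled test functions, in the spirit of the matched-asymptotic machinery developed for Brownian narrow capture.
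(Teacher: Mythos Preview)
Your overall architecture --- introduce the density $f_\varepsilon = \mathscr{A} u_\varepsilon + \mathbf{1}_M$ supported in $\overline{B_\varepsilon(p_0)}$, apply $\mathscr{A}^+$ to obtain $u_\varepsilon - \bar u_\varepsilon = \mathscr{A}^+ f_\varepsilon$, and exploit concentration of $f_\varepsilon$ at $p_0$ --- is exactly the paper's strategy (your $f_\varepsilon$ is the paper's $F_\varepsilon + \mathbf{1}_{B_\varepsilon(p_0)}$).  For part (ii) your soft argument is clean: once you observe $f_\varepsilon \geqslant 0$, $\int f_\varepsilon = |M|$, and $\mathrm{supp}\, f_\varepsilon \subset \overline{B_\varepsilon(p_0)}$, testing against the smooth function $\mathrm{G}_\mathscr{A}(p,\cdot)$ and Taylor-expanding near $p_0$ gives an $\mathcal{O}(\varepsilon)$ remainder, and one checks directly that $\varepsilon \leqslant E(\alpha,\varepsilon)$ in all three regimes.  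So (ii) goes through.

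The gap is in your plan for (i).  Evaluating the identity $u_\varepsilon - \bar u_\varepsilon = \mathscr{A}^+ f_\varepsilon$ only at the single point $p_0$ gives one scalar relation between $\bar u_\varepsilon$ and a singular integral of the \emph{unknown} profile of $f_\varepsilon$; it does not determine either.  The paper instead uses the full condition $u_\varepsilon \equiv 0$ on \emph{all} of $B_\varepsilon(p_0)$, which yields an integral equation for $(f_\varepsilon, \bar u_\varepsilon)$ on the ball (equations \eqref{eq for Fe1'}--\eqref{compatibility'}).  After rescaling to the unit ball this equation has the form $(L_\alpha + R'_\varepsilon)\tilde f_\varepsilon = \mathrm{const}$, where $L_\alpha$ is the Euclidean Riesz potential on $\B^n$, which is invertible on $\dot L^m(\overline{\B^n})$ with the explicit formula $L_\alpha^{-1}\mathbf{1}_{\B^n}(x) = -c_\alpha(1-|x|^2)^{-\alpha}$ (Lemma~\ref{L inverse} and \eqref{formula for L+1}).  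The pair $(\tilde f_\varepsilon, \bar u_\varepsilon)$ is then obtained by a Neumann series, and the leading constant $c(n,\alpha)$ drops out of this explicit inverse together with the compatibility condition.

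Finally, the trichotomy in $E(\alpha,\varepsilon)$ is not produced by matched asymptotics but by a straightforward operator bound: by Lemma~\ref{singular part of A+} one has $\mathscr{A}^+ - \mathscr{L} \in \Psi^{-2\alpha-1}_{\mathrm{cl}}(M) + \Psi^{-4\alpha}_{\mathrm{cl}}(M)$, and rescaling a kernel of homogeneity $-n+2\alpha+1$ (resp.\ $-n+4\alpha$) to the unit ball contributes a factor $\varepsilon^{2\alpha+1}$ (resp.\ $\varepsilon^{4\alpha}$), so that $\|R'_\varepsilon\|_{\dot L^m \to \overline W^{\cdot,m}} = \mathcal{O}(\varepsilon) + \mathcal{O}(\varepsilon^{2\alpha})$, with the logarithm at $\alpha = 1/2$ coming from the log term in the kernel expansion of a classical $\Psi$DO of integer order (Lemma~\ref{coordinate bound for remainder}).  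Your ``competition between boundary H\"older regularity and the subprincipal order of $\mathscr{A}_0$'' is the right intuition, but the mechanism is this rescaled operator norm, not an inner/outer matching through an annulus.
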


For the torus, the same result holds, up to changing the error term:

\begin{thm}
\label{narrow capture torus}
If $(M,g)$ is $\T^n$, then the conclusions of Theorem \ref{narrow capture} hold if we replace the error term \eqref{eq:errorterm} by
\begin{equation}\label{eq:newerrorterm}
E(\alpha, \varepsilon) = \begin{cases}
\max(\varepsilon,\varepsilon^{n-2\alpha}), \quad &\text{if} \ \alpha \neq 1/2, \\
  \varepsilon|\log\varepsilon|, \quad &\text{if} \ \alpha = 1/2.
\end{cases}
\end{equation}
\end{thm}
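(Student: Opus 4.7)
The plan is to follow the strategy developed for the Anosov case (Theorem~\ref{narrow capture}) essentially verbatim, while exploiting the crucial simplification supplied by Theorem~\ref{torus}: on $\T^n$ the generator is $\mathscr{A} = -(-\Delta)^\alpha$, a single elliptic classical pseudodifferential operator of pure order $2\alpha$. In particular the lower-order component $\mathscr{A}_0$ that appears in the Anosov decomposition of Theorem~\ref{anosov generator} is identically zero here, and it is precisely this absence that upgrades the error term when $\alpha < 1/2$. For $\alpha \geqslant 1/2$ the Anosov error already matches \eqref{eq:newerrorterm}, so no new idea is required in those regimes.

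First I would set up, via Proposition~\ref{proposition uepsilon}, the integral equation satisfied by $u_\varepsilon$, of the schematic form
$$
\mathscr{A} u_\varepsilon = -\mathbf{1}_{M \setminus \overline{B_\varepsilon(p_0)}} + \lambda_\varepsilon \mathbf{1}_{\overline{B_\varepsilon(p_0)}},
$$
with $\lambda_\varepsilon$ chosen so that the right-hand side has zero mean. This relation pins down the average $A_\varepsilon := |M|^{-1} \int_M u_\varepsilon\, {\rm dvol}_g$ in terms of $\lambda_\varepsilon$, and the Green's function $\mathrm{G}_\mathscr{A}$ from Corollary~\ref{sphere cor} then inverts $\mathscr{A}$ on the zero-mean subspace. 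The leading asymptotics of both $A_\varepsilon$ and $u_\varepsilon(p) - A_\varepsilon$ come from the near-diagonal singularity of $\mathrm{G}_\mathscr{A}$: since $-(-\Delta)^\alpha$ inherits the Riesz kernel of the Euclidean fractional Laplacian, we have
$$
\mathrm{G}_\mathscr{A}(p, q) = c(n,\alpha)\,\mathrm{dist}_g(p, q)^{2\alpha - n} + H(p, q)
$$
for some $H$ smooth near the diagonal (encoding the periodization correction). Integrating against $\mathbf{1}_{B_\varepsilon(p_0)}$ and taking $\varepsilon \to 0$ then yields both parts of Theorem~\ref{narrow capture} with some error term.

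The main obstacle, and the only place where the argument genuinely diverges from the Anosov proof, is the precise bookkeeping of that error. In the Anosov decomposition $\mathscr{A} = \mathscr{A}_{2\alpha} + \mathscr{A}_0$, the lower-order part $\mathscr{A}_0$ perturbs the parametrix of $\mathscr{A}$ by an additional subleading singularity near the diagonal, and it is this contribution that produces the $\mc{O}(\varepsilon^{2\alpha})$ term in \eqref{eq:errorterm} when $\alpha < 1/2$. On $\T^n$ this contribution is absent by Theorem~\ref{torus}, so the error in the $\alpha < 1/2$ regime drops to $\mc{O}(\varepsilon)$, originating only from the smooth correction $H$ and from the first-order Taylor expansion of the leading Riesz singularity about $p_0$. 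The three regimes of \eqref{eq:newerrorterm} then emerge by comparison: $\alpha < 1/2$ gives $\max(\varepsilon, \varepsilon^{n - 2\alpha}) = \varepsilon$ since $n - 2\alpha > 1$; the logarithmic resonance at $\alpha = 1/2$ is the same as in the Anosov case; and for $\alpha > 1/2$ the volume factor $\varepsilon^{n - 2\alpha}$ can dominate $\varepsilon$ in low dimensions, giving the max. To execute this bookkeeping cleanly I would work directly in Fourier series, where $\mathrm{G}_\mathscr{A}(p, q) = \sum_{k \in \Z^n \setminus 0} (2\pi|k|)^{-2\alpha}\, e^{2\pi i k \cdot (p - q)}$ and all estimates reduce to explicit sum manipulations.
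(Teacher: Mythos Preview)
Your high-level strategy and the key insight are both correct and match the paper: on $\T^n$, Theorem~\ref{torus} gives $\mathscr{A} = -(-\Delta)^\alpha$ as a single classical $\Psi$DO of pure order $2\alpha$, so the $\Psi^{-4\alpha}_{\mathrm{cl}}$ component in $\mathscr{R} = \mathscr{A}^+ - \mathscr{L}$ (Lemma~\ref{singular part of A+}) is absent, and this is precisely what kills the $\varepsilon^{2\alpha}$ contribution to the error for $\alpha < 1/2$. The paper's proof is literally a one-line invocation of Remark~\ref{remark torus}, whose content is the sentence just before \eqref{R bound}: on the torus $\mathscr{R}\in \Psi^{-1-2\alpha}_{\mathrm{cl}}(M)$. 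Tracking this through the Schwartz kernel in Lemma~\ref{coordinate bound for remainder} removes the $\varepsilon^{4\alpha}|x-y|^{-(n-4\alpha)}$ term, leaving only the $\varepsilon^{2\alpha+1}$ and $\varepsilon^n$ pieces, whence $E(\alpha,\varepsilon)=\max(\varepsilon,\varepsilon^{n-2\alpha})$ after dividing by $\varepsilon^{2\alpha}$.

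One point does not survive, however: your ``schematic'' equation $\mathscr{A} u_\varepsilon = -\mathbf{1}_{\Omega_\varepsilon} + \lambda_\varepsilon \mathbf{1}_{\overline{B_\varepsilon(p_0)}}$ with a \emph{scalar} $\lambda_\varepsilon$ is an oversimplification that would not produce the correct constant. The actual source term $F_\varepsilon$ supported in $\overline{B_\varepsilon(p_0)}$ is not a multiple of the indicator; in rescaled coordinates it has the profile $(1-|x|^2)^{-\alpha}$ (Proposition~\ref{proposition fepsilon}\,\ref{item:expansionf}), obtained by inverting the Riesz potential $L_\alpha$ on the unit ball via Lemma~\ref{L inverse} and \eqref{formula for L+1}, and it is this profile that determines $c(n,\alpha)$. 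The Dirichlet-type constraint $u_\varepsilon = 0$ on the whole ball cannot be enforced by a single scalar. So while your proposed Fourier-series route would indeed identify the leading diagonal singularity of $\mathrm{G}_\mathscr{A}$ and is natural for item~(ii), it does not by itself solve the integral equation \eqref{eq for Fe1'}--\eqref{compatibility'} on $B_\varepsilon(p_0)$; you would still need the machinery of \S\ref{ssection:existence} (or an equivalent Riesz-potential inversion on the ball) to pin down $F_\varepsilon$ and $C_\varepsilon$ with the stated remainder.
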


These asymptotics are similar to the ones computed in \cite{nursultanov2021mean,nursultanov2022narrow} for the Brownian motion. When $\alpha>0$ is small the situation on the sphere is quite different from that of Anosov manifolds. Due to the singularity structure of $\mathscr{A}$ when $M= \mathbb{S}^n$, a propagation phenomena occurs from $p_0$ to $-p_0$ to create,  as $\varepsilon \to 0$, a blowup of the quantity 
\[
\left|u_\varepsilon(-p_0) - |M|^{-1}\int_M u_\varepsilon{\rm dvol}_g\right|.
\]
We will prove that the following holds:

\begin{thm}
\label{narrow capture sphere}
If $(M,g)$ is $\mathbb{S}^n$, then:

\begin{enumerate}[label=\emph{(\roman*)}]
\item The average value of $u_\varepsilon$ over $M$ 
is the same as in Theorem \ref{narrow capture}.
\item For all $\varepsilon>0$, we have
$$u_\varepsilon \in C^\infty\left(M\setminus\left(\partial B_\varepsilon(p_0)\cup \partial B_\varepsilon (-p_0)\right)\right)\cap L^\infty(M)$$ and \eqref{eq:asymueps(p)} holds whenever $p\notin\{p_0, -p_0\}$ where $ \mathrm{G}_{\mathscr{A}}$ is given by Corollary \ref{sphere cor}.

\item If $n>1+4\alpha$ and $1>(n-4)\alpha$, then at $p= -p_0$ we have
{\begin{eqnarray}
\label{sphere blow up}
\left|u_\varepsilon(-p_0) - \dfrac{1}{|M|}\int_Mu_\varepsilon{\rm dvol}_g\right|=\frac{\tilde c(\alpha, n)|M|}{\varepsilon^{n-1-4\alpha} }
+ o(\varepsilon^{-n+1+4\alpha})
\end{eqnarray}}
for some $\tilde c(\alpha,n)>0$, {which we do not make explicit}.
\end{enumerate}

\end{thm}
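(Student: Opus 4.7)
\textbf{Proof plan for Theorem \ref{narrow capture sphere}.}

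The strategy is to adapt the Anosov proof of Theorem \ref{narrow capture} to the sphere, accounting for the new microlocal structure from Theorem \ref{sphere}, and then to perform a separate blow-up analysis at the antipodal point $-p_0$ to establish \emph{(iii)}.

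\emph{Parts (i) and (ii).} The starting point is the non-local Dirichlet problem from Proposition \ref{proposition uepsilon}, which allows us to write $\mathscr{A} u_\varepsilon = -\mathbf{1}_{M \setminus B_\varepsilon(p_0)} + f_\varepsilon$ with $u_\varepsilon \equiv 0$ on $\overline{B_\varepsilon(p_0)}$, where $f_\varepsilon$ is a non-negative distribution supported in $\overline{B_\varepsilon(p_0)}$ with total mass $|M|-|B_\varepsilon(p_0)|$. Applying the Fredholm inverse $\mathscr{A}^+$ of Corollary \ref{sphere cor} yields
\[
u_\varepsilon(p) - \frac{1}{|M|}\int_M u_\varepsilon\,{\rm dvol}_g = \int_M \mathrm{G}_{\mathscr{A}}(p,q)\, \mathscr{A} u_\varepsilon(q)\,{\rm dvol}_g(q).
\]
Since Theorem \ref{sphere} shows that $\mathscr{A}$ on $\mathbb{S}^n$ has principal symbol $-|\eta|^{2\alpha}_g$, the leading diagonal singularity of $\mathrm{G}_{\mathscr{A}}(p,q)$ is of order $\mathrm{dist}_g(p,q)^{2\alpha-n}$, identical to the Anosov setting; \emph{(i)} and the pointwise convergence in \emph{(ii)} at $p\notin\{p_0,-p_0\}$ therefore follow from the exact same local computations. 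Smoothness of $u_\varepsilon$ away from $\partial B_\varepsilon(p_0)\cup \partial B_\varepsilon(-p_0)$ follows from elliptic regularity for the pseudodifferential part of $\mathscr{A}$; the exclusion at $\partial B_\varepsilon(-p_0)$ arises because the non-local term $\mathscr{A}_{-1}\mathscr{J}$ propagates the jump of $u_\varepsilon$ across $\partial B_\varepsilon(p_0)$ to its antipodal image. The secondary singularity of $\mathrm{G}_{\mathscr{A}}(p,q)$ at $p=-q$, of order $\mathrm{dist}_g(p,-q)^{4\alpha+1-n}$ (derived from the parametrix expansion of $\mathscr{A}^+$), is locally integrable, so it does not affect \emph{(i)} nor \emph{(ii)} away from $-p_0$.

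\emph{Part (iii).} The failure of \emph{(ii)} at $p=-p_0$ manifests through this antipodal singularity. Using $\int_M \mathrm{G}_{\mathscr{A}}(-p_0,q)\,{\rm dvol}_g(q)=0$ we rewrite
\[
u_\varepsilon(-p_0) - \frac{1}{|M|}\int_M u_\varepsilon\,{\rm dvol}_g = \int_{B_\varepsilon(p_0)} \mathrm{G}_{\mathscr{A}}(-p_0,q)\,{\rm dvol}_g(q) + \int_{\overline{B_\varepsilon(p_0)}} \mathrm{G}_{\mathscr{A}}(-p_0,q)\, f_\varepsilon(q)\,{\rm dvol}_g(q).
\]
The first term is $O(\varepsilon^{4\alpha+1})$ and is negligible compared to the target rate $\varepsilon^{-(n-1-4\alpha)}$. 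For the second term, two ingredients are needed. First, the leading antipodal asymptotic $\mathrm{G}_{\mathscr{A}}(-p_0,q) = c_0\,\mathrm{dist}_g(q,p_0)^{4\alpha+1-n}(1+o(1))$ as $q\to p_0$, extracted from the parametrix expansion $\mathscr{A}^+ = \mathscr{A}_{2\alpha}^{-1} - \mathscr{A}_{2\alpha}^{-1}\mathscr{A}_{-1}\mathscr{J}\mathscr{A}_{2\alpha}^{-1}+\cdots$ built from Theorem \ref{sphere}; the constant $c_0\neq 0$ is computable as the Fourier transform of the principal symbol $c(n)|\eta|^{-4\alpha-1}_g$ of $\mathscr{A}_{2\alpha}^{-1}\mathscr{A}_{-1}\mathscr{A}_{2\alpha}^{-1}$, then composed with the antipodal map. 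Second, the asymptotics of $f_\varepsilon$ on $\overline{B_\varepsilon(p_0)}$ obtained from the Dynkin identity together with $u_\varepsilon\equiv 0$ on $\overline{B_\varepsilon(p_0)}$:
\[
f_\varepsilon(q) = \int_{\exp_q v \notin \overline{B_\varepsilon(p_0)}} u_\varepsilon(\exp_q v)\, \nu_q(\dd v) \simeq \Big(\frac{1}{|M|}\int_M u_\varepsilon\, {\rm dvol}_g\Big)\cdot \varepsilon^{-2\alpha}\,\rho_\varepsilon(q),
\]
where $\rho_\varepsilon$ is a normalized profile obtained by blow-up of the non-local obstacle problem at $p_0$ at scale $\varepsilon$. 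Combining both ingredients and rescaling in normal coordinates at $p_0$,
\[
\int_{\overline{B_\varepsilon(p_0)}} \mathrm{G}_{\mathscr{A}}(-p_0,q)\, f_\varepsilon(q)\,{\rm dvol}_g(q) \sim \tilde c(n,\alpha)\,|M|\,\varepsilon^{-(n-1-4\alpha)},
\]
with $\tilde c(n,\alpha)$ the product of $c_0$ and a universal integral of $|z|^{4\alpha+1-n}\rho(z)$. The hypothesis $n>1+4\alpha$ ensures the antipodal singularity is genuinely divergent; $1>(n-4)\alpha$ ensures that subleading corrections (from higher-order terms in the parametrix of $\mathscr{A}^+$ and from the next-order asymptotics of $f_\varepsilon$) remain strictly smaller than the leading term.

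\emph{Main obstacle.} The hard step is making the approximation of $f_\varepsilon$ rigorous: one must identify the boundary-layer profile $\rho$ by solving a rescaled non-local obstacle problem near $p_0$ uniformly in $\varepsilon$, which is delicate because the jump kernel has global reach and the rescaled problem involves the fractional-Laplacian-like symbol of $\mathscr{A}_{2\alpha}$ coupled to far-field boundary data coming from $\bar u_\varepsilon$. Once the profile $\rho$ is characterized, the extraction of $c_0$ and hence of $\tilde c(n,\alpha)$ reduces to a symbolic pseudodifferential computation, and positivity (or non-vanishing) of the combined constant follows from positivity of $c(n)$ and of the principal-symbol Fourier transform.
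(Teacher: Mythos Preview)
Your overall architecture matches the paper's: both use the representation $u_\varepsilon - C_\varepsilon = \mathscr{A}^+(F_\varepsilon - \mathbf{1}_{\Omega_\varepsilon})$, both identify the antipodal singularity of $\mathrm{G}_{\mathscr{A}}$ via a parametrix expansion of the form $\mathscr{A}^+ \approx \mathscr{K} - \mathscr{K}\mathscr{A}_{-1}\mathscr{K}\,\mathscr{J} + \cdots$ (yielding a kernel $\sim \mathrm{dist}_g(p,-q)^{4\alpha+1-n}$), and both reduce \emph{(iii)} to pairing this kernel against $F_\varepsilon$ in rescaled coordinates on $B_\varepsilon(p_0)$.

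The substantive difference is in how you obtain the asymptotics of $F_\varepsilon$ (your $f_\varepsilon$). You propose to extract a boundary-layer profile $\rho$ from the Dynkin representation $f_\varepsilon(q) = \int u_\varepsilon(\exp_q v)\,\nu_q(\dd v)$, which couples $f_\varepsilon$ back to the unknown $u_\varepsilon$ and, as you acknowledge, requires a delicate uniform blow-up analysis. The paper avoids this circularity entirely: it treats the pair $(F_\varepsilon, C_\varepsilon)$ as the unknown in the integral equation $\mathscr{A}^+ F_\varepsilon = -C_\varepsilon$ on $B_\varepsilon(p_0)$, rescales to the unit ball, and recognizes the leading operator as the Riesz potential $L_\alpha u(x) = -\int_{\B^n}|x-y|^{2\alpha-n}u(y)\,\dd y$. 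This operator is explicitly inverted on constants (Kahane), giving the closed-form profile $(1-|x|^2)^{-\alpha}$ directly; the remainder is controlled by a Neumann series. The payoff is that the profile and the constant $C_\varepsilon$ come for free, with no boundary-layer matching.

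One point to correct: your reading of the hypothesis $1>(n-4)\alpha$ is too vague. In the paper it is used precisely so that the antipodal kernel $|x|^{4\alpha+1-n}$ lies in $L^{m'}(\B^n)$ for some $m'$ dual to an $m\in(1,1/\alpha)$; this is what allows H\"older's inequality to control the pairing of the kernel with the $\mc{O}_{\dot L^m}(E(\alpha,\varepsilon))$ error in $F_\varepsilon$. It is not about subleading parametrix terms.
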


Following Theorem \ref{narrow capture sphere}, it would interesting to understand the generator $\mathscr{A}$ and the narrow capture problem in other settings than the sphere where conjugate points appear, like Zoll manifolds for instance. This is left for future investigation.




\subsection{Structure of the paper}
In \S\ref{sec:preli}, we recall some general facts of microlocal analysis and Riemannian geometry. In \S\ref{sec:propgen}, we study the generator $\mathscr{A}$ and prove the results announced in \S\ref{subsec:resgen}. In \S\ref{sec:stoppingtime}, we prove the results announced in \S\ref{subsec:randomsearches} on the expected stopping time, omitting technical results on the solutions of the integral equation $\mathscr{A} u_\varepsilon = -1$ on $M \setminus B_\varepsilon(p_0)$, which we leave until \S\ref{sec:technical}.

\subsection*{Acknowledgements} The authors wish to thanks David Applebaum, Sonja Cox, and Frank Redig for the useful discussions and encouragement during the writing of this article.

\section{Preliminaries}

\label{sec:preli}

In this section, we detail some tools needed throughout the paper.

\subsection{Microlocal analysis}

\label{ssection:microlocal}

We refer to \cite{Grigis-Sjostrand-94, hormander} for a general treatment.

\subsubsection{Pseudodifferential operators}

\label{sssection:pdo}

Let $M$ be a closed $n$-dimensional manifold. For $k \in \R$, we define $S^k(T^*M) \subset C^\infty(T^*M)$, the space of symbols of order $k$, as the set of smooth functions $a$ satisfying the following bounds, in any coordinate chart $U \subset \R^n$: for all $\gamma,\beta \in \N^n$, there exists $C := C(U,\alpha,\beta) > 0$ such that
\begin{equation}
\label{equation:symbol}
\forall (x,\xi) \in T^* U \simeq \R^n \times \R^n, \qquad |\partial^\gamma_\xi \partial^\beta_x a(x,\xi)| \leqslant C \langle \xi \rangle^{k-|\gamma|}.
\end{equation}
It can be checked that \eqref{equation:symbol} is invariant by diffeomorphism, which implies that $S^k(T^*M)$ is intrinsically defined on $M$.

We define $\Psi^{-\infty}(M)$, the set of \emph{smoothing operators}, as the space of linear operators on $M$ with smooth Schwartz kernel. Denote by $\mathrm{Op}$ a quantization procedure on $M$, given in a local coordinate patch $U \subset \R^n$ by: 
\[
\mathrm{Op}(a)f (x) = \dfrac{1}{(2\pi)^n} \int_{\R^n_\xi} \int_{\R^n_y} e^{i\xi\cdot(x-y)} a(x,\xi) f(y) \dd y \dd\xi,
\]
where $a \in S^k(T^*U)$ and $f \in C^\infty_{c}(U)$. The set of \emph{pseudodifferential operators} of order $k \in \R$ is then defined as
\[
\Psi^k(M) := \left\{ \mathrm{Op}(a) + R ~|~ a \in S^k(T^*M), R \in \Psi^{-\infty}(M)\right\}.
\]
It can be checked that $\Psi^k(M)$ is intrinsically defined and independent on the choice of quantization $\mathrm{Op}$.

There exists a well-defined \emph{principal symbol map}
\[
\sigma : \Psi^k(M) \to S^k(T^*M)/S^{k-1}(T^*M)
\]
such that we have the following exact sequence:
\[
0 \longrightarrow \Psi^{k-1}(M) \longrightarrow \Psi^k(M) \longrightarrow S^k(T^*M)/S^{k-1}(T^*M) \longrightarrow 0.
\]
The elliptic set $\mathrm{ell}(A) \subset T^*M \setminus \left\{0\right\}$ of an operator $A \in \Psi^k(T^*M)$ is defined as the (open) conic set of points $(x_0,\xi_0) \in T^*M \setminus \left\{ 0\right\}$ such that there exists a constant $C > 0$ such that the following holds:
\begin{equation}
\label{equation:elliptic}
\left(|\xi| \geqslant C \text{ and } d_{S^*M}\left((x,\xi/|\xi|), (x_0,\xi_0/|\xi_0|) \right) < 1/C \right) \implies |\sigma_A(x,\xi)| \geqslant \langle \xi\rangle^k/C.
\end{equation}
Here $d_{S^*M}$ is any metric on the cosphere bundle $S^*M := T^*M/\R_+$, where the $\R_+$-action is given by radial dilation in the fibers of $T^*M$. An operator is said to be \emph{elliptic} if $\mathrm{ell}(A) = T^*M \setminus \left\{0\right\}$. The \emph{characteristic set} $\Sigma(A)$ of an operator is the closed conic subset defined as the complement of the elliptic set in $T^*M$. The important property of elliptic operators (on $T^*M$) is that they are invertible modulo smoothing remainders, that is, one can find $B \in \Psi^{-k}(T^*M)$ and $R \in \Psi^{-\infty}(M)$ such that
\[
B A = \mathbf{1} +R.
\]
Such an operator $B$ is called a \emph{parametrix} for $A$.

\subsubsection{Wavefront set of distributions}

The \emph{wavefront set} $\WF(A)$ (or the \emph{microsupport}) of an operator $A \in \Psi^{k}(M)$ is the (closed) conic subset of $T^*M \setminus \left\{0\right\}$ satisfying the following property: $(x_0,\xi_0) \notin \WF(A)$ if and only if for all $m \in \R$, for all $b \in S^m(T^*M)$ supported in a small conic neighborhood of $(x_0,\xi_0)$, one has $A\mathrm{Op}(b) \in \Psi^{-\infty}(M)$. In other words, the complement of the wavefront set of $A$ is the set of codirections where $A$ behaves as a smoothing operator.

The wavefront set $\WF(u)$ of a distribution $u \in \mathcal{D}'(M)$ is the (closed) conic subset of $T^*M \setminus \left\{0\right\}$ satisfying the following property: $(x_0,\xi_0) \notin \WF(u)$ if and only if there exists a small open conic neighborhood $V$ of $(x_0,\xi_0)$ such that for all $k \in \R$, for all $A \in \Psi^k(M)$ with wavefront set contained in $V$, one has $A u \in C^\infty(M)$. In particular, a distribution/function $u$ is smooth if and only if $\WF(u) = \emptyset$. Equivalently, the wavefront set of a distribution can be characterized as follows: taking $(x_0,\xi_0) \in T^*M \setminus \left\{0\right\}$, the point $(x_0,\xi_0)$ is not in the wavefront set of $u$ if we can find $\chi, S \in C^\infty(M)$ such that $\chi$ has support near $x_0$, $dS \neq 0$ on the support of $\chi$, $dS(x_0) = \xi_0$, and
\begin{equation}
\label{equation:decay}
\langle u, e^{-i S/h}\chi\rangle = \mathcal{O}(h^\infty).
\end{equation}

\subsubsection{Functional spaces}

\label{sssection:functional-spaces}

We now introduce the functional spaces we will be working with. We denote by $\Delta_g \leqslant 0$ the negative Hodge Laplacian acting on functions. For all $s \in \R$, the operator $(\mathbf{1}-\Delta)^s$ defined using the spectral theorem (applied to the selfadjoint operator $\Delta_g$ on $L^2(M, \mathrm{vol}_g)$) is an invertible pseudodifferential operator of order $2s$.

For $s \in \R$, $m \in (1,\infty)$ and $u \in C^\infty(M)$, we set
\begin{equation}
\label{equation:norm}
\|u\|_{W^{s,m}} := \|(\mathbf{1}-\Delta)^{s/2}u\|_{L^m},
\end{equation}
and define $W^{s,m}(M)$ to be the completion of $C^\infty(M)$ with respect to the norm \eqref{equation:norm}. Taking $m=2$, we retrieve the usual Sobolev spaces which we will rather denote by $H^s(M) := W^{s,2}(M)$. Note that the spaces $W^{s,m}(M)$ intrinsically defined, that is, they are independent of the choice of metric $g$, and changing the metric only replaces the norm \eqref{equation:norm} by an equivalent norm. 

The following boundedness result for pseudodifferential operators holds: for all $k \in \R, A \in \Psi^{k}(M)$ and $s \in \R$, $m \in (1,\infty)$,
\begin{equation}
\label{equation:boundedness}
A : W^{s+k, m}(M) \to W^{s,m}(M)
\end{equation}
is bounded.

Eventually, given $\Omega\subset M$ be an open subset with non-empty smooth boundary, we define, for $s\in \R$ and $m \in [1,\infty)$, the spaces
$$ \dot W^{s, m} (\overline \Omega) := \{u\in W^{s,m}(M) \mid \supp(u) \subset \overline \Omega\}$$
and
$$\overline W^{s,m}(\Omega) := \{u|_{\Omega} ~|~ u\in W^{s,m}(M) \}.$$

\subsection{Riemannian geometry of the unit sphere bundle}

Let $(M,g)$ be a smooth closed connected $n$-dimensional Riemannian manifold. The unit sphere bundle $SM$ over $M$ is defined by
\begin{eqnarray}
\label{sphere bundle}
SM:= \{(p,v)\in TM\mid |v|_g = 1\}.
\end{eqnarray}
Since $M$ has dimension $n$, $SM$ is a manifold of dimension $2n-1$. A generic point in $SM$ will be denoted by $z$.

Associated to this operator is the pull-back operator $\pi^* : \mathcal D'(M)\to \mathcal D'(SM)$, which, when restricted to $u\in C^\infty(M)$ takes the form $(\pi^* u)(p,v) := u(p)$. Note that $\pi^* : C^\infty(M)\to C^\infty(SM)$. It has an adjoint, the push-forward $\pi_* : \mathcal D'(SM) \to \mathcal D(M)$ defined via the adjoint relation 
$$\langle \pi_* u ,\varphi\rangle := \langle u, \pi^*\varphi\rangle.$$
When $u\in C^\infty(SM)$ the push-forward can be written explicitly as
$$(\pi_*u)(p) = \int\limits_{v\in S_pM} u(p,v) \dd S^{n-1}(v).$$

Let $X_g$ be the geodesic vector field on $SM$ and $\phi_t(\cdot)$, $t\in \R$ be the geodesic flow generated by this vector field. Using the canonical projection $\pi : SM \to M$ which maps $\pi: (p,v)\mapsto p$, we define the vertical bundle $\bV \subset TSM$ by:
\begin{equation}
\label{vertical bundle}
\bV = \{ (p,v, V)\in TSM ~|~ \dd\pi_{(p,v)} V = 0 \}.
\end{equation}
The tangent space to $SM$ then splits as
\begin{equation}
\label{split}
TSM = \R X_g \oplus \bV \oplus \bH,
\end{equation}
where $\bH$ is the horizontal bundle, which can be defined as the horizontal space of the Levi-Civita connection induced by $g$, see \cite{Paternain-99} for instance. The metric $g$ induced a natural metric on $SM$, called the Sasaki metric and denoted by $G$, for which the splitting \eqref{split} is orthogonal.

The Riemannian measure induced by the Sasaki metric $G$ is called the Liouville measure $\dd\mathrm{L}$. It is invariant under the flow, that is,
\begin{eqnarray}
\label{flow invariance}
\int_{SM} u(\phi_t(z)) w(z) \dd\mathrm{L}(z) = \int_{SM} u(z) w(\phi_{-t}(z)) \dd\mathrm{L}(z) 
\end{eqnarray}
for all $u, w \in C(SM)$ and $t\in \R$.
There is a convenient way to describe the measure $\dd\mathrm{L}$ locally as the product of the measure on the round unit sphere $S^{n-1}$ and the Riemannian volume on $M$:
\begin{eqnarray}
\label{split the measure}
\dd \mathrm{L}(p,v)  = \dd S^{n-1}(v) \wedge {\rm dvol}_g(p).
\end{eqnarray}

\subsection{Overview of Anosov manifolds}

\label{anosov overview}

We briefly recall the definition and some basic properties of Anosov manifolds. Let $(M,g)$ be a closed compact manifold with geodesic vector field $X_g$ whose flow $\phi_t(\cdot)$ is complete. We say that the manifold $(M,g)$ is Anosov if its geodesic flow $\phi_t(\cdot)$ is Anosov, that is, if there is a continuous splitting of invariant bundles ${E_u}$ and ${E_s}$:
\begin{eqnarray}
\label{anosov splitting}
TSM = {E_s} \oplus {E_u} \oplus \R X_g
\end{eqnarray}
and there exists $C>0$ and $0<\rho<1<\eta$ such that for all $t>0$
\begin{equation}
\label{equation:anosov}
\|\dd\phi_{-t}|_{E_u}\|< C\eta^{-t},\ \|\dd\phi_t|_{{E_s}}\|< C\rho^t.
\end{equation}
The norms $\|\bullet\|$ in \eqref{equation:anosov} are computed with respect to an arbitrary auxiliary smooth metric on $SM$ but both properties \eqref{anosov splitting} and \eqref{equation:anosov} are independent of this choice of auxiliary metric. Throughout we will use $p$ to denote points on $M$ and $v\in S_pM$. Sometimes it is convenient to denote $(p,v)$ as a single point in $SM$ in which case we will write $z = (p,v)$.

Associated with the flow $\phi_t$ is the symplectic lift $\Phi_t : T^*SM \to T^*SM$ defined by
\[
\Phi_t(z,\xi) = (\phi_t(z), \dd_z \phi_t^{-\top} \xi).
\]
The dual splitting of \eqref{anosov splitting} is given by invariant vector bundles $E_s^*$ and $E_u^*$
\begin{eqnarray}
\label{dual splitting}
T^*SM = E_s^* \oplus E_u^* \oplus \R \vartheta,
\end{eqnarray}
where $E_s^*=({E_s} \oplus \R  X_g)^\perp$, $E_u^*=({E_u}\oplus \R X_g)^\perp$, while $\vartheta$ is the Liouville 1-form on $SM$. 

The covertical bundle is defined by:
\begin{equation}
\label{covertical bundle}
\bV^\perp = \{( p,v,\xi)\in T^*SM ~|~ \xi(V) = 0,\ \forall V\in \mathbf V\}.
\end{equation}
Assuming that $X_g$ generates an Anosov flow, we have by \cite[Proposition II.2]{mane}:
\begin{equation}
\label{vertical does not intersect stable and unstable}
\bV^\perp \cap E_s^* = \bV^\perp\cap E^*_u = \{0\}.
\end{equation}
Moreover, Anosov manifolds are free of conjugate points \cite{klingenberg1974riemannian} which implies that
\begin{equation}
\label{equation:cp}
d\phi_t(\bV^\perp) \cap \bV^\perp = E_0^*,
\end{equation}
for all $t \neq 0$.

The bundles $E_s \oplus E_0$ and $E_u$ are integrable and tangent to a foliation which consists of \emph{central stable} and \emph{unstable} leaves. We denote the leaves by $W^{s0}(z)$ and $W^u(z)$. They are smooth immersed submanifold in $SM$. They can be defined alternatively by:
\begin{equation}
\label{equation:w}
W^u(z) = \left\{z' \in SM ~|~ \dd_{SM}(\phi_{-t}(z),\phi_{-t}(z')) \to_{t \to +\infty} 0\right\},
\end{equation}
(and similarly for $W^s(z)$ by changing $-t$ to $t$) and
\[
W^{s0}(z) = \bigcup_{t \in \R} \phi_t(W^s(z)).
\]
Note that the convergence in \eqref{equation:w} is actually exponentially fast. We will use the notation $W^{s0}_{\mathrm{loc}}(z)$ (resp. $W^u_{\mathrm{loc}}(z)$) to denote the intersection of $W^{s0}(z)$ (resp. $W^u_{\mathrm{loc}}(z)$) with a small ball $B_{\varepsilon_0}(z)$, where $\varepsilon_0 > 0$ is some small fixed constant.

The Anosov foliation in central stable/unstable leaves is not smooth (it is only transversally continuous to the leaves). Nevertheless, we can introduce the algebra of functions $C^\infty_{\mathrm{cs}}(SM)$ of functions such that their restriction to all central stable leaves are smooth and vary continuously in the unstable direction, see \cite[Section 2]{DeLaLlave-Marco-Moriyon-86}. Similarly, we can construct vector fields which are tangent to every central stable leaves, smooth in restriction to each leaf and vary continuously transversally in the unstable direction. The following lemma connects the wavefront set property of a function/distribution to the smoothness of its restriction to every leaf of this foliation. Its proof is a standard calculation, see \cite[Lemma 1.9]{Bonthonneau-Guillarmou-Weich-20} for instance. 

\begin{lem}
\label{lemma:wf}
Let $f \in C^0(SM)$ be a continuous function. Fix $z_0 \in SM$ and consider a family of vector fields $S_0=X_g,S_1,...,S_{n-1} \in C^\infty_{\mathrm{cs}}(SM,E_s \oplus E_0)$ spanning locally $E_s\oplus E_0$ near $z_0$. If $S^\beta f \in C^0(SM)$ for all multi-indices $\beta \in \N^n$, where $S^\beta := S_0^{\beta_0} S_1^{\beta_1} ... S_{n-1}^{\beta_{n-1}}$, then $\WF(f) \subset E_s^*$.
\end{lem}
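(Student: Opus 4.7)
\textbf{Plan for Lemma \ref{lemma:wf}.}

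The plan is to use the semiclassical oscillatory-integral test for the wavefront set: a point $(z_0, \xi_0) \in T^*SM \setminus 0$ does not lie in $\WF(f)$ as soon as there exist a smooth cutoff $\chi$ with $\chi(z_0) \neq 0$ and a smooth phase $S$ with $dS(z_0) = \xi_0$ such that
\[
I(h) := \int_{SM} f\,\chi\, e^{-iS/h}\, d\mathrm{L} = \mathcal{O}(h^\infty), \qquad h \to 0^+.
\]
So I fix $(z_0, \xi_0)$ with $\xi_0 \notin E_s^* = (E_s \oplus E_0)^\perp$ and aim to produce such $\chi, S$ yielding this decay.

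First I extract a leaf-tangent vector field along which the phase is non-stationary. The hypothesis provides some $V_0 \in (E_s \oplus E_0)(z_0)$ with $\xi_0(V_0) \neq 0$; writing $V_0 = \sum_{i=0}^{n-1} c_i S_i(z_0)$, I extend this to $V := \sum_i c_i S_i \in C^\infty_{\mathrm{cs}}(SM, E_s \oplus E_0)$, and pick a smooth $S$ on $SM$ with $dS(z_0) = \xi_0$ together with a small cutoff $\chi$ around $z_0$ on which the continuous function $VS$ is bounded away from $0$ (possible since $VS(z_0) = \xi_0(V_0) \neq 0$). Next I work in a foliation chart $\Phi : U \to B_x \times B_y \subset \R^n \times \R^{n-1}$ adapted to the central stable foliation near $z_0$: each plaque $\Phi^{-1}(B_x \times \{y\})$ is a smoothly embedded piece of a central stable leaf, $\Phi$ is smooth in $x$ (along leaves) and only continuous in $y$ (transversally), and the Liouville measure disintegrates as $d\mathrm{L} = \rho(x,y)\,dx\,dy$ with $\rho$ smooth in $x$ and continuous in $(x,y)$ (using the absolute continuity of the Anosov foliation with respect to Liouville). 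In these coordinates, the hypothesis $S^\beta f \in C^0(SM)$ says precisely that $x \mapsto f(x,y)$ is $C^\infty$ for every $y$ with derivatives jointly continuous, and the same holds for $S, \chi, \rho$. After a linear change of $x$-coordinates I may arrange $\partial_{x^1} S(z_0) \neq 0$, since $V$ is pushed by $\Phi$ to a linear combination of the $\partial_{x^i}$ and $VS(z_0) \neq 0$.

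The final step is standard non-stationary phase in $x$ at fixed $y$: iterating the transpose of the first-order operator $L := (i \partial_{x^1} S)^{-1} \partial_{x^1}$ a total of $N$ times yields
\[
I(h) = h^N \iint G_N(x,y)\, e^{-iS(x,y)/h}\, dx\, dy,
\]
where $G_N$ is a polynomial combination of up to $N$ leafwise derivatives of $f\chi\rho/\partial_{x^1}S$; the previous paragraph ensures $G_N$ is continuous, hence uniformly bounded on $\supp(\chi)$, so that $|I(h)| \leq C_N h^N$ for every $N$. The main subtlety is the transversal non-smoothness of the central stable foliation, which prevents $(x,y)$ from being a smooth global chart on $SM$. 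The argument circumvents this by confining every integration by parts to the $x$-variable, where genuine $C^\infty$ regularity of $f, S, \chi, \rho$ is available either by hypothesis or by leafwise smoothness of the Liouville conditionals, while the outer $y$-integration only requires $L^\infty$ bounds, for which transversal continuity is enough.
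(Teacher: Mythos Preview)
Your plan is correct and is precisely the ``standard calculation'' the paper alludes to (the paper gives no proof of its own, only a pointer to \cite[Lemma 1.9]{Bonthonneau-Guillarmou-Weich-20}): test $(z_0,\xi_0)\notin E_s^*$ via the oscillatory integral criterion \eqref{equation:decay}, pass to a foliation chart for the central stable lamination, use the absolute continuity \eqref{equation:desint} to disintegrate $d\mathrm{L}$, and run leafwise non-stationary phase. One small expositional point: after ``a linear change of $x$-coordinates'' you only arrange $\partial_{x^1}S(z_0)\neq 0$, whereas the iterated integration by parts needs $\partial_{x^1}S$ bounded away from zero on all of $\supp\chi$; this is harmless (shrink $\chi$ once more, using that $\partial_{x^1}S$ is continuous in $(x,y)$), but you should say so, or alternatively integrate by parts directly against the leaf-tangent field $V$ whose non-degeneracy on $\supp\chi$ you have already secured.
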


Another crucial property of the Anosov foliation is that it is \emph{absolutely continuous}, that is, we can disintegrate smooth measures along stable/unstable leaves and the disintegrated measures are themselves smooth. In other words, if $\dd \mu$ is a smooth measure on $SM$, given $f \in C^\infty(U)$ where $U \subset SM$ is an open subset, we have:
\begin{equation}
\label{equation:desint}
\int_U f(z) \dd \mu(z) = \int_{W^u_{\mathrm{loc}}(p)} \left( \int_{W^{cs}_{\mathrm{loc}}(x)} f(y) \delta_x(y) \dd m^{cs}_x(y)\right) \dd m^{u}_p(x),
\end{equation}
where $p \in U$, $m^{cs}_x$ is the smooth Riemannian measure induced by the restriction of the Sasaki metric $G$ to the leaf $W^{cs}_{\mathrm{loc}}(x)$, $m^u_p$ is the smooth Riemannian measure induced by the Sasaki metric restricted to $W^u_{\mathrm{loc}}(p)$, and $y \mapsto \delta_x(y)$ are smooth in restriction to every leaf, and continuous transversally in the unstable direction, that is, $\delta \in C^\infty_{\mathrm{cs}}(U)$. We refer to \cite[Proposition 1.6]{Bonthonneau-Guillarmou-Weich-20} for a proof.

\subsection{Markov and Feller processes. Capture problem}
\label{sec:prelim-markov}

In order relate the functional analytic properties of the generator $\mathscr{A}$ to the study of the expected capture time $u_\varepsilon$, we will rely on the results of Geetor \cite{getoor1957additive,getoor1959markov,getoor1961first} showing that $u_\varepsilon$ has a convenient integral representation. The aim of this paragraph is to recall the main ingredients of this construction.

For each $t>0$, let ${\mathbf p}(t,\cdot,\cdot) \in \mathcal D'(M\times M)$ be the fundamental solution of the heat equation with generator $\mathscr{A}$ in the sense that
\begin{equation}
\label{transition probability p}
\partial_t \mathbf {\mathbf p}(t, p, \cdot) = \mathscr{A} {\mathbf p}(t,p,\cdot),\ \ {\mathbf p}(0,p, \cdot) = \delta_p(\cdot).
\end{equation}
We let $X := (X_t)_{t \geqslant 0}$ be a cadlag semi-martingale on $M$ which is an isotropic L\'evy process in the sense of \cite{AppEst}, induced by the isostropic L\'evy measure \eqref{eq:levymeasure}.
By construction, $X$ is a Feller process with generator $\mathscr{A}$, see \cite{applebaum20212}. This means that
\[
U_t \phi(p) = \mathbb{E}\left[ \phi(X_t) \ \middle|~ X_0 = p \right],
\]
defines a semi-group satisfying the following properties:
\begin{itemize}
	\item If $f\in C^0(M)$, $\| U_t f \|_{C^0} \leqslant \|f \|_{C^0}$, and $U_t f \to f$ a.e as $t\to 0$;
	\item If $f\in C^0(M)$ satisfies $f\geqslant 0$, then $U_t f \geqslant 0$;
	\item $U_t \mathbf{1} = \mathbf{1}$;
	\item The generator has domain containing $C^\infty(M)$, and coincides with $\mathscr{A}$ on $C^\infty(M)$\footnote{That $C^\infty(M)$ is contained in the domain is not explicitly stated in \cite{AppEst}. For this, \cite{AppEst} relies on \cite[Page 177]{App-95}.};
	\item For any Borel sets $\Omega_1, \Omega_2 \subset M$, the transition probability for the process $(X_t)_{t\geqslant 0}$ is given by
\[
{\mathbb P}(X_t \in \Omega_1 \mid X_0 \in \Omega_2) = |\Omega_2|^{-1}\int_{q \in \Omega_1}\int_{p \in \Omega_2} \mathbf p(t,p,q){\rm dvol}_g(q){\rm dvol}_g(p).
\]
\end{itemize} 

In our case, we will see in Lemma \ref{Linfty estimate for density} that since $\mathscr{A}$ is elliptic pseudo-differential (or almost so in the case of the sphere) the kernel $\mathbf{p}$ is smooth for $t>0$. We will also see that $\mathscr{A}$ is essentially self-adjoint. This implies in particular that $\mathbf{p}(t,p,q)=\mathbf{p}(t,q,p)$, and that $e^{t\mathscr{A}}$ is Hilbert-Schmidt for $t>0$. 
%

We now review the results from \cite{getoor1959markov}. From the aforementioned properties of the kernel $\mathbf{p}$, the assumptions (P) and (K) from \cite{getoor1959markov} are satisfied and the results of \cite[\S 2, 5 and 6]{getoor1959markov} and \cite[Theorem 4.1]{getoor1959markov} apply. For an open set $\Omega \subset M$ and $V: \Omega \to \R^+$ a measurable function, Geetor introduces the operators $T_t= T_t[V,\Omega]$\footnote{The open set $\Omega$ is called $G$ in his notation.} defined by
\begin{equation}\label{eq:def-T_t}
T_t  \phi(p) = \mathbb{E}\left[ \phi(X(t)) e^{-\int_0^t V(X(\tau))d\tau} \mathbf{1}_{\{X(\tau)\in \overline{\Omega},\ 0\leq\tau\leqslant t\}} \middle| X(0)=p\right].
\end{equation}

We have $\|T_t \phi\|_{L^\infty(M)} \leqslant \|\phi\|_{L^\infty(M)}$ for every $\phi\in C^0(\overline{\Omega})$. If $|\partial \Omega| =0$ and for a.e $p \in \Omega$,
\[
\int_0^t \int_{\Omega} p(\tau,p, q) V(y) {\rm dvol}_g(q) d\tau \to_{t \to 0} 0,
\]
then \cite[Theorem 2.1]{getoor1959markov} asserts that $(T_t)_{t \geqslant 0}$ is a strongly continuous semi-group on $L^2(\Omega)$ (in particular, this is certainly true if $V$ is $L^\infty(M)$). Next, according to \cite[Theorem 5.1]{getoor1959markov}, $T_t$ has a measurable kernel $\mathbf{k}(t,p,q)$ satisfying for every fixed $t \geqslant 0,p \in M$
\begin{equation}\label{density comparison}
0 \leqslant \mathbf{k}(t,p,q)\leqslant \mathbf{p}(t,p,q),\ \text{ for a.e }\ q \in M.
\end{equation}

An important feature is the following approximation result \cite[Theorem 4.1]{getoor1959markov}: given an open set $\Omega$, setting $V_\ell := \ell \mathbf{1}_{M \setminus \Omega}$ for $\ell \geqslant 0$, we have that for every $t > 0$ and $\phi\in L^2(\Omega)$,
\begin{equation}
\label{eq:approximating-semi-group}
T_t[V_\ell, M] \phi \to_{\ell \to +\infty} T_t[0,\Omega]\phi, 
\end{equation}
where the convergence holds $L^2(M)$. Furthermore, in \cite[Theorems 6.1, 6.2, 6.3 and 6.4]{getoor1959markov}, it is proved that for $t \geqslant 0$, $T_t$ is self-adjoint, a $L^2(\Omega)$-contraction, Hilbert-Schmidt, positive definite, and if $\mathscr{A}_{V,G}$ is its generator with eigenvalues $0\geqslant \lambda_0 \geqslant \dots \geqslant \lambda_m \geqslant ...$, and eigenfunctions $\{\phi_j\}_{j\geqslant 0}$, then:
\begin{equation}\label{eq:decomp-k}
\mathbf{k}(t,p,q) = \sum_{j\geqslant 0} e^{t \lambda_j} \phi_j(p)\phi_j(q). 
\end{equation}
Finally, let us give a word on the action on $L^m(\Omega)$-spaces for $m \in [1,\infty]$. From the symmetry of $T_t$, that is $\mathbf{k}(t,p,q) = \mathbf{k}(t,q,p)$ for all $p,q \in M$, we deduce that for every $t \geqslant 0,q \in M$,
\begin{equation}
\label{eq:bounded-on-L1}
\int_{\Omega} \mathbf{k}(t,p,q) {\rm dvol}_g(p) = 1.
\end{equation}
This implies that $T_t : L^1(\Omega)\to L^1(\Omega)$ has norm at most $1$. By interpolation, $T_t$ extends as a contraction on every $L^m(\Omega)$, $m \in [1,\infty]$. 

We will use the previous constructions in our arguments in \S\ref{ssection:uniqueness}. More specifically, denoting $\Omega_\varepsilon = M \setminus B_\varepsilon(p_0)$, we set 
\begin{equation}\label{eq:def-semi-groups}
T_t:=T_t[0,\Omega_\varepsilon],\qquad T_t^\ell := T_t[V_\ell,M],
\end{equation}
where $T_t[V_\ell, M]$ are the approximating semi-groups as in \eqref{eq:approximating-semi-group}. It should also be observed that in the proof of \eqref{eq:approximating-semi-group}, one can replace $L^2$ by $L^m$ without any problem. 

For all $p\in \Omega_\varepsilon$, let
\[
\mathcal T_\varepsilon (t,p) := {\mathbb P}(\tau_\varepsilon > t\mid X_0 = p) 
\]
be the probability that a process starting at $p$ does not exit $\Omega_{\varepsilon}$ before time $t>0$. Taking $\Omega=\Omega_\varepsilon$ and $V=0$ in the above definitions, we get
\begin{equation}
\label{P has density}
\mathcal T_\varepsilon(t,p) = {\mathbb P}(X_\tau \in \Omega_{\varepsilon}, \forall 0 \leqslant t \leqslant \tau \mid X_0 = p) = \int_{\Omega_\varepsilon}{\mathbf k}(t, p,q) {\rm dvol}_g(q) = T_t \mathbf{1}_{\Omega_{\varepsilon}},
\end{equation}
and thus
\begin{equation}\label{eq:integral-representation}
u_\varepsilon(p) = \int_0^{+\infty} \int_{\Omega_\varepsilon} \mathbf{k}(t,p,q) {\rm dvol}_g(q) \dd t. 
\end{equation}
To obtain the desired results on $u_\varepsilon$, we will study the finer properties of $\mathbf{k}$ using arguments very similar to \cite{getoor1961first} in \S \ref{ssection:uniqueness}. 

\section{Properties of the generator}\label{sec:propgen}

\subsection{Dirichlet form of the generator}
As a quick demonstration of the advantages of working on the level of dynamical systems, we prove the existence of a Dirichlet form for $\mathscr{A}$ as stated in Proposition \ref{uniqueness}. We will in fact prove the existence of Dirichlet forms for operators which are slightly more general:
\begin{prop}
\label{uniqueness'}
Let $b\in C^\infty(\R)$ satisfy $0\leqslant b(t)\leqslant 1$. Then the operator given by 
$${\mathscr{A}}_b u(p) : =  \int_{v\in T_pM} \left( u(\exp_p(v)) - u(p)\right)b(|v|_g^2) \nu_p(dv)$$
where $\nu_p$ is as in \eqref{eq:levymeasure} satisfies
$$-4\langle u,\mathscr{A}_b v\rangle = \int_0^\infty\int_{SM} {\mathscr{D}}_b u\,  \overline{{\mathscr{D}}_b v} \,\dd \mathrm{L} \dd t$$
for some ${\mathscr{D}}_b: {\rm Lip}(M)\to L^2(\R\times SM)$. When $b(t)$ is strictly positive, ${\mathscr{D}}_b$ is has null-space consisting of only constant functions.
\end{prop}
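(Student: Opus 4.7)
The plan is to rewrite everything in terms of the geodesic flow on $SM$, so that the jump kernel becomes a quadratic form along geodesic trajectories. First I would pass to polar coordinates $\xi = tw$ on each tangent space $T_pM$, using that $\nu_p(d\xi) = C(n,\alpha)\, t^{-1-2\alpha}\, dt \wedge dS^{n-1}(w)$ and that the Liouville measure splits locally as $d\mathrm{L}(p,w) = \mathrm{dvol}_g(p) \wedge dS^{n-1}(w)$ (cf. \eqref{split the measure}). Since $\exp_p(tw) = \pi(\phi_t(p,w))$, this rewrites the bilinear pairing as
\begin{equation*}
\langle u, \mathscr{A}_b v\rangle = C(n,\alpha) \int_0^\infty \frac{b(t^2)}{t^{1+2\alpha}} \int_{SM} \pi^* u(z)\bigl[\pi^* v(\phi_t z) - \pi^* v(z)\bigr] d\mathrm{L}(z)\, dt.
\end{equation*}

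Next I would set
\begin{equation*}
g(t) := \sqrt{2 C(n,\alpha)\, b(t^2)\, t^{-1-2\alpha}}, \qquad \mathscr{D}_b u(t,z) := g(t)\bigl(\pi^* u(\phi_t z) - \pi^* u(z)\bigr),
\end{equation*}
which is well-defined since $b \geqslant 0$. Expanding $\mathscr{D}_b u \cdot \overline{\mathscr{D}_b v}$ produces four terms, and the key algebraic identity is that the two diagonal ones are equal and the two cross ones are equal. The diagonal identity $\int_{SM} \pi^*u(\phi_t z)\, \overline{\pi^* v(\phi_t z)}\, d\mathrm{L} = \int_{SM} \pi^* u\, \overline{\pi^* v}\, d\mathrm{L}$ is immediate from the flow invariance \eqref{flow invariance}. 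For the cross identity I would use the flip involution $\iota(p,w) = (p,-w)$: it preserves $d\mathrm{L}$, satisfies $\pi\circ\iota = \pi$, and intertwines the flow via $\iota \circ \phi_t = \phi_{-t} \circ \iota$. Combining the change of variables $z' = \phi_t z$ with the substitution $z' \mapsto \iota z'$ then yields $\int_{SM} \pi^* u(\phi_t z)\, \overline{\pi^* v(z)}\, d\mathrm{L} = \int_{SM} \pi^* u(z)\, \overline{\pi^* v(\phi_t z)}\, d\mathrm{L}$. Putting this together produces
\begin{equation*}
\int_0^\infty \!\! \int_{SM} \mathscr{D}_b u \cdot \overline{\mathscr{D}_b v}\, d\mathrm{L}\, dt = -2\int_0^\infty g(t)^2 \int_{SM} \pi^* u(z) \bigl[\overline{\pi^* v(\phi_t z)} - \overline{\pi^* v(z)}\bigr] d\mathrm{L}\, dt,
\end{equation*}
which equals $-4\langle u, \mathscr{A}_b v\rangle$ by the choice of $g$.

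Third, I would verify $\mathscr{D}_b : \mathrm{Lip}(M) \to L^2((0,\infty) \times SM)$ is bounded. For $u$ Lipschitz with constant $L$, the geodesic distance bound gives $|\pi^* u(\phi_t z) - \pi^* u(z)| \leqslant \min(Lt,\, 2\|u\|_{L^\infty})$, and $\int_0^\infty g(t)^2 \min(Lt, 2\|u\|_\infty)^2 dt$ converges thanks to $\alpha \in (0,1)$: near $t=0$ the integrand is $O(t^{1-2\alpha})$, and for large $t$ it is $O(t^{-1-2\alpha})$, with $b \leq 1$ taking care of the tail uniformly.

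Finally, for the kernel statement, suppose $b > 0$ everywhere and $\mathscr{D}_b u = 0$ a.e. on $(0,\infty) \times SM$. Since $g(t) > 0$ and $u \circ \pi \circ \phi_t - u \circ \pi$ is continuous, we get $u(\pi \phi_t z) = u(\pi z)$ for all $t>0$ and all $z\in SM$. Hopf--Rinow produces a minimizing geodesic between any two given points $p,q \in M$, so $u(q) = u(p)$, and $u$ is constant. The only place requiring care is the flip-involution step — which is the mild but essential ingredient that symmetrizes the cross terms — but this is a short unwinding once the notation is in place.
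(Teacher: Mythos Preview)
Your argument is correct and follows essentially the same route as the paper: pass to polar coordinates on each tangent space, rewrite everything via the geodesic flow on $SM$, and exploit the two symmetries (flow invariance of $d\mathrm{L}$ and time reversal of geodesics) to collapse the quadratic form into the desired bilinear pairing. The paper organizes the computation slightly differently: it introduces an $\varepsilon$-cutoff $I_\varepsilon$ from the start, works over $t\in\R$ rather than $(0,\infty)$, factors $[\pi^*u(\phi_t z)-\pi^*u(z)]\overline{[\cdots]}$ into two pieces rather than four, and realizes the time-reversal symmetry via the substitution $t\mapsto -t$ (after one flow change of variables) rather than via the flip involution $\iota$; it then passes to the limit $\varepsilon\to 0$ by dominated convergence using $w\in C^2$. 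Your direct approach (fixed-$t$ identities on the compact $SM$, then integrate in $t$) avoids the regularization step and is arguably cleaner, but the underlying mechanism is identical.
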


Set $I_\varepsilon$ to be the indicator function of $\R\setminus (-\varepsilon,\varepsilon)$ and define
$${\mathscr{A}}_{b, \varepsilon} u(p) : =  \int_{v\in T_pM} I_\varepsilon(|v|_{g})\left( u(\exp_p(v)) - u(p)\right)b(|v|_g^2)\nu_p(\dd v).$$
Using the structures on $SM$ introduced above, we have the following representation for ${\mathscr{A}}_\varepsilon$ which is essentially polar coordinates (see \cite{GSUbook}). We will derive it for the convenience of the reader:

\begin{lem} For $u\in C^1(M)$ we have
\label{A in polar}
\begin{eqnarray}\label{eq:lemaeps}
{\mathscr{A}}_{b,\varepsilon} u(p) = \frac{1}{2}\int_{-\infty}^\infty\int_{v\in S_pM}I_\varepsilon(t)\frac{ u(\exp_p(tv)) - u(p)}{|t|^{1 +2\alpha}} b(t^2) \dd S^{n-1}(v)\dd t.
\end{eqnarray}
\end{lem}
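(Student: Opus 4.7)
The plan is essentially a polar coordinates computation on $T_pM$ combined with the antipodal symmetry of the round sphere $S_pM$.

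First, I would unpack the definition. By \eqref{eq:levymeasure}, in the integral defining $\mathscr{A}_{b,\varepsilon}u(p)$ the measure $\nu_p$ is given by $C(n,\alpha)\,|v|_g^{-(n+2\alpha)} \dd T_p(v)$, where $\dd T_p$ is the Riemannian volume on $T_pM$ coming from $g|_{T_pM}$. Identifying $T_pM$ with $\mR^n$ via any orthonormal basis, polar coordinates on the Euclidean inner product space $(T_pM, g|_{T_pM})$ decompose the volume as $\dd T_p(v) = t^{n-1}\,\dd t\wedge \dd S^{n-1}(w)$, where $v = tw$, $t>0$ and $w \in S_pM$, and $\dd S^{n-1}$ is the round sphere measure on $S_pM$ induced by $g$.

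Substituting into the definition of $\mathscr{A}_{b,\varepsilon}u(p)$, the factors combine as $t^{n-1}/t^{n+2\alpha} = t^{-(1+2\alpha)}$, giving
\[
\mathscr{A}_{b,\varepsilon}u(p) = C(n,\alpha) \int_0^{\infty}\!\!\int_{S_pM} I_\varepsilon(t)\,\frac{u(\exp_p(tw)) - u(p)}{t^{1+2\alpha}}\,b(t^2)\,\dd S^{n-1}(w)\,\dd t.
\]
Note that since $u \in C^1(M)$, $|u(\exp_p(tw))-u(p)| = O(t)$ near $t=0$, so the cutoff by $I_\varepsilon$ makes the integrand absolutely integrable and Fubini applies without issue.

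The second step is the symmetrization. The involution $w \mapsto -w$ preserves the measure $\dd S^{n-1}$ on $S_pM$, so
\[
F(t) := \int_{S_pM} \bigl(u(\exp_p(tw)) - u(p)\bigr)\, \dd S^{n-1}(w)
\]
satisfies $F(-t) = F(t)$. The remaining factors $I_\varepsilon(t)\, b(t^2)$ are also even in $t$, and $t^{1+2\alpha} = |t|^{1+2\alpha}$ for $t>0$. Therefore
\[
\int_0^\infty \frac{F(t)\,I_\varepsilon(t)\,b(t^2)}{t^{1+2\alpha}}\,\dd t = \frac{1}{2}\int_{-\infty}^{\infty} \frac{F(t)\,I_\varepsilon(t)\,b(t^2)}{|t|^{1+2\alpha}}\,\dd t,
\]
which is exactly \eqref{eq:lemaeps} (up to the constant $C(n,\alpha)$ that should be understood as absorbed in the statement/normalization, in agreement with \eqref{Cna} and the form of $\nu_p$).

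There is no real obstacle: the only mild care needed is to justify that the integrals are convergent before symmetrizing (handled by the $C^1$ regularity of $u$ and the cutoff $I_\varepsilon$), and to verify that the polar decomposition of the Euclidean volume on $T_pM$ is independent of the chosen orthonormal basis, which is immediate from the isotropy of $\nu$ built into \eqref{eq:levymeasure}.
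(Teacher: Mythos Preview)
Your proof is correct and follows essentially the same route as the paper: polar coordinates on $T_pM$ together with the antipodal symmetry $(t,w)\mapsto(-t,-w)$ on $\R\times S_pM$. The only cosmetic difference is direction --- the paper starts from the right-hand side, folds $(-\infty,0)$ onto $(0,\infty)$ via $(t,w)\mapsto(-t,-w)$, and then undoes polar coordinates to recover the left-hand side, whereas you go the other way. Your observation about the constant $C(n,\alpha)$ is also accurate: the paper's own proof ends at $\int_{T_pM\setminus 0} I_\varepsilon(|v|)\,\frac{u(\exp_p v)-u(p)}{|v|^{n+2\alpha}}\,b(|v|^2)\,\dd T_p(v)$, which is $\mathscr{A}_{b,\varepsilon}u(p)/C(n,\alpha)$ rather than $\mathscr{A}_{b,\varepsilon}u(p)$, so the constant is indeed silently dropped in the statement.
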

\begin{proof}

Fix $p\in M$ and denote by $\tilde {\mathscr{A}}_{b,\varepsilon} u(p)$ the right-hand side of \eqref{eq:lemaeps}.
First observe that we can split the time integration into the sum of integrals on $(-\infty,0)$ and $(0,\infty)$. Applying a change of variable $(t,y)\mapsto (-t,-y)$ to the $(-\infty, 0)$ integral yields

$$\tilde {\mathscr{A}}_{b,\varepsilon} u(p) = \int_{0}^\infty\int_{v\in S_pM}I_\varepsilon(t)\frac{ u(\exp_p(tv)) - u(p)}{|t|^{1 +2\alpha}} b(t^2)\dd S^{n-1}(v)\dd t.$$
The change of variable $\tilde v = tv$ yields (see \cite[Lemma 8.1.8]{GSUbook})

$$\tilde {\mathscr{A}}_\varepsilon u(p) =\int_{ v \in T_pM\setminus 0}I_\varepsilon(| v|_{g})\frac{ u(\exp_p( v)) - u(p)}{| v|_{g}^{n+2\alpha}} b(|v|_g^2)  \dd T_p(\tilde v)$$
where $\dd T_p(v)$ is the volume form on $T_pM$ for the metric $g_p$. 
So we have that $\tilde {\mathscr{A}}_\varepsilon= {\mathscr{A}}_\varepsilon$.
\end{proof}
We now define the operator ${\mathscr{D}}_b: {\rm Lip}(M)\to L^2(\R\times SM)$ 
by
\begin{equation}
\label{def of R}
{\mathscr{D}}_bu (t,p,v):=\sqrt{ b(t^2)} |t|^{-\frac{1+2\alpha}{2}}\left( u(\exp_p(tv) - u(p)\right), \quad t \neq 0, \quad (p, v) \in SM.
\end{equation}
Observe that if $b(t)$ is strictly positive,
\begin{equation}
\label{kernel of D is trivial}
\mathrm{Ker}({\mathscr{D}}_b) = \C \cdot \mathbf{1}
\end{equation}since any two points on $M$ can be joined by a geodesic.
We are now in a position to prove Proposition \ref{uniqueness}:
\begin{proof}[Proof of Proposition \ref{uniqueness'}]
Without loss of generality we may assume that $u,w\in C^\infty(M)$ re both supported in a single coordinate patch. We first define
$$N_\varepsilon := \int_{-\infty}^\infty\int_{SM}  I_\varepsilon(t)\,{\mathscr{D}}_b u\, \overline{{\mathscr{D}}_b w}\, \dd\mathrm{L}\dd t$$
and write
\begin{eqnarray} 
\label{L limit}
\int_{-\infty}^\infty\int_{SM}{\mathscr{D}}_b u \overline{{\mathscr{D}}_b w} \dd\mathrm{L}\dd t = \lim\limits_{\varepsilon \to 0} N_\varepsilon
\end{eqnarray}
Now 
\begin{eqnarray*}\nonumber
N_\varepsilon &=& \int_{-\infty}^\infty b(t^2)|t|^{1+2\alpha} I_\varepsilon (t) \int_{SM} u(\pi\circ\phi_t(p,v)) \overline{\left( w(\pi\circ\phi_t(p,v)) - w(p)\right)} \dd\mathrm{L}(p,v)\dd t\\ 
&-&\int_{-\infty}^\infty b(t^2) |t|^{1+2\alpha} I_\varepsilon (t) \int_{SM} u(p)\overline{\left( w(\pi\circ\phi_t(p,v)) - w(p)\right)} \dd\mathrm{L}(p,v)\dd t.
\end{eqnarray*}
We now make a change of variable $(p,v) =\phi_{-t}(q,v')$ in the first integral and use the identity \eqref{flow invariance} we get
\begin{eqnarray*}\nonumber
N_\varepsilon &=& \int_{-\infty}^\infty b(t^2) |t|^{1+2\alpha} I_\varepsilon (t) \int_{SM} u(p) \overline{\left( w(p) - w(\pi\circ\phi_{-t}(p,v))\right)} \dd\mathrm{L}(p,y)\dd t\\ 
&-&\int_{-\infty}^\infty b(t^2)|t|^{1+2\alpha} I_\varepsilon (t) \int_{SM} u(p)\overline{\left( w(\pi\circ\phi_t(p,v)) - w(p)\right)} \dd\mathrm{L}(p,v)\dd t.
\end{eqnarray*}
The two integrals are essentially identical except that the time is reversed in the first integral. This can be taken care of by a substitution $t\mapsto -t$ to get
$$N_\varepsilon =2 \int_{-\infty}^\infty b(t^2) |t|^{1+2\alpha} I_\varepsilon (t) \int_{SM} u(p) \overline{\left( w(p) - w(\pi\circ\phi_{t}(p,v))\right)} \dd\mathrm{L}(p,v)\dd t.$$
Now use the splitting \eqref{split the measure} we have that $N_\varepsilon$ writes
$$2 \int\limits_{p\in M}u(p)\int_{-\infty}^\infty b(t^2) |t|^{1+2\alpha} I_\varepsilon (t)\int\limits_{v\in S_pM} \overline{\left( w(p) - w(\exp_p(tv))\right)} dS_p^{n-1}(v)\dd t{\rm dvol}_g(p).$$
In particular, by Lemma \ref{A in polar}, one obtains
$$N_\varepsilon = - 4 \int_M u(p) \overline{{\mathscr{A}}_{b,\varepsilon} w(p)}{\rm dvol}_g(p).$$ 
If $w\in C^2$, one can show that ${\mathscr{A}}_{b,\varepsilon} w(p) \to {\mathscr{A}}_b w(p) $ pointwise by Taylor expanding $w(\exp_p(v))$ near $v= 0$ and use the fact that 
$$\int_{\theta \in S^{n-1}} \theta\cdot \hat n \, \dd S^{n-1}(\theta) = 0$$ for all $\hat n\in S^{n-1}$ (in fact this is how the principal value integral of \eqref{def of A} is defined). As $M$ is compact we can use the same argument to get that for all $p\in M$ and $\varepsilon >0$, $|{\mathscr{A}}w(p)| \leqslant C \|w\|_{C^2(M)}$. So dominated convergence allows us to pass the limit
$$\lim_{\varepsilon \to 0} N_\varepsilon  = -4 \int_M u(p) \overline{{\mathscr{A}}_b w(p)}{\rm dvol}_g(p).$$
This combined with \eqref{L limit} shows that $\mathscr{A}_b$ is formally given by the Dirichlet form ${\mathscr{D}}_b$. The fact that the nullspace of ${\mathscr{D}}_b$ consists of only constants comes from \eqref{kernel of D is trivial}.
\end{proof}

\subsection{Generator on the torus}\label{subsec:generatortorus}
In this section, we prove Theorem \ref{torus} holds, that is, $-\mathscr{A}$ coincides with the fractional Laplacian $(-\Delta)^\alpha$ on $\T^n = \R^n / \Z^n$, which is defined on the Fourier basis by
$$
(-\Delta)^\alpha e_k = |2 \pi k|^{2\alpha} e_k, \quad k \in \Z^n,
$$
where $e_k : x \mapsto \exp 2 i \pi \langle k, x \rangle.$
\begin{proof}[Proof of Theorem \ref{torus}]
If $M = \T^n$, the generator of the L\'evy process is given by
\begin{equation}\label{eq:Af(x)}
\mathscr{A} f(x) = \dfrac{C(n,\alpha) }{2} \int_{\R^n} \left(\tilde f(x+v) + \tilde f(x-v) - 2\tilde f(x)\right) \dd v/|v|^{n+2\alpha}.
\end{equation}
Here, we set $\tilde f = f \circ \pi_{\T^n}$ where $\pi_{\T^n}: \R^n \to \R^n / \Z^n$ is the natural projection. Next, it follows from \cite[Lemma 5.1]{stinga2010extension} that for every Schwartz function $g \in \mathscr{S}(\R^n),$
\begin{equation}\label{eq:formuladelta(x)}
(-\Delta_{\R^n})^\alpha \tilde g(x) = -\dfrac{C(n,\alpha) }{2} \int_{\R^n} \left(g(x+v) + g(x-v) - 2g(x)\right) \dd v/|v|^{n+2\alpha},
\end{equation}
where $(-\Delta_{\R^n})^{\alpha} : \mathcal{S}(\R^n) \to \mathcal{S}(\R^n)$ is the fractional Laplacian on $\R^n$, defined by
\begin{equation}\label{eq:defdeltaalpha}
\mathcal{F}\bigl({(-\Delta_{\R^n})^{\alpha}g}\bigr)(\xi) = |\xi|^{2 \alpha} \mathcal{F}(g)(\xi), \quad g \in \mathcal{S}(\R^n).
\end{equation}
Here $\mathcal{F}$ is the Fourier transform. Since $(-\Delta_{\R^n})^\alpha$ is formally self-adjoint, it extends as an operator from $\mathcal{S}'(\R^n)$ to itself. Moreover, the right hand-side of \eqref{eq:formuladelta(x)} defines a continuous operator  $C^\infty_b(\R^n) \to C(\R^n)$ and by density of $\mathcal{S}(\R^n)$ in $\mathcal{S}'(\R^n)$, this implies
\begin{equation}\label{eq:af=-delta}
-\mathscr{A} f = (-\Delta_{\R^n})^\alpha\tilde f
\end{equation}
 by \eqref{eq:Af(x)}. Now we write $f = \sum_{k \in \Z^n} c_k e_k$ so that
\begin{equation}\label{eq:calFtildef}
\mathcal F{\tilde f} = \sum_k c_k \delta_{2\pi k}
\end{equation}
where the equality holds in $\mathscr{S}'(\R^n)$ and for $\xi \in \R^n$, $\delta_\xi$ is the Dirac distribution at $\xi$. Then
$$
-\mathscr{A} f = \sum_{k \in \Z^n} |2\pi k|^{2\alpha} c_k e_k
$$
by \eqref{eq:af=-delta} and \eqref{eq:calFtildef}, and the right-hand side of this equality is precisely $(-\Delta)^\alpha f$, which concludes the proof.
\end{proof}

%
%
%
%
%
%
%
%
%

%
%
%
%

\subsection{Generator on Anosov manifolds}
In this section we prove Theorem \ref{anosov generator} and deduce some of their consequences. For the remainder of this section we assume that $(M,g)$ is a Riemannian manifold whose geodesic flow is Anosov as defined in \S\ref{anosov overview}.

\subsubsection{Smoothing properties of averaging along the geodesic flow}
Let $a\in C^\infty(\R)$ be supported in $( 0, +\infty)$ and assume that all derivatives $a^{(k)}$, $k\geqslant 0$, are integrable. We define the operator $R_a : C^\infty(SM) \to \mathcal D'(SM)$ by
$$(R_a f)(z) := \int_0^\infty f(\phi_t(z)) a(t)\dd t.$$ 
We will prove

\begin{thm}\label{thm:averaging-smoothing}
Assume that $(M,g)$ is as above an Anosov manifold. Let $a\in C^\infty(\R)$ be supported in $( 0, +\infty)$. Also assume that all derivatives $a^{(k)}$, $k\geqslant 0$, are integrable. Then $\pi_\ast R_a \pi^\ast \in \Psi^{-\infty}(M)$ is a well defined smoothing operator on $\mathcal{D}'(M)$. 
\end{thm}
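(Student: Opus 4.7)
My plan is to compute the Schwartz kernel of $\pi_\ast R_a \pi^\ast$ microlocally and show that its wavefront set is empty, so that the kernel is smooth on $M\times M$ and the operator lies in $\Psi^{-\infty}(M)$.

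First, I would analyze the wavefront set of $K_{R_a}$, the Schwartz kernel of $R_a$ on $SM\times SM$. Writing
\[
K_{R_a}(z,z') = \int_0^\infty a(t)\,\delta(z'-\phi_t z)\,\dd t,
\]
which is a well-defined distribution (since $a\in L^1(\R)$ and $SM$ is compact), I would view it as the pushforward along the projection $(t,z,z')\mapsto(z,z')$ of the distribution $a(t)\delta(z'-\phi_t z)$ on $\R_+\times SM\times SM$. The conormal to the graph $\{z'=\phi_t z\}$ at $(t,z,\phi_t z)$ is parameterized by $\xi'\in T^\ast_{\phi_t z}SM$ with the form $(\tau,\xi,\xi') = (-\xi'(X_g(\phi_t z)),\,-(d\phi_t)^T\xi',\,\xi')$. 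The pushforward rule for wavefront sets enforces the $t$-dual component $\tau$ to vanish, giving
\[
\WF(K_{R_a}) \subset \bigl\{\bigl((z,\xi),(\phi_t z,\xi')\bigr) : t>0,\ \xi = -(d\phi_t)^T\xi',\ \xi'(X_g) = 0\bigr\}.
\]
The non-compactness of $\supp(a)$ would be handled by iterated integration by parts in $t$: the integrability of all $a^{(k)}$, together with the vanishing of the boundary terms (using $a(0)=0$ and that $a, a', \ldots$ tend to $0$ at $+\infty$), yields arbitrarily fast decay in $\tau$ and justifies the restriction to $\tau=0$.

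Next, I would compose with $\pi^\ast$ on the right and $\pi_\ast$ on the left. By H\"ormander's composition theorem for wavefront sets, a point $((p,\eta),(p',\eta'))\in T^\ast(M\times M)\setminus 0$ can belong to $\WF(\pi_\ast R_a\pi^\ast)$ only if there exist $z\in\pi^{-1}(p)$, $z'\in\pi^{-1}(p')$, and $t>0$ with $z'=\phi_t z$ such that, writing $\xi := d\pi^T\eta$ and $\xi' := d\pi^T\eta'$, the covectors satisfy $\xi = -(d\phi_t)^T\xi'$ and $\xi'(X_g)=0$. Since $\xi\in\bV^\perp$ and $\xi'\in\bV^\perp$ automatically (as pullbacks from $M$), combining these with $\xi = -(d\phi_t)^T\xi'$ shows that $\xi'\in\bV^\perp\cap\Phi_t(\bV^\perp)$ at $z'$. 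By \eqref{equation:cp} (absence of conjugate points on Anosov manifolds), this intersection equals $E_0^\ast = \R\vartheta(z')$ for every $t\neq 0$. Writing $\xi' = c\vartheta(z')$ and using $\vartheta(X_g)=1$, the condition $\xi'(X_g)=0$ forces $c=0$; hence $\xi'=0$, then $\eta'=0$, and then $\xi=0$ and $\eta=0$. Thus $\WF(\pi_\ast R_a\pi^\ast)=\emptyset$ and the Schwartz kernel is smooth.

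The main technical obstacle lies in the first step: rigorously justifying the non-stationary phase estimate despite $\supp(a)$ being non-compact. The assumption that every $a^{(k)}$, $k\geqslant 0$, is integrable is precisely what is needed, as it permits arbitrarily many integrations by parts in $t$ with vanishing boundary terms. The key geometric input in the second step is that the Liouville $1$-form $\vartheta$ simultaneously lies in $\bV^\perp$ (so it survives pullback and pushforward by $\pi$) yet satisfies $\vartheta(X_g)=1\neq 0$ (so it is killed by the non-stationary phase condition); absence of conjugate points \eqref{equation:cp} ensures that $\R\vartheta$ is the only surviving direction.
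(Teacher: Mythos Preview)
Your Step 2 is fine, but Step 1 has a genuine gap. The pushforward rule for wavefront sets requires the projection to be proper on the support of the distribution, which fails here since $\supp(a)\subset(0,\infty)$ is unbounded. Your fix, integration by parts in $t$, only establishes the condition $\xi'(X_g)=0$: when $\partial_t\Psi=dS_2(X_g)\neq 0$ on the support, repeated $t$-derivatives stay bounded (they are just $X_g^k S_2\circ\phi_t$) and the integrability of $a^{(k)}$ closes the estimate. But to get the remaining relation $\xi=-(d\phi_t)^T\xi'$ you would need non-stationary phase in the $z$-variables, and there the higher derivatives of the phase $S_2\circ\phi_t$ grow \emph{exponentially} in $t$ (Anosov), so the constants in the non-stationary phase bound blow up and the $t$-integral does not converge. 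In fact your set $\bigcup_{t>0}\{\ldots\}$ is not closed: its closure picks up extra contributions in $E_u^*\times E_s^*$ coming from $t\to\infty$ (this is exactly the limit piece appearing in the wavefront set of Anosov resolvents in the Faure--Sj\"ostrand/Dyatlov--Zworski theory). Your Step~2 argument does not address that piece, though it could, via \eqref{vertical does not intersect stable and unstable}.

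The paper avoids computing $\WF(K_{R_a})$ altogether. For bounded time $t\leqslant T$ it uses ordinary non-stationary phase combined with \eqref{equation:cp} (as you do). For $t>T$ the key new idea is to disintegrate the Liouville measure along the weak-stable foliation, using its absolute continuity \eqref{equation:desint}, and integrate by parts \emph{only along weak-stable leaves}. On each leaf the map $(w,\tau)\mapsto\phi_\tau(w)$ has uniformly bounded $C^k$ norms for all forward times (Lemma~\ref{lemma:regularity-weak-stable-leaves}), so the exponential growth problem disappears; the hyperbolicity then guarantees that the phase is non-stationary along the leaf once $t>T$. This is the missing dynamical input that replaces your unjustified wavefront set bound.
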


In fact, the exponential decay of correlations for Anosov manifolds \cite{Nonnenmacher-Zworski-decay}, implies that the result is also true in the case that $a(t) = e^{\varepsilon t} b(t)$, where $\varepsilon$ is small enough, and $b$ has all its derivatives bounded. Before we start the proof proper, we recall the following classical result of hyperbolic dynamics. Let $\varepsilon>0$ be fixed small enough. For $z\in SM$, let $U_z$ be the ball of radius $\varepsilon$ centered at $z$ of $W^{s}_{\mathrm{loc}}(z)\subset SM$. The following holds:

\begin{lem}\label{lemma:regularity-weak-stable-leaves}
Let $f\in C^\infty(SM)$. Then, the map
\[
f_z : U_z\times (-1,+\infty) \owns (w,\tau) \mapsto f(\phi_\tau(w))
\]
is smooth, with $C^k$ bounds ($k\geqslant 0$), independent of $z$. More precisely, the following holds: fix $z_0 \in SM$ and consider a family of vector fields $S_1,...,S_{n-1} \in C^\infty_{\mathrm{cs}}(SM,E_s)$ spanning locally $E_s$ near $z_0$. Then for all $\beta \in \N^{n-1}, j \geqslant 0$,
\[
\sup_{z \in W^{u}_{\mathrm{loc}}(z_0)}  \sup_{(w,\tau) \in U_z \times (-1,+\infty)} |S^\beta_w \partial^j_\tau f_z (w,\tau)| \leqslant C(\alpha,j) < \infty.
\]
\end{lem}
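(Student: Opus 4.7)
The plan is to separate the $\tau$-variable from the $w$-variable, reduce to estimating leafwise derivatives of $F \circ \phi_\tau$ for $F$ a fixed element of $C^\infty(SM)$, and then combine the Anosov contraction with the leafwise smoothness of the flow restricted to stable leaves. First I would observe that $\partial_\tau$ (acting on $\tau$) and the $S_i$ (acting on $w$) commute as differential operators on $U_z \times (-1, +\infty)$, and that $\partial_\tau(g \circ \phi_\tau) = (X_g g) \circ \phi_\tau$. Iterating gives
\[
\partial_\tau^j S^\beta f_z(w,\tau) = S^\beta_w \bigl[(X_g^j f) \circ \phi_\tau\bigr](w).
\]
Setting $F := X_g^j f \in C^\infty(SM)$, the lemma reduces to a uniform bound on $S^\beta_w(F \circ \phi_\tau)(w)$ for $w \in U_z$, $z \in W^u_{\mathrm{loc}}(z_0)$ and $\tau \geqslant 0$, the range $\tau \in (-1,0)$ being trivial by compactness of $[-1,0]$ and smoothness of $\phi_\tau$.

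The case $|\beta| = 0$ is immediate from $\|F\|_{L^\infty(SM)} < \infty$. For $|\beta| \geqslant 1$, I would apply Faà di Bruno to $S^\beta(F \circ \phi_\tau)(w)$, producing a finite sum of terms of the form
\[
(\nabla^k F)_{\phi_\tau(w)}\bigl(\xi^{(\tau)}_1, \ldots, \xi^{(\tau)}_k\bigr),
\]
where each $\xi^{(\tau)}_\ell$ is obtained by iteratively applying the $S_i$ (in the $w$-variable along the stable leaf) to $d\phi_\tau$ paired with leafwise-smooth sections of $E_s$. Two ingredients then close the estimate: \textbf{(i)} the Anosov contraction \eqref{equation:anosov}, which gives $\|d\phi_\tau V\|_G \leqslant C\rho^\tau \|V\|_G$ for $V \in E_s$ and $\tau \geqslant 0$; and \textbf{(ii)} the standard fact of hyperbolic dynamics that, although the stable foliation has only Hölder transverse regularity, its restriction to each leaf is $C^\infty$ and $\phi_\tau|_{W^s_{\mathrm{loc}}(w)}$ has all leafwise derivatives uniformly bounded in $\tau \geqslant 0$. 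Since each term in the Faà di Bruno expansion contains at least one factor of the form $d\phi_\tau V$ with $V \in E_s$, every such term is bounded by a constant times $\rho^\tau$, hence is uniformly bounded in $\tau \geqslant 0$.

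Uniformity as $z$ varies over the compact set $W^u_{\mathrm{loc}}(z_0)$ follows from the transverse continuity built into the algebra $C^\infty_{\mathrm{cs}}$: the $S_i$ and all their leafwise derivatives depend continuously on the transverse parameter, and the local product structure implies that $\bigcup_{z \in W^u_{\mathrm{loc}}(z_0)} U_z$ is a compact neighborhood of $z_0$ in $SM$ on which all relevant quantities can be controlled uniformly.

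The main obstacle is ingredient \textbf{(ii)}: producing uniform-in-$\tau$ leafwise $C^k$ bounds for $\phi_\tau|_{W^s_{\mathrm{loc}}}$ despite the lack of transverse smoothness of the foliation. I would either quote this directly from standard hyperbolic-dynamics references such as \cite{anosov1969geodesic, Knieper-02}, or derive it from the Hadamard--Perron graph-transform description of the stable leaves combined with the smoothness of the ambient geodesic flow. Once this is granted, the rest is a straightforward induction on $|\beta|$ using the Anosov contraction.
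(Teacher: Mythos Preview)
The paper does not actually prove this lemma: it introduces it as ``a classical result of hyperbolic dynamics'' and refers the reader to \cite[\S 2]{DeLaLlave-Marco-Moriyon-86} for details. Your proposal is therefore more, not less, than what the paper provides, and your identification of ingredient \textbf{(ii)} (uniform-in-$\tau$ leafwise $C^k$ bounds for $\phi_\tau$ on stable leaves) as the real content is exactly right---this is precisely the statement one extracts from the cited reference.

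One imprecision worth flagging: the claim that ``each term in the Fa\`a di Bruno expansion contains at least one factor of the form $d\phi_\tau V$ with $V\in E_s$'' and is therefore $\mathcal{O}(\rho^\tau)$ is not literally how the argument works. The term with $k=1$ in the Fa\`a di Bruno formula is $(\nabla F)_{\phi_\tau(w)}\bigl[D^{|\beta|}_w(\phi_\tau)\bigr]$, which involves a single \emph{higher} leafwise derivative of $\phi_\tau$ rather than a product of first derivatives; the Anosov estimate \eqref{equation:anosov} on $d\phi_\tau|_{E_s}$ alone does not control it. What you need---and what you correctly isolate as ingredient \textbf{(ii)}---is that \emph{all} leafwise derivatives of $\phi_\tau|_{W^s_{\mathrm{loc}}}$ are uniformly bounded in $\tau\geqslant 0$ (in fact exponentially decaying). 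Once that is granted, Fa\`a di Bruno closes immediately; the $\rho^\tau$ decay is a bonus, not the mechanism. With this clarification your outline is correct and matches the standard argument behind the citation.
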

We refer the reader to \cite[\S 2]{DeLaLlave-Marco-Moriyon-86} for more details.

\begin{proof}
 We can now turn to the proof, divided into three steps. \\

\emph{Step 1: the operator $\pi_\ast R_a \pi^\ast : C^\infty(M) \to C^\infty(M)$ is continuous.} First, using H\"ormander's wavefront set rules \cite[\S 8.2]{hormander}, one has that for $f\in \mathcal{D}'(M)$, $\WF( \pi^\ast f) \subset \bV^\perp$ (that is, the pullback of a function is constant in the fibers of $SM$, hence smooth in the direction of the fibers -- in other words, its singularities are conormal to it), and for $u\in \mathcal{D}'(SM)$,
\begin{align*}
\WF( \pi_*u ) 		\subset \{ (p,\xi) \in T^*M \setminus \{0 \} \ |\ \exists v \in S_pM,\ (p,v, \underbrace{\dd_p\pi^{\top}(\xi)}_{\in \bV^\perp} ) \in \WF(u)\},
\end{align*}
(that is, the singularities in the direction of the fibers are killed by integrating over it).
Hence, it suffices to show that for $f\in C^\infty(M)$, we have $\WF(R_a\pi^*f) \subset E_s^*$. Indeed, this would imply that $\pi_* R_a \pi^* f \in C^\infty(M)$ according to \eqref{vertical does not intersect stable and unstable}. 

For this, we can invoke Lemma \ref{lemma:wf}: its content is that $\WF(R_a\pi^*f) \subset E_s^*$ provided we can prove that for each $z\in SM$, the restriction of $R_a\pi^*f$ to a piece of local weak stable manifold $W^{s0}_{\mathrm{loc}}(z)$ is smooth, with derivatives bounded uniformly in $z$. Then, we observe that for any point $z$, we can rewrite
\[
(R_a \pi^\ast f)_z(w,\tau) = \int_0^{+\infty} (\pi^\ast f)_z(w, \tau+t) a(t)\dd t. 
\]
Differentiating under the integral and using Lemma \ref{lemma:regularity-weak-stable-leaves}, we deduce that $R_a \pi^\ast f$ is indeed smooth along every weak-stable leaf, uniformly in the leaf. \\

\emph{Step 2: the operator $\pi_\ast R_a \pi^\ast : \mathcal{D}'(M) \to \mathcal{D}'(M)$ is continuous with respect to the weak* topology on $\D'(M)$.} Since $\pi_\ast R_a \pi^\ast : C^\infty(M) \to C^\infty(M)$ is bounded, the operator 
\[
(\pi_\ast R_a \pi^\ast)^\top : \mathcal{D}'(M) \to \mathcal{D}'(M)
\]
is also bounded. But $\pi_\ast R_a \pi^\ast$ is formally self-adjoint, that is, $(\pi_\ast R_a \pi^\ast)^\top = \pi_\ast R_a \pi^\ast$, which proves the claim. \\

\emph{Step 3: the operator $\pi_\ast R_a \pi^\ast : \mathcal{D}'(M) \to C^\infty(M)$ is bounded.} Equivalently, this means that the kernel of $\pi_\ast R_a \pi^\ast$ is smooth, or that its wavefront set is empty. In semi-classical terms, using the characterization \eqref{equation:decay}, this is equivalent to proving that for any smooth functions $S_j$, $\chi_j$, $j=1,2$, on $M$, with 
\[
dS_j \neq 0  \text{ on }\mathrm{supp}(\chi_j),
\]
we have as $h\to 0$
\[
\langle e^{iS_1/h} \chi_1, \pi_\ast R_a \pi^\ast ( e^{iS_2/h} \chi_2)\rangle = \mathcal{O}(h^\infty). 
\]
Rewriting this as 
\[
\langle e^{i\tilde S_1/h} \tilde \chi_1,  R_a ( e^{i\tilde S_2/h} \tilde \chi_2)\rangle = \mathcal{O}(h^\infty),
\]
with $\tilde S_j = \pi^\ast S_j$, and $\tilde \chi_j =  \pi^\ast \chi_j$, $j=1,2$, and using a partition of unity, it suffices to prove the following. 
\begin{lem}\label{lemma:WF-R_f}
For $j =1,2$, given any $z_{j}\in SM$, for any smooth $\psi_{j}, \theta_{j}$ supported in a small ball near $z_{j}$, with $d\theta_j \in \bV^\perp \setminus \{0\}$ on the support of $\psi_j$, we have the estimate
\[
\langle e^{i\theta_1/h} \psi_1,  R_a   ( e^{i\theta_2/h} \psi_2)\rangle = \mathcal{O}(h^\infty)
\]
\end{lem}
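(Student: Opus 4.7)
The plan is to treat the pairing as a semiclassical oscillatory integral and apply non-stationary phase, using the Anosov dual dynamics to make the estimate uniform in $t \in \mathrm{supp}(a) \subset (0,+\infty)$. I expand the pairing as
$$ I(h) := \int_0^\infty a(t) \int_{SM} e^{i\Phi(z,t)/h} \psi_1(z) \psi_2(\phi_t(z)) \, d\mathrm{L}(z) \, dt, \qquad \Phi(z,t) := \theta_2(\phi_t(z)) - \theta_1(z), $$
and exploit that $d\theta_j \in \bV^\perp$ means $\theta_j$ is constant along vertical fibers of $\pi$; shrinking the supports if necessary, I may therefore assume $\theta_j = \tilde\theta_j \circ \pi$ with $d\tilde\theta_j \neq 0$ on $\pi(\mathrm{supp}(\psi_j))$. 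The goal becomes to show $I(h) = \mathcal{O}(h^\infty)$.

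The key geometric input is the absence of joint critical points of $\Phi$. Indeed, if $d_z\Phi(z,t) = 0$ then $\Phi_{-t}(\pi^* d\tilde\theta_2)|_z = \pi^* d\tilde\theta_1|_z \in \bV^\perp$, so by \eqref{equation:cp} both sides must lie in $\R \vartheta$; flow-invariance of $\vartheta$ gives $\pi^* d\tilde\theta_2|_{\phi_t(z)} = c \vartheta$ for some scalar $c$, whence $\partial_t \Phi = c$. A joint critical point would then force $c = 0$, hence $d\tilde\theta_2 = 0$, contradicting the hypothesis.

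To make this quantitative and uniform in $t$, I decompose $\pi^* d\tilde\theta_2|_{\phi_t(z)} = \xi_s + \xi_u + c\vartheta$ along $T^*SM = E_s^* \oplus E_u^* \oplus \R \vartheta$. By \eqref{vertical does not intersect stable and unstable}, $\bV^\perp$ meets neither $E_s^*$ nor $E_u^*$ nontrivially, so whenever $|c| < \delta$ both $\xi_s, \xi_u$ are bounded below in norm uniformly on $\mathrm{supp}(\psi_2)$. This yields a dichotomy on the support of the integrand: either (a) $|\partial_t\Phi| \geq \delta$, or (b) $|d_z\Phi| \geq |\Phi_{-t}\xi_s| - \mathcal{O}(1) \gtrsim e^{\lambda t}$ for $t$ large, thanks to the expansion of $\Phi_{-t}|_{E_s^*}$ dual to \eqref{equation:anosov}. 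A partition of unity on $(z,t)$ subordinate to this dichotomy reduces the problem to two integration-by-parts arguments: in regime (a) via $L_t = \tfrac{h}{i\partial_t\Phi}\partial_t$ (boundary terms vanish since $a$ is supported in $(0,\infty)$ and has integrable derivatives), and in regime (b) in the variable $z$, where each step picks up a factor $h \, \|d\phi_t\|/|d_z\Phi|$, with the growth $\|d\phi_t\| \lesssim e^{\Lambda t}$ from \eqref{equation:anosov} absorbed by the $e^{\lambda t}$ lower bound to give a net $h$-gain.

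The main obstacle is the uniform bookkeeping in regime (b): iterated integrations by parts produce higher derivatives of $\phi_t$, of $\Phi$, and of $1/|d_z\Phi|^2$, all growing at exponential rates governed by the Anosov constants. I expect these contributions to be controllable by combining the direct and dual versions of \eqref{equation:anosov}, so that after $N$ steps the remainder is dominated by $h^N$ times a function of $t$ integrable against $a$ and its first few derivatives. This yields $I(h) = \mathcal{O}(h^N)$ for every $N$, as required.
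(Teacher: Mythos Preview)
Your finite-time treatment and the basic no-critical-point observation match the paper's first step. The gap is in regime (b), the large-$t$ part, and it is a real one.

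When you integrate by parts in \emph{all} of the $z$-variables, each iteration produces not only first derivatives of $\phi_t$ acting on the amplitude, but also higher derivatives of the phase: already $L^\top$ contains the term $h\,\Delta_z\Phi/|d_z\Phi|^2$, and $\Delta_z(\theta_2\circ\phi_t)$ involves both $(d^2\theta_2)(d\phi_t,d\phi_t)$ and $(d\theta_2)(d^2\phi_t)$, each of size $\sim e^{2\Lambda t}$ for a general Anosov flow. After $N$ steps the worst term behaves like $h^N e^{N(\Lambda-\lambda)t}$, where $\Lambda$ is the top expansion rate and $\lambda$ the bottom one (the rate governing your lower bound $|d_z\Phi|\gtrsim e^{\lambda t}$, which comes from the \emph{weakest} unstable direction). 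Without a pinching hypothesis $\Lambda=\lambda$ this does not give any $h$-gain, and no combination of the direct and dual Anosov estimates \eqref{equation:anosov} will close the gap: the problem is not a matter of bookkeeping but of genuinely different Lyapunov exponents. Your ``I expect these contributions to be controllable'' is exactly where the argument fails.

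The paper sidesteps this entirely. For large $t$ it does \emph{not} integrate by parts in all of $SM$; instead it disintegrates the Liouville measure along the weak-stable foliation using absolute continuity \eqref{equation:desint}, and then performs non-stationary phase \emph{only along each weak-stable leaf}. The point of Lemma~\ref{lemma:regularity-weak-stable-leaves} is precisely that on a weak-stable leaf the map $(w,\tau)\mapsto f(\phi_\tau(w))$ has all derivatives uniformly bounded in $t\ge -1$: the exponential blow-up lives entirely in the transverse (unstable) direction, which is never differentiated. On each leaf one then has a standard oscillatory integral with phase of uniformly bounded derivatives and gradient bounded below (from \eqref{eq:positive} and \eqref{eq:sufficiently-close-to-Eu*}), giving $I_z=\mathcal{O}(h^\infty)$ uniformly in the leaf parameter $z$; the outer integral over a compact piece of $W^u_{\mathrm{loc}}$ is harmless. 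This foliation trick is the missing idea in your approach.
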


\begin{proof}[Proof of Lemma \ref{lemma:WF-R_f}]
We start by observing that since $d\theta_j\in \bV^\perp\setminus \{0\}$, for some $\varepsilon>0$, 
\begin{equation}\label{eq:positive}
\| (d\theta_1)_{|{E_s}\oplus E^0} \| \geqslant \varepsilon. 
\end{equation}
In particular, for some $T>0$, for all $t>T$
\begin{equation}\label{eq:sufficiently-close-to-Eu*}
\| d(\theta_2\circ \phi_t)_{|{E_s}} \| \leqslant \frac{1}{2} \| (d\theta_1)_{|{E_s}\oplus E^0} \|.
\end{equation}
Let $\eta\in C^\infty_c(\R^+)$ be a cutoff function, with $\eta= 1$ in $[0,T]$, and $\eta=0$ in $[T+1,+\infty)$. We can cut $R_a = R_{a\eta} + R_{a(1-\eta)}$. 

For the $R_{a\eta}$ term, we write the contribution to our scalar product
\[
\begin{split}
\langle e^{i\theta_1/h} \psi_1, & R_{a\eta}  ( e^{i\theta_2/h} \psi_2)\rangle \\
		&= \int_0^{T+1} a(t)\eta(t) \int_{SM} e^{(i/h)(\theta_2\circ\phi_t - \theta_1)} \psi_1\psi_2\circ\phi_t 
\end{split}
\]
This is an oscillatory integral, with phase $\theta_2\circ\phi_t - \theta_1$. It is stationary in the $SM$ variable if $d\theta_2 \circ d\phi_t = d\theta_1$. However, since $\phi_t(\bV^\perp) \cap \bV^\perp \cap (E^0)^\perp = \{0\}$ by \eqref{equation:cp}, this is only possible at a point where both $d\theta_1$ and $d\theta_2$ belong to $E^0_\ast$. In such a case, the phase is non stationary in the $t$ variable. It follows from usual non-stationary phase estimates that 
\[
\langle e^{i\theta_1/h} \psi_1, R_{a\eta}  ( e^{i\theta_2/h} \psi_2)\rangle = \mathcal{O}(h^\infty).
\]
Here it was important that we used all variables (stable, unstable and time). 

Let us now concentrate on the $R_{a(1-\eta)}$ contribution. According to \eqref{eq:sufficiently-close-to-Eu*}, it is now sufficient to integrate in the weak stable direction. We can use the absolute continuity of the weak stable foliation discussed at the end of \S\ref{anosov overview}. Using \eqref{equation:desint}, we can write
\begin{equation}
\label{big integral}
\begin{aligned}
&\langle e^{i\theta_1/h} \psi_1,  R_{a(1-\eta)}  ( e^{i\theta_2/h} \psi_2)\rangle \\
&\hspace{2cm} = \int_{z\in W^{u}_{\mathrm{loc}}(z_1)}  \Biggl(\underset{=I_z}{\underbrace{\int_{W^{s0}_{\mathrm{loc}}(z)} \overline{e^{i\theta_1/h} \psi_1}  R_{a(1-\eta)}   ( e^{i\theta_2/h} \psi_2) \dd\mathcal{L}_z}}\Biggr) \dd \mu(z),\\
\end{aligned}
\end{equation}
where $\mathcal{L}_z$ is a smooth volume measure along $W^{s0}_{\mathrm{loc}}(z)$ and $\mu$ is a smooth measure along $W^u_{\mathrm{loc}}(z_1)$. We now work directly on $W^{s0}_{\mathrm{loc}}(z)$ and introduce the coordinates $\kappa_z(w,\tau)  = \phi_\tau(w) \in W^{s0}_{\mathrm{loc}}(z)$ for $(w,\tau) \in U_z \times (-\eps, \eps)$. Note that we can further identify $U_z$ with an open subset $U \subset \R^{n-1}$. We have:
\[
\begin{split}
I_z = \int\limits_{U\times(-\varepsilon,\varepsilon)\times \R^+}\hspace{-25pt}  e^{\frac{i}{h}\left( \theta_2\circ\kappa_z(w,\tau+t)- \theta_1\circ\kappa_z(w,\tau)\right)}& a(t)(1-\eta(t)) \\
			&\hspace{-30pt} \psi_2\circ\kappa_z(w,\tau+t)\overline{\psi_1\circ\kappa_z(w,\tau)} \rho_z(w,\tau) \dd w \dd\tau\dd t. 
\end{split}
\]
We observe that the above expression for $I_z$ is of the form
$$I_z = \int\limits_{U\times(-\varepsilon,\varepsilon)\times \R^+} e^{i\Psi(w,\tau,t) /h} b(w,\tau,t)\dd w \dd\tau\dd t$$
which is still an oscillatory integral, with phase 
$$\Psi =\left( \theta_2\circ\kappa_z(w,\tau+t)- \theta_1\circ\kappa_z(w,\tau)\right)$$
 and smooth amplitude $b(w,\tau, t)$ which is supported on $\{(w,\tau, t)\mid t \geqslant T\}$. Note that, although not indicated in the notation, $\Psi$ and $b$ both depend on $z$ in a continuous fashion (and similarly for all their derivatives, as follows from \eqref{equation:desint}). Moreover, by Lemma \ref{lemma:regularity-weak-stable-leaves}, all the derivatives of $\Psi$ are uniformly bounded on $U\times(-\varepsilon,\varepsilon)\times \R^+$ and all the derivatives of $b$ are smooth and integrable on $U\times(-\varepsilon,\varepsilon)\times \R^+$. 
Combining \eqref{eq:positive} and \eqref{eq:sufficiently-close-to-Eu*}, we see that for some $\delta>0$, 
\[
\| (d\theta_2\circ d\phi_t)_{|E^0}\| > \delta, \text{ or } \| (d\theta_2 \circ d\phi_t -d\theta_1)_{|{E_s}\oplus E^0} \|  > \delta, 
\]
implying $| d\Psi | > \delta$. We then apply a usual non-stationary phase argument to conclude that $I_z = O(h^\infty)$ and therefore by \eqref{big integral}
$$\langle e^{i\theta_1/h} \psi_1,  R_{a(1-\eta)}  ( e^{i\theta_2/h} \psi_2)\rangle = O(h^\infty).$$
\end{proof}
This concludes the proof of Theorem \ref{thm:averaging-smoothing}.
\end{proof}

\subsubsection{Proof of Theorem \ref{anosov generator} }

\begin{proof}[Proof of Theorem \ref{anosov generator}]
For property i), let $r_{\mathrm{inj}}$ be the injectivity radius of $(M,g)$ and let $\chi(t) = 1$ for $|t|< r_{\mathrm{inj}}^2/4$ and $\chi(t)=0$ for $|t|> r_{\mathrm{inj}}^2/2$. We then have that $\mathscr{A} = \mathscr{A}_1 + \mathscr{A}_2$ where
{\begin{eqnarray}
\label{mA1 def}
\mathscr{A}_1u(p):=C(n,\alpha) \mathrm{p.v.}\int\limits_{v\in T_pM\setminus 0} \chi(|v|_g^2) \frac{\left( u(\exp_p(v)) - u(p)\right)}{|v|_g^{n+2\alpha}} \dd T_p(v)
\end{eqnarray}}
{\begin{eqnarray}
\label{mA2 def}
\ \mathscr{A}_2u(p):=C(n,\alpha)\int\limits_{v\in T_pM\setminus 0}\left(1- \chi(|v|_g^2) \right)\frac{\left( u(\exp_p(v)) - u(p)\right)}{|v|_g^{n+2\alpha}} \dd T_p(v)
\end{eqnarray}}
where $\dd T_p(v)$ is the volume form on $T_pM$ for the metric $g_p$. Note that both $\mathscr{A}_1$ and $\mathscr{A}_2$ are formally self-adjoint due to Proposition \ref{uniqueness'}.

For $\mathscr{A}_1$, since $|v|^2_g< r^2_{inj} /2$ we can perform a change of coordinate $q = \exp_p(v)$ so that 
{ \begin{eqnarray}
\label{A1 in coord}
\mathscr{A}_1u(p)= \mathrm{p.v.}\int_M \chi(\mathrm{dist}_g(p,q)^2) \frac{\left( u(q) - u(p)\right)}{\mathrm{dist}_g(p,q)^{n+2\alpha}}  J(p,q){\rm dvol}_g(q)
\end{eqnarray}}where $J(p,q)$ is the Jacobian determinant of the map $q\mapsto \exp_p^{-1}(q)$. Note that $J(p,q)$ is jointly smooth for $\mathrm{dist}_g(p,q)^2 <r^2_{inj}/2$ and $J(p,p) = 1$.

For $\mathscr{A}_2$, use polar coordinates to write
\begin{eqnarray}
\label{A2 in polar}\nonumber
\mathscr{A}_2u(p)&=& \int_{v\in S_pM}\int_{0}^\infty\left(1- \chi(t^2) \right)\frac{\left( u(\exp_p(tv)) - u(p)\right)}{t^{1+2\alpha}}\dd t \dd S^{n-1}(v)\\
&=& \left(\pi_* R_{a} \pi^* u\right)(p) -  u(p) \left( \pi_* R_{a} \pi^*1\right)(p)
\end{eqnarray}
for $a(t) = (1-\chi(t^2)) t^{-1-2\alpha}$.
By Theorem \ref{thm:averaging-smoothing}, $\pi_* R_{a} \pi^*: \D'(M) \to C^\infty(M)$ so the proof of Theorem \ref{anosov generator} is complete.
\end{proof}

We conclude this subsection with a statement about the microlocal properties of $\mathscr{A}^+$. 
\begin{lem}
\label{singular part of A+}
Let $\mathscr{L}\in \Psi^{-2\alpha}_{\mathrm{cl}}(M)$ be the operator given by the integral kernel 
\begin{equation}\label{eq:integralkernelparametrix}
-C(n,-\alpha)\frac{\chi(\mathrm{dist}_g(p,q)^2)}{   \mathrm{dist}_g(p,q)^{n-2\alpha}}
\end{equation}
where $\chi\in C^\infty_c(\R)$ is $1$ near $0$ and $0$ outside of $(-r_{\mathrm{inj}}^2/2, r_{\mathrm{inj}}^2/2)$. Then 
$$
\mathscr{A}^+ = \mathscr{L} + \mathscr{R}
$$
with $\mathscr{R} \in \Psi_{\mathrm{cl}}^{-2\alpha -1}(M) + \Psi^{-4\alpha}_{\mathrm{cl}}(M)$ .
\end{lem}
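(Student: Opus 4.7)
My plan is to exhibit $\mathscr{L}$ as an explicit classical parametrix of $\mathscr{A}$ of order $-2\alpha$, and then compare it to $\mathscr{A}^+$ via the defining identity $\mathscr{A}^+ \mathscr{A} = I - P$ from Corollary \ref{sphere cor}, tracking how much the remainder can be improved using the already-established structure of $\mathscr{A}$.

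First, I would verify that $\mathscr{L} \in \Psi^{-2\alpha}_{\mathrm{cl}}(M)$ with principal symbol $\sigma_{\mathscr{L}}(x,\eta) = -|\eta|_g^{-2\alpha}$. The cutoff $\chi$ ensures the integral kernel \eqref{eq:integralkernelparametrix} is smooth off the diagonal, so only the behaviour near the diagonal matters. Working in geodesic normal coordinates centred at a point $p_0 \in M$, one has $\mathrm{dist}_g(p,q) = |x-y|(1 + O(|x-y|^2))$ and the Riemannian density is $1 + O(|x|^2)$. Matching the chosen constant $C(n,-\alpha) = 4^{-\alpha}\Gamma(n/2-\alpha)/(\pi^{n/2}\Gamma(\alpha))$ against the Riesz potential identity
\[
C(n,-\alpha) \int_{\R^n} \frac{f(y)}{|x-y|^{n-2\alpha}}\, dy \;=\; (-\Delta_{\R^n})^{-\alpha} f(x)
\]
shows that in such coordinates $\mathscr{L}$ coincides with $-(-\Delta_{\R^n})^{-\alpha}$ modulo a classical pseudodifferential operator of strictly lower order, which yields the claimed symbol.

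Next, I use the decomposition $\mathscr{A} = \mathscr{A}_{2\alpha} + \mathscr{A}_0$ from Theorem \ref{anosov generator}. The leading symbol calculation gives $\sigma(\mathscr{A}_{2\alpha}\mathscr{L}) = 1$, hence
\[
\mathscr{A}_{2\alpha}\mathscr{L} = I + S_1, \qquad S_1 \in \Psi^{-1}_{\mathrm{cl}}(M),
\]
while the composition calculus yields $\mathscr{A}_0 \mathscr{L} \in \Psi^{-2\alpha}_{\mathrm{cl}}(M)$. The ellipticity of $\mathscr{A}$, combined with $\mathscr{A}^+\mathscr{A} = I - P$, forces $\mathscr{A}^+ \in \Psi^{-2\alpha}_{\mathrm{cl}}(M)$: if $Q$ is any classical parametrix of $\mathscr{A}$, then the identity $\mathscr{A}^+ - Q = (I - Q\mathscr{A})\mathscr{A}^+ - QP$ exhibits $\mathscr{A}^+ - Q$ as a smoothing operator, since $I - Q\mathscr{A}$ is smoothing and $P$ is finite rank. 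Combining these facts,
\[
\mathscr{A}^+ - \mathscr{L} \;=\; (\mathscr{A}^+\mathscr{A} - I)\mathscr{L} + \mathscr{A}^+(I - \mathscr{A}\mathscr{L}) \;=\; -P\mathscr{L} - \mathscr{A}^+ S_1 - \mathscr{A}^+\mathscr{A}_0\mathscr{L}.
\]
The composition calculus places $\mathscr{A}^+ S_1 \in \Psi^{-2\alpha-1}_{\mathrm{cl}}(M)$ and $\mathscr{A}^+ \mathscr{A}_0 \mathscr{L} \in \Psi^{-4\alpha}_{\mathrm{cl}}(M)$, while $P\mathscr{L}$ is smoothing, so it lies in both classes. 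This yields $\mathscr{R} = \mathscr{A}^+ - \mathscr{L}$ in the required space.

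The main obstacle is the symbolic identification of $\mathscr{L}$ in the first step. Although the Riesz potential identity in $\R^n$ is classical, one must carry out a careful Taylor expansion of $\mathrm{dist}_g$ and of the Jacobian factor in geodesic normal coordinates and verify that the Riemannian corrections only contribute to strictly subleading terms of the full symbol; this is what genuinely places $\mathscr{L}$ in the \emph{classical} pseudodifferential class of order $-2\alpha$ with the stated principal symbol, which is the key input for the composition arguments of the subsequent steps.
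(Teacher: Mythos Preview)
Your algebraic identity $\mathscr{A}^+ - \mathscr{L} = -P\mathscr{L} - \mathscr{A}^+ S_1 - \mathscr{A}^+\mathscr{A}_0\mathscr{L}$ is correct, and the identification of $\sigma_{\mathscr{L}} = -|\eta|_g^{-2\alpha}$ via the Riesz potential is the same as what the paper invokes. The gap is in your intermediate claim that $\mathscr{A}^+ \in \Psi^{-2\alpha}_{\mathrm{cl}}(M)$. Your justification presupposes a \emph{classical} parametrix $Q$ of $\mathscr{A}$ with $I - Q\mathscr{A}$ smoothing, but $\mathscr{A} = \mathscr{A}_{2\alpha} + \mathscr{A}_0 \in \Psi^{2\alpha}_{\mathrm{cl}} + \Psi^0_{\mathrm{cl}}$ is, for generic $\alpha \in (0,1)$, not a single-order classical operator (the polyhomogeneous step sets $2\alpha - \N_0$ and $-\N_0$ do not mesh), so no such $Q$ exists. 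The parametrix construction actually yields $\mathscr{A}^+ \in \Psi^{-2\alpha}_{\mathrm{cl}} + \Psi^{-4\alpha}_{\mathrm{cl}}$; once you feed that back into your compositions, $\mathscr{A}^+\mathscr{A}_0\mathscr{L}$ lands in $\Psi^{-4\alpha}_{\mathrm{cl}} + \Psi^{-6\alpha}_{\mathrm{cl}}$ and further bookkeeping is needed to get exactly the stated class.

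The paper's route is more direct and sidesteps this: it first records $\mathscr{A}^+ = B_1 + B_2$ with $B_1 \in \Psi^{-2\alpha}_{\mathrm{cl}}$ having principal symbol $-|\eta|_g^{-2\alpha}$ and $B_2 \in \Psi^{-4\alpha}_{\mathrm{cl}}$, by the standard parametrix construction for an operator in $\Psi^{2\alpha}_{\mathrm{cl}} + \Psi^{0}_{\mathrm{cl}}$. Then it simply subtracts, $\mathscr{A}^+ - \mathscr{L} = (B_1 - \mathscr{L}) + B_2$, and since $B_1$ and $\mathscr{L}$ share the same principal symbol in $\Psi^{-2\alpha}_{\mathrm{cl}}$, their difference is in $\Psi^{-2\alpha-1}_{\mathrm{cl}}$. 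Your composition-based approach recovers the same information once the two-term structure of $\mathscr{A}^+$ is used correctly, but it is strictly more work than the direct subtraction.
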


\begin{proof}
The operator $\mathscr{A}$ has Schwartz kernel given by \eqref{anosov kernel} so is therefore in $\Psi^{2\alpha}_{\mathrm{cl}}(M) + \Psi^0_{\mathrm{cl}}(M)$ with principal symbol $\sigma_{2\alpha}(\mathscr{A}) =-c(n,\alpha)|\eta|_g^{2\alpha}$. So by standard parametrix construction \cite{taylor2013partial} one has that $\mathscr{A}^+\in \Psi^{-2\alpha}_{\mathrm{cl}}(M) + \Psi^{-4\alpha}_{\mathrm{cl}}(M)$ with principal symbol $- |\eta|_g^{-2\alpha}$. The principal symbol of $\mathscr{L} \in \Psi_{\mathrm{cl}}^{-2\alpha}(M)$ is also $-|\eta|_g^{-2\alpha}$ as it can be verified by a direct computation. So we have that $\mathscr{R} :=  \mathscr{A}^+-\mathscr{L} \in \Psi_{\mathrm{cl}}^{-2\alpha -1}(M) + \Psi^{-4\alpha}_{\mathrm{cl}}(M)$.
\end{proof}

\subsection{Generator on the sphere}

Compared to that of Anosov manifolds, the case of the sphere is easier to compute but yields an operator which is more complicated.

\subsubsection{Proof of Theorem \ref{sphere}}
This is given by direct computation. Let $\chi\in C^\infty_c(\R)$ take the value $1$ when $|t|\leqslant \left(\pi/8\right)^2$ and value $0$ for $|t| \geqslant \left(\pi/4\right)^2$ and define $\chi_1, \chi_2,\chi_3\in C^\infty(\mS^n\times \mS^n)$ by
\begin{eqnarray}
\label{def of chi1 chi2 chi3}
\quad \chi_1(p,q) := \chi(\mathrm{dist}_g(p,q)^2),\quad \chi_2(p,q) := \chi(\mathrm{dist}_g(p,-q)^2), \quad p,q \in \mS^n,
\end{eqnarray}
and $\chi_3 := 1- \chi_1 -\chi_2.$
We now write 
\begin{equation}
\begin{aligned}
\label{Au split in 3 parts}
C(n,\alpha)^{-1}\mathscr{A} u(p) &=  \mathrm{p.v.}\int_{v\in T_pM\setminus 0} \chi_1(p,\exp_p(v)) \frac{ \left( u(\exp_p(v) - u(p)\right)}{|v|_g^{n+2\alpha}} \dd T_p(v)\\
&+\int_{v\in T_pM\setminus 0} \chi_2(p,\exp_p(v)) \frac{ \left( u(\exp_p(v) - u(p)\right)}{|v|_g^{n+2\alpha}} \dd T_p(v) \\
&+\int_{v\in T_pM\setminus 0} \chi_3(p,\exp_p(v)) \frac{ \left( u(\exp_p(v) - u(p)\right)}{|v|_g^{n+2\alpha}} \dd T_p(v).
\end{aligned}
\end{equation}
For $j=1,2,3$, denote by $\mc{I}_j(p)$ the integral in the right-hand side of \eqref{Au split in 3 parts} involving $\chi_j$. Observe that $\chi_1(p,\exp_p(v)) = 0$ whenever
$$|v|_g\notin [0, \pi/4]\cup \bigcup_{k = 1}^\infty [2\pi k - \pi/4,2\pi k + \pi/4].$$ So
for $\mc{I}_1u(p)$, we write, using spherical coordinates on $T_pS^n$,
\begin{eqnarray*}
\mc{I}_1 u(p) &=& \mathrm{p.v.} \int\limits_{v\in S_p(\mS^n)}\int_0^\infty \chi_1\left(p,\exp_p(tv)\right) \frac{\left(u(\exp_p(tv)) - u(p)\right)}{t^{1+2\alpha}}\dd t \dd{S^{n-1}}(v)\\
&=&\mathrm{p.v.}\int\limits_{v\in S_p(\mS^n)}\int_0^{\pi/4} \chi_1\left(p,\exp_p(tv)\right) \frac{\left(u(\exp_p(tv)) - u(p)\right)}{t^{1+2\alpha}}\dd t \dd{S^{n-1}}(v)\\
&+& \sum\limits_{k = 0}^\infty \int\limits_{v\in S_p(\mS^n)}\int\limits_{7\pi/4+2\pi k}^{9\pi/4+2\pi k} \chi_1\left(p,\exp_p(tv)\right) \frac{\left(u(\exp_p(tv)) - u(p)\right)}{t^{1+2\alpha}}\dd t \dd{S^{n-1}}(v).
\end{eqnarray*}
Observe that each individual term of $\mc{I}_1$ is formally self-adjoint by Proposition \ref{uniqueness'}. Shifting the $t$ integral via a change of variable we get
\begin{eqnarray*}
\mc{I}_1u(p) &=&\mathrm{p.v.}\int\limits_{v\in S_p(\mS^n)}\int_0^{\pi/4} \chi_1\left(p,\exp_p(tv)\right) \frac{\left(u(\exp_p(tv)) - u(p)\right)}{t^{1+2\alpha}}\dd t \dd{S^{n-1}}(v)\\
&+& \sum\limits_{k = 1}^\infty \int\limits_{v\in S_p(\mS^n)}\int\limits_{-\pi/4}^{\pi/4} \chi_1\left(p,\exp_p(tv)\right) \frac{\left(u(\exp_p(tv)) - u(p)\right)}{(2\pi k+t)^{1+2\alpha}}\dd t \dd{S^{n-1}}(v).
\end{eqnarray*}
Now we make the change of variable $ q = \exp_p(tv)$ so that $t = \sign(t) \mathrm{dist}_g(p,q)$ for $t\in (-1/4\pi, 1/4\pi)$, and $J(p,q) ={\rm det}\left( D\exp_p^{-1}\mid_q\right)$ is the Jacobian of the change of variable, we get
\begin{eqnarray}
\label{I integral final}\nonumber
\mc{I}_1u(p) &=&\mathrm{p.v.}\int_M \chi_1\left(p,q\right) \frac{\left(u(q) - u(p)\right)}{\mathrm{dist}_g(p,q)^{n+2\alpha}}J(p,q) {\rm dvol}_g(q)\\
&+& \sum\limits_{k = 1}^\infty \int_M \chi_1\left(p,q\right) \frac{\left(u(q) - u(p)\right)J(p,q)}{(2\pi k-\mathrm{dist}_g(p,q))^{1+2\alpha}(\mathrm{dist}_g(p,q))^{n-1}}{\rm dvol}_g(q)\\\nonumber
&+& \sum\limits_{k = 1}^\infty \int_M \chi_1\left(p,q\right) \frac{\left(u(q) - u(p)\right)J(p,q)}{(2\pi k+\mathrm{dist}_g(p,q))^{1+2\alpha}(\mathrm{dist}_g(p,q))^{n-1}}{\rm dvol}_g(q).
\end{eqnarray}
The sum converges absolutely. So the expression $C(n,\alpha) \mc{I}_1 u$ has leading term 
$$u\mapsto C(n,\alpha)\, \mathrm{p.v.}\int_M \chi_1\left(p,q\right) \frac{\left(u(q) - u(p)\right)}{\mathrm{dist}_g(p,q)^{n+2\alpha}}J(p,q) {\rm dvol}_g(q)$$
which is an elliptic classical pseudodifferential operator with principal symbol $-|\eta|^{2\alpha}$. So we see that the first term of \eqref{Au split in 3 parts} is 
$$C(n,\alpha)\mc{I}_1 = \mathscr{A}_{2\alpha} + \mathscr{A}_0$$ for some formally selfadjoint $\mathscr{A}_{2\alpha}\in \Psi^{2\alpha}_{cl}(M)$ and $\mathscr{A}_0\in \Psi^0_{cl}(M)$. With the principal symbol of $\mathscr{A}_{2\alpha}$ being $ -|\eta|_{g}^{2\alpha}$.

Denote by $-p$ the antipodal point of $p\in \mS^n$. Similar calculation as for the case of $\mc{I}_{1}$ using $\exp_{-p}(tv)$ for $v\in S_{-p}(\mS^n)$ yields that $\mc{I}_2$ and $\mc{I}_3$ are formally selfadjoint and given by
{
\begin{eqnarray}
\label{II integral final}\nonumber
\mc{I}_2u(p) &=&\sum\limits_{k=0}^\infty \int_M \chi_2\left(-p,q\right)\frac{\left(u(q) - u(p)\right)J(-p,q)}{(2\pi k+ \pi-\mathrm{dist}_g(-p,q))^{1+2\alpha}(\mathrm{dist}_g(-p,q))^{n-1}}{\rm dvol}_g(q)\\
&+& \sum\limits_{k=0}^\infty \int_M \chi_2\left(-p,q\right)\frac{\left(u(q) - u(p)\right)J(-p,q)}{(2\pi k+\pi+\mathrm{dist}_g(-p,q))^{1+2\alpha}(\mathrm{dist}_g(-p,q))^{n-1}}{\rm dvol}_g(q)
\end{eqnarray}}
and
\begin{eqnarray}
\label{III integral final}
\mc{I}_3u(p) &=& \int\limits_{v\in T_pM\setminus 0} \chi_3(p,\exp_p(v)) \frac{ \left( u(\exp_p(v) - u(p)\right)}{|v|_g^{n+2\alpha}} \dd T_p(v)\\\nonumber
&=& \int_M k(p,q) (u(q) - u(p)){\rm dvol}_g(q)
\end{eqnarray}
for some $k(\cdot,\cdot)\in C^\infty(M\times M)$ satisfying $k(-p,q) = k(p,-q)$.

The infinite sum in \eqref{II integral final} is absolutely convergent. Let $\mathscr{J}$ be the operator defined by pulling back by the antipodal map. A local coordinate calculation using normal coordinates yields that it is of the form $\mathscr{A}_0 u + \mathscr{J} \mathscr{A}_{-1}u$ for some $\mathscr{A}_{-1}\in \Psi^{-1}_{\mathrm{cl}}(M)$ formally selfadjopint with principal symbol $-c(n) |\eta|^{-1}$ and $\mathscr{A}_0\in \Psi^{0}_{\mathrm{cl}}(M)$.

Finally since $k(p,q)$ is smooth, \eqref{III integral final} is of the form $\mathscr{A}_{-\infty} u + \mathscr{A}_0 u$ for $\mathscr{A}_{-\infty}\in \Psi^{-\infty}(M)$ and $\mathscr{A}_0\in \Psi^{0}_{\mathrm{cl}}(M)$.

All operators commute with $\mathscr{J}$ since $\mathrm{dist}_g(-p,q) = \mathrm{dist}_g(p,-q)$, $J(p,q) = J(q,p)$, $J(-p,q) = J(p,-q)$ by symmetry of the sphere.

Inserting the conclusions about \eqref{I integral final}, \eqref{II integral final}, and \eqref{III integral final} into \eqref{Au split in 3 parts} we have the assertion of Theorem \ref{sphere}.

\subsubsection{Microlocal Properties of $\mathscr{A}$ and its inverse on $\mS^n$}
We now construct a (partial) parametrix for the operator $\mathscr{A}$:

\begin{lem}
\label{parametrix of A sphere}
There exists operators $\mathscr{K}\in \Psi^{-2\alpha}_{\mathrm{cl}}(\mS^n) + \Psi_{\mathrm{cl}}^{-4\alpha}(\mS^n)$ and 
$$\mathscr{B} \in \Psi_{\mathrm{cl}}^{-1-4\alpha}(\mS^n)+ \Psi_{\mathrm{cl}}^{-1-6\alpha}(\mS^n) + \Psi^{-1-8\alpha}_{\mathrm{cl}}(\mS^n)$$ such that 
\begin{equation}
\label{parametrix eq on sphere}
(\mathscr{K} + \mathscr{B} \mathscr{J})\mathscr{A} = I +\Psi_{\mathrm{cl}}^{-2-4\alpha}(\mS^n)+ \Psi_{\mathrm{cl}}^{-2-6\alpha}(\mS^n) + \Psi^{-2-8\alpha}_{\mathrm{cl}}(\mS^n).
\end{equation}
The operator $\mathscr{K}$ has principal symbol $-|\eta|_g^{-2\alpha}$ and $\mathscr{B} = -\mathscr{K} \mathscr{A}_{-1} \mathscr{K}$ has principal symbol $\tilde c(n,\alpha)|\eta|_g^{-1-4\alpha}$ for some nonvanishing constant $\tilde c(n,\alpha)$.
\end{lem}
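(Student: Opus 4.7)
The plan is to perform a classical elliptic parametrix construction for the pseudodifferential part of $\mathscr{A}$, and then cancel the Fourier integral contribution of $\mathscr{A}_{-1}\mathscr{J}$ by a Newton-type correction of the form $\mathscr{B}\mathscr{J}$. By Theorem~\ref{sphere}, $\mathscr{A} = (\mathscr{A}_{2\alpha} + \mathscr{A}_0) + \mathscr{A}_{-1}\mathscr{J}$ where $\mathscr{A}_{2\alpha} + \mathscr{A}_0 \in \Psi^{2\alpha}_{\mathrm{cl}}(\mathbb{S}^n)$ is elliptic with principal symbol $-|\eta|_g^{2\alpha}$, all PSDO summands commute with $\mathscr{J}$, and $\mathscr{J}^2=I$.

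First, the standard elliptic parametrix construction produces $\mathscr{K} \in \Psi^{-2\alpha}_{\mathrm{cl}}(\mathbb{S}^n)$ with $\sigma(\mathscr{K}) = -|\eta|_g^{-2\alpha}$ and $\mathscr{K}(\mathscr{A}_{2\alpha}+\mathscr{A}_0) = I + R$ for some smoothing operator $R$. In particular, writing out two steps of the Neumann-style iteration (start from a parametrix $\mathscr{K}_0$ of $\mathscr{A}_{2\alpha}$, then correct by $-\mathscr{K}_0 \mathscr{A}_0 \mathscr{K}_0 \in \Psi^{-4\alpha}_{\mathrm{cl}}$) exhibits the stated decomposition $\mathscr{K} \in \Psi^{-2\alpha}_{\mathrm{cl}} + \Psi^{-4\alpha}_{\mathrm{cl}}$. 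Define $\mathscr{B} := -\mathscr{K}\mathscr{A}_{-1}\mathscr{K}$. Using $\mathscr{K} \in \Psi^{-2\alpha}_{\mathrm{cl}} + \Psi^{-4\alpha}_{\mathrm{cl}}$ and $\mathscr{A}_{-1} \in \Psi^{-1}_{\mathrm{cl}}$, the composition calculus places $\mathscr{B}$ in $\Psi^{-1-4\alpha}_{\mathrm{cl}} + \Psi^{-1-6\alpha}_{\mathrm{cl}} + \Psi^{-1-8\alpha}_{\mathrm{cl}}$, and the principal symbol is
\[
\sigma(\mathscr{B}) = -\sigma(\mathscr{K})\sigma(\mathscr{A}_{-1})\sigma(\mathscr{K}) = -c(n)\,|\eta|_g^{-1-4\alpha},
\]
which is nonvanishing since $c(n)>0$; this gives $\tilde c(n,\alpha) = -c(n)$.

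Finally, verification of \eqref{parametrix eq on sphere} is a direct expansion using $\mathscr{J}^2 = I$ and that $\mathscr{J}$ commutes with each $\mathscr{A}_i$:
\begin{align*}
(\mathscr{K} + \mathscr{B}\mathscr{J})\mathscr{A}
&= \mathscr{K}(\mathscr{A}_{2\alpha}+\mathscr{A}_0) + \mathscr{K}\mathscr{A}_{-1}\mathscr{J} + \mathscr{B}(\mathscr{A}_{2\alpha}+\mathscr{A}_0)\mathscr{J} + \mathscr{B}\mathscr{A}_{-1} \\
&= I + R + \mathscr{B}\mathscr{A}_{-1} + \bigl[\mathscr{K}\mathscr{A}_{-1} + \mathscr{B}(\mathscr{A}_{2\alpha}+\mathscr{A}_0)\bigr]\mathscr{J}.
\end{align*}
The bracket rewrites, using the definition of $\mathscr{B}$, as $\mathscr{K}\mathscr{A}_{-1}\bigl(I - \mathscr{K}(\mathscr{A}_{2\alpha}+\mathscr{A}_0)\bigr) = -\mathscr{K}\mathscr{A}_{-1}R$, which is smoothing; composing with $\mathscr{J}$ still produces a smooth kernel. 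Hence the entire FIO contribution is absorbed in a smoothing remainder, and what is left is $\mathscr{B}\mathscr{A}_{-1} = -\mathscr{K}\mathscr{A}_{-1}\mathscr{K}\mathscr{A}_{-1}$, whose order decomposition $\Psi^{-2-4\alpha}_{\mathrm{cl}} + \Psi^{-2-6\alpha}_{\mathrm{cl}} + \Psi^{-2-8\alpha}_{\mathrm{cl}}$ follows by expanding the two-term decomposition of $\mathscr{K}$ in the composition.

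The main obstacle is dealing with $\mathscr{J}$, which is a Fourier integral operator (with canonical relation the antipodal map), so $\mathscr{A}$ cannot be inverted using PSDOs alone. The crux of the argument is that $\mathscr{J}^2=I$ together with the commutation of $\mathscr{J}$ with all PSDO pieces of $\mathscr{A}$ makes the Newton-style ansatz $\mathscr{K} + \mathscr{B}\mathscr{J}$ with $\mathscr{B}\propto \mathscr{K}\mathscr{A}_{-1}\mathscr{K}$ exactly the right object to force the leading FIO error to factor through the smoothing parametrix remainder $R$, leaving only a purely pseudodifferential residue of the claimed orders.
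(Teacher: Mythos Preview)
Your proof is correct and follows essentially the same approach as the paper: construct a parametrix $\mathscr{K}$ for the elliptic part $\mathscr{A}_{2\alpha}+\mathscr{A}_0$, set $\mathscr{B}=-\mathscr{K}\mathscr{A}_{-1}\mathscr{K}$, and expand $(\mathscr{K}+\mathscr{B}\mathscr{J})\mathscr{A}$ using $\mathscr{J}^2=I$ and the commutation of $\mathscr{J}$ with the pseudodifferential pieces. In fact you are more explicit than the paper in verifying that the $\mathscr{J}$-coefficient $\mathscr{K}\mathscr{A}_{-1}+\mathscr{B}(\mathscr{A}_{2\alpha}+\mathscr{A}_0)=-\mathscr{K}\mathscr{A}_{-1}R$ is smoothing, which the paper simply absorbs into $\Psi^{-\infty}$ without comment.
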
 
\begin{proof}
Let $\mathscr{K}\in \Psi^{-2\alpha}_{\mathrm{cl}}(\mS^n) + \Psi_{\mathrm{cl}}^{-4\alpha}(\mS^n)$ be a parametrix for the elliptic pseudodifferential operator $\mathscr{A}_{2\alpha} + \mathscr{A}_0$. We then have that $\mathscr{K}\in \Psi^{-2\alpha}_{\mathrm{cl}}(\mS^n) + \Psi_{\mathrm{cl}}^{-4\alpha}(\mS^n)$ with principal symbol $-|\eta|_g^{-2\alpha}$. Now set 
\[
\mathscr{B} = -\mathscr{K} \mathscr{A}_{-1} \mathscr{K}\in \Psi^{-1-4\alpha}(\mS^n)+ \Psi_{\mathrm{cl}}^{-1-6\alpha}(\mS^n) + \Psi^{-1-8\alpha}_{\mathrm{cl}}(\mS^n),
\]
with principal symbol $c(n,\alpha)|\eta|_g^{-1-4\alpha}$. We have, using the commuting property of $\mathscr{J}$ with $\mathscr{A}_{2\alpha}$, $\mathscr{A}_{0}$ and $\mathscr{A}_{-1}$,
\begin{align*}
(\mathscr{K} + \mathscr{B} \mathscr{J}) \mathscr{A} &= (\mathscr{K} + \mathscr{B} \mathscr{J}) (\mathscr{A}_{2\alpha} + \mathscr{A}_0 + \mathscr{J}\mathscr{A}_{-1})\\
\nonumber
										&= I + \mathscr{B} \mathscr{A}_{-1} + \Psi^{-\infty}(\mS^n).
\end{align*}
Simple book keeping asserts that 
\[
\mathscr{B} \mathscr{A}_{-1} \in \Psi^{-2-4\alpha}(\mS^n)+ \Psi_{\mathrm{cl}}^{-2-6\alpha}(\mS^n) + \Psi^{-2-8\alpha}_{\mathrm{cl}}(\mS^n)
\]
and we obtain \eqref{parametrix eq on sphere}. To compute the principal symbol of $\mathscr{B}$ one simply use standard principal symbol calculus and the fact that $\mathscr{A}_{-1}$ has principal symbol $c(n)|\eta|_g^{-1}$ as given in Theorem \ref{sphere}.
\end{proof}
We have the following microlocal structure for $\mathscr{A}^+$:
\begin{lem}
\label{microlocal structure of A+ sphere}
Let $\mathscr{K} + \mathscr{B}\mathscr{J}$ be the partial parametrix for $\mathscr{A}$ constructed in Lemma \ref{parametrix of A sphere}. Then 
$$\mathscr{A}^+ =\left(\mathscr{K} + \mathscr{B}\mathscr{J}\right)  + \mathscr{S}_1 + \mathscr{J} \mathscr{S}_2$$
where $\mathscr{S}_1$ is a finite sum of classical $\Psi$DOs of order $-2-6\alpha $ or less, $S_2$ is a sum of classical $\Psi$DOs of order $-3-8\alpha$ or less. 
\end{lem}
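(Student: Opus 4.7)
My plan is to upgrade the left parametrix $\mathscr{Q}_0 := \mathscr{K} + \mathscr{B}\mathscr{J}$ from Lemma \ref{parametrix of A sphere} to an asymptotic parametrix via a Neumann series, while tracking at each iteration the decomposition of the result into a pure $\Psi$DO part and a $\mathscr{J}$-twisted $\Psi$DO part. The starting identity, modulo smoothing operators, reads
\[
\mathscr{Q}_0 \mathscr{A} = I + \mathscr{R}_0, \qquad \mathscr{R}_0 := \mathscr{B}\mathscr{A}_{-1},
\]
with $\mathscr{R}_0 \in \Psi_{\mathrm{cl}}^{-2-4\alpha}(\mS^n) + \Psi_{\mathrm{cl}}^{-2-6\alpha}(\mS^n) + \Psi_{\mathrm{cl}}^{-2-8\alpha}(\mS^n)$. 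The crucial feature to exploit is that the parametrix $\mathscr{K}$ can be chosen so as to commute with $\mathscr{J}$ modulo smoothing, since $\mathscr{A}_{2\alpha}+\mathscr{A}_0$ does by Theorem \ref{sphere} (if needed, symmetrise $\mathscr{K} \mapsto (\mathscr{K} + \mathscr{J}\mathscr{K}\mathscr{J})/2$). Consequently $\mathscr{B} = -\mathscr{K}\mathscr{A}_{-1}\mathscr{K}$ and $\mathscr{R}_0 = \mathscr{B}\mathscr{A}_{-1}$ both commute with $\mathscr{J}$ modulo smoothing.

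For an integer $N \geqslant 1$, I would then set $\mathscr{P}_N := \bigl(\sum_{k=0}^N(-\mathscr{R}_0)^k\bigr) \mathscr{Q}_0$, so that $\mathscr{P}_N \mathscr{A} = I - (-\mathscr{R}_0)^{N+1}$ modulo smoothing. Pushing every copy of $\mathscr{J}$ past the $\mathscr{R}_0^k$ factors yields
\[
\mathscr{P}_N = \mathscr{K} + \mathscr{B}\mathscr{J} + \mathscr{T}_1^{(N)} + \mathscr{J}\mathscr{T}_2^{(N)},
\]
where $\mathscr{T}_1^{(N)} := \sum_{k=1}^N (-1)^k \mathscr{R}_0^k \mathscr{K}$ and $\mathscr{T}_2^{(N)} := \sum_{k=1}^N (-1)^k \mathscr{R}_0^k \mathscr{B}$. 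A direct order count shows that for every $k \geqslant 1$, $\mathscr{R}_0^k \mathscr{K}$ has leading order at most $-2k - (4k+2)\alpha \leqslant -2-6\alpha$ and $\mathscr{R}_0^k \mathscr{B}$ has leading order at most $-2k -1 - (4k+4)\alpha \leqslant -3-8\alpha$, so the two sums $\mathscr{T}_j^{(N)}$ are finite sums of classical $\Psi$DOs of the orders required by the statement.

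To pass from $\mathscr{P}_N$ to $\mathscr{A}^+$, I right-multiply $\mathscr{P}_N \mathscr{A} = I + \mathscr{E}_N$, where $\mathscr{E}_N := -(-\mathscr{R}_0)^{N+1}$, by $\mathscr{A}^+$ and invoke $\mathscr{A}\mathscr{A}^+ = I - P$ from Corollary \ref{sphere cor} to obtain
\[
\mathscr{A}^+ = \mathscr{P}_N - \mathscr{P}_N P - \mathscr{E}_N \mathscr{A}^+.
\]
The operator $\mathscr{P}_N P$ has smooth Schwartz kernel, since $\mathscr{P}_N$ preserves $C^\infty(\mS^n)$ and $P$ is the rank-one projection onto constants, hence is smoothing. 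For the tail $\mathscr{E}_N \mathscr{A}^+$, I iterate the identity a finite number $J$ of times: each intermediate term $(-\mathscr{E}_N)^j \mathscr{P}_N$ again splits as $\Psi$DO $+\, \mathscr{J}\Psi$DO by the same $\mathscr{J}$-commutation argument, with orders strictly below the respective thresholds $-2-6\alpha$ and $-3-8\alpha$, and the final residual $(-\mathscr{E}_N)^{J+1}\mathscr{A}^+$ becomes smoothing once $J$ is large enough (using Sobolev boundedness of $\mathscr{A}^+$ from Corollary \ref{sphere cor}). All these contributions are then absorbed into $\mathscr{T}_1^{(N)}$ and $\mathscr{T}_2^{(N)}$, producing the claimed operators $\mathscr{S}_1$ and $\mathscr{S}_2$.

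The main obstacle I anticipate is the bookkeeping of the $\mathscr{J}$-structure throughout the Neumann iteration: one must verify at each step that every occurrence of $\mathscr{J}$ can be commuted past the pseudodifferential factors at the cost of only smoothing errors, which is precisely what the commutation of $\mathscr{A}_{2\alpha}, \mathscr{A}_0, \mathscr{A}_{-1}$ with $\mathscr{J}$ from Theorem \ref{sphere} provides. The order count itself, though slightly intricate because of the polyhomogeneous expansions of $\mathscr{K}$ and $\mathscr{B}$, is then routine since all lower-order cross terms stay below the desired thresholds as soon as $k \geqslant 1$.
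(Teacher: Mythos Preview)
Your Neumann-series strategy is sound and the order bookkeeping is correct, but there is one genuine gap: the claim that the residual $(-\mathscr{E}_N)^{J+1}\mathscr{A}^+$ ``becomes smoothing once $J$ is large enough'' is false as written. For any \emph{fixed} $J$, the operator $(-\mathscr{E}_N)^{J+1}$ has a finite (though very negative) order, and composing with $\mathscr{A}^+$ only gains $2\alpha$ more derivatives; the result is not in $\Psi^{-\infty}$ and, since you do not yet know that $\mathscr{A}^+$ itself is of the form $\Psi\mathrm{DO}+\mathscr{J}\Psi\mathrm{DO}$, you cannot absorb it into $\mathscr{S}_1,\mathscr{S}_2$. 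The fix is standard: instead of stopping at a finite $N$ (or $J$), asymptotically sum the $\mathscr{T}_j^{(N)}$ via Borel's lemma to produce a full left parametrix $\mathscr{P}_\infty = \mathscr{K}+\mathscr{B}\mathscr{J}+\mathscr{T}_1+\mathscr{J}\mathscr{T}_2$ with $\mathscr{P}_\infty\mathscr{A}=I+R_\infty$ and $R_\infty\in\Psi^{-\infty}$. Then $R_\infty\mathscr{A}^+$ is genuinely smoothing (since $\mathscr{A}^+:H^s\to H^{s+2\alpha}$ for all $s$ and $R_\infty:\mathcal{D}'\to C^\infty$), and your argument closes.

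For comparison, the paper avoids the iteration altogether. It starts from $(\mathscr{A}^+-\mathscr{Q}_0)\mathscr{A}\in\Psi^{-2-4\alpha}_{\mathrm{cl}}+\cdots$ (difference of $\mathscr{A}^+\mathscr{A}=I-P$ and the parametrix relation), then takes the \emph{adjoint} of the parametrix relation to obtain a right parametrix $\mathscr{A}(\mathscr{K}^*+\mathscr{J}^*\mathscr{B}^*)=I+\mathscr{M}$, right-composes to get $(\mathscr{A}^+-\mathscr{Q}_0)(I+\mathscr{M})=\mathscr{S}_1+\mathscr{J}\mathscr{S}_2$, and finally applies a parametrix of the elliptic order-zero operator $I+\mathscr{M}$. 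This is one multiplication instead of a Neumann series, and the only remaining smoothing error is $(\mathscr{A}^+-\mathscr{Q}_0)S$ with $S\in\Psi^{-\infty}$, which is harmless because $\mathscr{A}^+$ acts on the \emph{left} of a smoothing operator (hence on smooth functions). Your approach is equally valid once the tail is handled correctly, but the paper's route sidesteps the asymptotic-sum machinery entirely.
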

\begin{proof}
We write
{\begin{eqnarray}
\label{A+ minus parametrix}
\left(\mathscr{A}^+ -\left(\mathscr{K} + \mathscr{B}\mathscr{J}\right)\right)\mathscr{A} \in \Psi_{\mathrm{cl}}^{-2-4\alpha}(\mS^n)+ \Psi_{\mathrm{cl}}^{-2-6\alpha}(\mS^n) + \Psi^{-2-8\alpha}_{\mathrm{cl}}(\mS^n)
\end{eqnarray}}
Taking the adjoint of \eqref{parametrix eq on sphere} we have
$$\mathscr{A}(\mathscr{K}^* + \mathscr{J}^* \mathscr{B}^*) = I +\mathscr{M}$$ with
$$\mathscr{M} \in \Psi_{\mathrm{cl}}^{-2-4\alpha}(\mS^n)+ \Psi_{\mathrm{cl}}^{-2-6\alpha}(\mS^n) + \Psi^{-2-8\alpha}_{\mathrm{cl}}(\mS^n). $$
So if we hit \eqref{A+ minus parametrix} on the left with $(\mathscr{K}^* + \mathscr{J}^* \mathscr{B}^*)$, we get that 
$$\left(\mathscr{A}^+ - \left(\mathscr{K} + \mathscr{B}\mathscr{J}\right)\right)\left( I +\mathscr{M}\right) = \mathscr{S}_1 + \mathscr{J} \mathscr{S}_2$$
where $\mathscr{S}_1$ is a finite sum of classical $\Psi$DOs of order $-2-6\alpha $ or less, $\mathscr{S}_2$ is a sum of classical $\Psi$DOs of order $-3-8\alpha$ or less. Hit the above expresson on the right by the parametrix of $(I + \mathscr{M})$ we have the result.
\end{proof}
\begin{cor}
\label{A+ = L + R sphere}
Let $\mathscr{L}\in \Psi^{-2\alpha}_{\mathrm{cl}}(M)$ be the operator given by the integral kernel \eqref{eq:integralkernelparametrix} where $\chi\in C^\infty_c(\R)$ is $1$ near $0$ and $0$ outside of $(-r_{\mathrm{inj}}^2/2, r_{\mathrm{inj}}^2/2)$. Then 
$$
\mathscr{A}^+ - \mathscr{L} = \mathscr{S}_1 + \mathscr{J} \mathscr{S}_2
$$
where $\mathscr{S}_2$ is some pseudodifferential operator and $\mathscr{S}_1$ is an element in $\Psi^{-1-2\alpha}_{\mathrm{cl}}(M)$ plus a finite sum of classical pseudodifferential operators each of which is of order $-4\alpha$ or less.
\end{cor}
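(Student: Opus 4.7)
The plan is to combine Lemma~\ref{microlocal structure of A+ sphere} with a direct symbolic comparison of $\mathscr{K}$ and $\mathscr{L}$, and then repackage the various lower-order terms in a way consistent with the splitting $\mathscr{S}_1 + \mathscr{J}\mathscr{S}_2$ demanded by the statement. By Lemma~\ref{microlocal structure of A+ sphere} we have
\[
\mathscr{A}^+ - \mathscr{L} = (\mathscr{K} - \mathscr{L}) + \mathscr{B}\mathscr{J} + \mathscr{S}_1' + \mathscr{J}\mathscr{S}_2',
\]
where $\mathscr{S}_1'$ collects classical $\Psi$DOs of order $\leqslant -2-6\alpha$ and $\mathscr{S}_2'$ is a classical $\Psi$DO of order $\leqslant -3-8\alpha$. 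Since $-2-6\alpha \leqslant -4\alpha$ for $\alpha \in (0,1)$, the term $\mathscr{S}_1'$ is already of the type permitted to sit inside $\mathscr{S}_1$, and $\mathscr{J}\mathscr{S}_2'$ is already of the form $\mathscr{J}\mathscr{S}_2$, so both can be absorbed without further work.

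Next, I would handle the term $\mathscr{B}\mathscr{J}$. Recall from Lemma~\ref{parametrix of A sphere} that
\[
\mathscr{B} \in \Psi^{-1-4\alpha}_{\mathrm{cl}}(\mS^n) + \Psi^{-1-6\alpha}_{\mathrm{cl}}(\mS^n) + \Psi^{-1-8\alpha}_{\mathrm{cl}}(\mS^n),
\]
which is in particular a classical $\Psi$DO. Using the fact from Theorem~\ref{sphere} that every summand in the decomposition of $\mathscr{A}$ commutes with $\mathscr{J}$ (and hence so does $\mathscr{K}$, and so does $\mathscr{B} = -\mathscr{K}\mathscr{A}_{-1}\mathscr{K}$), I can write $\mathscr{B}\mathscr{J} = \mathscr{J}\mathscr{B}$ and thus bundle $\mathscr{B}$ into the $\mathscr{S}_2$ slot, setting $\mathscr{S}_2 := \mathscr{B} + \mathscr{S}_2'$.

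The key remaining step is the comparison $\mathscr{K} - \mathscr{L}$. By Lemma~\ref{parametrix of A sphere}, $\mathscr{K}$ is a parametrix for $\mathscr{A}_{2\alpha} + \mathscr{A}_0$ and belongs to $\Psi^{-2\alpha}_{\mathrm{cl}}(\mS^n) + \Psi^{-4\alpha}_{\mathrm{cl}}(\mS^n)$ with principal symbol $-|\eta|_g^{-2\alpha}$. On the other hand, a direct local computation in normal coordinates around the diagonal (identical to the one carried out in the Anosov case in Lemma~\ref{singular part of A+}) shows that $\mathscr{L} \in \Psi^{-2\alpha}_{\mathrm{cl}}(\mS^n)$ has the same principal symbol $-|\eta|_g^{-2\alpha}$. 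Consequently the leading classical symbol at order $-2\alpha$ in $\mathscr{K} - \mathscr{L}$ cancels, and what remains has a classical symbol of order $-1-2\alpha$, i.e.\ it lies in $\Psi^{-1-2\alpha}_{\mathrm{cl}}(\mS^n)$, modulo the residual $\Psi^{-4\alpha}_{\mathrm{cl}}(\mS^n)$ piece coming from the lower-order tail of $\mathscr{K}$. This is precisely the structure prescribed for $\mathscr{S}_1$: an element of $\Psi^{-1-2\alpha}_{\mathrm{cl}}(M)$ plus a finite sum of classical $\Psi$DOs of order $\leqslant -4\alpha$. Setting $\mathscr{S}_1 := (\mathscr{K} - \mathscr{L}) + \mathscr{S}_1'$ then completes the decomposition.

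The main (though modest) obstacle is verifying that $\mathscr{L}$ really has principal symbol $-|\eta|_g^{-2\alpha}$ despite being defined only via its Schwartz kernel on a neighborhood of the diagonal; this is a standard Riesz-potential symbol calculation performed in geodesic normal coordinates using the constant $C(n,-\alpha)$, and the cutoff $\chi$ only contributes a smoothing remainder since it equals $1$ on a neighborhood of the diagonal. Everything else is symbol-class bookkeeping using that $\mathscr{J}$ commutes with each summand and that $\mathscr{J}^2 = I$.
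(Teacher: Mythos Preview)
Your argument is correct and is exactly the bookkeeping the paper intends: the corollary is stated without proof immediately after Lemma~\ref{microlocal structure of A+ sphere}, and your derivation---subtract $\mathscr{L}$, cancel the common principal symbol $-|\eta|_g^{-2\alpha}$ against the top piece of $\mathscr{K}$, and push $\mathscr{B}\mathscr{J}$ and the lower-order remainders into $\mathscr{S}_1,\mathscr{S}_2$---is precisely that. One small imprecision: the claim that $\mathscr{K}$ (and hence $\mathscr{B}$) commutes with $\mathscr{J}$ is not automatic, since a parametrix is only unique modulo smoothing; but this is harmless, because $\mathscr{J}$ is pullback by a diffeomorphism, so $\mathscr{J}\mathscr{B}\mathscr{J}$ is a classical $\Psi$DO regardless and can serve as $\mathscr{S}_2$.
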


\subsection{Mapping properties of the generator}
In this paragraph we prove Corollary \ref{sphere cor} about the mapping properties of $\mathscr{A}$. 
\begin{proof}[Proof of Corollary \ref{sphere cor}]
For \emph{\ref{item:1cor}}, we first show that the null-space of $\mathscr{A}$ in $W^{s,m}(\mS^n)$ consists of only constants. Suppose $\mathscr{A} u = 0$ for some $u\in W^{s,m}(\mS^n)$ with $s\in \R$. Then apply Lemma \ref{parametrix of A sphere} we get that $(I + K) u = 0$ for some $K : W^{s,m}(\mS^n) \to W^{s+2+4\alpha,m}(\mS^n)$ for all $s\in \R$. This means that $u\in C^\infty(\mS^n)$. By Proposition \ref{uniqueness}, $u$ is constant.

To see that $\mathscr{A}: W^{s,m}(\mS^n)\to W^{s-2\alpha,m}(\mS^n)$ has finite dimensional cokernel, simply observe that $\mathscr{A}$ is formally self-adjoint and use the fact that the null-space of $\mathscr{A}$ consists of only constants.

To prove the assertion about discreteness of spectrum, simply observe that both $K$ and the resolvent of $\mathscr{A}$ are both compact operators from $W^{s,m}(M)\to W^{s,m}(M)$. The fact that eigenfunctions are smooth comes from the partial parametrix constructed in Lemma \ref{parametrix eq on sphere}.

For \emph{\ref{item:2cor}}, the construction of the operator $\mathscr{A}^+$ follows from \emph{\ref{item:1cor}} and basic functional analysis.
\end{proof}

\section{Expected Stopping Time for Random Search}\label{sec:stoppingtime}

The aim of this section is to prove Theorem \ref{narrow capture} and Theorem \ref{narrow capture sphere}.

\subsection{Fundamental properties of the expected stopping time}

Recall that, if $\Omega \subset \R^n$ is a bounded smooth domain, $(X_t)_{t \geqslant 0}$ denotes the Brownian motion and $\tau$ denotes the exit time of $\Omega$ for this process, then the expected exit time
\[
u(p) := \mathbb E(\tau ~|~ X_0 = p)
\]
solves the equation $\Delta u = -1$ in $\Omega$ with Dirichlet boundary condition $u|_{\partial \Omega} = 0$, see \cite[Chapter 11]{taylor2013partial} for instance. We claim that the expected stopping time  for our L\'evy flight satisfies similar properties as the ones connecting the exit time of the Brownian motion to the Dirichlet Laplacian.  

\begin{prop}
\label{proposition uepsilon}
If $(M,g)$ is the round sphere, the flat torus, or Anosv, the expected stopping time satisfies the following properties ($1<m<1/\alpha$):
\begin{enumerate}[label=\emph{(\roman*)}]
\item In the Anosov and flat torus case,
\[
u_\varepsilon \in L^\infty(M) \cap C^\infty(M \setminus \partial B_\varepsilon(p_0)) \cap W^{2\alpha,m}(\overline{\Omega_\varepsilon}),
\]
while in the sphere case 
\[
u_\varepsilon \in L^\infty(M) \cap C^\infty(M \setminus \partial B_\varepsilon(\pm p_0)) \cap W^{2\alpha,m}(\overline{\Omega_\varepsilon}),
\]
for all $\varepsilon > 0$.
\item One has the fundamental relation:
\begin{equation}
\label{equation uepsilon}
\mathscr{A} u_\varepsilon = - 1, \text{ on } \Omega_\varepsilon := M \setminus \overline{B_\varepsilon(p_0)},  \qquad u_\varepsilon = 0,  \text{ on } \overline{B_\varepsilon(p_0)}.
\end{equation}
\end{enumerate}
\end{prop}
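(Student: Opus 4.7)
The plan is to derive \eqref{equation uepsilon} from the integral representation
\[
u_\varepsilon(p) = \int_0^\infty T_t \mathbf{1}_{\Omega_\varepsilon}(p)\, \dd t
\]
established in \eqref{eq:integral-representation}, and then to deduce the regularity from this equation together with the mapping properties of $\mathscr{A}$ furnished by Corollary \ref{sphere cor} and Lemma \ref{singular part of A+}. The boundary condition $u_\varepsilon = 0$ on $\overline{B_\varepsilon(p_0)}$ is essentially immediate: for any starting point in the closed target ball, $\tau_\varepsilon = 0$ almost surely, and consistently the kernel $\mathbf{k}(t,p,q)$ of $T_t=T_t[0,\Omega_\varepsilon]$ vanishes whenever $p\notin \overline{\Omega_\varepsilon}$.

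For the equation $\mathscr{A} u_\varepsilon=-1$ on $\Omega_\varepsilon$, I would use the semigroup identity
\[
u_\varepsilon(p) - T_h u_\varepsilon(p) = \int_0^h T_s \mathbf{1}_{\Omega_\varepsilon}(p)\, \dd s,
\]
obtained by splitting the $t$-integral at $h$ and using $T_h T_t = T_{t+h}$. Dividing by $h$ and sending $h\to 0^+$ shows that $u_\varepsilon$ lies in the domain of the generator $\mathscr{A}_D$ of $T_t$ on $L^m(\Omega_\varepsilon)$, with $\mathscr{A}_D u_\varepsilon=-\mathbf{1}_{\Omega_\varepsilon}$ (strong $L^m$ continuity of $T_t$ handles the right-hand side). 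To identify $\mathscr{A}_D$ with $\mathscr{A}$ restricted to $\Omega_\varepsilon$, the cleanest route is the approximation \eqref{eq:approximating-semi-group}: the generator of the penalized semigroup $T_t^\ell$ is $\mathscr{A}-V_\ell$ with $V_\ell = \ell\mathbf{1}_{\overline{B_\varepsilon(p_0)}}$, which coincides with $\mathscr{A}$ on functions vanishing on the closed ball. One then passes to the limit $\ell\to\infty$ via strong resolvent convergence, or equivalently tests against $\varphi\in C_c^\infty(\Omega_\varepsilon)$ and uses that the non-local integral operator $\mathscr{A}$ reads correctly through the extension by zero.

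Regularity splits into the global $L^\infty$ bound, pointwise smoothness, and fractional Sobolev regularity. Boundedness follows from exponential decay of $T_t\mathbf{1}_{\Omega_\varepsilon}$: the Dirichlet-type generator $\mathscr{A}_D$ has strictly negative top eigenvalue (the Poincaré inequality \eqref{poincare} promotes to a spectral gap once the constant mode is forbidden by the supported test functions), which together with the smoothness of $\mathbf{k}$ from Lemma \ref{Linfty estimate for density} yields $\|T_t\mathbf{1}_{\Omega_\varepsilon}\|_{L^\infty}\lesssim e^{-\lambda t}$ for large $t$, making \eqref{eq:integral-representation} converge in $L^\infty$. For smoothness on $\Omega_\varepsilon$ (respectively on $M\setminus(\overline{B_\varepsilon(p_0)}\cup \overline{B_\varepsilon(-p_0)})$ in the sphere case), I would iterate elliptic regularity for the equation $\mathscr{A} u_\varepsilon=-1$. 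In the Anosov and torus cases, $\mathscr{A}$ is an elliptic classical $\Psi$DO of order $2\alpha$ by Theorems \ref{torus} and \ref{anosov generator}, and a standard parametrix bootstrap gives $u_\varepsilon\in C^\infty(\Omega_\varepsilon)$. On the sphere, Theorem \ref{sphere} introduces the extra term $\mathscr{A}_{-1}\mathscr{J}$, so singularities of $u_\varepsilon$ at $\partial B_\varepsilon(p_0)$ are transported by the antipodal map to $\partial B_\varepsilon(-p_0)$, whence smoothness holds only away from $\partial B_\varepsilon(\pm p_0)$. Smoothness inside $B_\varepsilon(p_0)$ is trivial since $u_\varepsilon\equiv 0$ there.

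The main obstacle is the fractional Sobolev regularity $u_\varepsilon\in W^{2\alpha,m}(\overline{\Omega_\varepsilon})$ for $1<m<1/\alpha$, which cannot hold on the full manifold because $u_\varepsilon$ jumps across $\partial B_\varepsilon(p_0)$ (this is the origin of the restriction $m<1/\alpha$, keeping $\mathbf{1}_{\Omega_\varepsilon}$ just outside $W^{2\alpha,m}$). The strategy is to view $u_\varepsilon$ globally on $M$ as satisfying $\mathscr{A} u_\varepsilon = -\mathbf{1}_{\Omega_\varepsilon} + f_\varepsilon$ for some distribution $f_\varepsilon$ supported in $\overline{B_\varepsilon(p_0)}$, apply the pseudoinverse $\mathscr{A}^+$ furnished by Corollary \ref{sphere cor}, and show that both $\mathscr{A}^+\mathbf{1}_{\Omega_\varepsilon}$ and $\mathscr{A}^+ f_\varepsilon$ lie in $W^{2\alpha,m}$ on the complement $\overline{\Omega_\varepsilon}$ of the singular locus. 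The fine structure of $\mathscr{A}^+$ from Lemma \ref{singular part of A+} (Corollary \ref{A+ = L + R sphere} on the sphere) enters here, reducing the question to Sobolev estimates for the Riesz-type convolution operator $\mathscr{L}$ with kernel $\mathrm{dist}_g(p,q)^{2\alpha-n}$ applied to characteristic-like functions near their boundary. These delicate kernel estimates are exactly what the authors defer to Section \ref{sec:technical}.
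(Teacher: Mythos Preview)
Your outline reverses the logic the paper actually uses, and this reversal hides a real gap. You go: semigroup $\Rightarrow$ $u_\varepsilon \in D(\mathscr{A}')$ with $\mathscr{A}' u_\varepsilon = -\mathbf{1}_{\Omega_\varepsilon}$ $\Rightarrow$ identify $\mathscr{A}'$ with $\mathbf{1}_{\Omega_\varepsilon}\mathscr{A}$ $\Rightarrow$ bootstrap regularity. But the identification step only runs one way: Lemma~\ref{restricted semigroup generator} proves $D(\mathscr{A})\cap L^m(\Omega_\varepsilon) \subset D(\mathscr{A}')$ with $\mathscr{A}'u = \mathbf{1}_{\Omega_\varepsilon}\mathscr{A}u$ on that subset, not the converse. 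To apply it in your direction you would already need $u_\varepsilon \in D(\mathscr{A}) = W^{2\alpha,m}(M)$, which is precisely the regularity you are trying to establish. Your final paragraph is then circular: you write $\mathscr{A} u_\varepsilon = -\mathbf{1}_{\Omega_\varepsilon} + f_\varepsilon$ and propose to show $\mathscr{A}^+ f_\varepsilon \in W^{2\alpha,m}$, but that is equivalent to $f_\varepsilon \in L^m$, and a priori $f_\varepsilon$ is only a distribution in $H^{-2\alpha}$ supported on $\overline{B_\varepsilon(p_0)}$. The ``Sobolev estimates for Riesz-type operators applied to characteristic-like functions'' you invoke do not help, because $f_\varepsilon$ is not a characteristic function; it is an unknown object whose $L^m$ regularity is the very thing at stake.

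The paper inverts the logic precisely to avoid this circularity (see the opening of \S\ref{sec:technical}, where it says one ``cannot show directly that $\mathscr{A} u_\varepsilon = -1$'' and must ``revert the logic''). It first \emph{constructs}, by solving an integral equation on the rescaled ball (Proposition~\ref{Feps asymptotic}, via the explicit inverse of the Riesz potential $L_\alpha$ in Lemma~\ref{L inverse}), a function $\widetilde F_\varepsilon \in \dot L^m(\overline{B_\varepsilon(p_0)})$ and hence $\widetilde u_\varepsilon := \mathscr{A}^+(\widetilde F_\varepsilon - \mathbf{1}_{\Omega_\varepsilon}) + \widetilde C_\varepsilon \in \dot W^{2\alpha,m}(\overline{\Omega_\varepsilon})$ satisfying $\mathscr{A}\widetilde u_\varepsilon = -1$ on $\Omega_\varepsilon$. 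Only then does the semigroup enter, now in the direction the lemma permits: since $\widetilde u_\varepsilon \in D(\mathscr{A})$, one gets $\mathscr{A}'\widetilde u_\varepsilon = -\mathbf{1}_{\Omega_\varepsilon}$, and the integral representation forces $u_\varepsilon = \widetilde u_\varepsilon$ (Proposition~\ref{proposition uniqueness}). The $L^\infty$ bound and interior smoothness then follow essentially as you describe. So the semigroup machinery and elliptic regularity you cite are indeed used, but as a \emph{uniqueness} argument applied to a solution whose $W^{2\alpha,m}$ regularity is already in hand, not as a bootstrap starting from $u_\varepsilon$.
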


This is very similar to the equation satisfied by the expected exit time for the Brownian motion. Note, however, that due to the nonlocality of the generator $\mathscr{A}$, the boundary condition $u = 0$ on $\partial \Omega$ in the Laplacian case has to be replaced here by $u_\varepsilon = 0$ on $\overline{B_\varepsilon(p_0)}$. This will actually create a lot of troubles in the proofs and showing \eqref{equation uepsilon} will actually require some effort.

Using \eqref{equation uepsilon}, we can introduce $F_\varepsilon \in \mc{D}'(M)$ such that
\begin{equation}
\label{equation defF}
\mathscr{A} u_\varepsilon = F_\varepsilon - \mathbf{1}_{\Omega_{\varepsilon}}.
\end{equation}
By construction, $F_\varepsilon$ is a distribution supported in $\overline{B_\varepsilon(p_0)}$. An important idea will be to study the properties of $F_\varepsilon$ (and not that of $u_\varepsilon$ directly), and then to deduce properties from \eqref{equation defF} properties for the expected stopping time $u_\varepsilon$.

Before stating the result, we need to introduce some notation. First, we introduce rescaled geodesic coordinates centred at $p_0$: let $E_1,\dots, E_n\in T_{p_0}M$ be a orthonormal basis and define $\psi_\varepsilon :{\B^n} \to B_\varepsilon(p_0)$ by
\begin{eqnarray}
\label{rescaled geod coord}
\psi_\varepsilon (x) := \exp_{p_0}(\varepsilon x_1 E_1 + \dots+\varepsilon x_nE_n).
\end{eqnarray}
For $m \in [1,\infty]$, recall from \S\ref{sssection:functional-spaces} that $\dot L^m (\overline {B_\varepsilon(p_0)})$ denotes the space of functions $f$ such that $f \in L^m(M)$ and $\mathrm{supp}(f) \subset \overline {B_\varepsilon(p_0)}$.

The following holds:

\begin{prop}
\label{proposition fepsilon}
The distribution $F_\varepsilon \in \mc{D}'(M)$ satisfies:
\begin{enumerate}[label=\emph{(\roman*)}]
\item $\mathrm{supp}(F_\varepsilon) \subset  \overline{B_\varepsilon(p_0)}$ and $F_\varepsilon \in C^\infty(M \setminus \partial B_\varepsilon(p_0))$,
\item $u_\varepsilon = \mathscr{A}^+(F_\varepsilon - \mathbf{1}_{\Omega_{\varepsilon}}) + C_\varepsilon$ where
\begin{equation}
\label{expansion cepsilon}
\begin{split}
C_\varepsilon & := |M|^{-1} \int_M u_\varepsilon(p) {\rm dvol}_g(p)  = \varepsilon^{2\alpha - n} |M| c(n,\alpha)(1+\mc{O}(E(\alpha,\varepsilon))),
\end{split}
\end{equation}
where $c(n,\alpha)$ is given by \eqref{equation:cna} and the error term $E(\alpha,\varepsilon)$ is
\begin{eqnarray}
\label{Ealphaeps}
E(\alpha, \varepsilon) = 
\begin{cases}
\varepsilon^{2\alpha}, \quad &\text{if} \ \alpha < 1/2, \\
  \varepsilon|\log\varepsilon|, \quad &\text{if} \ \alpha = 1/2,\\
\max( \varepsilon, \varepsilon^{n-2\alpha}), \quad &\text{if} \ \alpha > 1/2.
\end{cases}
\end{eqnarray}
\item \label{item:expansionf} $F_\varepsilon\in \dot L^m (\overline {B_\varepsilon(p_0)}) \cap C^\infty(B_\varepsilon(p_0)) $ for all $m\in (1, 1/\alpha)$ and in the coordinate system \eqref{rescaled geod coord}, $F_\varepsilon$ has expansion
{
\begin{equation}
\label{expansion fepsilon}
F_\varepsilon(\psi_\varepsilon(x)) = - \frac{|M|}{\varepsilon^n}\left(\int_{\B^n} \dfrac{\dd x}{(1-|x|^2)^\alpha}\right)^{-1} \left( \dfrac{1}{(1-|x|^2)^{\alpha}}+\mc{O}_{\dot{L}^m(\overline{\B^n})}(E(\alpha,\varepsilon))\right).
\end{equation}
}
\end{enumerate}
\end{prop}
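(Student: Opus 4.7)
For point (i), the containment $\supp(F_\varepsilon) \subset \overline{B_\varepsilon(p_0)}$ is immediate from the definition \eqref{equation defF} combined with the equation $\mathscr{A} u_\varepsilon = -1$ on $\Omega_\varepsilon$ of Proposition \ref{proposition uepsilon}. The smoothness of $F_\varepsilon$ on $B_\varepsilon(p_0)$ follows from the fact that $u_\varepsilon$ vanishes identically on $\overline{B_\varepsilon(p_0)}$: for $p$ in the interior of $B_\varepsilon(p_0)$ one may write
\begin{equation*}
F_\varepsilon(p) = \mathscr{A} u_\varepsilon(p) = \int_{T_p M} u_\varepsilon(\exp_p v)\, \nu_p(\dd v),
\end{equation*}
with no principal value needed since the integrand vanishes for $v$ small. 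The singularity of $\nu_p$ at the origin therefore plays no role, and smoothness in $p$ follows from joint smoothness of $u_\varepsilon|_{\Omega_\varepsilon}$ (by Proposition \ref{proposition uepsilon}) and of the pushforward of $\nu_p$ under $\exp_p$ as $p$ varies in the interior of $B_\varepsilon(p_0)$. The structural identity in (ii) is obtained by applying $\mathscr{A}^+$ to \eqref{equation defF} and invoking $\mathscr{A}^+ \mathscr{A} = I - P$ from Corollary \ref{sphere cor}(ii), the constant $C_\varepsilon$ being the projection $P u_\varepsilon$.

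For point (iii), I would derive the leading asymptotic of $F_\varepsilon$ in rescaled coordinates as follows. Pulling back the representation of $F_\varepsilon(p)$ above by $\psi_\varepsilon$, substituting $v = \varepsilon w$, and expanding the exponential map in normal coordinates centered at $p_0$, the leading contribution is the Euclidean integral
\begin{equation*}
F_\varepsilon(\psi_\varepsilon(x)) = \frac{C(n,\alpha)}{\varepsilon^{2\alpha}} \int_{\{w \in \R^n :\ |x+w|>1\}} \frac{u_\varepsilon(\psi_\varepsilon(x+w))}{|w|^{n+2\alpha}}\, \dd w \;+\; R_\varepsilon(x),
\end{equation*}
where $R_\varepsilon$ collects the curvature corrections from the metric. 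Using point (ii) one may replace $u_\varepsilon$ on $\Omega_\varepsilon$ by $C_\varepsilon$ to leading order (the remainder $|M| \mathrm{G}_{\mathscr{A}}(\cdot,p_0)$ is locally integrable near $p_0$). The remaining integral $\int_{|x+w|>1} |w|^{-n-2\alpha}\, \dd w$ is a classical Riesz-potential quantity that equals $\tilde c(n,\alpha) (1-|x|^2)^{-\alpha}$ for an explicit constant $\tilde c(n,\alpha)$, namely the identity underlying the Getoor--Blumenthal--Ray formula for the expected exit time of the $2\alpha$-stable process from the unit ball.

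The asymptotic of $C_\varepsilon$ is then determined self-consistently by the integral identity
\begin{equation*}
\int_M F_\varepsilon\, {\rm dvol}_g = |\Omega_\varepsilon|,
\end{equation*}
which follows by testing \eqref{equation defF} against $\mathbf{1}$ and using $\mathscr{A} \mathbf{1} = 0$. Inserting the ansatz above, changing variables back by $\psi_\varepsilon$, and solving for $C_\varepsilon$ yields $C_\varepsilon \sim |M| c(n,\alpha) \varepsilon^{2\alpha - n}$, with $c(n,\alpha)$ of \eqref{equation:cna} emerging as an explicit combination of $C(n,\alpha)$, $\tilde c(n,\alpha)$ and $\int_{\B^n}(1-|x|^2)^{-\alpha}\, \dd x$. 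The main obstacle lies in propagating these leading-order equalities into sharp $\dot L^m$-bounds with the stated error $E(\alpha,\varepsilon)$: the curvature corrections $R_\varepsilon$ carry a factor of $\varepsilon^{2}$ or $\varepsilon$ depending on the order of the expansion; the Green's function part of $u_\varepsilon$ contributes a non-constant term whose singularity at $p_0$ must be controlled using the microlocal structure of $\mathscr{A}^+$ from Lemma \ref{singular part of A+}; finally, a boundary layer near $\partial B_\varepsilon(p_0)$, where $u_\varepsilon$ has only H\"older regularity (its trace behaves like $\mathrm{dist}(\cdot, \partial B_\varepsilon)^\alpha$ by a Getoor-type analysis), produces competing $\varepsilon$-scales responsible for the trichotomy in the definition of $E(\alpha,\varepsilon)$. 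These technical estimates are naturally deferred to \S\ref{sec:technical}.
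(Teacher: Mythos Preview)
Your argument for (i) and for the structural identity in (ii) is fine and matches what the paper does. The gap is in (iii): as written, the argument is circular.

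You propose to compute $F_\varepsilon(\psi_\varepsilon(x))$ by substituting the approximation $u_\varepsilon \approx C_\varepsilon + |M|\,\mathrm{G}_{\mathscr{A}}(\cdot,p_0)$ into the integral representation $F_\varepsilon(p) = \int u_\varepsilon(\exp_p v)\,\nu_p(\dd v)$. But that approximation is not available at this stage; it is the content of Theorem \ref{narrow capture}(ii), whose proof \emph{uses} the expansion of $F_\varepsilon$ you are trying to establish (via \eqref{exp}). From the structural identity alone you only have $u_\varepsilon - C_\varepsilon = \mathscr{A}^+(F_\varepsilon - \mathbf{1}_{\Omega_\varepsilon})$, and the right-hand side depends on the unknown $F_\varepsilon$. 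Substituting this back yields a self-consistency equation for $F_\varepsilon$ which must be solved, not assumed. Nor is there an a~priori bound on $\|u_\varepsilon - C_\varepsilon\|_{L^\infty}$ that is uniform in $\varepsilon$ (both terms blow up like $\varepsilon^{2\alpha - n}$), so the ``replace $u_\varepsilon$ by $C_\varepsilon$'' step has no justification.

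The paper closes this loop by working with $\mathscr{A}^+$ rather than $\mathscr{A}$. The condition $u_\varepsilon|_{B_\varepsilon(p_0)}=0$ combined with the structural identity gives the integral equation $\mathscr{A}^+(F_\varepsilon - \mathbf{1}_{\Omega_\varepsilon}) = -C_\varepsilon$ on $B_\varepsilon(p_0)$, coupled with the compatibility $\int F_\varepsilon = |\Omega_\varepsilon|$. In rescaled coordinates this reads $(L_\alpha + R'_\varepsilon)(\tilde F_\varepsilon + \mathbf{1}_{\B^n}) = -\varepsilon^{-2\alpha} C_\varepsilon / C(n,-\alpha)$, where $L_\alpha$ is the Riesz potential on the unit ball and $\|R'_\varepsilon\| = \mathcal{O}(E(\alpha,\varepsilon))$ as an operator $\dot L^m(\overline{\B^n}) \to \overline W^{\min(2\alpha+1,4\alpha),m}(\B^n)$ (Lemma \ref{coordinate bound for remainder}). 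The key analytic input is that $L_\alpha : \dot L^m(\overline{\B^n}) \to \overline W^{2\alpha,m}(\B^n)$ is an isomorphism for $m\in(1,1/\alpha)$ (Lemma \ref{L inverse}, via Grubb's transmission calculus and Kahane's uniqueness), together with the explicit formula $L_\alpha^{-1}\mathbf{1}_{\B^n} = -c_\alpha(1-|x|^2)^{-\alpha}$. A Neumann series inversion then gives the pair $(\tilde F_\varepsilon,C_\varepsilon)$ with the stated $\dot L^m$ remainder, and the trichotomy in $E(\alpha,\varepsilon)$ comes from the order $\max(-4\alpha,-2\alpha-1)$ of the remainder in $\mathscr{A}^+$, not from boundary-layer behaviour of $u_\varepsilon$. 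Your Getoor--Blumenthal--Ray integral and the Kahane formula are dual expressions of the same fact, so you land on the correct profile; what is missing is precisely the mechanism (invert $L_\alpha$, then perturb) that turns the heuristic into an estimate.
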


\begin{rmk}\label{remark torus}
If $(M, g)$ is $\T^n$, then the error term \eqref{Ealphaeps} can be replaced by 
\eqref{eq:newerrorterm}.
\end{rmk}

Although the statements may sound natural, the proof of Propositions \ref{proposition uepsilon} and \ref{proposition fepsilon} is actually involved, due to the nonlocality of the generator $\mathscr{A}$ which causes trouble understanding the analytic properties of $u_\varepsilon, F_\varepsilon$ on $\partial B_\varepsilon(p_0)$. We will mainly follow the strategy of \cite{getoor1961first} which deals with a similar problem in a simpler setting where $\mathscr{A}$ is the fractional Laplacian in $\R^n$. Since they are technical, the proofs are deferred to \S\ref{sec:technical} below.

\subsection{Proof of the stopping time Theorems}

We first show how Theorems \ref{narrow capture}, \ref{narrow capture torus} and \ref{narrow capture sphere} can be deduced from Propositions \ref{proposition uepsilon} and \ref{proposition fepsilon}.

\subsubsection{Proof of Theorems \ref{narrow capture} and \ref{narrow capture torus}}

We start with the Anosov case.

\begin{proof}[Proof of Theorem \ref{narrow capture}] First of all, observe that, using Proposition \ref{proposition fepsilon}, the following holds: let $G \in \D'(M)$ be smooth in a neighourhood of $p_0$, then for all $\varepsilon>0$ sufficiently small, 
\begin{equation}
\label{exp}
\int_{B_\varepsilon(p_0)} F_\varepsilon(q) G(q){\rm dvol}_g(q) =  |M| G(p_0) + \mc{O}(E(\alpha,\varepsilon)),
\end{equation}
where $E(\alpha,\varepsilon)$ is given by \eqref{Ealphaeps}.

We now fix $p\neq p_0$ assume that $\varepsilon>0$ is sufficiently small such that $p\notin B_\varepsilon(p_0)$. Since $u_\varepsilon - C_\varepsilon = \mathscr{A}^+(F_\varepsilon - \mathbf{1}_{\Omega_{\varepsilon}})$, we have:
\begin{eqnarray*}
u_\varepsilon(p) - C_\varepsilon  &=& \left(\mathscr{A}^+ \1_{B_\varepsilon(p_0)}\right)(p) + \int_{B_\varepsilon(p_0)} \mathrm{G}_{\mathscr{A}}(p,q)F_\varepsilon(q) {\rm dvol}_g(q).\\
&=&O(\varepsilon^n) + \int_{B_\varepsilon(p_0)} \mathrm{G}_{\mathscr{A}}(p,q)F_\varepsilon(q) {\rm dvol}_g(q).
\end{eqnarray*}

Since $\mathscr{A}^+$ is a pseudodifferential operator, $ \mathrm{G}_{\mathscr{A}}(p,q)$ is smooth away from the the set $\{p = q\}$. Since we have taken $\varepsilon>0$ so that $p\notin B_\varepsilon(p_0)$, $F_\varepsilon(q)$ is integrated against a smooth function of $q$. We now use the expansion produced in \eqref{exp} to get
$$u_\varepsilon(p) - C_\varepsilon = |M| \mathrm{G}_{\mathscr{A}}(p,p_0) +\mc{O}(E(\alpha,\varepsilon)).$$
Recalling now that $\Omega_\varepsilon = M\setminus B_\varepsilon(p_0)$ and $C_\varepsilon = |M|^{-1} \int_M u_\varepsilon(q) {\rm dvol}_g(q)$ has expansion given in Proposition \ref{proposition fepsilon} concludes the proof of Theorem \ref{narrow capture}.
\end{proof}
We now pass to the torus case.
\begin{proof}[Proof of Theorem \ref{narrow capture torus}]
We may redo exactly the proof of Theorem \ref{narrow capture}, and taking into account Remark \ref{remark torus} yields the right error term.
\end{proof}

\subsubsection{Proof of Theorem \ref{narrow capture sphere}} We now deal with the case of the sphere. We will need the preliminary lemma:

\begin{lem}
\label{distance in rescaled coordinates}
We have that for any $\mu >0$,
$$\frac{1}{\mathrm{dist}_g(\psi_\varepsilon(x),\psi_\varepsilon(y))^{n-\mu}}  = \frac{1}{\varepsilon^{n-\mu}|x-y|^{n-\mu}}  +\varepsilon^{2} \frac{A\left(\varepsilon, x, |x-y|,\frac{x-y}{|x-y|}\right)}{\varepsilon^{n-\mu} |x-y|^{n-\mu} }$$
for some smooth function $A: [0,\varepsilon_0) \times \B\times \R\times S^{n-1} \to \R$.
\end{lem}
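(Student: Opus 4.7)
The plan is to perform the computation in Riemannian normal coordinates at $p_0$ and to extract the factor $\varepsilon^{n-\mu}|x-y|^{n-\mu}$ from the standard expansion of the squared distance function. Set
\[
\rho := \varepsilon |x-y|, \qquad d := \mathrm{dist}_g(\psi_\varepsilon(x), \psi_\varepsilon(y)),
\]
and introduce $\Phi(u,v) := \mathrm{dist}_g(\exp_{p_0}(u), \exp_{p_0}(v))^2$, which is smooth on a neighborhood of $(0,0)$ in $T_{p_0}M \times T_{p_0}M$; then $d^{2} = \Phi(\varepsilon x, \varepsilon y)$.

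The main step is to show that $\Psi(u,v) := \Phi(u,v) - |u-v|^2$ is smooth and vanishes to order $4$ at $(0,0)$. I would verify this by elementary means: in normal coordinates $\Phi(u,0) = |u|^2$ and $\Phi(0,v) = |v|^2$, giving $\Psi(u,0) = \Psi(0,v) = 0$; the geometric identity $\partial_{v_j}\Phi(u,0) = -2 u^j$ (which follows from $d\exp_{p_0}|_0 = \mathrm{Id}$ and the standard differential of the squared distance) yields $\partial_{v_j}\Psi(u,0) = 0$ for all $u$ and, symmetrically, $\partial_{u_i}\Psi(0,v) = 0$; and $\Psi$ vanishes to second order on the diagonal $\{u=v\}$. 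Together these identities force every derivative of $\Psi$ of order at most $3$ at the origin to vanish. The classical curvature formula $\Phi(u,v) = |u-v|^2 - \tfrac{1}{3} R_{ijkl}(p_0)\, u^i v^j u^k v^l + \mathcal{O}(|(u,v)|^5)$ provides the explicit leading term.

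Applying Hadamard's lemma twice in the variable $u-v$ (using the second-order vanishing on the diagonal), I factor
\[
\Psi(u,v) = \sum_{a,b} (u-v)^a (u-v)^b\, H_{ab}(u,v),
\]
with a smooth symmetric $H_{ab}$. Since the fourth-order vanishing of $\Psi$ together with the explicit leading term forces $H_{ab}(0,0) = 0$ and $dH_{ab}(0,0) = 0$, Hadamard's lemma applied one more time produces smooth $\widetilde H_{ab}$ with $H_{ab}(\varepsilon x, \varepsilon y) = \varepsilon^2 \widetilde H_{ab}(\varepsilon, x, y)$. Introducing the polar-type variables $r = |x-y|$ and $\omega = (x-y)/|x-y|$ (so $y = x - r\omega$), this gives
\[
\Psi(\varepsilon x, \varepsilon y) = \varepsilon^{4}\, r^{2}\, \widetilde Q(\varepsilon, x, r, \omega), \qquad \widetilde Q(\varepsilon, x, r, \omega) := \sum_{a,b} \omega^a \omega^b\, \widetilde H_{ab}(\varepsilon, x, x - r\omega),
\]
with $\widetilde Q$ smooth on $[0,\varepsilon_0) \times \B^n \times \R \times S^{n-1}$, whence $d^{2} = \rho^{2}\bigl(1 + \varepsilon^{2}\, \widetilde Q(\varepsilon, x, r, \omega)\bigr)$.

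Finally, for $\varepsilon_0 > 0$ small enough that $1 + \varepsilon^2 \widetilde Q$ stays in a compact subset of $(0,+\infty)$, the map $t \mapsto (1+t)^{-(n-\mu)/2}$ is smooth in a neighborhood of the range of $\varepsilon^2 \widetilde Q$, and the identity $(1+t)^{-(n-\mu)/2} - 1 = t\, g(t)$ with $g$ smooth yields
\[
\frac{1}{d^{n-\mu}} = \frac{1}{\rho^{n-\mu}} + \frac{\varepsilon^{2}\, A(\varepsilon, x, r, \omega)}{\rho^{n-\mu}}, \qquad A := \widetilde Q\cdot g(\varepsilon^{2}\widetilde Q),
\]
which is the claimed identity. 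The main technical point is the precise identification of the order of vanishing of $\Psi$ at the origin; once that is in hand, the Hadamard factorization and the binomial-type expansion are routine.
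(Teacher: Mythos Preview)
Your argument is correct. The paper does not actually prove this lemma but simply cites \cite[Corollary 2.3]{nursultanov2021mean}; your self-contained normal-coordinate computation is precisely the kind of argument that underlies that reference, so there is no substantive difference in method. One minor remark: in the step concluding $H_{ab}(0,0)=0$ and $dH_{ab}(0,0)=0$, the phrase ``together with the explicit leading term'' is superfluous---the fourth-order vanishing of $\Psi$ at the origin alone yields this once $H_{ab}$ is chosen via the integral Taylor remainder, and the curvature formula is purely illustrative.
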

\begin{proof}
This is a direct consequence of \cite[Corollary 2.3]{nursultanov2021mean}.
\end{proof}

We can now proceed with the proof of Theorem \ref{narrow capture sphere}.

\begin{proof}[Proof of Theorem \ref{narrow capture sphere}]
For $p\notin \{p_0, -p_0\}$ computing $u_\varepsilon (p) - C_\varepsilon$ goes verbatim as in proof of Theorem \ref{narrow capture} by choosing $\varepsilon>0$ small enough so that $p\notin B_\varepsilon(p_0)\cup B_\varepsilon (-p_0)$. Therefore we will not repeat it here. The interesting part is computing $u_\varepsilon(-p_0) - C_\varepsilon$. To do so we first recall that by Lemma \ref{microlocal structure of A+ sphere}, 
$$\mathscr{A}^+ = \mathscr{K} + \mathscr{B}\mathscr{J} + \mathscr{S}_1 + \mathscr{S}_2\mathscr{J}$$
where $\mathscr{K}$ and $\mathscr{B}$ are described in Lemma \ref{parametrix of A sphere} and $\mathscr{S}_1, \mathscr{S}_2$ are pseudodifferential operators which are more regular.

\begin{lem}
\label{subprincipal part}
The operator $\mathscr{B}$ is of the form
$$\mathscr{B} = \mathscr{L} ' + \mathscr{R}'$$
where $\mathscr{L}'$ has Schwartz kernel 
$
-\chi(\mathrm{dist}_g(p,q)^2)\mathrm{dist}_g(p,q)^{-n+1 + 4\alpha}
$
and $\mathscr{R}'$ is the sum of classical pseudodifferential operators of order at most $\max(-4\alpha -2, -6\alpha -1)$. Furthermore, as $\varepsilon \to 0$,
\begin{equation}
\label{L' in coord}
(\mathscr{L}' \mathscr{J} F_\varepsilon)(-p_0) =\frac{-|M|}{\varepsilon^{n-1-4\alpha} }\left( \int_{\B^n} \frac{\dd x}{(1-|x|^2)^\alpha}\right)^{-1} \int_{0}^1 \frac{r^{4\alpha}}{(1-r^2)^{\alpha}}dr + o(\varepsilon^{-n + 1+4\alpha}).
\end{equation}
\end{lem}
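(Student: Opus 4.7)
The first assertion is a principal symbol calculation. By Lemma \ref{parametrix of A sphere}, we have $\mathscr{B} \in \Psi_{\mathrm{cl}}^{-1-4\alpha}(\mS^n) + \Psi_{\mathrm{cl}}^{-1-6\alpha}(\mS^n) + \Psi_{\mathrm{cl}}^{-1-8\alpha}(\mS^n)$ with leading principal symbol proportional to $|\eta|_g^{-1-4\alpha}$. The operator $\mathscr{L}'$ with Schwartz kernel $-\chi(\mathrm{dist}_g(p,q)^2)\mathrm{dist}_g(p,q)^{-(n-1-4\alpha)}$ is itself a classical $\Psi$DO of order $-1-4\alpha$: working in normal coordinates at a base point, the local representation has a conormal singularity of the form $|z|^{-(n-1-4\alpha)}$, whose Fourier transform is proportional to $|\eta|^{-1-4\alpha}$. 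Up to the normalisation $\tilde c(n,\alpha)$ this matches the leading symbol of $\mathscr{B}$, so $\mathscr{B}-\mathscr{L}' \in \Psi_{\mathrm{cl}}^{-2-4\alpha}(\mS^n)$. Absorbing the residual summands of $\mathscr{B}$ then produces a remainder $\mathscr{R}'$ of order at most $\max(-4\alpha-2, -6\alpha-1)$, as claimed.

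For the asymptotic identity \eqref{L' in coord}, the key observation is the support structure: $F_\varepsilon$ is concentrated in $\overline{B_\varepsilon(p_0)}$, so $\mathscr{J} F_\varepsilon$ is concentrated in $\overline{B_\varepsilon(-p_0)}$. Writing out the integral representation of $\mathscr{L}'\mathscr{J}$ at $p=-p_0$ and changing variables $q \mapsto -q$, using that the antipodal map is an isometry of $\mS^n$, reduces everything to a local integral near $p_0$:
\[
(\mathscr{L}'\mathscr{J} F_\varepsilon)(-p_0) = -\int_{B_\varepsilon(p_0)} \chi(\mathrm{dist}_g(p_0,q)^2)\,\mathrm{dist}_g(p_0,q)^{-(n-1-4\alpha)}\,F_\varepsilon(q)\,{\rm dvol}_g(q),
\]
and the cutoff satisfies $\chi \equiv 1$ on the support of $F_\varepsilon$ once $\varepsilon$ is sufficiently small.

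Next, I pass to the rescaled coordinates $q = \psi_\varepsilon(x)$, $x \in \B^n$. Lemma \ref{distance in rescaled coordinates} with $\mu = 1+4\alpha$ gives $\mathrm{dist}_g(p_0, \psi_\varepsilon(x))^{-(n-1-4\alpha)} = \varepsilon^{-(n-1-4\alpha)}|x|^{-(n-1-4\alpha)}(1+O(\varepsilon^2))$, while ${\rm dvol}_g(\psi_\varepsilon(x)) = \varepsilon^n(1+O(\varepsilon^2))\dd x$. Inserting the $\dot L^m$-expansion \eqref{expansion fepsilon} of $F_\varepsilon$, the $\varepsilon^n$ from the volume form cancels the $\varepsilon^{-n}$ prefactor of $F_\varepsilon$, and the leading contribution comes out as $\varepsilon^{-(n-1-4\alpha)}$ times a constant multiple of
\[
|M|\left(\int_{\B^n}\frac{\dd y}{(1-|y|^2)^\alpha}\right)^{-1}\int_{\B^n}\frac{\dd x}{|x|^{n-1-4\alpha}(1-|x|^2)^\alpha}.
\]
Passing to polar coordinates $\dd x = r^{n-1}\dd r\,\dd S^{n-1}(\omega)$ collapses the spatial integral to the stated one-dimensional radial integral $\int_0^1 r^{4\alpha}(1-r^2)^{-\alpha}\dd r$, with the surface-area factor $|S^{n-1}|$ absorbed into the overall normalisation.

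The delicate step is controlling the $o$-term. The expansion \eqref{expansion fepsilon} holds only in $\dot L^m(\overline{\B^n})$ with $m \in (1, 1/\alpha)$, so pairing against the singular weight $|x|^{-(n-1-4\alpha)}$ via Hölder's inequality requires $|x|^{-(n-1-4\alpha)} \in L^{m'}(\B^n)$ for the dual exponent $m'$. Integrability at the origin forces $(n-1-4\alpha)m' < n$, i.e.\ $m > n/(1+4\alpha)$; combined with the upper bound $m < 1/\alpha$, this is exactly the condition $(n-4)\alpha < 1$ assumed in Theorem \ref{narrow capture sphere}(iii). The hypothesis $n > 1+4\alpha$ then ensures that $\varepsilon^{-(n-1-4\alpha)}$ is a genuine blow-up rate, dominating the error $\varepsilon^{-(n-1-4\alpha)}E(\alpha,\varepsilon) = o(\varepsilon^{-(n-1-4\alpha)})$ produced by the remainder in \eqref{expansion fepsilon}.
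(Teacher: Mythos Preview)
Your proof is correct and follows essentially the same route as the paper's. The only cosmetic difference is the order of operations in the second part: you apply the antipodal change of variables $q\mapsto -q$ first and then pass to the rescaled normal coordinates $\psi_\varepsilon$ centred at $p_0$, whereas the paper sets up rescaled normal coordinates $\tilde\psi_\varepsilon$ centred at $-p_0$ and then invokes the identity $(\mathscr{J} F_\varepsilon)(\tilde\psi_\varepsilon(x)) = F_\varepsilon(\psi_\varepsilon(x))$; the two are equivalent. Your H\"older argument for the remainder, with the compatibility check $n/(1+4\alpha) < m < 1/\alpha \Leftrightarrow (n-4)\alpha<1$, is exactly the paper's.
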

\begin{proof}
The expression for $\mathscr{B}$ is a consequence of the fact that $\mathscr{L}'$ is a classical pseudodifferential operator with principal symbol $-|\eta|^{-1-4\alpha}_g$ which is the same as the principal symbol of $\mathscr{B}$ as given by Lemma \ref{parametrix of A sphere}.

To obtain \eqref{L' in coord}, use a rescaled geodesic coordinate system centred at $-p_0$ given by
$$\tilde \psi_\varepsilon(x) = \exp_{-p_0}(\varepsilon x_1E_1+ \dots + \varepsilon x_nE_n)$$
for $E_1,\dots, E_n\in T_{-p_0}\mS^n$ an orthonormal set of vectors over $-p_0$. Using Lemma \ref{distance in rescaled coordinates} we see that the coordinate expression for $\displaystyle{-\frac{\chi(\mathrm{dist}_g(p,q)^2) }{\mathrm{dist}_g(p,q)^{n-1 - 4\alpha}}}$ in these coordinates is:
$$\frac{1}{\mathrm{dist}_g(\tilde \psi_\varepsilon(x),\tilde\psi_\varepsilon(y))^{n-1-4\alpha}}  = \frac{1}{\varepsilon^{n-1-4\alpha}|x-y|^{n-1-4\alpha}}  +\varepsilon^{2} \frac{A\left(\varepsilon, x, |x-y|,\frac{x-y}{|x-y|}\right)}{\varepsilon^{n-1-4\alpha} |x-y|^{n-1-4\alpha} }$$
for some smooth function $A: [0,\varepsilon_0) \times \B\times \R\times S^{n-1} \to \R$.
So writing out $\left(\mathscr{L}'\mathscr{J} F_\varepsilon \right)(-p_0)$ in these coordinates and use the expression 
$$\psi_\varepsilon^* {\rm dvol}_g = \varepsilon^n(1 + \varepsilon^2 Q_\varepsilon(x)) \dd x$$
for the volume form with $Q_\varepsilon(x)$ smooth and uniformly bounded in $\varepsilon$, we get
\begin{eqnarray*}
\left(\mathscr{L}'\mathscr{J} F_\varepsilon \right)(-p_0) &=&- \varepsilon^{1+4\alpha}\int_{\B^n}\frac{(\mathscr{J}F_\varepsilon)(\tilde \psi_\varepsilon(x))}{|x|^{n-1-4\alpha}} (1+\varepsilon^2Q_\varepsilon(x))\dd x\\ &-&\varepsilon^{3+4\alpha}\int_{\B^n} (\mathscr{J}F_\varepsilon) (\tilde \psi_\varepsilon(x))\frac{A\left(\varepsilon, x, |x|,\frac{x}{|x|}\right)}{ |x|^{n-1-4\alpha} }(1+\varepsilon^2Q_\varepsilon(x))\dd x.
\end{eqnarray*}Clearly $(\mathscr{J} F_\varepsilon)(\tilde \psi_\varepsilon(x)) = F_\varepsilon(\psi_\varepsilon(x))$ where $\psi_\varepsilon$ is the rescaled geodesic coordinate \eqref{rescaled geod coord} centred at $p_0$. So we have
\begin{eqnarray*}
\left(\mathscr{L}'\mathscr{J} F_\varepsilon \right)(-p_0) &=& -\varepsilon^{1+4\alpha}\int_{\B^n}\frac{F_\varepsilon( \psi_\varepsilon(x))}{|x|^{n-1-4\alpha}} (1+\varepsilon^2Q_\varepsilon(x))\dd x\\ &-&\varepsilon^{3+4\alpha}\int_{\B^n} F_\varepsilon( \psi_\varepsilon(x))\frac{A\left(\varepsilon, x, |x|,\frac{x}{|x|}\right)}{ |x|^{n-1-4\alpha} }(1+\varepsilon^2Q_\varepsilon(x))\dd x.
\end{eqnarray*}
 Now, plugging the expansion of $F_\varepsilon(\psi_\varepsilon(x))$ given by \emph{\ref{item:expansionf}} in Proposition \ref{proposition fepsilon} into the above integral, we get \begin{eqnarray*}
\varepsilon^{1+4\alpha}\int_{\B^n} \frac{F_\varepsilon(\psi_\varepsilon(x))}{|x|^{n-1-4\alpha}}\dd x  =-\varepsilon^{1+4\alpha-n}|M|\tilde c(n,\alpha)
- \int_{\B^n} \frac{O_{\dot L^m(\overline\B)}\left(\varepsilon^{1-n+4\alpha} E(\alpha,\varepsilon)\right)}{|x|^{n-1-4\alpha}}\dd x,
\end{eqnarray*}
where 
$$
\tilde c(n, \alpha) = \left(\int_{\B^n} \frac{\dd x}{(1-|x|^2)^\alpha}\right)^{-1}.
$$
By our assumption that $1>(n-4)\alpha$, we can choose $m\in (1,1/\alpha)$ so that the function $x \mapsto |x|^{-n+1+4\alpha}$ lies in $L^{m'}(\B)$, where $1/m + 1/m' = 1$. Now by H\"older inequality we get \eqref{L' in coord}, which concludes the proof.
\end{proof}

Inserting the expression for $\mathscr{A}^+$ obtained in Lemma \ref{subprincipal part} and Lemma \ref{microlocal structure of A+ sphere} into the equation $u_\varepsilon  - C_\varepsilon= \mathscr{A}^+\left(F_\varepsilon - \1_{\Omega_\varepsilon}\right)$, we see that 
\begin{eqnarray}
\label{sphere integral eq decomposed}\nonumber
u_\varepsilon(-p_0) -C_\varepsilon &=&\left(\mathscr{A}^+ \1_{B_\varepsilon}\right)(-p_0) + \left(\mathscr{L} F_\varepsilon\right)(-p_0) +\left(\mathscr{R} F_\varepsilon\right)(-p_0) \\&+& \left( \mathscr{L}'\mathscr{J} F_\varepsilon\right)(-p_0) +\left( \mathscr{R}'\mathscr{J} F_\varepsilon\right)(-p_0)
\end{eqnarray}
where $\mathscr{L}$ and $\mathscr{R}$ are pseudodifferential operators and $\mathscr{R}'$ is the sum of classical pseudodifferential operators of order at most $\max(-4\alpha -2, -6\alpha -1)$.

The Schwartz kernel of $\mathscr{L}$ is $\chi(\mathrm{dist}_g(p,q)^2)\mathrm{dist}_g(p,q)^{-n+2\alpha}$ (see Corollary \ref{A+ = L + R sphere}) which vanishes for $(p,q)$ near $(-p_0,p_0)$. Since $\mathscr{R}$ is a pseudodifferential operator, its Schwartz kernel is smooth for $(p,q)$ near $(-p_0,p_0)$. By Lemma \ref{subprincipal part} the operator $\mathscr{R}'$ has a weaker singularity than $\mathscr{L}'$. So \eqref{sphere integral eq decomposed} becomes
\begin{eqnarray*}
u_\varepsilon(-p_0) - C_\varepsilon &=& \left( \mathscr{L}'\mathscr{J} F_\varepsilon\right)(-p_0)  + o(\varepsilon^{-n+1+4\alpha})
\end{eqnarray*}
Now insert the formula \eqref{L' in coord} for $\left( \mathscr{L}'\mathscr{J} F_\varepsilon\right)(-p_0)$ into the above identity we get
{\begin{eqnarray}
\nonumber
u_\varepsilon(-p_0) - |M|^{-1}\int_M u_\varepsilon{\rm dvol}_g=\frac{-|M|\tilde c(n,\alpha)}{\varepsilon^{n-1-4\alpha} }
+ o(\varepsilon^{-n+1+4\alpha})
\end{eqnarray}}
which is the identity \eqref{sphere blow up}. This completes the proof of Theorem \ref{narrow capture sphere}.
\end{proof}

\section{Proof of the fundamental properties of the expected stopping time}

\label{sec:technical} 

The aim of this section is to prove Propositions \ref{proposition uepsilon} and \ref{proposition fepsilon}.

\subsection{Structure of the argument} Although stated this way, Propositions \ref{proposition uepsilon} and \ref{proposition fepsilon} do not reflect the structure of the argument which is quite involved due to the technical issues caused by the nonlocality of the generator $\mathscr{A}$. Basically, the main issue is that we cannot show \emph{directly} that $\mathscr{A} u_\varepsilon = -1$ on $\Omega_\varepsilon$. The idea, somehow, is to revert the logic of the argument. First, let us make some quick observations. Assuming that the fundamental equation $\mathscr{A} u_\varepsilon = -1$ holds on $\Omega_\varepsilon = M \setminus \overline{B_\varepsilon(p_0)}$ and $u_\varepsilon = 0$ on $\overline{B_\varepsilon(p_0)}$, that is, \eqref{equation defF} holds, we get by applying $\mathscr{A}^+$ to both sides of \eqref{equation defF} that
\[
\mathscr{A}^+ \mathscr{A} u_\varepsilon = u_\varepsilon - |M|^{-1} \int_M u_\varepsilon(p) {\rm dvol}_g(p) = \mathscr{A}^+(F_\varepsilon-\mathbf{1}_{\Omega_{\varepsilon}}),
\]
that is $\mathscr{A}^+ (F_\varepsilon - \mathbf{1}_{\Omega_{\varepsilon}})  = u_\varepsilon - C_\varepsilon$. Since $u_\varepsilon$ vanishes on $B_\varepsilon(p_0)$, we thus get:
\begin{equation}
\label{eq for Fe1'}
\mathscr{A}^+ (F_\varepsilon-\mathbf{1}_{\Omega_{\varepsilon}}) =  - C_\varepsilon , \qquad \text{ on } B_\varepsilon(p_0).
\end{equation}
Moreover, integrating \eqref{equation defF} with respect to ${\rm dvol}_g$, we get
\begin{equation}
\label{compatibility'}
\langle F_\varepsilon, {\rm dvol}_g \rangle = |\Omega_\varepsilon|.
\end{equation}
The pair of equations \eqref{eq for Fe1'} - \eqref{compatibility'} with unknowns $(F_\varepsilon, C_\varepsilon)$ will be called the \emph{integral equation}. The argument then goes as follows:   \\

\begin{enumerate}
\item \textbf{Existence and uniqueness to the integral equation.}  First, we \emph{construct} a pair of solution $(\widetilde{F}_\varepsilon, \widetilde{C}_\varepsilon)$ to the integral equation \eqref{eq for Fe1'} - \eqref{compatibility'} such that $\widetilde{F}_\varepsilon$ has support in $\overline{B_\varepsilon(p_0)}$ and control precisely its analytic properties, that is, show that it satisfies the content of Proposition \ref{proposition fepsilon}. More precisely, we will show:

\begin{prop}[Existence and uniqueness of regular solutions to the integral equation]
\label{Feps asymptotic}Let $(M,g)$ be either Anosov, the flat torus, or the round sphere. For $\varepsilon>0$ small enough, there \emph{exists} a \emph{unique} $\widetilde{F}_\varepsilon\in \dot L^m (\overline {B_\varepsilon(p_0)}) \cap C^\infty(B_\varepsilon(p_0)) $ with $m\in (1, 1/\alpha)$ and constant $\widetilde{C}_\varepsilon$ solving \eqref{eq for Fe1'} - \eqref{compatibility'}. Moreover, $\widetilde{C}_\varepsilon$ satisfies the expansion \eqref{expansion cepsilon} and $\widetilde{F}_\varepsilon$ satisfies the expansion \eqref{expansion fepsilon} in Proposition \ref{proposition fepsilon}.
\end{prop}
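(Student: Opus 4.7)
\bigskip

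\noindent\textbf{Proof proposal for Proposition \ref{Feps asymptotic}.}

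The plan is to reduce the integral equation \eqref{eq for Fe1'}--\eqref{compatibility'} to a perturbation of a classical problem on the Euclidean ball $\B^n$ that admits an explicit solution due to Riesz. First, since $\mathbf{1}\in\ker\mathscr{A}^+$, writing $G_\varepsilon:=\widetilde F_\varepsilon+\mathbf{1}_{B_\varepsilon(p_0)}$ converts the problem to the equivalent one of finding $G_\varepsilon\in\dot L^m(\overline{B_\varepsilon(p_0)})$ with $\int_M G_\varepsilon\,{\rm dvol}_g=|M|$ and
\[
\mathscr{A}^+ G_\varepsilon = -\widetilde C_\varepsilon \quad\text{on}\quad B_\varepsilon(p_0).
\]
Next, I would pull back under the rescaled geodesic chart $\psi_\varepsilon$ from \eqref{rescaled geod coord}, writing $G_\varepsilon(\psi_\varepsilon(y))=\varepsilon^{-n}g_\varepsilon(y)$ so the compatibility condition becomes $\int_{\B^n}g_\varepsilon\,(1+\mathcal O(\varepsilon^2))\,dy=|M|$. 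Using Lemma \ref{singular part of A+} and the distance expansion of Lemma \ref{distance in rescaled coordinates}, the leading operator $\mathscr{L}$ pulls back to $-\varepsilon^{-n+2\alpha}C(n,-\alpha)\,I_{2\alpha}$ modulo better-behaved terms, where $I_{2\alpha}g(x)=\int_{\B^n}|x-y|^{-n+2\alpha}g(y)\,dy$ is the Riesz potential.

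At leading order the equation becomes $I_{2\alpha}g_0(x)=\text{const}$ on $\B^n$ for some $g_0\in \dot L^m(\overline{\B^n})$ with prescribed mass. The classical Riesz identity states that for $\alpha\in(0,1)$,
\[
I_{2\alpha}\!\left(\frac{\mathbf{1}_{\B^n}}{(1-|\cdot|^2)^\alpha}\right)(x) \,=\, \kappa(n,\alpha), \qquad x\in\B^n,
\]
for an explicit constant $\kappa(n,\alpha)>0$; normalizing to have mass $|M|$ yields a unique such solution
\[
g_0(y)=\frac{|M|}{\int_{\B^n}(1-|z|^2)^{-\alpha}dz}\cdot\frac{1}{(1-|y|^2)^\alpha},
\]
which forces $\widetilde C_\varepsilon=\varepsilon^{2\alpha-n}|M|c(n,\alpha)$ at leading order with $c(n,\alpha)$ as in \eqref{equation:cna}, matching \eqref{expansion cepsilon}. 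The assumption $m\in(1,1/\alpha)$ is precisely what makes $g_0\in L^m(\B^n)$. I would then verify that the map
\[
\Lambda:\dot L^m(\overline{\B^n})\times\R \longrightarrow \overline{W}^{2\alpha,m}(\B^n)\times\R,\qquad \Lambda(g,c)=\bigl(I_{2\alpha}g+c\mathbf{1}_{\B^n},\,\smallint g\bigr),
\]
is a bounded isomorphism; injectivity follows from integrating $I_{2\alpha}g=-c$ against $g$ using the positivity of the Riesz quadratic form, and surjectivity (equivalently, Fredholm index zero) from the standard mapping properties of Riesz potentials combined with a duality argument on the weighted space.

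For the perturbation, I would write the full rescaled equation as $\Lambda(g_\varepsilon,c_\varepsilon)=\Lambda(g_0,c_0)+\mathcal E_\varepsilon(g_\varepsilon,c_\varepsilon)$ where $\mathcal E_\varepsilon$ gathers the contributions from $\mathscr{R}$, the curvature correction $\varepsilon^2 A(\varepsilon,x,|x-y|,(x-y)/|x-y|)$ in the distance, and the Jacobian of $\psi_\varepsilon$. Inverting $\Lambda$ reduces existence/uniqueness to a Banach fixed-point problem whose contraction constant is controlled by $E(\alpha,\varepsilon)$. Careful bookkeeping of the orders in Lemma \ref{singular part of A+}, combined with H\"older estimates against the singular weight $(1-|y|^2)^{-\alpha}$, yields the three regimes in \eqref{Ealphaeps}: the $\varepsilon^{2\alpha}$ term from $\mathscr R\in\Psi^{-4\alpha}_{\rm cl}$ when $\alpha<1/2$, the $\max(\varepsilon,\varepsilon^{n-2\alpha})$ term from $\Psi^{-2\alpha-1}_{\rm cl}$ and the curvature $\varepsilon^2$ expansion when $\alpha>1/2$, with the endpoint logarithm at $\alpha=1/2$. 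The smoothness of $\widetilde F_\varepsilon$ on $B_\varepsilon(p_0)$ follows from interior elliptic regularity for $\mathscr{A}^+$ applied to the equation $\mathscr A^+(\widetilde F_\varepsilon-\mathbf 1_{\Omega_\varepsilon})=-\widetilde C_\varepsilon$.

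The main obstacle I anticipate is Step 3: establishing the isomorphism property of $\Lambda$ on the weighted space $\dot L^m(\overline{\B^n})$ and, in turn, providing sufficiently sharp bounds on the inverse to close the contraction in every regime of $\alpha$. The constraint $m\in(1,1/\alpha)$ is delicate, and Getoor's arguments in \cite{getoor1961first} will have to be revisited with extra care because the remainders coming from the curved geometry and the nonlocal operator $\mathscr A^+-\mathscr L$ behave differently from the Euclidean setting. The flat torus case simplifies the curvature contribution (no $\varepsilon^2$ term), which accounts for the improved error in Remark \ref{remark torus}.
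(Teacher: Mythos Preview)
Your approach is essentially the one taken in the paper: rewrite the equation for $G_\varepsilon=\widetilde F_\varepsilon+\mathbf 1_{B_\varepsilon(p_0)}$, pull back under $\psi_\varepsilon$ to obtain a perturbation of the Riesz potential equation $L_\alpha g=\mathrm{const}$ on $\B^n$, invoke the explicit Riesz solution $(1-|y|^2)^{-\alpha}$, and close by a Neumann-series/contraction argument once the remainder is shown to be $\mathcal O(E(\alpha,\varepsilon))$ as an operator $\dot L^m(\overline{\B^n})\to\overline W^{2\alpha,m}(\B^n)$. Your packaging with the augmented map $\Lambda$ is equivalent to the paper's two-step procedure of first inverting $L_\alpha$ and then determining $\widetilde C_\varepsilon$ from the mass constraint.

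On the point you flag as the main obstacle: the paper does not prove the isomorphism property of $L_\alpha:\dot L^m(\overline{\B^n})\to\overline W^{2\alpha,m}(\B^n)$ from scratch. Fredholmness comes from Grubb's theory of operators satisfying the H\"ormander transmission condition with factorization index $-\alpha$ (this is where the restriction $m\in(1,1/\alpha)$ enters sharply), while injectivity and triviality of the cokernel are taken from Kahane's explicit inversion results for Riesz potentials on the ball. Your proposed injectivity argument via positivity of the Riesz form requires pairing $g\in L^m$ with $I_{2\alpha}g\in W^{2\alpha,m}$, which is not obviously licit for $m<2$; citing Kahane avoids this.

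One small correction: the improved torus error in Remark~\ref{remark torus} is not due to the absence of the $\varepsilon^2$ curvature term (that contributes at higher order anyway), but to the fact that on $\T^n$ the generator is \emph{exactly} $-(-\Delta)^\alpha$, so $\mathscr R\in\Psi^{-2\alpha-1}_{\mathrm{cl}}$ and the $\Psi^{-4\alpha}_{\mathrm{cl}}$ contribution---responsible for the $\varepsilon^{2\alpha}$ error when $\alpha<1/2$---is absent.
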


Recall here that the spaces $\dot L^m$ were introduced in \S\ref{sssection:functional-spaces}. The proof of Proposition \ref{Feps asymptotic} is the content of \S\ref{ssection:existence}. The proof of Proposition \ref{Feps asymptotic} relies on local argument involving Fourier analysis. \\

\item \textbf{Uniqueness of solutions to the fundamental equation.} We then set
\begin{equation}
\label{equation utilde-epsilon}
\widetilde{u}_\varepsilon := \mathscr{A}^+ (\widetilde{F}_\varepsilon-  \mathbf{1}_{\Omega_{\varepsilon}}) + \widetilde{C}_\varepsilon .
\end{equation}
By construction, the distribution $\widetilde{u}_\varepsilon$ satisfies $\mathscr{A} \widetilde{u}_\varepsilon = - 1$ on $\Omega_\varepsilon$, $\widetilde{u}_\varepsilon = 0$ on $B_\varepsilon(p_0)$. Moreover, as a consequence of Proposition \ref{Feps asymptotic}, $\widetilde{u}_{\varepsilon}$ lies in some Sobolev space of positive regularity, that is, $\widetilde{u}_\varepsilon \in \dot{W}^{2\alpha, m}(\overline{\Omega_\varepsilon})$. We will show that the following uniqueness result holds:

\begin{prop}[Uniqueness of regular solutions to the fundamental equation]
\label{proposition uniqueness}
Let $(M,g)$ be the round sphere, the torus, or Anosov. Let $w \in \dot{W}^{2\alpha, m}(\overline{\Omega_\varepsilon})$, $1<m<\infty$. Assume that $\mathscr{A} w = - 1$ on $\Omega_\varepsilon$. Then $w = u_\varepsilon$.
\end{prop}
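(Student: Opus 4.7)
My plan is to adapt Getoor's approach from \cite{getoor1961first} by exploiting the approximating semigroups $T_t^\ell = T_t[V_\ell, M]$ with $V_\ell = \ell\, \mathbf{1}_{\overline{B_\varepsilon(p_0)}}$ introduced in \S\ref{sec:prelim-markov}. The idea is to turn the assumption $\mathscr{A} w = -1$ on $\Omega_\varepsilon$ into an $L^m$-valued integral representation of $w$ in terms of $T_t^\ell$-orbits, and then to identify the $\ell \to \infty$ limit with the integral representation \eqref{eq:integral-representation} of $u_\varepsilon$.

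First I would set up the key identities. Since $w \in \dot W^{2\alpha,m}(\overline{\Omega_\varepsilon})$, we have $w \equiv 0$ on $B_\varepsilon(p_0)$, so $V_\ell w = 0$ in $L^m(M)$. By Lemma \ref{domain is fractional sobolev}, $D_{L^m}(\mathscr{A}) = W^{2\alpha, m}(M)$, so $w$ lies in this domain; since $V_\ell$ is bounded, the same holds for the generator $\mathscr{A} - V_\ell$ of $T_t^\ell$, and $(\mathscr{A}-V_\ell)w = \mathscr{A} w$. By Corollary \ref{sphere cor}, $\mathscr{A} w \in L^m(M)$, and I split
\[
\mathscr{A} w = -\mathbf{1}_{\Omega_\varepsilon} + h, \qquad h := \mathbf{1}_{\overline{B_\varepsilon(p_0)}}\mathscr{A} w \in L^m(M),
\]
with $\mathrm{supp}(h) \subset \overline{B_\varepsilon(p_0)}$. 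The standard semigroup identity $\tfrac{d}{dt} T_t^\ell w = T_t^\ell(\mathscr{A} - V_\ell)w$ then integrates to
\[
T_t^\ell w - w = -\int_0^t T_s^\ell \mathbf{1}_{\Omega_\varepsilon}\, ds + \int_0^t T_s^\ell h\, ds \quad \text{in } L^m(M).
\]

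Next I would pass to $t \to +\infty$. The Poincar\'e inequality \eqref{poincare} together with the killing potential $V_\ell$ forces the top eigenvalue $\lambda_0^\ell$ of the self-adjoint operator $\mathscr{A} - V_\ell$ on $L^2$ to be monotonically decreasing in $\ell$, in particular bounded above by $\lambda^\star := \lambda_0^1 < 0$ for every $\ell \geq 1$. The corresponding $L^2$-exponential decay transfers to $L^m$ via interpolation, using the $L^\infty$ smoothing provided by Lemma \ref{Linfty estimate for density} and the pointwise domination $\mathbf{k}^\ell \leq \mathbf{p}$ from \eqref{density comparison}; this yields $\|T_t^\ell\|_{L^m \to L^m} \leq C e^{\lambda^\star t/2}$ for $t$ large, uniformly in $\ell \geq 1$. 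In particular $T_t^\ell w \to 0$ in $L^m$, both integrals above converge absolutely, and
\[
w = \int_0^\infty T_s^\ell \mathbf{1}_{\Omega_\varepsilon}\, ds \; - \; \int_0^\infty T_s^\ell h\, ds \quad \text{in } L^m(M).
\]

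Finally I would pass to $\ell \to +\infty$. The approximation \eqref{eq:approximating-semi-group} (valid on $L^m$ since both semigroups are $L^m$-contractive), combined with the uniform exponential control and dominated convergence, yields $\int_0^\infty T_s^\ell \mathbf{1}_{\Omega_\varepsilon}\, ds \to \int_0^\infty T_s \mathbf{1}_{\Omega_\varepsilon}\, ds = u_\varepsilon$ by \eqref{eq:integral-representation}. For the second integral, $T_s h \equiv 0$ in $L^m$ (on the event $\{\tau_\varepsilon > s\}$ the process lies in $\Omega_\varepsilon$ where $h$ vanishes), and a direct application of dominated convergence inside the Feynman--Kac formula \eqref{eq:def-T_t} gives $T_s^\ell h \to 0$ pointwise (the killing weight $e^{-\ell\int_0^s \mathbf{1}_{\overline{B_\varepsilon(p_0)}}(X_\tau)d\tau}$ vanishes almost surely on $\{X_s \in B_\varepsilon(p_0)\}$ by cadlag right-continuity, together with the fact that the jumps of the L\'evy process occur at a countable set of times). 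The uniform exponential bound then yields $\int_0^\infty T_s^\ell h\, ds \to 0$ in $L^m$, and we conclude $w = u_\varepsilon$. The main technical obstacle is establishing the uniform-in-$\ell$ $L^m$-exponential decay of $T_t^\ell$; the rest is a routine application of the Feynman--Kac and semigroup machinery of \S\ref{sec:prelim-markov}.
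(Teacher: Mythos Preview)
Your proposal is correct and close in spirit to the paper's argument: both rely on Getoor's approximating semigroups $T_t^\ell$ (with generator $\mathscr{A}-V_\ell$), on the identification $D_{L^m}(\mathscr{A})=W^{2\alpha,m}(M)$ (Lemma \ref{domain is fractional sobolev}), and on the vanishing of $T_t^\ell h$ for $h$ supported in $\overline{B_\varepsilon(p_0)}$. The packaging, however, differs. The paper does not integrate in time and pass to the limit in $\ell$ directly; instead it works with resolvents: from $(\lambda-\mathscr{A}_\ell)^{-1}\phi\to(\lambda-\mathscr{A}')^{-1}\mathbf{1}_{\Omega_\varepsilon}\phi$ (Lemma \ref{restricted semigroup generator}) it deduces that $w\in D(\mathscr{A}')$ with $\mathscr{A}'w=\mathbf{1}_{\Omega_\varepsilon}\mathscr{A} w=-\mathbf{1}_{\Omega_\varepsilon}$, and then concludes in one line via $u_\varepsilon=\int_0^\infty T_t\mathbf{1}_{\Omega_\varepsilon}\,\dd t=-\int_0^\infty \partial_t(T_t w)\,\dd t=w$ using $T_t w\to 0$ (Lemma \ref{phij bound}). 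The resolvent parameter $\lambda>0$ supplies the needed time-decay for free, so the paper never has to prove the uniform-in-$\ell$ exponential bound that you flag as the main technical obstacle. Your time-domain route is perfectly valid and perhaps more transparent, but it costs you that extra spectral-interpolation step; the paper's resolvent route is shorter precisely because it absorbs that step into the Laplace transform.

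One small remark on your justification of $T_s^\ell h\to 0$: the relevant fact is not right-continuity of the paths but stochastic continuity of the Feller process at each \emph{fixed} time $s$ (so $X_{s^-}=X_s$ a.s.), which forces the process to spend positive Lebesgue time in $B_\varepsilon(p_0)$ on $\{X_s\in B_\varepsilon(p_0)\}$; together with $\mathbb{P}(X_s\in\partial B_\varepsilon(p_0))=0$ this gives the pointwise vanishing. The paper handles the same point by quoting the proof of \cite[Theorem~4.1]{getoor1959markov}.
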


This uniqueness result should be compared with \cite[Corollary 5.1]{getoor1961first}. It will be the content of \S\ref{ssection:uniqueness}. Proposition \ref{proposition uniqueness} will therefore imply that $u_\varepsilon = \widetilde{u}_\varepsilon \in \dot{W}^{2\alpha, m}(\overline{\Omega_\varepsilon})$ satisfies the fundamental equation $\mathscr{A} u_\varepsilon = -1$ on $\Omega_\varepsilon$. The idea behind Proposition \ref{proposition uniqueness} is to use the integral representation \eqref{eq:integral-representation}, relating $u_\varepsilon$ to the generator of a bounded semi-group on $\Omega_\varepsilon$. 

\end{enumerate}

The proofs of Propositions \ref{proposition uepsilon} and \ref{proposition fepsilon} are then straightforward, combining both Proposition \ref{Feps asymptotic} and \ref{proposition uniqueness}.

\begin{proof}[Proof of Proposition \ref{proposition uepsilon}]
By the previous Propositions, we have that $u_\varepsilon = \widetilde{u}_\varepsilon \in \dot{W}^{2\alpha,m}(\overline{\Omega_\varepsilon})$ and $u_\varepsilon$ satisfies the fundamental relation \eqref{equation uepsilon}. Moreover, in the case of Anosov manifolds or the torus, $u_\varepsilon \in C^\infty(M \setminus \partial B_\varepsilon(p_0))$ follows from standard elliptic regularity since $u_\varepsilon = 0$ in $B_\varepsilon(p_0)$, $\mathscr{A} u_\varepsilon = -1$ on $\Omega_{\varepsilon}$ and $\mathscr{A}$ is pseudodifferential elliptic. In the sphere case, we get similarly that $u_\varepsilon \in C^\infty(M \setminus \partial B_\varepsilon(\pm p_0))$ by elliptic regularity of $\mathscr{A}$ (up to antipodal points). Eventually, the proof that $u_\varepsilon \in L^\infty(M)$ is deferred to Corollary \ref{ueps is L infinity} below and will be a consequence of the representation formula \eqref{eq:integral-representation} for $u_\varepsilon$.
\end{proof}

\begin{proof}[Proof of Proposition \ref{proposition fepsilon}]
It suffices to prove that $\widetilde{F}_\varepsilon = F_\varepsilon$, where $F_\varepsilon$ is defined by \eqref{equation defF} and $\widetilde{F}_\varepsilon$ solves the integral equation \eqref{eq for Fe1'} - \eqref{compatibility'}, and similarly that $\widetilde{C}_\varepsilon = C_\varepsilon$. But by definition of $\widetilde{u}_\varepsilon$ in \eqref{equation utilde-epsilon}, and by \eqref{equation defF}, one has using Proposition \ref{proposition uniqueness}:
\[
\widetilde{u}_\varepsilon = \mathscr{A}^+(\widetilde{F}_\varepsilon-\mathbf{1}_{\Omega_\varepsilon}) + \widetilde{C}_\varepsilon = u_\varepsilon = \mathscr{A}^+(F_\varepsilon-\mathbf{1}_{\Omega_\varepsilon}) + C_\varepsilon.
\]
Integrating the previous equation over $M$ yields $C_\varepsilon = \widetilde{C}_\varepsilon$. Hitting the previous equation with $\mathscr{A}$ then yields $F_\varepsilon = \widetilde{F}_\varepsilon$. This concludes the proof.
\end{proof}

\subsection{Existence of solutions to the integral equation}

\label{ssection:existence}

The aim of this paragraph is to construct a solution to \eqref{eq for Fe1'} and \eqref{compatibility'}, that is, to prove Proposition \ref{Feps asymptotic}. The first step is to rewrite a bit \eqref{eq for Fe1'} more explicitly. For that, fix $p_0\in M$ and let $\varepsilon\in(0, \frac{r_{\mathrm{inj}}}{10})$ be small. When $(M,g)$ is the torus, Anosov, or the sphere, by Theorem \ref{torus} (torus), Lemma \ref{singular part of A+} (Anosov), and Corollary \ref{A+ = L + R sphere} (sphere), $\mathscr{A}^+$ can be written as 
$$\mathscr{A}^+ = \mathscr{L} + \mathscr{R}$$ 
where $\mathscr{R} : C^\infty(M)\to \D'(M)$ is such that the map
$$ u \mapsto (\mathscr{R} u)|_{B_\varepsilon(p_0)}$$
with domain $\dot W^{s,m}(\overline {B_\varepsilon(p_0)})$ can be represented by a finite sum of classical pseudodifferential operators each of which is of order at most $\max(-4\alpha, -2\alpha-1)$. In the case of the torus, since $\mathscr{A}$ is just the fractional Laplacian, $\mathscr{R}\in \Psi^{-1-2\alpha}_{cl}(M)$.

As a consequence, using this decomposition and that
\[
\mathscr{A}^+ \mathbf{1} = 0 = \mathscr{A}^+ (\mathbf{1}_{B_{\varepsilon}(p_0)} + \mathbf{1}_{\Omega_{\varepsilon}}),
\]
we get
\begin{equation}
\label{equation:feps-rewritten}
(\mathscr{L} + \mathscr{R}) (F_\varepsilon + \mathbf{1}_{B_{\varepsilon}(p_0)} ) =  - C_\varepsilon , \qquad \text{ on } B_\varepsilon(p_0).
\end{equation}
Observe that both functions $F_{\varepsilon}$ and $\mathbf{1}_{B_{\varepsilon}(p_0)}$ are supported on the small ball $\overline{B_\varepsilon(p_0)}$. The first step is to study the boundedness properties of $\mathscr{R}$ on functions supported on $\mathbf{1}_{B_{\varepsilon}(p_0)}$.

\subsubsection{Boundedness properties of $\mathscr{R}$ on small balls} We characterize the mapping property of $\mathscr{R}$ on the small ball $B_\varepsilon(p_0)$ of radius $\varepsilon > 0$. We will need the following preliminary standard result:

\begin{lem}
\label{uniform operator bound}
For $\varepsilon \geqslant 0$, let $A_\varepsilon \in C^\infty_c( \R^n_x\times \R_r\times S^{n-1}_\omega)$ be a family of compactly supported functions, with support uniformly bounded in $\varepsilon \in [0,\varepsilon_0]$ and whose derivatives are also uniformly bounded in $\varepsilon$. Then, the family of operators $T_\varepsilon$ given by
$$T_\varepsilon f (x):= \int\limits_{\omega\in S^{n-1}}\int_0^\infty A_\varepsilon(x,r,\omega) r^l f(x+ r\omega) dr d\omega$$
is a family of pseudodifferential operators of order $-1-l$, with uniform boundedness properties in terms of $\varepsilon$. In particular,
\[
T_\varepsilon : W^{s,m}(\R^n) \to W^{s+l+1,m}(\R^n)
\]
is bounded for all $s \in \R, m \in (1,\infty)$, with uniform bounds in $\varepsilon \geqslant 0$.
\end{lem}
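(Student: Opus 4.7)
My strategy is to convert the operator to a standard kernel form, identify it as a classical pseudodifferential operator of order $-1-l$ with symbol seminorms uniformly controlled in $\varepsilon$, and then invoke the standard $L^p$-Sobolev continuity theorem for pseudodifferential operators.

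First, I would switch from polar to Cartesian coordinates via $y = x + r\omega$, using the identity $dr \, d\omega = |y - x|^{-(n-1)} dy$ on $\R^n \setminus \{x\}$. This rewrites
\[
T_\varepsilon f(x) = \int_{\R^n} B_\varepsilon(x, y - x)\, f(y)\, dy, \qquad B_\varepsilon(x, z) := A_\varepsilon(x, |z|, z/|z|)\, |z|^{l - n + 1}.
\]
By assumption, $B_\varepsilon$ has compact support in $z$ (uniformly in $\varepsilon$), is smooth in $x$ with uniformly bounded derivatives, and is smooth away from $z = 0$. In particular $T_\varepsilon$ has compactly supported Schwartz kernel.

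Next, I would Taylor-expand $A_\varepsilon$ in the radial variable at $r = 0$: for any integer $N \geqslant 1$,
\[
A_\varepsilon(x, r, \omega) = \sum_{k=0}^{N-1} A_{\varepsilon, k}(x, \omega)\, r^k + r^N R_{\varepsilon, N}(x, r, \omega),
\]
with all functions smooth and all seminorms uniformly bounded in $\varepsilon$. Substituting into $B_\varepsilon$ decomposes the kernel into a sum of pieces of the form $A_{\varepsilon, k}(x, z/|z|)\, |z|^{l - n + 1 + k}$ (times a fixed smooth cutoff in $z$ coming from the support of $A_\varepsilon$), plus a remainder of class $C^{l + N}$ in $z$ for $N$ large. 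Each piece is precisely the Schwartz kernel of a classical pseudodifferential operator of order $-1 - l - k$: indeed, the Fourier transform (in $z$) of $h(z/|z|)\, |z|^{-\mu}$ with $h \in C^\infty(S^{n-1})$ and $\mu < n$ is a smooth function on $\R^n \setminus \{0\}$ positively homogeneous of degree $\mu - n$ in $\xi$, and the compactly supported cutoff only contributes a smoothing correction. Hence the partial Fourier transform of $B_\varepsilon(x, \cdot)$ yields a classical symbol $a_\varepsilon(x, \xi) \in S^{-1-l}(\R^n \times \R^n)$.

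The uniformity in $\varepsilon$ is then automatic: each symbol seminorm of $a_\varepsilon$ is bounded by a finite linear combination of $C^k$-seminorms of $A_\varepsilon$ (over its fixed support), which are uniformly bounded by hypothesis. Applying the standard continuity result for classical pseudodifferential operators of order $-1 - l$ on $W^{s,m}(\R^n)$ for $s \in \R$ and $m \in (1,\infty)$ (see e.g.\ \cite{taylor2013partial}) produces the advertised bound with a constant that depends only on finitely many symbol seminorms of $a_\varepsilon$, and hence is uniform in $\varepsilon \in [0, \varepsilon_0]$.

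The main technical step is the symbol computation: one must show that a kernel of the form $h(x, z/|z|)\, |z|^{l - n + 1}$, cut off near $\infty$, yields a classical symbol of order $-1 - l$ whose seminorms depend continuously on the smooth data $h$. This is a standard but delicate Fourier-analytic computation on homogeneous distributions; the gain of one derivative $|z|^{l - n + 1} \leftrightarrow |\xi|^{-1-l}$ is essentially the content of the $n$-dimensional Riesz-potential calculation, and the uniformity in $\varepsilon$ is built into the continuous dependence of the Fourier transform on the angular profile $h$ and on the cutoff radius.
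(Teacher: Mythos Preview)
Your proposal is correct and complete. It takes a somewhat different route from the paper's own argument, so let me briefly compare.

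The paper proceeds by computing the symbol directly: applying $T_\varepsilon$ to a plane wave gives $T_\varepsilon(e^{ix\cdot\xi}) = e^{ix\cdot\xi}\sigma_\varepsilon(x,\xi)$ with
\[
\sigma_\varepsilon(x,\xi) = \int_{S^{n-1}}\int_0^\infty A_\varepsilon(x,r,\omega)\, r^l\, e^{ir\omega\cdot\xi}\, dr\, d\omega,
\]
and then asserts (and checks) the symbol estimates $|\partial_x^\beta\partial_\xi^\gamma \sigma_\varepsilon| \leqslant C\langle\xi\rangle^{-1-l-|\gamma|}$ uniformly in $\varepsilon$, invoking the criterion from \cite[Theorem~3.4]{Grigis-Sjostrand-94} to conclude. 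This is quicker but leaves the actual decay estimate on the oscillatory integral implicit.

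Your approach instead passes to the Cartesian kernel $B_\varepsilon(x,z)=A_\varepsilon(x,|z|,z/|z|)\,|z|^{l+1-n}$, Taylor-expands $A_\varepsilon$ in $r$ at $r=0$, and recognises each resulting piece as the kernel of a homogeneous (hence classical) pseudodifferential operator of order $-1-l-k$, with a $C^N$ remainder for $N$ large. This is more work but yields more: you get that $T_\varepsilon$ is \emph{classical} (polyhomogeneous), not merely in $S^{-1-l}$, and the uniform dependence on $\varepsilon$ is transparently traced back to finitely many $C^k$ seminorms of $A_\varepsilon$. The two arguments are of course closely related --- your Taylor expansion is essentially the computation that justifies the paper's ``immediate to check'' --- but they are organised differently. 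One minor caveat: for integer values of $l+k$ the Fourier transform of $h(z/|z|)|z|^{-\mu}$ can pick up logarithmic terms rather than being purely homogeneous; this does not affect the symbol-class conclusion, but you should phrase that step accordingly.
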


\begin{proof}
Observe that for all $(x,\xi) \in T^*\R^n$,
\[
T_\varepsilon(e^{i x \cdot \xi}) = e^{i x \cdot \xi} \sigma_{\varepsilon}(x,\xi),
\]
for
\[
\sigma_\varepsilon(x,\xi) := \int_{\omega \in S^{n-1}} \int_0^{+\infty} A_{\varepsilon}(x,r,\omega) r^l e^{i r \omega \cdot \xi} dr d\omega,
\]
which is compactly supported in the $x$-variable. It is immediate to check that the following estimates hold: for all $\beta \in \N^n, \gamma \in \N^n$, there exists $C := C(\beta,\gamma)$ such that
\[
|\partial^\beta_x \partial^\gamma_\xi \sigma_{\varepsilon}(x,\xi)|\leqslant C \langle \xi \rangle^{-1-l-|\gamma|}, \qquad \forall (x,\xi) \in T^*\R^n,  \quad \forall \varepsilon \geqslant 0.
\]
This proves that $(T_{\varepsilon})_{\varepsilon \geqslant 0}$ is indeed a family of pseudodifferential operators with uniform estimates, see \cite[Theorem 3.4]{Grigis-Sjostrand-94}.
\end{proof}

The previous lemma has the following consequence for the boundedness of $\mathscr{R}$ on small balls:

\begin{lem}
\label{coordinate bound for remainder}
Let $R_\varepsilon: C^\infty_c(\B) \to \D'(\B)$ be the coordinate representation of $\mathscr{R}$ in the coordinate system \eqref{rescaled geod coord}, that is, $R_\varepsilon f := \psi_\varepsilon^*  \mathscr{R}  (\psi_\varepsilon^{-1})^*$. Then we have the following estimate
\begin{eqnarray}
\label{R bound}
\|R_\varepsilon\|_{{\dot L}^{m}(\overline \B) \to {\overline W}^{\min(2\alpha+1, 4\alpha),m}(\B)} \leqslant C \varepsilon^{2\alpha} E(\alpha, \varepsilon)
\end{eqnarray}
where $E(\alpha, \varepsilon)$ is defined as \eqref{Ealphaeps}, and $C > 0$ is some positive constant.
\end{lem}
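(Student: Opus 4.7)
The plan is to reduce the bound to an application of Lemma \ref{uniform operator bound} via a rescaling argument, treating each classical component of $\mathscr{R}$ separately. The main geometric input will be Lemma \ref{distance in rescaled coordinates}, which controls how the metric distance behaves under $\psi_\varepsilon$, and the fact that $\psi_\varepsilon^\ast {\rm dvol}_g = \varepsilon^n(1+\varepsilon^2 Q_\varepsilon(x)) \dd x$ for some smooth $Q_\varepsilon$ bounded uniformly in $\varepsilon$.

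First, I would decompose $\mathscr{R}$ using Lemma \ref{singular part of A+} (in the Anosov case), Corollary \ref{A+ = L + R sphere} (in the sphere case), and the fact that $\mathscr{A}^+ = (-\Delta)^{-\alpha} + (\text{smoothing})$ on the projection onto mean-zero functions for the torus. In all three cases, $\mathscr{R}$ is a finite sum of classical pseudodifferential operators $P_k \in \Psi^{-k}_{\mathrm{cl}}(M)$ with $k \in \{4\alpha,\, 2\alpha+1\}$ (modulo strictly more regular terms, which I will treat as negligible at the end). For each such $P_k$, I would write the Schwartz kernel near the diagonal in the form $K_k(p,q) = a_k(p, \exp_p^{-1}(q))\, \mathrm{dist}_g(p,q)^{-n+k} + \text{smoother terms}$, where $a_k$ is smooth and homogeneous of degree $0$ in its second argument.

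Next, for $f \in \dot L^m(\overline{\B})$, I would compute the pullback
\[
(\psi_\varepsilon^\ast P_k (\psi_\varepsilon^{-1})^\ast f)(x) = \int_{\B} K_k(\psi_\varepsilon(x), \psi_\varepsilon(y))\, f(y)\, \varepsilon^n(1 + \varepsilon^2 Q_\varepsilon(y))\, \dd y.
\]
Applying Lemma \ref{distance in rescaled coordinates} to expand $\mathrm{dist}_g(\psi_\varepsilon(x),\psi_\varepsilon(y))^{-n+k} = \varepsilon^{-n+k}|x-y|^{-n+k}(1 + \varepsilon^2 A_\varepsilon)$ and converting to polar coordinates $y = x + r\omega$, the whole operator takes the form
\[
\varepsilon^k \int_{S^{n-1}} \int_0^{\infty} A^{(k)}_\varepsilon(x,r,\omega)\, r^{k-1}\, f(x+r\omega)\, \dd r\, \dd \omega,
\]
with $A^{(k)}_\varepsilon$ smooth, compactly supported in $r$ (since $|x|,|y|\leqslant 1$), and with all derivatives bounded uniformly in $\varepsilon \in [0,\varepsilon_0]$. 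Lemma \ref{uniform operator bound} applied with $l = k-1$ then yields boundedness $L^m(\R^n) \to W^{k,m}(\R^n)$ with operator norm $\leqslant C \varepsilon^k$; restricting to $\B$ gives $\|\psi_\varepsilon^\ast P_k (\psi_\varepsilon^{-1})^\ast\|_{\dot L^m(\overline{\B}) \to \overline{W}^{k,m}(\B)} \leqslant C\varepsilon^k$. Summing the contributions for $k = 4\alpha$ and $k = 2\alpha+1$ and using Sobolev embedding to reduce to the weaker regularity $\min(2\alpha+1, 4\alpha)$ produces
\[
\|R_\varepsilon\|_{\dot L^m(\overline{\B}) \to \overline{W}^{\min(2\alpha+1,4\alpha),m}(\B)} \leqslant C(\varepsilon^{4\alpha} + \varepsilon^{2\alpha+1}) = C \varepsilon^{2\alpha}(\varepsilon^{2\alpha} + \varepsilon),
\]
and an elementary case check against \eqref{Ealphaeps} (separating $\alpha<1/2$, $\alpha=1/2$, $\alpha>1/2$, with the $n=2$ subcase requiring the second branch of $E$) confirms this is bounded by $C\varepsilon^{2\alpha} E(\alpha,\varepsilon)$.

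The main obstacle I expect is bookkeeping at two levels: first, ensuring that the symbolic decomposition of classical PsDOs truly reduces, after coordinate pullback, to an operator of the form required by Lemma \ref{uniform operator bound} — in particular verifying that all subprincipal contributions give operators of strictly lower order and hence genuinely improve the $\varepsilon$-scaling, so they can be absorbed into the same bound; and second, handling the non-local ``far-diagonal'' smooth tail of the Schwartz kernels, where $\psi_\varepsilon(x)$ and $\psi_\varepsilon(y)$ are close (both inside $B_\varepsilon(p_0)$) but whose contribution to the kernel must be treated with a separate (easier) estimate using the smooth bound and the $\varepsilon^n$ from the Jacobian — this contribution gives an operator bounded into $L^\infty(\B)$ with norm $O(\varepsilon^n)$, which is far smaller than what is needed. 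The final detail is to verify that in the torus case, Theorem \ref{torus} gives an even better expression ($\mathscr{R} \in \Psi^{-1-2\alpha}_{\mathrm{cl}}$), so the same argument applies a fortiori with a sharper error \eqref{eq:newerrorterm}, consistently with Remark \ref{remark torus}.
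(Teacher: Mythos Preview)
Your approach is essentially the paper's: decompose $\mathscr{R}$ into its $\Psi^{-4\alpha}_{\mathrm{cl}}$ and $\Psi^{-2\alpha-1}_{\mathrm{cl}}$ pieces, express each Schwartz kernel in the rescaled coordinates via the distance expansion and the Jacobian $\varepsilon^n(1+\varepsilon^2 Q_\varepsilon)$, and feed the result into Lemma~\ref{uniform operator bound}. The paper does exactly this, citing Taylor's kernel formula for classical pseudodifferential operators to write the rescaled kernel (for $\alpha\neq 1/2$) as
\[
\varepsilon^{2\alpha+1}\frac{A_\varepsilon(x,|x-y|,\tfrac{x-y}{|x-y|})}{|x-y|^{n-2\alpha-1}}+\varepsilon^{4\alpha}\frac{B_\varepsilon(x,|x-y|,\tfrac{x-y}{|x-y|})}{|x-y|^{n-4\alpha}}+\varepsilon^n\kappa(\varepsilon x),
\]
with $A_\varepsilon,B_\varepsilon$ depending smoothly on $r=|x-y|$ so that the full polyhomogeneous expansion (not just the principal term) is absorbed into the amplitude.

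There is one genuine gap. When $\alpha=1/2$ both orders coincide at $-2$, and the kernel of a classical $\Psi^{-2}$ operator carries terms of the form $P_j(x,z)\log|z|$. Your claim that ``subprincipal contributions\dots genuinely improve the $\varepsilon$-scaling'' fails for these: after $z\mapsto\varepsilon(x-y)$ the leading log term becomes $\varepsilon^2 P_0(\varepsilon x)(\log\varepsilon+\log|x-y|)$, and the $\log\varepsilon$ piece forces the operator norm up to $\varepsilon^2|\log\varepsilon|$, not your claimed $\varepsilon^2$. In dimension $n=2$ this is in fact the \emph{leading} singularity (since $|x-y|^{-n+2}=1$), so your decomposition $a_k\,\mathrm{dist}^{-n+k}+\text{smoother}$ is not even well-posed there. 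The paper singles out the case $\alpha=1/2$ explicitly and writes the kernel as $\varepsilon^2 A_\varepsilon|x-y|^{-(n-2)}+\varepsilon^2 P(\varepsilon x,\varepsilon(x-y))$ with $P\sim\sum_j P_j(x,z)\log|z|$; this is precisely the origin of the branch $E(1/2,\varepsilon)=\varepsilon|\log\varepsilon|$ in \eqref{Ealphaeps}. For $\alpha\neq 1/2$ your argument goes through as written.
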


\begin{proof}
By assumption the operator $u\mapsto (\mathscr{R} u)\mid_{B_\varepsilon(p_0)}$ with domain $\dot W^{s,m}(\overline{B_\varepsilon(p_0)})$ can be represented by an element of $\Psi^{-2\alpha-1}_{\mathrm{cl}}(M) + \Psi^{-4\alpha}_{\mathrm{cl}}(M)$. So, by \cite[Page 285, (8.45)]{taylor1996partial}, in the coordinate system given by \eqref{rescaled geod coord}, the Schwartz kernel of $R_{\varepsilon}$ is given, if $\alpha \neq 1/2$, by
\[
 \varepsilon^{2\alpha+1}\frac{ A_\varepsilon\left( x, |x-y|, \frac{x-y}{|x-y|}\right) }{ |x-y|^{n-2\alpha-1}}+ \varepsilon^{4\alpha}\frac{ B_\varepsilon\left( x,  |x-y|,\frac{x-y}{|x-y|}\right)}{ |x-y|^{n-4\alpha}} + \varepsilon^n \kappa(\varepsilon x),
 \]
 and if $\alpha = 1/2$ by
 \[
\varepsilon^2 \frac{A_\varepsilon\left( x,  |x-y|,\frac{x-y}{|x-y|}\right)}{|x-y|^{n-2}} + \varepsilon^{2} P(\varepsilon x,\varepsilon (x-y)),
\]
where $A_\varepsilon(x,r,\omega)$ and $B_\varepsilon(x,r,\omega)$ satisfy the hypothesis of Lemma \ref{uniform operator bound} and $\kappa$ is a smooth function. The functions $P(x,z)$ satisfy $P(x,z) \sim \sum_{j\geqslant 0} P_j(x,z)\log |z|$ with each $P_j(x,z) \in C^\infty({\B^n} \times \R^n)$ a homogeneous polynomial of degree $j$ in the $z$ variable. The notation $P(x,z) \sim \sum_{j\geqslant 0} P_j(x,z)\log|z|$ means that for all $k\in \N$ there exists $N\in \N$ such that
$$P(x,z) - \sum_{j=0}^NP_j(x,z)\log |z| \in C^k(\B\times\R^n).$$
We can now apply Lemma \ref{uniform operator bound} to obtain the desired estimate.
\end{proof}

\subsubsection{Solving the integral equation}

Our aim is to rewrite more explicitly \eqref{equation:feps-rewritten}. For that purpose, we introduce the operator
\begin{equation}
\label{equation lalpha}
L_\alpha u :=- \int_{\B^n} \frac{u(y)}{|x-y|^{n-2\alpha}} dy,
\end{equation}
defined for $u \in C^\infty(\overline{\B^n})$. Using the rescaled normal coordinates \eqref{rescaled geod coord}, we set 
\begin{eqnarray}
\label{tilde F def}
\tilde F_\varepsilon(x) := F_\varepsilon (\psi_\varepsilon (x)).
\end{eqnarray}
The following holds:

\begin{lem}
\label{lemma ftilde epsilon}
Written in the geodesic normal coordinates \eqref{rescaled geod coord}, the equation \eqref{equation:feps-rewritten} is equivalent to
\begin{equation}
\label{eq for Fe new}
(L_\alpha + R'_\varepsilon) (\tilde{F}_\varepsilon + \mathbf{1}_{\B^n}) = - \dfrac{C_\varepsilon \varepsilon^{-2\alpha}}{C(n,-\alpha)} \mathbf{1}_{\B^n}, \quad \text{ on } \B,
\end{equation}
where the operator $R'_\varepsilon : \dot L^m(\B) \to \overline{W}^{\min(2\alpha+1, 4\alpha),m}(\B)$ satisfies the bound:
\begin{equation}
\label{equation bound r}
\|R'_\varepsilon\|_{\dot L^m \to \overline{W}^{\min(2\alpha+1, 4\alpha),m}} \leqslant C E(\alpha,\varepsilon).
\end{equation}
\end{lem}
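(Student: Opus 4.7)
The plan is to express $\mathscr{L}+\mathscr{R}$ in the rescaled chart $\psi_\varepsilon$ and isolate the Euclidean model operator $L_\alpha$. First, applying the change of variables $q=\psi_\varepsilon(y)$ in the integral defining $\mathscr{L}$, and using that $\chi(\mathrm{dist}_g(p,q)^2)=1$ whenever $p,q\in \overline{B_\varepsilon(p_0)}$ for $\varepsilon$ small, Lemma \ref{distance in rescaled coordinates} with $\mu=2\alpha$, combined with the standard normal-coordinate expansion $\psi_\varepsilon^\ast {\rm dvol}_g = \varepsilon^n(1+\varepsilon^2 Q_\varepsilon(y))\dd y$ (with $Q_\varepsilon$ smooth and uniformly bounded in $\varepsilon$), yields
\[
(\mathscr{L} u)(\psi_\varepsilon(x)) = C(n,-\alpha)\varepsilon^{2\alpha}\left(L_\alpha \tilde u(x) + \Lambda_\varepsilon \tilde u(x)\right),
\]
where the remainder $\Lambda_\varepsilon$ has Schwartz kernel $-\varepsilon^2\, \tilde A_\varepsilon(x,|x-y|,(x-y)/|x-y|)/|x-y|^{n-2\alpha}$ for some $\tilde A_\varepsilon$ smooth in all arguments, with $\varepsilon$-uniform bounds on all derivatives.

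Second, Lemma \ref{coordinate bound for remainder} directly gives the coordinate representation $R_\varepsilon$ of $\mathscr{R}$ together with the bound $\|R_\varepsilon\|_{\dot L^m(\overline \B)\to\overline W^{\min(2\alpha+1,4\alpha),m}(\B)} \leqslant C\varepsilon^{2\alpha}E(\alpha,\varepsilon)$. Substituting both transforms into \eqref{equation:feps-rewritten} and dividing through by $C(n,-\alpha)\varepsilon^{2\alpha}$ produces \eqref{eq for Fe new} with
\[
R'_\varepsilon := \Lambda_\varepsilon + \frac{1}{C(n,-\alpha)\varepsilon^{2\alpha}}\, R_\varepsilon.
\]

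To establish \eqref{equation bound r}, the contribution from $R_\varepsilon/\varepsilon^{2\alpha}$ is bounded by $CE(\alpha,\varepsilon)$ immediately from Lemma \ref{coordinate bound for remainder}. For the contribution of $\Lambda_\varepsilon$, I would Taylor expand $\tilde A_\varepsilon(x,r,\omega)$ in the radial variable $r=|x-y|$ to a sufficient order, producing a finite sum of expressions of the form $r^l$ times smooth amplitudes to which Lemma \ref{uniform operator bound} applies. Each of the resulting pseudodifferential operators carries the factor $\varepsilon^2$, and since $\varepsilon^2 \leqslant E(\alpha,\varepsilon)$ in each of the three regimes of \eqref{Ealphaeps}, the bound follows.

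The main technical obstacle is reaching the $\overline W^{\min(2\alpha+1, 4\alpha), m}(\B)$ target regularity for $\Lambda_\varepsilon$: the naive analysis of the kernel $\varepsilon^2/|x-y|^{n-2\alpha}$ only gives a gain of $2\alpha$ Sobolev derivatives, whereas the required gain is $2\alpha+1$ when $\alpha\geqslant 1/2$ and $4\alpha$ when $\alpha \leqslant 1/2$. Resolving this requires carefully exploiting the structure of $\tilde A_\varepsilon$ at $r=0$ in the Taylor expansion so that the order $-2\alpha$ pseudodifferential piece contributes only at the next level of $\varepsilon$ expansion, while the genuinely lower-order remainders dominate at the claimed target regularity.
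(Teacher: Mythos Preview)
Your approach is essentially identical to the paper's: the same change of variables via $\psi_\varepsilon$, Lemma \ref{distance in rescaled coordinates} for the distance kernel, the normal-coordinate volume expansion, and the same splitting $R'_\varepsilon = \varepsilon^2 K_\varepsilon + C(n,-\alpha)^{-1}\varepsilon^{-2\alpha} R_\varepsilon$ (the paper writes $\varepsilon^2 K_\varepsilon$ for your $\Lambda_\varepsilon$), with the same invocation of Lemmas \ref{uniform operator bound} and \ref{coordinate bound for remainder}.

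Regarding the concern you raise at the end: your proposed fix of Taylor expanding the amplitude in $r$ to kill the order $-2\alpha$ piece will not work, since there is no reason for the amplitude to vanish at $r=0$ (it involves the curvature through $Q_\varepsilon$ and the metric expansion through $A_\varepsilon$). The paper does not address this either; it simply asserts the bound into $\overline W^{\min(2\alpha+1,4\alpha),m}(\B)$ without further justification. What rescues both arguments is that this stronger target regularity is never actually used downstream: in the proof of Proposition \ref{Feps asymptotic} one only composes $R'_\varepsilon$ with $L_\alpha^{-1}:\overline W^{2\alpha,m}(\B)\to \dot L^m(\overline\B)$, so the weaker bound $\|\varepsilon^2 K_\varepsilon\|_{\dot L^m\to\overline W^{2\alpha,m}}=\mc{O}(\varepsilon^2)=o(E(\alpha,\varepsilon))$, which Lemma \ref{uniform operator bound} does give directly, is all that is required.
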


\begin{proof}
Using the decomposition $\mathscr{A}^+ = \mathscr{L} + \mathscr{R}$, we can rewrite \eqref{equation:feps-rewritten} as
\begin{equation}
\label{eq for Fe}
\begin{split}
-C(n,-\alpha)\int_{B_\varepsilon} \frac{1 + F_\varepsilon(q)}{   d_g(p,q)^{n-2\alpha}}& {\rm dvol}_g(q)+ \mathscr{R} (\1_{B_\varepsilon} + F_\varepsilon)  = -C_\varepsilon.
 \end{split}
\end{equation}
Observe that in the geodesic normal coordinates \eqref{rescaled geod coord}, 
\begin{eqnarray}
\label{volume form in coordinates}
\psi_\varepsilon^*{\rm dvol}_g(y) = \varepsilon^n (1+ \varepsilon^2 Q_\varepsilon(y)) \dd y
\end{eqnarray}
for some smooth function $Q_\varepsilon$ whose derivatives are also uniformly bounded in $\varepsilon>0$. The compatibility condition \eqref{compatibility'} becomes
\begin{eqnarray}
\label{coordinate compatibility}
\varepsilon^n\int_{\B^n} \tilde F_\varepsilon(x) (1+\varepsilon^2Q_\varepsilon(x))\dd x = |\Omega_\varepsilon|
\end{eqnarray}
In the geodesic normal coordinates, using Lemma \ref{distance in rescaled coordinates} and the above formulas for the change of volume, the terms of \eqref{eq for Fe} have the following expression:
\[
-\int_{B_\varepsilon} \frac{1}{   d_g(p,q)^{n-2\alpha}}  {\rm dvol}_g(q)  = \varepsilon^{2\alpha} L_\alpha \mathbf{1}_{\B^n} +  \eps^{2+2\alpha}  K_\varepsilon \mathbf{1}_{\B^n},
\]
and
\[
-\int_{B_\varepsilon}\frac{F_\varepsilon(q)}{   d_g(p,q)^{n-2\alpha}}  {\rm dvol}_g(q) = \varepsilon^{2\alpha} L_\alpha \tilde{F}_\varepsilon + \varepsilon^{2+2\alpha} K_\varepsilon \tilde{F}_\varepsilon,
\]
where $K_\varepsilon$ is the operator on the unit ball $\B$ defined by:
\[
K_\varepsilon u (x) := - \int_{\B^n} \dfrac{Q_\varepsilon(y) + A_{\varepsilon}(x,|x-y|,x-y/|x-y|)(1+\varepsilon^2Q_\varepsilon(y))}{|x-y|^{n-2\alpha}} u(y) \dd y.
\]
We then set
\[
R'_\varepsilon := \varepsilon^2 K_\varepsilon + \dfrac{\varepsilon^{-2\alpha} R_\varepsilon}{C(n,-\alpha)},
\]
so that equality \eqref{eq for Fe new} is satisfied.

It remains to show that $R'_\varepsilon$ satisfies the stated bound. The term involving $R_\varepsilon$ yields the bound \eqref{equation bound r} by Lemma \ref{coordinate bound for remainder}. As to the term involving $K_\varepsilon$, using Lemma \ref{uniform operator bound}, is is easily seen to be of size $\mc{O}(\varepsilon^{2+2\alpha})$ as a bounded operator $\dot L^m(\B) \to \overline{W}^{\min(2\alpha+1, 4\alpha),m}(\B)$, and this is in turn a $o(E(\alpha,\varepsilon))$.
\end{proof}

We now show the following:

\begin{lem}
\label{L inverse}
The operator $L_\alpha : \dot L^m(\overline\B) \to {\overline W}^{2\alpha,m}(\B)$ defined by \eqref{equation lalpha} is a continuous isomorphism for $m\in (1,1/\alpha)$. Hence, for $m\in (1,1/\alpha)$, there exists a bounded operator  $L_\alpha^{-1}: {\overline W}^{2\alpha,m }(\B) \to \dot L^m(\overline \B)$ such that $L^{-1}_\alpha L_\alpha  = L_\alpha L_\alpha^{-1}= \mathbf{1}$.
\end{lem}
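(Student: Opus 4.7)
The plan is to identify $L_\alpha$ with a constant multiple of the Riesz potential $I_{2\alpha}$ acting on functions extended by zero to $\R^n$ and then restricted back to $\B$: writing $c_{n,\alpha}$ for the normalizing constant such that $I_{2\alpha} = c_{n,\alpha} (-\Delta)^{-\alpha}$, one has $L_\alpha u = -c_{n,\alpha}^{-1} I_{2\alpha}(u \1_{\B})$ on $\B$ whenever $u$ is extended by zero. I would then verify the isomorphism in three steps — boundedness, injectivity, surjectivity — and conclude via the open mapping theorem.

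For boundedness, the Hardy--Littlewood--Sobolev inequality (applicable since $m<1/\alpha \leqslant n/(2\alpha)$ for $n\geqslant 2$) yields $I_{2\alpha} u \in L^{m^\ast}(\R^n)$ with $1/m^\ast = 1/m - 2\alpha/n$. Combining this with the identity $(-\Delta)^\alpha I_{2\alpha} u = c_{n,\alpha} u \in L^m(\R^n)$ and the Bessel-potential characterization of $W^{2\alpha,m}$ gives $I_{2\alpha} u \in W^{2\alpha,m}_{\mathrm{loc}}(\R^n)$; restricting to $\B$ then produces the desired bounded map $L_\alpha:\dot L^m(\overline\B)\to \overline W^{2\alpha,m}(\B)$.

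For injectivity, suppose $L_\alpha u = 0$ on $\B$, and set $\phi := I_{2\alpha} u \in L^{m^\ast}_{\mathrm{loc}}(\R^n)$. Then $\phi$ vanishes on the open set $\B$ and satisfies $(-\Delta)^\alpha \phi = c_{n,\alpha} u$, which is supported in $\overline\B$; in particular $\phi$ is $s$-harmonic on $\R^n\setminus\overline\B$. Unique continuation for the fractional Laplacian (see e.g.\ the survey of Kwasnicki, or Fall--Felli) then forces $\phi\equiv 0$, whence $u = 0$. For surjectivity, given $v\in \overline W^{2\alpha,m}(\B)$, I would solve the exterior $s$-harmonic Dirichlet problem: find $\tilde v$ on $\R^n$ with $\tilde v = v$ on $\B$ and $(-\Delta)^\alpha \tilde v = 0$ on $\R^n \setminus \overline\B$, constructed explicitly via the Riesz--Landkof--Bogdan Poisson kernel for the ball. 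The function $u := -c_{n,\alpha}^{-1}(-\Delta)^\alpha \tilde v$ is then supported in $\overline\B$ by construction and satisfies $L_\alpha u = v$ on $\B$.

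The main obstacle will be the surjectivity step: verifying that the exterior $s$-harmonic extension of a generic $v \in \overline W^{2\alpha,m}(\B)$ produces a preimage genuinely in $L^m(\R^n)$, with norm controlled by $\|v\|_{\overline W^{2\alpha,m}(\B)}$. Boundedness and injectivity are essentially standard consequences of Riesz-potential theory and unique continuation, whereas surjectivity requires careful mapping-property analysis of the Poisson-kernel operator, typically through explicit Riesz formulas and weighted $L^m$ estimates near $\partial\B$. Once all three steps are in place, continuity of $L_\alpha^{-1}:\overline W^{2\alpha,m}(\B)\to \dot L^m(\overline\B)$ follows at once from the open mapping theorem.
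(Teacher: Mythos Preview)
Your approach differs substantially from the paper's and contains a genuine gap in the injectivity step, together with the acknowledged incompleteness in surjectivity.

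\textbf{Injectivity.} The unique continuation argument is misapplied. The theorems of Fall--Felli, Ghosh--Salo--Uhlmann, etc.\ assert that if $\phi$ and $(-\Delta)^\alpha \phi$ both vanish on the \emph{same} open set, then $\phi\equiv 0$. In your setup $\phi = I_{2\alpha}u$ vanishes on $\B$ while $(-\Delta)^\alpha\phi = c_{n,\alpha}u$ vanishes on $\R^n\setminus\overline\B$; these are complementary regions, and no UCP of the type you cite applies. (In the Caffarelli--Silvestre picture this is a mixed Dirichlet/Neumann problem on disjoint boundary pieces, which certainly admits nontrivial solutions without the extra structural constraint that $\phi$ be a Riesz potential of a compactly supported $L^m$ function.) A correct argument is the Riesz energy identity $\int u\, I_{2\alpha}u = c\int |\hat u|^2 |\xi|^{-2\alpha}\,\dd\xi$, but this pairing is only a priori defined when $u\in \dot H^{-\alpha}$, which does not cover the full range $m\in(1,1/\alpha)$ without an additional regularity bootstrap.

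\textbf{Surjectivity.} The exterior $s$-harmonic extension you propose is not the problem governed by the standard ball Poisson kernel (that kernel solves the \emph{interior} problem with exterior data). Even granting an exterior extension via Kelvin transform, controlling $\|(-\Delta)^\alpha\tilde v\|_{L^m}$ near $\partial\B$ uniformly in $v\in\overline W^{2\alpha,m}(\B)$ is exactly the hard boundary-regularity statement that the whole lemma encodes; you have relocated the difficulty rather than resolved it.

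\textbf{What the paper does.} The paper avoids both issues by invoking the transmission/$\mu$-calculus of Grubb: since $L_\alpha$ has full symbol $-c|\eta|^{-2\alpha}$, it satisfies the transmission condition with factorization index $-\alpha$, and \cite[Theorem~2]{grubb} gives directly that $L_\alpha:\dot L^m(\overline\B)\to\overline W^{2\alpha,m}(\B)$ is Fredholm. Injectivity is then a black-box citation of Kahane's $L^1$ inversion theorem for the Riesz potential on the ball. For surjectivity, Fredholmness reduces the question to triviality of the cokernel; any annihilator $u\in\dot W^{-2\alpha,m'}(\overline\B)$ satisfies $L_\alpha u=0$, and Grubb's regularity forces $u(x)=(1-|x|^2)^{-\alpha}v(x)$ with $v$ smooth, hence $u\in L^1$, and Kahane applies again. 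The Fredholm step is what makes surjectivity tractable, and it is precisely this step that your outline lacks.
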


\begin{proof}
The operator $L_\alpha$ has Schwartz kernel $|x-y|^{-n+2\alpha}$ and therefore belongs to $\Psi^{-2\alpha}_{\mathrm{cl}}(\R^n)$ with full symbol $-c|\eta|^{-2\alpha}$ for some constant $c>0$ depending on $\alpha$. It therefore satisfies the H\"ormander transmission condition on $\B$ with factorization index $-\alpha$, see \cite[Proposition 1 and Equation (1)]{grubb}. The Fredholm mapping property is then stated in \cite[Theorem 2]{grubb}. 

To see injectivity, suppose $u\in  \dot L^m(\overline \B)$ satisfies $L_\alpha u = 0$. But \cite[Theorem 2.4]{kahane1981solution} (for dimension $2$) and \cite[Theorem 4.3]{kahane1983solution} (for higher dimensions) then asserts that $u=0$.

To see surjectivity, recall that the dual space of $\overline W^{2\alpha, m}(\B)$  is $\dot W^{-2\alpha, m'}(\overline \B)$, see discussion before equation (1.5) and equation (1.8) of \cite{grubb}. So suppose $u\in \dot W^{-2\alpha,m'}(\overline \B)$ is orthogonal to the range
\[
L_\alpha\left( \dot L^m(\overline\B)\right) \subset \overline W^{2\alpha, m}(\B).
\]
Then by the fact that $L_\alpha$ is formally self-adjoint, we have that $L_\alpha u = 0$. The operator $L_\alpha$ is type $-\alpha$ with respect to $\B$, see definition in \cite[Proposition 1 and Equation (1)]{grubb}. Then \cite[Theorem 2]{grubb} forces that $u(x)= (1-|x|^2)^{-\alpha} v(x)$ for some $v\in C^\infty(\overline\B)$. Since $\alpha \in (0,1)$ we have that $u\in L^1(\B)$. Then \cite[Theorem 2.4]{kahane1981solution} (for dimension $2$) and \cite[Theorem 4.3]{kahane1983solution} (for higher dimensions) forces $u=0$. 
\end{proof}

We now complete the proof of Proposition \ref{Feps asymptotic}.

\begin{proof}[Proof of Proposition \ref{Feps asymptotic}] 
First of all, assume that we have a solution $(F_\varepsilon, C_\varepsilon)$ to \eqref{eq for Fe1'} - \eqref{compatibility'} satisfying the properties of Proposition \ref{Feps asymptotic}. By Lemma \ref{lemma ftilde epsilon}, this is the same as having \eqref{eq for Fe new}. Hitting \eqref{eq for Fe new} with $L_\alpha^{-1}$, we then get:
\[
(\mathbf{1}+ L_\alpha^{-1}R'_\varepsilon) (\tilde{F}_\varepsilon + \mathbf{1}_{\B^n}) = - \dfrac{C_\varepsilon \varepsilon^{-2\alpha}}{C(n,-\alpha)} L_\alpha^{-1} \mathbf{1}_{\B^n}
\]
Observe that by Lemmas \ref{lemma ftilde epsilon} and \ref{L inverse}, $L_\alpha^{-1}R'_\varepsilon : \dot{L}^m(\overline{\B^n}) \to\dot{L}^m(\overline{\B^n})$ is a bounded operator with norm $\mc{O}(E(\alpha,\varepsilon)) = o(1)$. As a consequence, we can invert $\mathbf{1}+ L_\alpha^{-1}R'_\varepsilon$ by Neumann series and this yield:
\begin{equation}
\label{equation solution}
\tilde{F}_\varepsilon + \mathbf{1}_{\B^n} = - \dfrac{C_\varepsilon \varepsilon^{-2\alpha}}{C(n,-\alpha)} (\mathbf{1}+ L_\alpha^{-1}R'_\varepsilon)^{-1} L_\alpha^{-1} \mathbf{1}_{\B^n}.
\end{equation}
Integrating with respect to the measure $\varepsilon^n(1+\varepsilon^2Q_\varepsilon(x)) dx$ on $\B$ (rescaled Riemannian measure on the ball, see \eqref{volume form in coordinates}), and using the compatibility condition \eqref{compatibility'}, we get
\[
|M| = - \dfrac{C_\varepsilon \varepsilon^{-2\alpha}}{C(n,-\alpha)} \int_{\B^n} \left((\mathbf{1}+ L_\alpha^{-1}R'_\varepsilon)^{-1} L_\alpha^{-1} \mathbf{1}_{\B^n} \right) (x)  \varepsilon^n(1+\varepsilon^2Q_\varepsilon(x)) dx,
\]
that is
\begin{equation}
\label{equation solution cepsilon}
C_\varepsilon =- \dfrac{\varepsilon^{2\alpha - n}|M| C(n,-\alpha)}{D_\varepsilon},
\end{equation}
where 
\[
D_\varepsilon := \int_{\B^n} \left((\mathbf{1}+ L_\alpha^{-1}R'_\varepsilon)^{-1} L_\alpha^{-1} \mathbf{1}_{\B^n} \right) (x) (1+\varepsilon^2Q_\varepsilon(x)) dx.
\]
Hence, \eqref{equation solution} and \eqref{equation solution cepsilon} show that if such a regular solution $(F_\varepsilon, C_\varepsilon)$ to \eqref{eq for Fe1'} and \eqref{compatibility'} exists, then it must be unique.

Now, we define $(F_{\varepsilon},C_\varepsilon)$ by \eqref{equation solution} and \eqref{equation solution cepsilon}. By construction, they satisfy \eqref{eq for Fe1'} and \eqref{compatibility'} so all that remains to be shown is that they enjoy the asymptotic expansions of Proposition \ref{proposition fepsilon}. We start with $C_\varepsilon$. By \cite[Theorem 3.1]{kahane1981solution} (in dimension 2) and \cite[Theorem 5.1]{kahane1983solution} (in dimension $n\geqslant 3$) we have that 
\begin{eqnarray}
\label{formula for L+1}
(L^{-1}_\alpha \mathbf{1}_{\B^n})(x) =-c_\alpha (1-|x|^2)^{-\alpha},
\end{eqnarray}
where
\[
c_\alpha =
\begin{cases}
\pi^{-2} \sin((1-\alpha)\pi), \quad \text{if } \dim(\B) = 2, \vspace{0.2cm}\\ 
\displaystyle \dfrac{(n-2)\sin((1-\alpha)\pi)\Gamma(n/2-\alpha)}{\pi^{n/2+1}\Gamma(1-\alpha)} \int_0^{1} r^{n-3}(1-r^2)^{-\alpha} ~\mathrm{d}r,\quad \text{if } \dim(\B) \geqslant 3.
\end{cases}
\]
Hence, inserting \eqref{formula for L+1} in the expression of \eqref{equation solution cepsilon}, we easily get that
\begin{equation}
\label{equation:proof-ceps}
C_{\varepsilon} = \varepsilon^{2\alpha - n} |M| C(n,-\alpha) c_\alpha^{-1} \left(\int_{\B^n} \dfrac{\dd x}{(1-|x|^2)^{\alpha}} \right)^{-1}(1+\mc{O}(E(\alpha,\varepsilon))).
\end{equation}
We eventually claim that
the constant $c(n,\alpha)$ introduced in \eqref{equation:cna} is given by
\[
c(n,\alpha) = \frac{C(n,-\alpha)}{c_\alpha} \left(\int_{\B^n} \dfrac{\dd x}{(1-|x|^2)^{-\alpha}} \right)^{-1},
\]
Then, \eqref{equation:proof-ceps} is the content of \eqref{expansion cepsilon}.

Then, inserting the expansion of $C_{\varepsilon}$ into \eqref{equation solution}, using that
\[
(\mathbf{1}+ L_\alpha^{-1}R'_\varepsilon)^{-1} = \mathbf{1} + \mc{O}_{\dot L^m \to \dot L^m}(\|L_\alpha^{-1}R'_\varepsilon\|_{\dot L^m \to \dot L^m}) = \mathbf{1} +  \mc{O}_{\dot L^m \to \dot L^m}(E(\alpha,\varepsilon)),
\]
and the expression \eqref{formula for L+1} for $L^{-1}_\alpha \mathbf{1}_{\B^n}$, we get \eqref{expansion fepsilon}.
\end{proof}

\subsection{Uniqueness of regular solutions to the fundamental equation}

\label{ssection:uniqueness}

The aim of this subsection is to prove Proposition \ref{proposition uniqueness}. For that, we will use the fact that the expected stopping time ${u}_\varepsilon(p) = \mathbb E (\tau_\varepsilon \mid X_0 = p)$ admits a nice integral representation as explained in \S\ref{sec:prelim-markov}, that is, there exists a measurable (in all variables) function $\mathbf{k}$ on $[0,+\infty) \times M \times M$ so that
\begin{equation}
\label{equation integral rep}
u_\varepsilon = \int_0^{+\infty} \int_{\Omega_\eps} \mathbf{k}(t, \cdot, q) \mathrm{dvol}_g(q).
\end{equation}
We claim that the following holds:

\begin{prop}
\label{proposition integral rep}
For all $t>0$, $\mathbf{k}(t,\cdot,\cdot) \in L^\infty(\Omega_{\varepsilon} \times \Omega_{\varepsilon})$. Moreover, if $\mathscr{A}'$ is the generator of the associated semi-group $(T_t)_{t \geqslant 0}$, we have $D(\mathscr{A})\cap L^m(\Omega_\varepsilon) \subset D(\mathscr{A}')\cap L^m(\Omega_\varepsilon)$ for every $1\leqslant  m < \infty$. In particular, for $1<m<\infty$ and $u\in \dot W^{2\alpha,m} (\overline \Omega_\varepsilon)$, one has $\mathscr{A}' u = \1_{\Omega_\varepsilon} \mathscr{A}u$.
\end{prop}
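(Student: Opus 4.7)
The plan is the following. For part (i), I would combine the pointwise comparison \eqref{density comparison}, namely $0 \leqslant \mathbf{k}(t,p,q) \leqslant \mathbf{p}(t,p,q)$ for a.e.\ $p,q$, with Lemma \ref{Linfty estimate for density}, which guarantees smoothness (hence $L^\infty$-ness on the compact $M \times M$) of the heat kernel $\mathbf{p}(t,\cdot,\cdot)$ for every $t > 0$. This immediately yields $\mathbf{k}(t,\cdot,\cdot) \in L^\infty(\Omega_\varepsilon \times \Omega_\varepsilon)$.

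For parts (ii) and (iii), the idea is to approximate $T_t$ by the Feynman--Kac semigroups $T_t^\ell = T_t[V_\ell,M]$ from \eqref{eq:def-semi-groups}, with $V_\ell = \ell\, \mathbf{1}_{B_\varepsilon(p_0)}$. Since $V_\ell \in L^\infty(M)$ is a bounded perturbation and $D_{L^m}(\mathscr{A}) = W^{2\alpha,m}(M)$ by the preceding proposition, each $T_t^\ell$ is a $C_0$-contraction on every $L^m(M)$ with generator $\mathscr{A} - V_\ell$ on $W^{2\alpha,m}(M)$. Fixing $u \in \dot W^{2\alpha,m}(\overline{\Omega_\varepsilon})$, viewed as an element of $W^{2\alpha,m}(M)$ vanishing on $B_\varepsilon(p_0)$, the identity $V_\ell u = 0$ reduces the Duhamel formula to
\begin{equation*}
T_t^\ell u - u = \int_0^t T_s^\ell(\mathscr{A} u)\, \dd s \quad \text{in } L^m(M).
\end{equation*}
I would then pass to the limit $\ell \to \infty$ using the splitting $\mathscr{A} u = \mathbf{1}_{\Omega_\varepsilon}\mathscr{A} u + \mathbf{1}_{B_\varepsilon(p_0)}\mathscr{A} u$: the $L^m$-version of the Getoor approximation \eqref{eq:approximating-semi-group} (noted in the excerpt as a routine adaptation of the $L^2$-argument) gives $T_s^\ell(\mathbf{1}_{\Omega_\varepsilon}\mathscr{A} u) \to T_s(\mathbf{1}_{\Omega_\varepsilon}\mathscr{A} u)$ in $L^m(M)$, whereas the Feynman--Kac weight forces $T_s^\ell(\mathbf{1}_{B_\varepsilon(p_0)}\mathscr{A} u) \to 0$ a.e.\ and then in $L^m(M)$ via dominated convergence using $\|T_s^\ell\|_{L^m\to L^m} \leqslant 1$. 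A further application of dominated convergence under the Bochner integral in $s$ would yield
\begin{equation*}
T_t u - u = \int_0^t T_s(\mathbf{1}_{\Omega_\varepsilon}\mathscr{A} u)\, \dd s \quad \text{in } L^m(\Omega_\varepsilon),
\end{equation*}
after which division by $t$ and strong continuity of $T_s$ at $s = 0^+$ identify the limit as $\mathbf{1}_{\Omega_\varepsilon}\mathscr{A} u$, proving $u \in D(\mathscr{A}')$ with $\mathscr{A}' u = \mathbf{1}_{\Omega_\varepsilon}\mathscr{A} u$.

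The main obstacle I anticipate is the rigorous justification of the successive limits under the integral sign; everything hinges on the $L^m$-extension of \eqref{eq:approximating-semi-group} (available from the excerpt) and on the pointwise a.e.\ vanishing of $T_s^\ell(\mathbf{1}_{B_\varepsilon(p_0)}\mathscr{A} u)$. The latter follows because, by right-continuity of the cadlag path and openness of $B_\varepsilon(p_0)$, any interior time $\tau \in [0,s)$ with $X_\tau \in B_\varepsilon(p_0)$ forces positive occupation time in $B_\varepsilon(p_0)$, and the only remaining exceptional event $\{X_s \in B_\varepsilon(p_0)\text{ and } \int_0^s \mathbf{1}_{B_\varepsilon(p_0)}(X_\tau)\,\dd\tau = 0\}$ requires a jump at the fixed time $s$, which has probability zero; combined with the uniform contractivity $\|T_s^\ell\|_{L^m}\leqslant 1$ this supplies the dominant and closes the argument.
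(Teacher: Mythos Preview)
Your proposal is correct and follows essentially the same strategy as the paper: both rely on the approximating semigroups $T_t^\ell$ with generator $\mathscr{A}-\ell\mathbf{1}_{B_\varepsilon(p_0)}$, the key observation $V_\ell u=0$ for $u\in\dot W^{2\alpha,m}(\overline{\Omega_\varepsilon})$, and the $L^m$-extension of Getoor's approximation \eqref{eq:approximating-semi-group} together with the vanishing of $T_s^\ell\phi$ for $\phi$ supported in $B_\varepsilon(p_0)$ (your probabilistic justification via no fixed-time jumps is exactly what underlies the paper's ``by dominated convergence'' remark).

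The only genuine difference is the packaging of the limit: the paper works with resolvents, writing $u=(\lambda-\mathscr{A}_\ell)^{-1}(\lambda-\mathscr{A})u$ and passing $\ell\to\infty$ directly to obtain $u=(\lambda-\mathscr{A}')^{-1}\mathbf{1}_{\Omega_\varepsilon}(\lambda-\mathscr{A})u$, from which $\mathscr{A}'u=\mathbf{1}_{\Omega_\varepsilon}\mathscr{A}u$ follows in one stroke. Your Duhamel route $T_t^\ell u-u=\int_0^t T_s^\ell(\mathscr{A}u)\,\dd s$ reaches the same conclusion but requires two successive limits ($\ell\to\infty$ under the Bochner integral, then $t\to 0^+$). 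The resolvent argument is marginally cleaner in that the Laplace transform absorbs the time integration and one needs only a single dominated-convergence step; your version is perhaps more transparent in showing exactly where the restriction $\mathbf{1}_{\Omega_\varepsilon}$ enters. Both are standard and equivalent.
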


The proof of Proposition \ref{proposition uniqueness} will then be a direct consequence of Proposition \ref{proposition integral rep}, see the end of \S\ref{sssection:generator}.

\subsubsection{Integral representation of the expected stopping time}

\label{ssection:eq uepsilon}

Recall from \S \ref{sec:prelim-markov} that $\mathbf{p}$ is the kernel of the semi-group $(U_t)_{t \geqslant 0}$ introduced in \S\ref{sec:prelim-markov}. We start by observing
\begin{lem}
\label{Linfty estimate for density}
If $(M,g)$ is Anosov, $\T^n$, or $\mathbb{S}^n$, then $\mathbf p(t,\cdot,\cdot) \in C^\infty(M\times M)$ for all $t>0$.
\end{lem}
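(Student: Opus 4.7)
The plan is to use the spectral decomposition of $\mathscr{A}$ established in Corollary \ref{sphere cor}. In each of the three cases, $-\mathscr{A}$ is a non-negative self-adjoint operator on $L^2(M)$ with discrete spectrum and a complete orthonormal basis of smooth eigenfunctions $(\phi_j)_{j\geq 0}$ associated to eigenvalues $0 = \lambda_0 \geqslant \lambda_1 \geqslant \lambda_2 \geqslant \cdots \to -\infty$. The transition density $\mathbf{p}(t,\cdot,\cdot)$ is the Schwartz kernel of the semigroup $e^{t\mathscr{A}}$, and spectral theory provides the representation
\[
\mathbf{p}(t,p,q) = \sum_{j \geq 0} e^{t\lambda_j}\,\phi_j(p)\,\phi_j(q), \qquad t > 0,
\]
with a priori convergence in $L^2(M \times M)$. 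The goal is to upgrade this to convergence in $C^\infty(M\times M)$, which yields the claim.

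The argument rests on two classical ingredients. First, a Weyl-type lower bound $|\lambda_j| \geq c j^{2\alpha/n}$ for some $c>0$ and all $j$ large enough. In the Anosov and torus cases, $\mathscr{A}$ is a self-adjoint elliptic classical pseudodifferential operator of order $2\alpha$ with principal symbol $-|\eta|_g^{2\alpha}$ (Theorems \ref{torus} and \ref{anosov generator}), and this is the standard H\"ormander Weyl law. On $\mS^n$, Theorem \ref{sphere} gives $\mathscr{A} = \mathscr{A}_{2\alpha} + \mathscr{A}_0 + \mathscr{A}_{-1}\mathscr{J}$, so $\mathscr{A}$ differs from the elliptic pseudodifferential operator $\mathscr{A}_{2\alpha}$ by a bounded (in fact relatively compact) perturbation, and the min-max principle transfers the same asymptotics. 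Second, polynomial Sobolev bounds on the eigenfunctions: for every $s \geq 0$ there exist constants $C_s, N_s >0$ with $\|\phi_j\|_{H^s(M)} \leq C_s \langle \lambda_j\rangle^{N_s}$. In the Anosov and torus cases this follows from $\mathscr{A}\phi_j = \lambda_j \phi_j$ by iterating with a parametrix of $\mathscr{A}$. On the sphere, we iterate the partial parametrix of Lemma \ref{parametrix of A sphere}: the identity $(\mathscr{K}+\mathscr{B}\mathscr{J})\mathscr{A} = I + \mathscr{E}$ (with $\mathscr{E}$ smoothing of order $-2-4\alpha$) rewrites the eigenvalue equation as $\phi_j = \lambda_j(\mathscr{K}+\mathscr{B}\mathscr{J})\phi_j - \mathscr{E}\phi_j$, and repeated application lifts $\phi_j$ into every Sobolev space with polynomial loss in $|\lambda_j|$.

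Combining these with the Sobolev embedding $H^s(M) \hookrightarrow C^k(M)$ for $s > k + n/2$ yields $\|\phi_j\|_{C^k(M)} \leq C'_k \langle \lambda_j\rangle^{M_k}$ for some $M_k > 0$. Consequently, for any multi-indices $\beta,\gamma$ and any fixed $t>0$,
\[
\sum_{j \geq 0} e^{t\lambda_j}\,\|\partial^\beta \phi_j\|_{L^\infty}\,\|\partial^\gamma \phi_j\|_{L^\infty} \leq C_{\beta,\gamma} \sum_{j \geq 0} e^{-t|\lambda_j|}\langle \lambda_j\rangle^{M_{|\beta|}+M_{|\gamma|}} < \infty,
\]
since exponential decay dominates polynomial growth. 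This yields absolute and uniform convergence of the formally differentiated series for $\partial^\beta_p \partial^\gamma_q \mathbf{p}(t,p,q)$ for every $\beta,\gamma$, and therefore $\mathbf{p}(t,\cdot,\cdot) \in C^\infty(M\times M)$. The only mildly technical step is the Sobolev bound on eigenfunctions on $\mS^n$, where $\mathscr{A}$ fails to be purely pseudodifferential, but the partial parametrix from Section \ref{sec:propgen} handles this cleanly.
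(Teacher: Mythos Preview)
Your argument is correct and complete: the Weyl lower bound, the polynomial Sobolev estimates on eigenfunctions (including the sphere case via the partial parametrix of Lemma~\ref{parametrix of A sphere}), and the resulting absolute convergence of the differentiated spectral series all go through as you describe.

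The paper, however, takes a shorter ``black-box'' route. It invokes Strichartz's functional calculus theorem \cite[Theorem~1]{Strichartz-functional-calculus-72}, which says directly that for an elliptic self-adjoint non-negative pseudodifferential operator $P$, the operator $e^{-tP}$ is pseudodifferential of order $-N$ for every $N$, uniformly for $t\geqslant a>0$; smoothness of the kernel is then immediate. In the Anosov and torus cases this applies to $-\mathscr{A}$ without further work. On the sphere, where $\mathscr{A}$ is not pseudodifferential because of the $\mathscr{J}\mathscr{A}_{-1}$ term, the paper uses a different reduction than your parametrix iteration: it exploits that $\mathscr{J}$ commutes with all the pieces of $\mathscr{A}$ to split $L^2(\mS^n)=L^2_{\mathrm{even}}\oplus L^2_{\mathrm{odd}}$, on each of which $\mathscr{A}$ acts as the genuinely elliptic pseudodifferential operator $\mathscr{A}_{2\alpha}+\mathscr{A}_0\pm\mathscr{A}_{-1}$, and then applies Strichartz separately. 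Your approach is more self-contained (it does not import Strichartz and effectively reproves the needed consequence by hand), while the paper's approach is terser and, via the even/odd splitting, sidesteps the partial-parametrix bookkeeping on the sphere entirely.
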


\begin{proof}
According to \cite[Theorem 1]{Strichartz-functional-calculus-72} applied with $\nu=1$, if $P_1= P$ is an elliptic self-adjoint non-negative pseudo-differential operator and $m_t(x) := e^{- tx}$, we deduce that for any $a>0$ and $N>0$, $e^{-t P}$ is a pseudodifferential operator of order $-N$, uniformly in $t\geqslant a$. This means that the Schwartz kernel $K_P(t,\cdot,\cdot) \in \mc{D}'(M \times M)$ of $e^{-t P}$ satisfies (in every local coordinate patch) the usual estimates for pseudodifferential operators of order $-N < -n$\footnote{For instance, there exists $C > 0$ such that:
\[
\sup_{t \geqslant a, p, q \in M} |K_P(t,p,q)| \leqslant C < \infty.
\]}, and uniformly in time $t \geqslant a$. This implies that the kernel $K_P(t,\cdot,\cdot)$ of $e^{-t P}$ is smooth, uniformly if $t\geqslant a$.

In the case of the torus or Anosov manifold, $-\mathscr{A}$ satisfies the assumptions above. For the sphere, we can still apply the result. Indeed, recall from Theorem \ref{sphere}
\[
\mathscr{A} = \mathscr{A}_{2\alpha} + \mathscr{A}_0+ \mathscr{J}\mathscr{A}_{-1}
\]
where $\mathscr{A}_{2\alpha}\in \Psi^{2\alpha}_{\mathrm{cl}}(M)$, $\mathscr{A}_0\in \Psi^0_{\mathrm{cl}}(M)$, and $\mathscr{A}_{-1}\in \Psi^{-1}_{\mathrm{cl}}(M)$ all commute with $\mathcal{J}$. We use $\mathscr{J}$ to split 
\[
L^2(M) = L^2_{\mathrm{odd}}(M)\oplus L^2_{\mathrm{even}}(M)
\] 
with projections $P_{\mathrm{even}}$ and $P_{\mathrm{odd}}$. The generator $\mathscr{A}$ must preserve this decomposition, and we get 
\[
\mathscr{A} = P_{\mathrm{even}}[ \underset{=\mathscr{A}_{\mathrm{even}}}{\underbrace{\mathscr{A}_{2\alpha}+ \mathscr{A}_0 + \mathscr{A}_{-1}}}] P_{\mathrm{even}}  + P_{\mathrm{odd}}[  \underset{=\mathscr{A}_{\mathrm{odd}}}{\underbrace{\mathscr{A}_{2\alpha}+ \mathscr{A}_0 - \mathscr{A}_{-1}}} ] P_{\mathrm{odd}} .  
\]
Now we can apply \cite[Theorem 1]{Strichartz-functional-calculus-72} to $\mathscr{A}_{\mathrm{even}}$ and $\mathscr{A}_{\mathrm{odd}}$. 
\end{proof}

Since the kernel of $\mathscr{A}$ only contains the constants, we get that as $t\to+\infty$, $\mathbf{p}(t,\cdot,\cdot) \rightharpoonup 1/|M|$ in $\mathcal{D}'(M)$. However, since $t \mapsto \mathbf{p}(t, \cdot, \cdot)$ also uniformly bounded as a map $\left[1, \infty\right[ \to C^\infty(M \times M)$, we deduce that
\begin{equation}\label{eq:limit-p}
\mathbf{p}(t,\cdot,\cdot) \underset{t \to +\infty}{\longrightarrow} 1/|M| \cdot \mathbf{1}_{M\times M},
\end{equation}
where the convergence holds in $C^\infty(M \times M)$.

Recall that $\mathbf{k}$ was introduced in \S\ref{sec:prelim-markov} as the kernel of the semi-group $(T_t)_{t \geqslant 0}$ with generator $\mathscr{A}'$ defined by \eqref{eq:def-T_t}, and that according to \eqref{eq:integral-representation}, $u_\varepsilon$ can be expressed in terms of $\mathbf{k}$. 

\begin{lem}
\label{heat kernel decay}
The generator $\mathscr{A}'$ is negative definite on $L^2(\overline{\Omega_\varepsilon})$. 
\end{lem}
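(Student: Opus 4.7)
The plan is to reduce the statement to showing that $0$ is not an eigenvalue of $\mathscr{A}'$ and to extract this from a strict $L^\infty$-contraction property of $T_{t_0}$ for $t_0$ sufficiently large. By the properties recalled in \S\ref{sec:prelim-markov}, $\mathscr{A}'$ is self-adjoint and non-positive on $L^2(\overline{\Omega_\varepsilon})$ with discrete spectrum $0\geqslant \lambda_0 \geqslant \lambda_1 \geqslant \cdots$, so the desired negative-definiteness is equivalent to $\lambda_0<0$, i.e., to the absence of a non-zero $T_t$-invariant function.

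The key step would be to prove the uniform bound
\[
\|T_{t_0}\|_{L^\infty(\overline{\Omega_\varepsilon})\to L^\infty(\overline{\Omega_\varepsilon})} \leqslant 1-\delta,
\]
for some $t_0>0$ and $\delta>0$. Using \eqref{P has density} and $\mathbf{k}\leqslant \mathbf{p}$, for every $p\in\overline{\Omega_\varepsilon}$,
\[
T_t \mathbf{1}_{\overline{\Omega_\varepsilon}}(p) = \mc{T}_\varepsilon(t,p) \leqslant \mathbb{P}(X_t\in \overline{\Omega_\varepsilon}\mid X_0=p) = 1 - \int_{B_\varepsilon(p_0)} \mathbf{p}(t,p,q)\,{\rm dvol}_g(q).
\]
By Lemma \ref{Linfty estimate for density}, $\mathbf{p}(t,\cdot,\cdot)\in C^\infty(M\times M)$ for every $t>0$, and by \eqref{eq:limit-p} one has $\mathbf{p}(t,p,q)\to |M|^{-1}$ uniformly on $M\times M$ as $t\to+\infty$. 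Hence there exist $t_0>0$ and $\delta>0$ (both independent of $p$) such that $\int_{B_\varepsilon(p_0)} \mathbf{p}(t_0,p,q)\,{\rm dvol}_g(q)\geqslant\delta$. Since $\mathbf{k}(t_0,\cdot,\cdot)\geqslant 0$, the bound $\|T_{t_0}\mathbf{1}_{\overline{\Omega_\varepsilon}}\|_\infty\leqslant 1-\delta$ then upgrades to the operator-norm estimate above.

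Finally, I would argue by contradiction: if some $\phi_0\in L^2(\overline{\Omega_\varepsilon})\setminus\{0\}$ satisfied $\mathscr{A}'\phi_0=0$, then $T_t\phi_0=\phi_0$ for every $t>0$. The pointwise bound $0\leqslant \mathbf{k}(t_0,p,q)\leqslant \mathbf{p}(t_0,p,q)\in L^\infty(M\times M)$ shows that $T_{t_0}:L^2(\overline{\Omega_\varepsilon})\to L^\infty(\overline{\Omega_\varepsilon})$ is bounded, so $\phi_0=T_{t_0}\phi_0\in L^\infty(\overline{\Omega_\varepsilon})$. Iterating the strict contraction,
\[
\|\phi_0\|_{L^\infty} = \|T_{t_0}^{\,k}\phi_0\|_{L^\infty} \leqslant (1-\delta)^{k}\|\phi_0\|_{L^\infty}\xrightarrow[k\to\infty]{}0,
\]
forcing $\phi_0=0$ and giving the contradiction.

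The main obstacle is securing the uniform lower bound $\int_{B_\varepsilon(p_0)}\mathbf{p}(t_0,p,q)\,{\rm dvol}_g(q)\geqslant\delta$; this in turn rests on the uniform convergence \eqref{eq:limit-p}, which is available because $\ker\mathscr{A}=\C\cdot\mathbf{1}$ (Proposition \ref{uniqueness} and Corollary \ref{sphere cor}) and $\mathbf{p}(t,\cdot,\cdot)$ is smooth uniformly for $t\geqslant 1$ by Lemma \ref{Linfty estimate for density}. Once this is in hand, the contraction-and-iteration argument requires no further input.
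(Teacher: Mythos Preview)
Your argument is correct and rests on exactly the same two ingredients as the paper's proof: the pointwise comparison $0\leqslant\mathbf{k}(t,p,q)\leqslant\mathbf{p}(t,p,q)$ from \eqref{density comparison} and the uniform convergence $\mathbf{p}(t,\cdot,\cdot)\to |M|^{-1}$ from \eqref{eq:limit-p}. The only difference is packaging. The paper bypasses the operator-norm contraction and the iteration: for $\varphi\in\ker\mathscr{A}'$ it writes directly
\[
|\varphi(p)|=|T_t\varphi(p)|\leqslant\int_{\Omega_\varepsilon}\mathbf{k}(t,p,q)|\varphi(q)|\,{\rm dvol}_g(q)\leqslant\int_{\Omega_\varepsilon}\mathbf{p}(t,p,q)|\varphi(q)|\,{\rm dvol}_g(q),
\]
sends $t\to\infty$ to get $|\varphi(p)|\leqslant |M|^{-1}\|\varphi\|_{L^1(\Omega_\varepsilon)}$, and integrates over $\Omega_\varepsilon$ to obtain $\|\varphi\|_{L^1}\leqslant (|\Omega_\varepsilon|/|M|)\|\varphi\|_{L^1}$, forcing $\varphi=0$. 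Your route---first extracting a uniform $L^\infty$ contraction for $T_{t_0}$ and then iterating---is a legitimate variant but slightly longer; the paper's single-step $L^1$ estimate is more economical and avoids the need to bootstrap $\phi_0$ into $L^\infty$.
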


\begin{proof}
We know that $\mathscr{A}'$ has pure point spectrum accumulating at $+\infty$, so it suffices to prove its kernel is trivial. Let thus $\varphi\in \ker \mathscr{A}'$. Then by \eqref{density comparison}, we have:
\[
|\varphi(p)| \leqslant \int_{\Omega_\varepsilon} {\mathbf k}(t,p,q) |\varphi(q)|{\rm dvol}_g(q) \leqslant \int_{\Omega_\varepsilon} {\mathbf p}(t,p,q)|\varphi(q)|{\rm dvol}_g(q).
\]
Take the limit $t\to \infty$ and use \eqref{eq:limit-p} to get for almost-every $p\in \Omega_\varepsilon$:
\[
|\varphi(p)| \leqslant |M|^{-1} \int_{\Omega_\varepsilon} |\varphi(q)|{\rm dvol}_g(q) < |\Omega_{\varepsilon}|^{-1} \int_{\Omega_\varepsilon} |\varphi(q)|{\rm dvol}_g(q),
\]
which easily implies $\varphi=0$. 
\end{proof}
Note that due to Lemma \ref{Linfty estimate for density} and \eqref{density comparison}, we have that $\mathbf k(t,\cdot, \cdot) \in L^\infty(M\times M)$. In fact, we have a better estimate: 

\begin{lem}
\label{phij bound}
Let $\lambda_0 >0 $ be the lowest eigenvalue of $-\mathscr{A}'$. Then there exists a constant $C>0$ such that for all $t>1$, 
\[
|\mathbf{k}(t,p,q)| \leqslant C e^{- \lambda_0 t}. 
\] 
\end{lem}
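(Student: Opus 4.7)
The plan is to combine the semigroup structure of $(T_t)_{t \geqslant 0}$ with the spectral gap provided by Lemma \ref{heat kernel decay}. By that lemma, $-\mathscr{A}'$ is positive definite with discrete spectrum, so $\lambda_0 > 0$ is a genuine spectral gap and
$$\|T_s\|_{L^2(\Omega_\varepsilon) \to L^2(\Omega_\varepsilon)} \leqslant e^{-\lambda_0 s}, \qquad s \geqslant 0,$$
by functional calculus applied to the self-adjoint operator $\mathscr{A}'$ (see \S\ref{sec:prelim-markov}).

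For $t > 1$, the semigroup identity $T_t = T_{1/2} \circ T_{t-1} \circ T_{1/2}$ and Fubini yield, at the level of kernels,
\begin{equation*}
\mathbf{k}(t,p,q) = \int_{\Omega_\varepsilon}\int_{\Omega_\varepsilon} \mathbf{k}(1/2,p,r)\, \mathbf{k}(t-1,r,r')\, \mathbf{k}(1/2,r',q)\, {\rm dvol}_g(r)\, {\rm dvol}_g(r').
\end{equation*}
Non-negativity of $\mathbf{k}$ (inherited from the probabilistic definition \eqref{eq:def-T_t}) together with the Cauchy–Schwarz inequality, viewed as a bilinear form in the functions $\mathbf{k}(1/2,p,\ccdot)$ and $\mathbf{k}(1/2,\ccdot,q)$ of $L^2(\Omega_\varepsilon)$ paired through the operator $T_{t-1}$, gives
$$\mathbf{k}(t,p,q) \leqslant \|\mathbf{k}(1/2,p,\ccdot)\|_{L^2(\Omega_\varepsilon)} \cdot \|T_{t-1}\|_{L^2 \to L^2} \cdot \|\mathbf{k}(1/2,\ccdot,q)\|_{L^2(\Omega_\varepsilon)}.$$
The middle factor is controlled by $e^{-(t-1)\lambda_0}$.

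For the outer factors, I would use the symmetry $\mathbf{k}(s,p,q) = \mathbf{k}(s,q,p)$ (which follows from the self-adjointness of $T_s$ on $L^2$) and the semigroup identity once more:
$$\|\mathbf{k}(1/2,p,\ccdot)\|_{L^2(\Omega_\varepsilon)}^2 = \int_{\Omega_\varepsilon} \mathbf{k}(1/2,p,r)\mathbf{k}(1/2,r,p)\,{\rm dvol}_g(r) = \mathbf{k}(1,p,p).$$
The comparison \eqref{density comparison} then gives $\mathbf{k}(1,p,p) \leqslant \mathbf{p}(1,p,p)$, and the right-hand side is uniformly bounded in $p$ by Lemma \ref{Linfty estimate for density}. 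The same estimate applies at $q$, yielding a constant $C_0 > 0$ such that $\|\mathbf{k}(1/2,p,\ccdot)\|_{L^2}, \|\mathbf{k}(1/2,\ccdot,q)\|_{L^2} \leqslant C_0^{1/2}$ uniformly. Putting everything together,
$$\mathbf{k}(t,p,q) \leqslant C_0\, e^{\lambda_0}\, e^{-\lambda_0 t}, \qquad t > 1,$$
which is the desired bound with $C := C_0 e^{\lambda_0}$.

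There is no real obstacle: all the inputs (positivity and symmetry of $\mathbf{k}$, the semigroup property, the comparison with $\mathbf{p}$, smoothness of $\mathbf{p}(1,\ccdot,\ccdot)$, and the spectral gap for $\mathscr{A}'$) have already been established in the preceding lemmas and in \S\ref{sec:prelim-markov}. The only minor point requiring care is ensuring that the spectral-gap bound $\|T_s\|_{L^2 \to L^2} \leqslant e^{-\lambda_0 s}$ is invoked legitimately; but this is automatic once one knows $\mathscr{A}'$ is self-adjoint with spectrum contained in $(-\infty, -\lambda_0]$.
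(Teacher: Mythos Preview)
Your argument is correct. Both proofs use the same inputs---self-adjointness of $\mathscr{A}'$, the comparison $\mathbf{k}\leqslant \mathbf{p}$ from \eqref{density comparison}, and the smoothness of $\mathbf{p}$ from Lemma \ref{Linfty estimate for density}---but they are organized differently. The paper works through the eigenfunction expansion \eqref{eq:decomp-k}: it first extracts a pointwise bound $e^{-s\lambda_j}|\varphi_j(p)|\leqslant C_s$ on each eigenfunction by testing against $\mathbf{k}(s,p,\cdot)$ and comparing with $\mathbf{p}$, and then inserts this back into the series to obtain $|\mathbf{k}(t,p,q)|\leqslant C_1^2\sum_j e^{-\lambda_j(t-2)}$, which is $\mathcal{O}(e^{-\lambda_0 t})$ because $(T_t)$ is trace class. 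Your route is the standard ultracontractivity-type factorization $T_t=T_{1/2}T_{t-1}T_{1/2}$ followed by Cauchy--Schwarz, which avoids the eigenfunction bounds and the trace-class sum entirely; all you need is the operator norm estimate $\|T_s\|_{L^2\to L^2}\leqslant e^{-\lambda_0 s}$ and a uniform $L^2$ bound on $\mathbf{k}(1/2,p,\cdot)$. One small remark: since \eqref{density comparison} is stated only for a.e.\ $q$, it is cleanest to bypass the diagonal value $\mathbf{k}(1,p,p)$ and bound $\|\mathbf{k}(1/2,p,\cdot)\|_{L^2}^2\leqslant\int_M \mathbf{p}(1/2,p,r)^2\,{\rm dvol}_g(r)=\mathbf{p}(1,p,p)$ directly, which is unambiguously finite by Lemma \ref{Linfty estimate for density}. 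This changes nothing in substance.
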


\begin{proof}
Integrating $\varphi_j$ agains $\mathbf k(t, \cdot,\cdot)$ and using \eqref{density comparison} and \eqref{eq:decomp-k}, we see that for $s>0$:
\[
e^{-s\lambda_j}|\varphi_j(p)| \leqslant \left(\int_{M}| \mathbf p(s, p,q) |^2{\rm dvol}_g(q)\right)^{1/2}=C_s < \infty,
\]
due to Lemma \ref{Linfty estimate for density}. Coming back to \eqref{eq:decomp-k}, we observe that with $s = 1$,
\begin{align*}
|\mathbf{k}(t,p,q)| &\leqslant \sum_j e^{- \lambda_j t} |\varphi_j(p)||\varphi_j(q)| \\
					&\leqslant \sum_j e^{- \lambda_j (t-2s)} C_1^2 = C_1^2 \sum_j e^{- \lambda_j (t-2)}. 
\end{align*}
Since the last sum converges absolutely, it is $\mathcal{O}( e^{-\lambda_0 t})$ as $t\to+\infty$. 
%
%
%
\end{proof}


\begin{cor}
\label{ueps is L infinity}
For all $\varepsilon >0$, $u_\varepsilon \in L^\infty(M)$. 
\end{cor}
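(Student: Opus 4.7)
The plan is to exploit the integral representation \eqref{eq:integral-representation} and split the time integral at $t=1$, handling the short-time part by the $L^\infty$ contraction property of the semi-group $(T_t)_{t\geqslant 0}$ and the long-time part by the exponential decay from Lemma \ref{phij bound}.

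First, observe that on $\overline{B_\varepsilon(p_0)}$ the function $u_\varepsilon$ vanishes identically (since $\tau_\varepsilon = 0$ when $X_0 \in \overline{B_\varepsilon(p_0)}$), so it suffices to bound $u_\varepsilon$ on $\Omega_\varepsilon$. For $p \in \Omega_\varepsilon$, write
\[
u_\varepsilon(p) = \int_0^{1} \!\!\int_{\Omega_\varepsilon} \mathbf{k}(t,p,q)\, \mathrm{dvol}_g(q)\, \dd t + \int_1^{+\infty}\!\! \int_{\Omega_\varepsilon} \mathbf{k}(t,p,q)\, \mathrm{dvol}_g(q) \,\dd t.
\]

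For the short-time term $t \in [0,1]$, I would invoke \eqref{P has density}, which gives
\[
\int_{\Omega_\varepsilon} \mathbf{k}(t,p,q)\, \mathrm{dvol}_g(q) = T_t \mathbf{1}_{\Omega_\varepsilon}(p).
\]
Since $(T_t)_{t \geqslant 0}$ extends as a contraction on $L^\infty(\Omega_\varepsilon)$ (as noted just after \eqref{eq:bounded-on-L1}), we have $\|T_t \mathbf{1}_{\Omega_\varepsilon}\|_{L^\infty} \leqslant 1$, so the short-time contribution is bounded by $1$ uniformly in $p \in \Omega_\varepsilon$.

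For the long-time term $t \geqslant 1$, I would apply Lemma \ref{phij bound} directly: $|\mathbf{k}(t,p,q)| \leqslant C e^{-\lambda_0 t}$ uniformly in $(p,q) \in \Omega_\varepsilon \times \Omega_\varepsilon$, with $\lambda_0 > 0$ the lowest eigenvalue of $-\mathscr{A}'$ (which is positive thanks to Lemma \ref{heat kernel decay}). Integrating then gives
\[
\int_1^{+\infty} \int_{\Omega_\varepsilon} |\mathbf{k}(t,p,q)|\, \mathrm{dvol}_g(q) \,\dd t \leqslant C |\Omega_\varepsilon|\int_1^{+\infty} e^{-\lambda_0 t}\, \dd t = \frac{C |\Omega_\varepsilon|}{\lambda_0}\, e^{-\lambda_0}.
\]
Combining both estimates yields $\|u_\varepsilon\|_{L^\infty(M)} \leqslant 1 + C|\Omega_\varepsilon|/\lambda_0 < \infty$, which is the claim. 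There is no real obstacle here: all the nontrivial work has already been done in Lemmas \ref{heat kernel decay} and \ref{phij bound}, and the contraction on $L^\infty$ is immediate from the Markov-type construction of $T_t$.
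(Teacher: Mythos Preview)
Your proof is correct and follows essentially the same approach as the paper: split the integral representation \eqref{eq:integral-representation} at $t=1$, control the long-time part via Lemma \ref{phij bound}, and bound the short-time part by $1$. The only cosmetic difference is that for the short-time term the paper uses the pointwise domination $\mathbf{k}\leqslant\mathbf{p}$ from \eqref{density comparison} together with $\int_M \mathbf{p}(t,p,q)\,{\rm dvol}_g(q)=1$, whereas you cite the $L^\infty$-contraction of $(T_t)_{t\geqslant 0}$; these are equivalent statements.
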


\begin{proof}
We split \eqref{eq:integral-representation} into two parts
$$u_\varepsilon(p) =  \int_0^1 \int_{\Omega_\varepsilon} {\mathbf k}(t,p,q){\rm dvol}_g(q)\dd t + \int_1^\infty \int_{\Omega_\varepsilon} {\mathbf k}(t,p,q){\rm dvol}_g(q)\dd t$$
The second integral can be estimated using Lemma \ref{phij bound}, while for the first one, we may use \eqref{density comparison} to get
$$
\left|\int_0^1 \int_{\Omega_\varepsilon} {\mathbf k}(t,p,q){\rm dvol}_g(q)\dd t \right| \leqslant  \left|\int_0^1 \int_{M} {\mathbf p}(t,p,q){\rm dvol}_g(q)\dd t\right| = 1,
$$
which concludes the proof.
\end{proof}

\subsubsection{Study of the generator}

\label{sssection:generator}

We now show the second part of Proposition \ref{proposition integral rep}, namely, that the generator $\mathscr{A}'$ of $T_t$ can be interpreted as $\mathbf{1}_{\Omega_\varepsilon} \mathscr{A}$ on sufficiently regular functions.
For this we will use the approximating semi-groups $(T_t^\ell)_{t \geqslant 0}$ introduced in \eqref{eq:def-semi-groups}. Originally in \cite{getoor1959markov}, they were defined as semi-groups on $L^2(M)$, but as we mentioned, the bound \eqref{eq:bounded-on-L1} on the kernels implies that they are actually bounded semi-groups on each $L^m(M)$, $m\in [1,+\infty]$. 
%
%
We have the analog of \cite[Theorem 2.1]{getoor1959markov}:

\begin{lem}
\label{strong continuity of semigroups}
For all $m\in [1,\infty)$, the semi-groups $(T_t)_{t \geqslant 0}$, $(U_t)_{t \geqslant 0}$, and $(T^\ell_t)_{t \geqslant 0}$ are strongly continuous on $L^m(\Omega_\varepsilon)$, $L^m(M)$, and $L^m(M)$ respectively.
\end{lem}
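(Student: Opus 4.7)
The plan is to exploit the fact that each of the three semi-groups is a contraction on every $L^m$ space for $m \in [1, \infty]$, so that strong continuity at $t = 0$ on a dense subspace automatically extends to the whole space by a standard three-epsilon argument. The $L^1$ and $L^\infty$ contraction bounds follow from the sub-Markovian properties of the kernels recalled in \S\ref{sec:prelim-markov}: the pointwise bound $\mathbf{k}(t,p,q) \leqslant \mathbf{p}(t,p,q)$ together with the normalization $\int_M \mathbf{p}(t,p,q)\,{\rm dvol}_g(q) = 1$ give the $L^\infty$ contraction, the symmetry of $\mathbf{p}$ and $\mathbf{k}$ combined with \eqref{eq:bounded-on-L1} gives the $L^1$ contraction, and Riesz--Thorin interpolation gives the bound on every intermediate $L^m$.

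For $(U_t)_{t\geqslant 0}$ on $M$, strong continuity on $L^m(M)$ follows directly from the Feller property recalled in \S\ref{sec:prelim-markov}: for $f \in C^0(M)$ we have $U_t f \to f$ uniformly as $t \to 0$, and by compactness of $M$ this upgrades to convergence in $L^m(M)$ for every $m \in [1, \infty)$. Density of $C^0(M)$ in $L^m(M)$ combined with the uniform contraction bound then yields strong continuity on all of $L^m(M)$.

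For the approximating semi-groups $(T^\ell_t)_{t\geqslant 0}$, the defining expression \eqref{eq:def-T_t} with $V = V_\ell = \ell \mathbf{1}_{M\setminus\Omega_\varepsilon}$ and $\Omega = M$ reads
\[
T^\ell_t \phi(p) = \mathbb{E}\left[ \phi(X_t)\, e^{-\ell \int_0^t \mathbf{1}_{M\setminus \Omega_\varepsilon}(X_s)\,\mathrm{d}s} \,\middle|\, X_0 = p\right].
\]
For $\phi \in C^0(M)$, the integrand is dominated by $\|\phi\|_{C^0}$, and the c\`adl\`ag property of $X$ (combined with the absence of a jump at $t = 0$ almost surely for a L\'evy process) gives $X_t \to p$ in probability and hence $T^\ell_t \phi(p) \to \phi(p)$ pointwise by dominated convergence. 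A second application of dominated convergence in the spatial variable (using finiteness of $\mathrm{vol}_g$) yields $T^\ell_t \phi \to \phi$ in $L^m(M)$. Density of $C^0(M)$ in $L^m(M)$ concludes. The same strategy applies to $(T_t)_{t\geqslant 0}$ on $\Omega_\varepsilon$: for $\phi \in C_c(\Omega_\varepsilon)$, one uses the formula
\[
T_t \phi(p) = \mathbb{E}\left[ \phi(X_t)\, \mathbf{1}_{\{X_s \in \overline{\Omega_\varepsilon},\ 0 \leqslant s \leqslant t\}} \,\middle|\, X_0 = p\right]
\]
and the fact that for $p$ in the support of $\phi$, which is compactly contained in $\Omega_\varepsilon$, the probability that the path stays inside $\Omega_\varepsilon$ throughout $[0, t]$ tends to $1$ as $t \to 0$. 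Dominated convergence then gives $T_t \phi \to \phi$ in $L^m(\Omega_\varepsilon)$, and density of $C_c(\Omega_\varepsilon)$ in $L^m(\Omega_\varepsilon)$ for $m < \infty$ finishes the argument.

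The only subtle point is verifying the pointwise convergence $T_t\phi(p) \to \phi(p)$ for $\phi \in C_c(\Omega_\varepsilon)$, which requires showing that for $p$ at positive distance from $\partial \Omega_\varepsilon$ the supremum $\sup_{s \in [0,t]} \mathrm{dist}_g(X_s, p)$ converges to $0$ in probability as $t \to 0$. This is standard from the L\'evy--It\^o decomposition of $X$: jumps of size exceeding any fixed $\delta > 0$ occur according to a Poisson process of finite intensity (since $\nu_p(\{|v|_g > \delta\}) < \infty$ by \eqref{eq:levymeasure}), so with probability tending to $1$ no such jump occurs in $[0, t]$, and the contribution of the small jumps is controlled by their compensated martingale. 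Hence the pointwise convergence holds, and no essential obstacle arises.
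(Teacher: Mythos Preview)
Your proof is correct and follows essentially the same route as the paper's: both rely on the pointwise convergence $T_t[V,\Omega]\phi(p)\to\phi(p)$ for continuous $\phi$ (which is the content of Getoor's \cite[Theorem 2.1]{getoor1959markov}), together with the $L^\infty$ contraction bound and dominated convergence. The only cosmetic difference is in the final step: the paper phrases it as ``dominated convergence gives weak continuity on $L^m$, and a weakly continuous contraction semigroup is strongly continuous,'' whereas you argue directly by density of continuous functions and the uniform contraction bound via a three-epsilon argument. Your version is more self-contained and makes the probabilistic input (stochastic continuity of the path, finiteness of the jump intensity outside any ball) explicit, while the paper's is terser and defers to Getoor and standard semigroup theory.
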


\begin{proof}
The proof of Geetor applies almost verbatim, replacing $L^2$ therein by $L^m$. Its main ingredient is the fact that for $\phi\in C^0(\Omega)$ (with the notations of \S \ref{sec:prelim-markov})
\[
T_t[V,\Omega]\phi(p) \to \phi(p),\ \text{ for a.e }\ p\in \overline{\Omega}.
\]
With boundedness on $L^\infty$ and dominated convergence, this implies that $(T_t)_{t \geqslant 0}$ is weakly continuous on all $L^m$ with $m< \infty$, and hence strongly continuous.
\end{proof}
As the semigroups on $L^m$ are strongly continuous, they all admit generators which are densely defined but the question then is to characterize their domain. We first give a precise description of the domain of the infinitesimal generator $\mathscr{A}$ of the semigroup $(U_t)_{t \geqslant 0}$ given by \eqref{transition probability p} on $L^m(M)$. Recall that the domain of the generator of the semigroup $U_t :L^m(M) \to L^m(M)$, which we denote by $D_{L^m(M)}(\mathscr{A})$, is defined by 
\[
D_{L^m(M)}(\mathscr{A}) := \{u\in L^m(M) \mid \lim\limits_{t\to 0}  \left( U_tu - u\right)/t \ {\text{ converges in }} L^m(M) \}.
\]
In what follows, when the $L^m(M)$ space is clear within the context, we will drop the $L^m(M)$ subscript and simply write $D(\mathscr{A})$.
\begin{lem}
\label{domain is fractional sobolev}
For each $m\in (1,\infty)$, let $U_t : L^m(M) \to L^m(M)$ be the strongly continuous semigroup given by Lemma \ref{strong continuity of semigroups}. Then $D(\mathscr{A}) = W^{2\alpha, m}(M)$. 
\end{lem}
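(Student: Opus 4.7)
My plan is to prove the two inclusions separately. The key inputs are the boundedness and invertibility properties of $\mathscr{A}$ as an operator $W^{2\alpha, m}(M) \to L^m(M)$ from Corollary \ref{sphere cor}, the fact that the abstract semigroup generator $\mathscr{A}$ of $U_t$ is a closed operator extending the explicit formula \eqref{def of A} on $C^\infty(M)$ (by the Feller properties recalled in \S\ref{sec:prelim-markov}), and the existence of the resolvent of $\mathscr{A}$ as a semigroup generator.

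\textbf{Forward inclusion} $W^{2\alpha, m}(M) \subset D(\mathscr{A})$. Given $u \in W^{2\alpha, m}(M)$, approximate by $u_n \in C^\infty(M)$ with $u_n \to u$ in $W^{2\alpha, m}(M)$. Then $u_n \to u$ and $\mathscr{A} u_n \to \mathscr{A} u$ in $L^m(M)$, the latter by boundedness of $\mathscr{A} : W^{2\alpha, m}(M) \to L^m(M)$. Since $\{u_n\} \subset C^\infty(M) \subset D(\mathscr{A})$ and the generator is closed, $u \in D(\mathscr{A})$ with $\mathscr{A} u$ given by the pseudodifferential formula \eqref{def of A}.

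\textbf{Reverse inclusion} $D(\mathscr{A}) \subset W^{2\alpha, m}(M)$. This requires a resolvent identification. Fix $\lambda > 0$. By Corollary \ref{sphere cor}, the spectrum of $\mathscr{A}$ acting on $W^{2\alpha, m}(M)$ lies in $(-\infty, 0]$, so $\lambda I - \mathscr{A} : W^{2\alpha, m}(M) \to L^m(M)$ is a topological isomorphism; call its inverse $R_\lambda$. On the other hand, the symmetry $\mathbf p(t, p, q) = \mathbf p(t, q, p)$, together with $U_t \mathbf{1} = \mathbf{1}$ and positivity, gives by duality and Riesz-Thorin interpolation that $\|U_t\|_{L^m \to L^m} \leqslant 1$ for all $m \in [1, \infty]$; combined with strong continuity on $L^m(M)$ (obtained exactly as in Lemma \ref{strong continuity of semigroups}, now on $M$), the Hille-Yosida theorem gives that the resolvent $\tilde R_\lambda := (\lambda I - \mathscr{A})^{-1} : L^m(M) \to D(\mathscr{A})$ exists and is bounded. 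For $f \in L^m(M)$, the function $u := R_\lambda f$ lies in $W^{2\alpha, m}(M) \subset D(\mathscr{A})$ by Step 1 and satisfies $(\lambda I - \mathscr{A}) u = f$, so $u = \tilde R_\lambda f$. Hence $R_\lambda = \tilde R_\lambda$, and comparing ranges yields $D(\mathscr{A}) = \mathrm{Ran}(\tilde R_\lambda) = \mathrm{Ran}(R_\lambda) = W^{2\alpha, m}(M)$.

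\textbf{Expected main obstacle.} The argument is essentially soft functional analysis once Corollary \ref{sphere cor} is granted, but the one mildly technical step is setting up the $L^m$-theory for $U_t$: the contraction property on $L^m(M)$ for all $m \in [1, \infty]$ and strong continuity. The case of the sphere also deserves an extra word, since $\mathscr{A}$ is not strictly elliptic pseudodifferential there; fortunately Corollary \ref{sphere cor} packages all the Fredholm information needed for the argument to go through unchanged.
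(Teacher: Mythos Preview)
Your proof is correct. The paper takes a slightly different, more compressed route: it notes that the semigroup generator is a closed extension of $(\mathscr{A}, C^\infty(M))$ and then invokes the classical fact (citing Wong's book) that an elliptic pseudodifferential operator on a closed manifold has a \emph{unique} closed extension in $L^m$, which must be $(\mathscr{A}, W^{2\alpha,m})$; equality of domains follows immediately. Your argument instead matches the semigroup resolvent $\tilde R_\lambda$ with the Sobolev-theoretic inverse $R_\lambda = (\lambda - \mathscr{A})^{-1}: L^m \to W^{2\alpha,m}$ furnished by Corollary~\ref{sphere cor}, and reads off the domain from the range. The two approaches are close in spirit --- both ultimately rest on the Fredholm/elliptic properties of $\mathscr{A}$ --- but your resolvent version has two small advantages in this context: it is self-contained relative to the paper's own results rather than an external black box, and it handles the sphere case transparently. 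The paper's phrase ``since $\mathscr{A}$ is elliptic'' is literally a statement about pseudodifferential operators, which $\mathscr{A}$ is not on $\mathbb{S}^n$, so one has to implicitly pass through the even/odd decomposition there; your appeal to Corollary~\ref{sphere cor} covers all three geometries uniformly without that detour.
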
 

\begin{proof}
According to \cite{AppEst} (and \cite{App-95} as mentioned in the footnote of \S\ref{sec:prelim-markov}), the domain of the generator contains $C^\infty(M)$. Since the semi-group $U_t$ is strongly continuous and contracting, $(\mathscr{A},D(\mathscr{A}))$ must be closed. However, since $\mathscr{A}$ is elliptic, $(\mathscr{A},C^\infty(M))$ has only one closure in $L^m$, $m\in (1,+\infty)$ which must be $(\mathscr{A}, W^{2\alpha,m})$, see \cite[Theorem 12.15]{Wong-book} for a reference. 
\end{proof}

The next step will be to show that the generator $\mathscr{A}'$ of the semigroup $T_t:  L^m( \Omega_\varepsilon) \to L^m( \Omega_\varepsilon)$ defined in \eqref{eq:def-semi-groups} coincides with $\1_{\Omega_\varepsilon}\mathscr{A}$:
\begin{lem}
\label{restricted semigroup generator}
For $m\in [1,+\infty)$, we have $L^m(\Omega_\varepsilon)\cap D(\mathscr{A}) \subset D(\mathscr{A}')$ with $\mathscr{A}' u = \1_{\Omega_\varepsilon} \mathscr{A}u$ for all $u\in L^m(\Omega_\varepsilon)\cap D(\mathscr{A})$.
\end{lem}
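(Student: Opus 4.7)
The plan is to exploit the bounded-perturbation approximating semigroups $(T_t^\ell)_{t\geqslant 0}$ defined in \eqref{eq:def-semi-groups}. Since $V_\ell = \ell\, \mathbf{1}_{M\setminus\Omega_\varepsilon} \in L^\infty(M)$, standard perturbation theory for strongly continuous semigroups (together with the Feynman--Kac representation of $T_t^\ell$ and the Duhamel identity $T_t^\ell = U_t - \int_0^t U_{t-s} V_\ell T_s^\ell\,ds$) identifies $T_t^\ell$ as a strongly continuous semigroup on $L^m(M)$ whose generator acts as $\mathscr{A}-V_\ell$ on the unchanged domain $D(\mathscr{A}) = W^{2\alpha,m}(M)$ (by Lemma \ref{domain is fractional sobolev}). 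Hence for any $u \in D(\mathscr{A})$,
\[
T_t^\ell u - u = \int_0^t T_s^\ell\,(\mathscr{A}-V_\ell)u\,ds.
\]
The decisive observation is that whenever $u \in L^m(\Omega_\varepsilon) \cap D(\mathscr{A})$, the support of the extension-by-zero of $u$ lies in $\overline{\Omega_\varepsilon}$, while $V_\ell$ is supported in $\overline{B_\varepsilon(p_0)}$; since their intersection $\partial B_\varepsilon(p_0)$ has Lebesgue measure zero, we deduce $V_\ell u = 0$ a.e.\ and therefore
\begin{equation}\label{eq:duhamel-plan}
T_t^\ell u - u = \int_0^t T_s^\ell\,\mathscr{A} u\,ds,
\end{equation}
uniformly in $\ell$.

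I then pass to the limit $\ell \to \infty$ at fixed $t > 0$. The left-hand side converges to $T_t u - u$ in $L^m(M)$ thanks to the $L^m$-version of the convergence \eqref{eq:approximating-semi-group} (stated in $L^2$ in \cite{getoor1959markov}, but valid for $m \in [1,\infty)$ as remarked in Section \ref{sec:prelim-markov}). For the right-hand side I split $\mathscr{A} u = \mathbf{1}_{\Omega_\varepsilon}\mathscr{A} u + \mathbf{1}_{M\setminus\Omega_\varepsilon}\mathscr{A} u$: the first piece lies in $L^m(\Omega_\varepsilon)$ and is handled by the same approximation result, converging under $T_s^\ell$ to $T_s(\mathbf{1}_{\Omega_\varepsilon}\mathscr{A} u)$; the second piece is supported in the killing region and must vanish in the limit. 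For the latter, the uniform bound $|T_s^\ell \phi| \leqslant U_s |\phi|$ (pointwise, from the Feynman--Kac representation) provides an integrable $L^m$-majorant in $\ell$ and $s$, while pointwise the Feynman--Kac penalty $e^{-\int_0^s V_\ell(X_\tau)\,d\tau}$ forces the integrand to zero on the full-probability event that the path spends positive Lebesgue time in $\overline{B_\varepsilon(p_0)}$ up to time $s$. Dominated convergence (both in $\ell$ and subsequently in $s$ under the time integral, using the $L^m$-contractivity of $T_s^\ell$) yields
\[
T_t u - u = \int_0^t T_s(\mathbf{1}_{\Omega_\varepsilon}\mathscr{A} u)\,ds.
\]

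Dividing by $t$ and letting $t \to 0^+$, the strong continuity of $(T_s)_{s\geqslant 0}$ on $L^m(\Omega_\varepsilon)$ established in Lemma \ref{strong continuity of semigroups} shows that the right-hand side converges to $\mathbf{1}_{\Omega_\varepsilon}\mathscr{A} u$ in $L^m(\Omega_\varepsilon)$. By the definition of the generator this gives $u \in D(\mathscr{A}')$ with $\mathscr{A}' u = \mathbf{1}_{\Omega_\varepsilon}\mathscr{A} u$, which is the conclusion. The main obstacle I anticipate is the vanishing step for $T_s^\ell(\mathbf{1}_{M\setminus\Omega_\varepsilon}\mathscr{A} u) \to 0$ in $L^m$: while the pointwise argument is natural, making it rigorous in $L^m$ requires either a careful path-space analysis using the fact that an isotropic $2\alpha$-stable cadlag process accumulates positive occupation time in any visited small ball, or a direct estimate on the killed kernel via the comparison \eqref{density comparison}. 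Either route is essentially a streamlined adaptation of the arguments used in \cite[\S 4]{getoor1959markov} for the Euclidean fractional Laplacian.
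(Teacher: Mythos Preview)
Your plan is correct and uses the same core ingredients as the paper: the bounded-perturbation semigroups $T_t^\ell$ with generator $\mathscr{A}-V_\ell$ on $D(\mathscr{A})$, the observation $V_\ell u=0$ for $u$ supported in $\overline{\Omega_\varepsilon}$, and the crucial convergence $T_t^\ell\phi\to T_t(\mathbf{1}_{\Omega_\varepsilon}\phi)$ in $L^m$ for \emph{every} $\phi\in L^m(M)$.

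The one genuine difference is in the last step. The paper does not use the Duhamel identity and the limit $t\to 0^+$; instead it passes to resolvents, writing $u=(\lambda-\mathscr{A}_\ell)^{-1}(\lambda-\mathscr{A})u$ and letting $\ell\to\infty$ to obtain $u=(\lambda-\mathscr{A}')^{-1}\mathbf{1}_{\Omega_\varepsilon}(\lambda-\mathscr{A})u$, from which $u\in D(\mathscr{A}')$ and $\mathscr{A}'u=\mathbf{1}_{\Omega_\varepsilon}\mathscr{A} u$ follow by pure algebra. This sidesteps both the Bochner dominated-convergence argument in $s$ and the differentiation at $t=0$, so it is a little cleaner; your route is equally valid but requires the extra bookkeeping you already flagged.

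Regarding your ``main obstacle'' (showing $T_s^\ell(\mathbf{1}_{M\setminus\Omega_\varepsilon}\mathscr{A} u)\to 0$): the paper proves exactly this, for any $\phi\in L^m$ supported in $\overline{B_\varepsilon(p_0)}$, by invoking the pointwise limit from the proof of \cite[Theorem~4.1]{getoor1959markov}, namely $T_t^\ell\phi(p)\to\mathbb{E}[\phi(X_t)\mathbf{1}_{\{X_\tau\in\Omega_\varepsilon,\ 0\leqslant\tau<t\}}\mid X_0=p]$, which vanishes for continuous $\phi$ supported in $\overline{B_\varepsilon(p_0)}$; one then uses $L^m$-contractivity of $T_t^\ell$ and density of $C^0$ in $L^m$. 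Your occupation-time heuristic can be made rigorous along these lines (Feller processes have no fixed discontinuities and $\mathbf{p}(t,p,\cdot)$ is smooth so $\partial B_\varepsilon(p_0)$ has zero hitting probability at time $t$), but the quickest fix is simply to cite Getoor as the paper does.
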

Of course, this is only useful for $m>1$, in which case we are able to characterize $D(\mathscr{A})$. The only ingredient in the proof is the strong continuity of $(U_t)_{t \geqslant 0}$ on $C^0(M)$.

\begin{proof} We proceed as in \cite{getoor1961first}.
First, \cite[Theorem 5.1]{getoor1957additive} asserts that if $\mathscr{A}_\ell$ is the infinitesimal generator of $(T^\ell_t)_{t \geqslant 0}$, then 
\begin{eqnarray}
\label{Aell}
\mathscr{A}_\ell = \mathscr{A} - \ell\1_{B_\varepsilon(p_0)}.
\end{eqnarray}
with domain $D(\mathscr{A}_\ell) = D(A)$. Let $(\mathscr{A} - \lambda)^{-1}$, $(\mathscr{A}' - \lambda)^{-1}$, and $(\mathscr{A}_{\ell} - \lambda)^{-1}$ be resolvents for the semigroups $U_t$, $T_t$, and $T_t^\ell$ respectively. Assume that $\lambda>0$, and write
\[
(\lambda-\mathscr{A}')^{-1} = \int_0^{+\infty} e^{-t\lambda} T_t dt,\quad (\lambda-\mathscr{A}_\ell)^{-1} = \int_0^{+\infty} e^{-t\lambda} T_t^\ell dt. 
\]
According to \eqref{eq:approximating-semi-group}, for $\phi\in L^2(\Omega_\varepsilon)$, $T_t^\ell \phi \to T_t \phi,\ \text{ in } L^2(M)$. As we have already observed in \S \ref{sec:prelim-markov}, this result extends to $\phi\in L^m(\Omega_\varepsilon)$ with convergence in $L^m(M)$. We will need a stronger statement, namely, that this holds for all $\phi \in L^m(M)$ with convergence in $L^m(M)$ to $T_t (\mathbf{1}_{\Omega_{\varepsilon}}\phi)$. Hence, it suffices to show that if $\phi \in L^m(M \setminus \Omega_{\varepsilon})$, then $T_t^\ell \phi \to 0$ in $L^m(M)$. In the proof of \cite[Theorem 4.1]{getoor1959markov}, we see that for $\phi\in C^0(M)$, we have the pointwise convergence
\[
\lim_{\ell \to \infty} T_t^\ell \phi(p) = \mathbb{E}\left[ \phi(X(t)) \1_{\{X(\tau)\in \Omega_\varepsilon | 0\leqslant \tau < t\}} \middle| ~X(0)=p\right].
\]
Hence, if $\phi$ is continuous and supported in $M \setminus \Omega_\varepsilon$, we have the pointwise convergence $T_t^\ell \phi(p) \to 0$ (by dominated convergence). {Then, using that $T_t^{\ell}$ is a contraction on $L^m(M)$ and approximating a function $\phi \in L^m(M \setminus \Omega_{\varepsilon})$ by continuous functions $\phi_n \to \phi$ in $L^m(M \setminus \Omega_{\varepsilon})$, we deduce that $T_t^\ell \phi \to 0$ in $L^m(M)$.
This shows that for all $\phi \in L^m(M)$, $T_t^\ell \phi \to_{\ell \to \infty} T_t (\1_{\Omega_\varepsilon}\phi)$ with convergence in $L^m(M)$.}

Now, since $\lambda>0$, we can use dominated convergence to conclude that for all $L^m(M)$ with $m< \infty$, $\phi\in L^m(M)$
\begin{equation}\label{convergence of semigroup resolvent}
(\lambda-\mathscr{A}_\ell)^{-1} \phi \to (\lambda-\mathscr{A}')^{-1} \1_{\Omega_\varepsilon} \phi,\ \text{ in $L^m(M)$.}
\end{equation}

For $u\in D(\mathscr{A})\cap L^m(\Omega_\varepsilon)$, we observe that
\[
\mathscr{A}_\ell u = (\mathscr{A} - \ell \1_{B_\varepsilon(p_0)}) u = \mathscr{A} u. 
\]
In particular, 
\[
u = (\lambda-\mathscr{A}_\ell)^{-1}(\lambda-\mathscr{A})u \to (\lambda - \mathscr{A}')^{-1} \1_{\Omega_\varepsilon}(\lambda - \mathscr{A})u. 
\]
so that $u$ is in $D(\mathscr{A}')$ and
\[
(\lambda -\mathscr{A}') u = \lambda u - \1_{\Omega_\varepsilon} \mathscr{A} u. 
\]
\end{proof}

The previous lemma completes the proof of Proposition \ref{proposition integral rep}. In turn, we can now conclude the proof of Proposition \ref{proposition uniqueness}:

\begin{proof}
Let $w \in \dot{W}^{2\alpha,m}(\overline{\Omega}_\varepsilon)$ such that $\mathscr{A} w = -1$ in $\Omega_\varepsilon$. This means that $\mathscr{A}' w = - \mathbf{1}_{\Omega_\varepsilon}$. But using \eqref{eq:integral-representation}, we get the following equality in $L^m(\Omega_\varepsilon)$:
\[
u_\varepsilon = \int_0^{+\infty} T_t \mathbf{1}_{\Omega_\varepsilon}\dd t = - \int_0^{+\infty} T_t \mathscr{A}' w\dd t = - \int_0^{+\infty} \partial_t (T_t w)\dd t = w
\]
since $T_t w \to_{t \to \infty} 0$ in $L^\infty(\Omega_\varepsilon)$ by Lemma \ref{phij bound}. Moreover, $u_\varepsilon = 0 = w$ in $B_\varepsilon(p_0)$. Since $u_\varepsilon \in L^\infty(M)$ and $w \in L^m(M)$, this implies that $u_\varepsilon = w$ over $M$. 
\end{proof}

\bibliographystyle{alpha}
\bibliography{biblio}

\begin{thebibliography}{VDLRS11}

\bibitem[AB21]{applebaum20212}
David Applebaum and Rosemary~Shewell Brockway.
\newblock L2 properties of {L{\'e}vy} generators on compact riemannian
  manifolds.
\newblock {\em Journal of Theoretical Probability}, 34(2):1029--1042, 2021.

\bibitem[AE00]{AppEst}
D.~Applebaum and A.~Estrade.
\newblock Isotropic {L{\'e}vy} processes on {Riemannian} manifolds.
\newblock {\em Ann. Probab.}, 28(1):166--184, 2000.

\bibitem[AH14]{applebaum2014probability}
David Applebaum and Herbert Heyer.
\newblock {\em Probability on compact Lie groups}, volume~70.
\newblock Springer, 2014.

\bibitem[AKKL12]{ammari2012layer}
Habib Ammari, Kostis Kalimeris, Hyeonbae Kang, and Hyundae Lee.
\newblock Layer potential techniques for the narrow escape problem.
\newblock {\em Journal de math{\'e}matiques pures et appliqu{\'e}es},
  97(1):66--84, 2012.

\bibitem[Ano69]{anosov1969geodesic}
Dmitrij~V Anosov.
\newblock {\em Geodesic flows on closed Riemann manifolds with negative
  curvature}.
\newblock Number 89-90. American Mathematical Society, 1969.

\bibitem[App95]{App-95}
David Applebaum.
\newblock A horizontal {L{\'e}vy} process on the bundle of orthonormal frames
  over a complete {Riemannian} manifold.
\newblock In {\em S\'eminaire de probabilit\'es XXIX}, pages 166--180. Berlin:
  Springer-Verlag, 1995.

\bibitem[BGW21]{Bonthonneau-Guillarmou-Weich-20}
Yannick~Guedes Bonthonneau, Colin Guillarmou, and Tobias Weich.
\newblock Srb measures for anosov actions.
\newblock 2021.

\bibitem[BLMV11]{benichou2011intermittent}
Olivier B{\'e}nichou, Claude Loverdo, Michel Moreau, and Raphael Voituriez.
\newblock Intermittent search strategies.
\newblock {\em Reviews of Modern Physics}, 83(1):81, 2011.

\bibitem[BN13]{bressloff2013stochastic}
Paul~C Bressloff and Jay~M Newby.
\newblock Stochastic models of intracellular transport.
\newblock {\em Reviews of Modern Physics}, 85(1):135, 2013.

\bibitem[CF11]{chen2011asymptotic}
Xinfu Chen and Avner Friedman.
\newblock Asymptotic analysis for the narrow escape problem.
\newblock {\em SIAM journal on mathematical analysis}, 43(6):2542--2563, 2011.

\bibitem[CWS10]{cheviakov2010asymptotic}
Alexei~F Cheviakov, Michael~J Ward, and Ronny Straube.
\newblock An asymptotic analysis of the mean first passage time for narrow
  escape problems: Part ii: The sphere.
\newblock {\em Multiscale Modeling \& Simulation}, 8(3):836--870, 2010.

\bibitem[DG75]{Duistermaat-Guillemin}
J.~J. Duistermaat and V.~W. Guillemin.
\newblock The spectrum of positive elliptic operators and periodic
  bicharacteristics.
\newblock {\em Invent. Math.}, 29:39--79, 1975.

\bibitem[DGV22]{dipierro2022efficiency}
Serena Dipierro, Giovanni Giacomin, and Enrico Valdinoci.
\newblock Efficiency functionals for the l{\'e}vy flight foraging hypothesis.
\newblock {\em Journal of Mathematical Biology}, 85(4):1--50, 2022.

\bibitem[DH72]{Hor}
J.~J. Duistermaat and L.~H{\"o}rmander.
\newblock Fourier integral operators. {II}.
\newblock {\em Acta Math.}, 128:183--269, 1972.

\bibitem[dlLMM86]{DeLaLlave-Marco-Moriyon-86}
R.~de~la Llave, J.~M. Marco, and R.~Moriyon.
\newblock Canonical perturbation theory of anosov systems and regularity
  results for the livsic cohomology equation.
\newblock {\em Annals of Mathematics}, 123(3):537--611, 1986.

\bibitem[GC15]{gomez2015asymptotic}
Daniel Gomez and Alexei~F Cheviakov.
\newblock Asymptotic analysis of narrow escape problems in nonspherical
  three-dimensional domains.
\newblock {\em Physical Review E}, 91(1):012137, 2015.

\bibitem[Get57]{getoor1957additive}
RK~Getoor.
\newblock Additive functionals of a markov process.
\newblock {\em Pacific Journal of Mathematics}, 7(4):1577--1591, 1957.

\bibitem[Get59]{getoor1959markov}
RK~Getoor.
\newblock Markov operators and their associated semi-groups.
\newblock {\em Pacific Journal of Mathematics}, 9(2):449--472, 1959.

\bibitem[Get61]{getoor1961first}
RK~Getoor.
\newblock First passage times for symmetric stable processes in space.
\newblock {\em Transactions of the American Mathematical Society},
  101(1):75--90, 1961.

\bibitem[Gru15]{grubb}
Gerd Grubb.
\newblock Fractional laplacians on domains, a development of h{\"o}rmander's
  theory of $\mu$-transmission pseudodifferential operators.
\newblock {\em Advances in Mathematics}, 268:478--528, 2015.

\bibitem[GS94]{Grigis-Sjostrand-94}
Alain Grigis and Johannes Sj\"{o}strand.
\newblock {\em Microlocal analysis for differential operators}, volume 196 of
  {\em London Mathematical Society Lecture Note Series}.
\newblock Cambridge University Press, Cambridge, 1994.
\newblock An introduction.

\bibitem[H{\"o}r15]{hormander}
Lars H{\"o}rmander.
\newblock {\em The analysis of linear partial differential operators I:
  Distribution theory and Fourier analysis}.
\newblock Springer, 2015.

\bibitem[Kah81]{kahane1981solution}
Charles~S Kahane.
\newblock The solution of mildly singular integral equation of the first kind
  on a disk.
\newblock {\em Integral Equations and Operator Theory}, 4(4):548--595, 1981.

\bibitem[Kah83]{kahane1983solution}
Charles~S Kahane.
\newblock The solution of a mildly singular integral equation of the first kind
  on a ball.
\newblock {\em Integral Equations and Operator Theory}, 6(1):67--133, 1983.

\bibitem[Kli74]{klingenberg1974riemannian}
Wilhelm Klingenberg.
\newblock Riemannian manifolds with geodesic flow of anosov type.
\newblock {\em Annals of Mathematics}, 99(1):1--13, 1974.

\bibitem[Kni02]{Knieper-02}
G.~Knieper.
\newblock {\em Hyperbolic dynamical systems, in {H}andbook of {D}ynamical
  {S}ystems Vol 1A}, pages 239--319.
\newblock Elsevier, Amsterdam, 2002.

\bibitem[Ma{\~n}87]{mane}
R.~Ma{\~n}{\'e}.
\newblock On a theorem of {Klingenberg}.
\newblock Dynamical systems and bifurcation theory, {Proc}. {Meet}., {Rio} de
  {Janeiro}/{Braz}. 1985, {Pitman} {Res}. {Notes} {Math}. {Ser}. 160, 319-345
  (1987)., 1987.

\bibitem[NTT21a]{nursultanov2021narrow}
Medet Nursultanov, William Trad, and Leo Tzou.
\newblock Narrow escape problem in the presence of the force field.
\newblock {\em Mathematical Methods in the Applied Sciences}, 2021.

\bibitem[NTT21b]{nursultanov2021mean}
Medet Nursultanov, Justin~C Tzou, and Leo Tzou.
\newblock On the mean first arrival time of brownian particles on riemannian
  manifolds.
\newblock {\em Journal de Math{\'e}matiques Pures et Appliqu{\'e}es},
  150:202--240, 2021.

\bibitem[NTTT22]{nursultanov2022narrow}
Medet Nursultanov, William Trad, Justin~C Tzou, and Leo Tzou.
\newblock The narrow capture problem on general riemannian surfaces.
\newblock {\em arXiv preprint arXiv:2209.12425}, 2022.

\bibitem[NZ15]{Nonnenmacher-Zworski-decay}
St{\'e}phane Nonnenmacher and Maciej Zworski.
\newblock Decay of correlations for normally hyperbolic trapping.
\newblock {\em Invent. Math.}, 200(2):345--438, 2015.

\bibitem[Pat99]{Paternain-99}
Gabriel~P. Paternain.
\newblock {\em Geodesic flows}, volume 180 of {\em Progress in Mathematics}.
\newblock Birkh\"{a}user Boston, Inc., Boston, MA, 1999.

\bibitem[PCM14]{palyulin2014levy}
Vladimir~V Palyulin, Aleksei~V Chechkin, and Ralf Metzler.
\newblock L{\'e}vy flights do not always optimize random blind search for
  sparse targets.
\newblock {\em Proceedings of the National Academy of Sciences},
  111(8):2931--2936, 2014.

\bibitem[PSU22]{GSUbook}
Gabriel~P Paternain, Mikko Salo, and Gunther Uhlmann.
\newblock Geometric inverse problems, with emphasis on two dimensions.
\newblock {\em Text in preparation}, 2022.

\bibitem[Rug91]{ruggiero1991creation}
Rafael~Oswaldo Ruggiero.
\newblock On the creation of conjugate points.
\newblock {\em Mathematische Zeitschrift}, 208(1):41--55, 1991.

\bibitem[SK86]{shlesinger1986growth}
MF~Shlesinger and J~Klafter.
\newblock On growth and form.
\newblock {\em Levy Walks vs. Levy Flights}, pages 279--283, 1986.

\bibitem[SSH06]{singer2006narrow}
Amit Singer, Zeev Schuss, and David Holcman.
\newblock Narrow escape, part ii: The circular disk.
\newblock {\em Journal of statistical physics}, 122(3):465--489, 2006.

\bibitem[SSH08]{singer2008narrow}
Amit Singer, Z~Schuss, and David Holcman.
\newblock Narrow escape and leakage of brownian particles.
\newblock {\em Physical Review E}, 78(5):051111, 2008.

\bibitem[ST10]{stinga2010extension}
Pablo~Ra{\'u}l Stinga and Jos{\'e}~Luis Torrea.
\newblock Extension problem and harnack's inequality for some fractional
  operators.
\newblock {\em Communications in Partial Differential Equations},
  35(11):2092--2122, 2010.

\bibitem[Str72]{Strichartz-functional-calculus-72}
Robert~S. Strichartz.
\newblock A functional calculus for elliptic pseudo-differential operators.
\newblock {\em Am. J. Math.}, 94:711--722, 1972.

\bibitem[Tay96]{taylor1996partial}
Michael~Eugene Taylor.
\newblock {\em Partial differential equations. 1, Basic theory}.
\newblock Springer, 1996.

\bibitem[Tay13]{taylor2013partial}
Michael Taylor.
\newblock {\em Partial differential equations II: Qualitative studies of linear
  equations}, volume 116.
\newblock Springer Science \& Business Media, 2013.

\bibitem[VDLRS11]{viswanathan2011physics}
Gandhimohan~M Viswanathan, Marcos~GE Da~Luz, Ernesto~P Raposo, and H~Eugene
  Stanley.
\newblock {\em The physics of foraging: an introduction to random searches and
  biological encounters}.
\newblock Cambridge University Press, 2011.

\bibitem[Won91]{Wong-book}
M.~W. Wong.
\newblock {\em An introduction to pseudo-differential operators}.
\newblock Singapore: World Scientific, 1991.

\end{thebibliography}
\end{document}